\theoremstyle{plain}
\newtheorem{theorem}{Theorem}
\newtheorem{coro}[theorem]{Corollary}
\newtheorem{lemma}[theorem]{Lemma}
\newtheorem{proposition}[theorem]{Proposition}
\newtheorem{ques}[theorem]{Question}
\newtheorem{thmABC}{Theorem}
\newtheorem{proABC}[thmABC]{Proposition}
\newtheorem{conjABC}[thmABC]{Conjecture}
\theoremstyle{remark}
\newtheorem{example}[theorem]{Example}
\newtheorem{remark}[theorem]{Remark}
\newtheorem*{notation}{Notation}
\numberwithin{theorem}{section}
\numberwithin{equation}{section}
\theoremstyle{definition}
\newtheorem{definition}[theorem]{Definition}
\newcommand{\F}{\mathbb{F}}
\newcommand{\Z}{\mathbb{Z}}
\newcommand{\N}{\mathbb{N}}
\newcommand{\res}{{\rm res}}
\newcommand{\sV}{{\mathcal V}}
\newcommand{\sE}{{\mathcal E}}
\newcommand{\calG}{{\mathcal G}}
\newcommand{\kernel}{\mathrm{ker}}
\newcommand{\image}{\mathrm{im}}
\newcommand{\dd}{\mathrm{d}}
\newcommand{\cdd}{\mathrm{cd}}
\newcommand{\rr}{\mathrm{r}}
\newcommand{\argu}{\hbox to 7truept{\hrulefill}}
\DeclareMathOperator{\Gal}{Gal}
\newcommand{\norm}[1]{\vert #1 \vert}
\renewcommand{\a}{\alpha}
\renewcommand{\b}{\beta}
\renewcommand{\c}{\gamma}
\renewcommand{\d}{\delta}
\newcommand{\comm}[1]{}
\newcommand{\pres}[2]{\langle #1 \mid #2 \rangle} 
\newcommand{\ol}[1]{\overline{#1}}
\newcommand{\gen}[1]{\ol{\langle #1 \rangle}}
\newcommand{\D}{\Delta}
\def\moverlay{\mathpalette\mov@rlay}
\def\mov@rlay#1#2{\leavevmode\vtop{%
   \baselineskip\z@skip \lineskiplimit-\maxdimen
   \ialign{\hfil$\m@th#1##$\hfil\cr#2\crcr}}}
\newcommand{\charfusion}[3][\mathord]{
    #1{\ifx#1\mathop\vphantom{#2}\fi
        \mathpalette\mov@rlay{#2\cr#3}
      }
    \ifx#1\mathop\expandafter\displaylimits\fi}
\newcommand{\bigcupdot}{\charfusion[\mathop]{\bigcup}{\cdot}}
\begin{document}

\title {On Pro-$p$ groups with quadratic cohomology} 

\author{C. Quadrelli \and I. Snopce \and M. Vannacci}

\date{\today}
\address{Dipartmento di Matematica e Applicazioni, Universit\`a di Milano-Bicocca \\ 
 Milan, Italy}
\email{claudio.quadrelli@unimib.it}
\address{Instituto de Matem\'atica, Universidade Federal do Rio de Janeiro \\
 Rio de Janeiro, Brasil}
\email{ilir@im.ufrj.br}
\address{Matematika Saila, Universidad del Pa\'is Vasco \\
Bilbao, Spain}
\email{matteo.vannacci@ehu.eus}

\begin{abstract}
The main purpose of this article is to study pro-$p$ groups with quadratic $\F_p$-cohomology algebra, i.e.\ $H^\bullet$-quadratic pro-$p$ groups. Prime examples of such groups are the maximal Galois pro-$p$ groups of fields containing a primitive root of unity of order $p$. 

We show that the amalgamated free product and HNN-extension of $H^\bullet$-quadratic pro-$p$ groups is $H^\bullet$-quadratic, under certain necessary conditions. Moreover, we introduce and investigate a new family of pro-$p$ groups that yields many new examples of $H^\bullet$-quadratic groups: $p$-RAAGs. These examples generalise right angled Artin groups in the category of pro-$p$ groups. Finally, we explore ``Tits alternative behaviour'' of $H^\bullet$-quadratic pro-$p$ groups.
\end{abstract}

\subjclass[2010]{Primary 12G05,  20E18; Secondary 17A45, 20J06, 20E06, 22E20.}

\keywords{Quadratic cohomology, maximal pro-$p$ Galois groups,
 Demushkin groups, free pro-$p$ constructions, Right angled Artin groups, uniform pro-$p$ groups.}

\thanks{The first author was partially supported by the grant ``Giovani Talenti 2017''.
The second author acknowledges support from the Alexander von Humboldt Foundation, CAPES (grant 88881.145624/2017-01) and FAPERJ.  The third author acknowledges support from the research training group
\textit{GRK 2240: Algebro-Geometric Methods in Algebra, Arithmetic and Topology}, funded by the DFG}

\maketitle

\section{Introduction}
\subsection{Number-theoretic motivation} For a field $K$, let $\bar K_s$ denote its separable closure.
The absolute Galois group of $K$ is the profinite group $G_K=\Gal(\bar K_s/K)$.
One of the main challenges in current Galois theory is to describe absolute Galois groups of fields among profinite groups.
Already describing the maximal pro-$p$ quotient $G_K(p)$ of $G_K$ among pro-$p$ groups, for $p$ a prime number,
is a remarkable challenge.

The most amazing advancement in Galois theory in the last decades is the proof
of the \emph{Bloch-Kato Conjecture} by M.~Rost and V.~Voevodsky, with the contribution of Ch.~Weibel
(cf.\ \cite{rost:BK, voev, weibel}). One of the most important consequences of this result is the following:
if $K$ contains a root of unity of order $p$, then the $\F_p$-cohomology algebra of $G_K$ ---
i.e.\ the graded algebra $\bigoplus_{n\geq0}H^n(G_K,\F_p)$, endowed with the cup product and with $\F_p$ as a trivial $G_K$-module ---
is a \emph{quadratic algebra} over $\F_p$, namely, all its elements of positive degree are combinations
of products of elements of degree 1, and its defining relations are homogeneous relations of degree 2
(see Definition~\ref{def:quadratic algebra}).
This property is inherited by the maximal pro-$p$ quotient $G_K(p)$ (cf.\ \cite[\S~2]{cq:bk}).

It is therefore of major interest to study pro-$p$ groups whose $\F_p$-cohomology algebra is quadratic, 
which we call \emph{$H^\bullet$-quadratic} (or simply \emph{quadratic}) pro-$p$ groups.
This class has been investigated in some recent papers, in order to find obstructions in the realization
of pro-$p$ groups as the maximal pro-$p$ quotient of absolute Galois groups (cf.\ \cite{cq:bk,cq:qc,qw:cyclotomic}).
Unfortunately, as it often happens in profinite group theory, there is an astounding lack of examples of $H^\bullet$-quadratic pro-$p$ groups. We will try to partly remedy this lack of examples in our work.

\subsection{Main Results} In the present paper we start a systematic investigation of $H^\bullet$-quadratic pro-$p$ groups from various points of view. 

First of all, we deal with torsion in $H^\bullet$-quadratic pro-$p$ groups. In the next proposition we will collect some facts that might be already known to experts. For further discussion on finite $H^\bullet$-quadratic $2$-groups see Remark~\ref{rmk:propCp2}.

\begin{proABC}\label{prop:properties quadratic}
Let $G$ be a finitely generated pro-$p$ group.
 \begin{itemize}
   \item[(a)] If $p$ is odd and $G$ is $H^\bullet$-quadratic, then $G$ is torsion-free.
  \item[(b)] If $p=2$, $G$ is finite and abelian, then $G$ is $H^\bullet$-quadratic if and only if $G$ is $2$-elementary abelian.
  \item[(c)]  If $p=2$ and $G$ is finite, then every subgroup of $G$ is quadratic if and only if $G$ is $2$-elementary abelian.
  \end{itemize}
\end{proABC}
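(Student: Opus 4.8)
The plan is to reduce all three statements to the $\F_p$-cohomology of finite cyclic $p$-groups, which is classical: for $p$ odd one has $H^\bullet(\Z/p,\F_p)\cong\Lambda(x)\otimes\F_p[y]$ with $\deg x=1$, $\deg y=2$, $x^2=0$ and $y=\beta(x)$ the Bockstein of $x$, so this algebra is not generated in degree $1$, let alone quadratic; for $p=2$ one has $H^\bullet(\Z/2,\F_2)\cong\F_2[x]$, which is quadratic, whereas for $k\ge2$ one has $H^\bullet(\Z/2^k,\F_2)\cong\Lambda(x)\otimes\F_2[y]$ with $x^2=0$, again not generated in degree $1$. I will also use that, over $\F_p$, the K\"unneth isomorphism $H^\bullet(A\times B,\F_p)\cong H^\bullet(A,\F_p)\otimes H^\bullet(B,\F_p)$ carries the subalgebra generated in degree $1$ on the left onto the tensor product of the corresponding subalgebras on the right; in particular $H^\bullet(A\times B,\F_p)$ is generated in degree $1$ only if both $H^\bullet(A,\F_p)$ and $H^\bullet(B,\F_p)$ are.

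For (b) the ``if'' direction is immediate, since $H^\bullet((\Z/2)^n,\F_2)$ is the polynomial ring $\F_2[x_1,\dots,x_n]$, a quadratic algebra (relations $x_ix_j-x_jx_i$ in degree $2$). For the ``only if'' direction I would argue contrapositively: if the (finite, abelian) group $G$ is not $2$-elementary abelian it has a cyclic direct factor $\Z/2^k$ with $k\ge2$, and then by the K\"unneth remark $H^\bullet(G,\F_2)$ is not generated in degree $1$, hence not quadratic. For (c), ``$\Leftarrow$'' follows from (b), as every subgroup of $(\Z/2)^n$ is again $2$-elementary abelian. For ``$\Rightarrow$'': a finite $2$-group in which every element has order at most $2$ is abelian, hence $2$-elementary abelian; so if $G$ is not $2$-elementary abelian it has an element of order $4$, i.e.\ a subgroup isomorphic to $\Z/4$, and $H^\bullet(\Z/4,\F_2)$ is not generated in degree $1$, so this subgroup is not $H^\bullet$-quadratic.

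For (a), assume $p$ is odd and $G$ is a finitely generated $H^\bullet$-quadratic pro-$p$ group, and suppose for contradiction that $G$ has torsion; then $G$ has an element of order $p$, hence a closed subgroup $C\cong\Z/p$. The first point is that quadraticity forces the restriction $\res^G_C\colon H^n(G,\F_p)\to H^n(C,\F_p)$ to vanish in every degree $n\ge2$: its image is a graded subalgebra of $H^\bullet(C,\F_p)$ generated by its degree-$1$ component $B^1\subseteq\F_p x$ (because $H^\bullet(G,\F_p)$ is generated in degree $1$), and $x^2=0$ confines that subalgebra to $\F_p\oplus\F_p x$. The plan is then to contradict this by producing a torsion element $\sigma$ for which $\res$ is nonzero in degree $2$ on $C=\overline{\langle\sigma\rangle}$. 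If $\sigma\notin\Phi(G)$ this is automatic: $\res^G_C$ is then surjective in degree $1$, and since restriction commutes with the Bockstein and $y=\beta(x)\neq0$ in $H^2(C,\F_p)$, it is nonzero in degree $2$. The remaining case --- in which every torsion element of $G$ lies in $\Phi(G)$, with $\Z/p^2$ as prototype --- I would attack by descending the Frattini series: pick $n$ minimal with the order-$p$ element $\sigma$ outside $\Phi^{n}(G)$, pass to the open subgroup $U=\Phi^{n-1}(G)$ in which $\sigma\notin\Phi(U)$, and transport the conclusion back to $G$.

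The main obstacle is precisely this last step of (a). The class of $H^\bullet$-quadratic pro-$p$ groups is not expected to be closed under passage to open subgroups, so one cannot simply invoke quadraticity of $U$; what one really needs is a ``detection'' statement --- that a finitely generated pro-$p$ group with torsion admits a closed subgroup $C\cong\Z/p$ on which $\res^G_C$ is nonzero in degree $2$ --- to be combined with the vanishing coming from quadraticity of $G$ itself, so that the degree-$\le2$ part of the argument is run directly between $G$ and $C$. Establishing this detection statement in general is the crux; the guiding computation is $G=\Z/p^2$, where restriction to the unique subgroup of order $p$ is an isomorphism in degree $2$, while quadraticity would force it to vanish.
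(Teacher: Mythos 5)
Your treatments of (b) and (c) are correct. For (b) you take a genuinely different route from the paper: you invoke the K\"unneth isomorphism together with the known ring $H^\bullet(\Z/2^k,\F_2)\cong\Lambda(x)\otimes\F_2[y]$ for $k\ge2$ to conclude that the cohomology of a non-elementary finite abelian $2$-group is not generated in degree $1$, whereas the paper argues on presentations, observing that a defining relation of the form $x^{2^k}$ or $x^{2^k}y^{-2^m}$ with $k,m\ge2$ lies in $F_{(3)}$ and therefore violates the necessary condition of Proposition~\ref{prop:relations}. Both arguments are sound; yours is more topological, the paper's stays inside its own combinatorial framework. Part (c) is handled identically in both: every cyclic subgroup must be quadratic, hence of exponent $2$ by (b), so $G$ has exponent $2$ and is elementary abelian.

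Part (a), however, has a genuine gap, which you acknowledge yourself: your restriction-to-$C$ strategy only closes when the order-$p$ element survives in $G/\Phi(G)$, and the ``detection statement'' you would need for torsion buried in the Frattini series is precisely what you do not prove (for finite groups it would amount to an Evens--Venkov/Quillen-type detection result, and you offer no argument for profinite $G$). The paper avoids all of this with a short direct argument you should adopt: for $p$ odd, graded-commutativity forces $\alpha\smile\alpha=0$ for every $\alpha\in H^1(G,\F_p)$, so the quadratic algebra $H^\bullet(G,\F_p)=T_\bullet(H^1(G,\F_p))/(\Omega)$ is a quotient of the exterior algebra $\Lambda_\bullet(H^1(G,\F_p))$, which vanishes in degrees above $\dd(G)<\infty$. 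Hence $\cdd(G)\le\dd(G)$ is finite; but a closed cyclic subgroup of order $p$ has infinite cohomological dimension, and the cohomological dimension of a closed subgroup is bounded by $\cdd(G)$, so $G$ must be torsion-free. Your own observation that the image of $\res^{G}_{C}$ is confined to $\F_p\oplus\F_p x$ is a weak shadow of the stronger fact that $H^n(G,\F_p)$ itself vanishes for $n>\dd(G)$; once you see the latter, no Bockstein argument and no case analysis on the Frattini series are needed.
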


Next, we study the closure of the class of $H^\bullet$-quadratic pro-$p$ groups under free constructions in the category of pro-$p$ groups, such as
\emph{amalgamated free products} and \emph{HNN-extensions} (see Section~\ref{sec:combgrth} for the definitions).
The corresponding results for free products and direct products are well known to experts. In Section~\ref{sec:combgrth} we prove the following theorems.

\begin{thmABC}\label{thm:cohomology amalgam}
Let $G$ be a finitely generated pro-$p$ group which can be written as a proper amalgam $G=G_1\amalg_{H}G_2$ with $H^\bullet$-quadratic
pro-$p$ groups $G_1,G_2,H$.
Assume that the restriction maps
\begin{equation}\label{eq:surj cohomology amalgam}
  \mathrm{res}_{G_i,H}^\bullet\colon H^\bullet(G_i,\F_p)\longrightarrow H^\bullet(H,\F_p),
\end{equation}
with $i=1,2$, satisfy the following conditions:
\begin{itemize}
 \item[(i)] $\res_{G_1,H}^1$ and $\res_{G_2,H}^1$ are surjective;
 \item[(ii)] $\kernel(\res_{G_i,H}^2)=\kernel(\res_{G_i,H}^1)\wedge H^1(G_i,\F_p)$ for both $i=1,2$.
\end{itemize}
Then also $G$ is $H^\bullet$-quadratic. 
\end{thmABC}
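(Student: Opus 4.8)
The plan is to compute $H^\bullet(G,\F_p)$ through the Mayer--Vietoris sequence of the proper amalgam and then verify quadraticity in two stages: generation in degree $1$, and relations in degree $2$. First I would invoke the Mayer--Vietoris exact sequence for the proper pro-$p$ amalgam $G=G_1\amalg_H G_2$ (standard for free pro-$p$ constructions),
\[
 \cdots\to H^n(G,\F_p)\xrightarrow{(\res_{G,G_1}^n,\res_{G,G_2}^n)} H^n(G_1,\F_p)\oplus H^n(G_2,\F_p)\xrightarrow{\res_{G_1,H}^n-\res_{G_2,H}^n} H^n(H,\F_p)\xrightarrow{\delta^n} H^{n+1}(G,\F_p)\to\cdots
\]
Since $H^\bullet(H,\F_p)$ is generated in degree $1$ and $\res_{G_i,H}^1$ is surjective by (i), each $\res_{G_i,H}^n$ is surjective, so $\res_{G_1,H}^n-\res_{G_2,H}^n$ is surjective and $\delta^n=0$ for every $n$. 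Hence Mayer--Vietoris splits into short exact sequences $0\to H^n(G,\F_p)\to H^n(G_1,\F_p)\oplus H^n(G_2,\F_p)\to H^n(H,\F_p)\to 0$, i.e.\ $H^\bullet(G,\F_p)$ is the fibre product of graded $\F_p$-algebras $H^\bullet(G_1,\F_p)\times_{H^\bullet(H,\F_p)}H^\bullet(G_2,\F_p)$; in particular each $\res_{G,G_i}^\bullet$ is surjective.

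Using (ii) I would next locate the kernels. Condition (ii) forces the degree-$2$ part of $\ker(\res_{G_i,H}^\bullet)$ to be the expected quadratic one; since $H^\bullet(G_i,\F_p)$ and $H^\bullet(H,\F_p)$ are quadratic and $\res_{G_i,H}^1$ is surjective, a comparison of quadratic presentations shows that a surjective morphism of quadratic algebras which is onto in degree $1$ and whose degree-$2$ kernel coincides with the ideal it generates in degree $1$ has kernel generated in degree $1$. Thus $\ker(\res_{G_i,H}^\bullet)$ is generated as an ideal by $\ker(\res_{G_i,H}^1)$. Plugging this into the short exact sequences, and using surjectivity of $\res_{G,G_i}^\bullet$, one deduces that $\ker(\res_{G,G_i}^\bullet)$ is also generated in degree $1$ inside $H^\bullet(G,\F_p)$: indeed $\ker(\res_{G,G_1}^n)\cong\ker(\res_{G_2,H}^n)=\ker(\res_{G_2,H}^1)\cdot H^{n-1}(G_2,\F_p)$, a factor in $\ker(\res_{G_2,H}^1)$ lifts to a class in $\ker(\res_{G,G_1}^1)$ by the degree-$1$ sequence, and the remaining factors lift by surjectivity of $\res_{G,G_2}^1$.

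Generation in degree $1$ of $H^\bullet(G,\F_p)$ then follows by lifting. Given $\xi\in H^n(G,\F_p)$, write the common restriction $\res_{G,H}^n(\xi)$ as a sum of products of degree-$1$ classes of $H^\bullet(H,\F_p)$, lift each factor through the surjection $\res_{G,H}^1$ to $H^1(G,\F_p)$, and call $\gamma$ the resulting sum of products. Then $\res_{G,G_i}^n(\xi-\gamma)\in\ker(\res_{G_i,H}^n)$, which by the previous step is spanned by products of degree-$1$ classes having one factor in $\ker(\res_{G_i,H}^1)$. Lifting each such product factorwise to $H^1(G,\F_p)$---the distinguished factor taken in $\ker(\res_{G,G_{3-i}}^1)$, possible by the degree-$1$ sequence, the others arbitrarily by surjectivity of $\res_{G,G_i}^1$---one obtains a class $\gamma'$ in the subalgebra generated by $H^1(G,\F_p)$ with $\res_{G,G_i}^n(\gamma')=\res_{G,G_i}^n(\xi-\gamma)$ for $i=1,2$; by the injectivity in the Mayer--Vietoris sequence, $\xi=\gamma+\gamma'$ lies in that subalgebra.

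It remains to prove that the relations of $H^\bullet(G,\F_p)$ are generated in degree $2$; equivalently, that the fibre product $H^\bullet(G_1,\F_p)\times_{H^\bullet(H,\F_p)}H^\bullet(G_2,\F_p)$ is quadratic, knowing that the structure maps $\res_{G_i,H}^\bullet$ are surjective with kernels generated in degree $1$. This is the crux and the step I expect to be the main obstacle. I would treat it as a statement about quadratic $\F_p$-algebras. Let $\Lambda$ be the quadratic algebra on $H^1(G,\F_p)$ with the degree-$2$ relations of $H^\bullet(G,\F_p)$; the tautological surjection $q\colon\Lambda\twoheadrightarrow H^\bullet(G,\F_p)$ is an isomorphism in degrees $\le 2$, and---since each degree-$2$ relation of $H^\bullet(G_i,\F_p)$ lifts to a degree-$2$ relation of $H^\bullet(G,\F_p)$ by compatibility in degree $2$---$q$ factors through compatible surjections $\Lambda\twoheadrightarrow H^\bullet(G_i,\F_p)$ with kernels generated in degree $1$, $\Lambda$ modulo the sum of these kernels being $H^\bullet(H,\F_p)$. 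The point is the injectivity of $q$ in all degrees. I would establish it either by a degree-by-degree analysis of the relations of the fibre product---showing, via the degree-$1$ generation of the kernels, that any relation among degree-$1$ classes is a consequence of the degree-$2$ ones, i.e.\ controlling how relations of $H^\bullet(G_1,\F_p)$ and of $H^\bullet(G_2,\F_p)$ combine across the common quotient $H^\bullet(H,\F_p)$---or by identifying $\mathrm{Ext}^\bullet$ of the fibre product with an amalgamated free product of the $\mathrm{Ext}$-algebras of the $H^\bullet(G_i,\F_p)$ over that of $H^\bullet(H,\F_p)$ and reading off that $\mathrm{Ext}^1$ and $\mathrm{Ext}^2$ sit in internal degrees $1$ and $2$ (cf.\ Polishchuk--Positselski's theory of quadratic algebras). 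Granting this, $H^\bullet(G,\F_p)$ is quadratic.
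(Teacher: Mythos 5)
Your setup is sound and, up to a point, matches the paper's: the Mayer--Vietoris sequence of the proper amalgam, the surjectivity of $f_H^n=\res^n_{G_1,H}-\res^n_{G_2,H}$ for all $n$ (from condition (i) together with quadraticity of $H^\bullet(G_i,\F_p)$ and $H^\bullet(H,\F_p)$), the resulting splitting into short exact sequences identifying $H^\bullet(G,\F_p)$ with the fibre product, and the observation that condition (ii) forces $\kernel(\res^\bullet_{G_i,H})$ to be generated in degree $1$. Your lifting argument for generation of $H^\bullet(G,\F_p)$ in degree $1$ is also essentially fine.

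But the proof is not complete: the final step --- that the relations are generated in degree $2$, i.e.\ that the fibre product is actually quadratic --- is exactly where you stop and write ``Granting this''. Neither of the two strategies you sketch is carried out, and neither is routine: the ``degree-by-degree analysis of relations'' is precisely the content that needs an argument, while the $\Ext$-algebra route would require a Koszulity-type statement about fibre products of quadratic algebras that is not available under the stated hypotheses. The paper closes this gap by exhibiting an explicit candidate quadratic algebra
\[
A_\bullet=\Lambda_\bullet(V_1\oplus W\oplus V_2)\,\big/\,\bigl(\Omega_1\oplus\Omega_2\oplus\Omega_H\oplus(V_1\wedge V_2)\bigr),
\]
where $V_i=\kernel(\res^1_{G_i,H})$, $W\cong H^1(H,\F_p)$ is a common complement, $\Omega_H\subseteq\Lambda_2(W)$ gives the relations of $H^\bullet(H,\F_p)$, $\Omega_i\subseteq\Lambda_2(V_i)\oplus(V_i\wedge W)$ those of $H^\bullet(G_i,\F_p)$ (this placement is exactly what (ii) buys), and the extra relations $V_1\wedge V_2$ record that classes supported only on the $G_1$-side and only on the $G_2$-side multiply to zero in $G$. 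One then defines a morphism of quadratic algebras $\phi_\bullet\colon A_\bullet\to H^\bullet(G_1,\F_p)\oplus H^\bullet(G_2,\F_p)$ landing in $\bigoplus_{n}\kernel(f_H^n)$ and verifies, degree by degree, that $\phi_n$ is an isomorphism onto
\[
\kernel(f_H^n)\cong\bigl(V_1\wedge H^{n-1}(G_1,\F_p)\bigr)\oplus\bigl(V_2\wedge H^{n-1}(G_2,\F_p)\bigr)\oplus H^n(H,\F_p).
\]
That explicit presentation and the accompanying dimension count are the missing idea; without them your argument establishes generation in degree $1$ but not quadraticity.
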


\begin{thmABC}\label{thm:HNN}
 Let $G_0$ be a $H^\bullet$-quadratic pro-$p$ group, and let $A,B\le G_0$ two isomorphic
$H^\bullet$-quadratic subgroups, with isomorphism $\phi\colon A\to B$.
Assume that 
\begin{itemize}
 \item[(i)] the restriction map $\res_{G_0,A}^1$ is surjective,
\item[(ii)] $\kernel(\res_{G_0,A}^2)=\kernel(\res_{G_0,A}^1)\wedge H^1(G_0,\F_p)$ and
 \item[(iii)] the map $f^1_{G_0}\colon H^1(G_0,\F_p)\to H^1(A,\F_p)$ is trivial, where
$$f_{G_0}^1=\res_{G_0,A}^1-\phi^*\circ\res_{G_0,B}^1,$$
and $\phi^*\colon H^1(B,\F_p)\to H^1(A,\F_p)$ is the map induced by $\phi$.
\end{itemize}
Assume further that $G=\mathrm{HNN}(G_0,A,\phi)$ is proper.
Then $G$ is $H^\bullet$-quadratic.
\end{thmABC}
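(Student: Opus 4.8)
The plan is to run the Mayer--Vietoris exact sequence attached to the proper pro-$p$ HNN-extension $G=\mathrm{HNN}(G_0,A,\phi)$ (recalled in Section~\ref{sec:combgrth}). Setting $f^n=\res_{G_0,A}^n-\phi^*\circ\res_{G_0,B}^n\colon H^n(G_0,\F_p)\to H^n(A,\F_p)$, so that $f^1=f^1_{G_0}$, this sequence reads
\[
\cdots\longrightarrow H^n(G,\F_p)\xrightarrow{\ \res_{G,G_0}^n\ }H^n(G_0,\F_p)\xrightarrow{\ f^n\ }H^n(A,\F_p)\xrightarrow{\ \partial^n\ }H^{n+1}(G,\F_p)\longrightarrow\cdots,
\]
and it is a sequence of $H^\bullet(G,\F_p)$-modules (with $H^\bullet(A,\F_p)$ viewed via $\res_{G,A}^\bullet$), so that $\partial^\bullet$ is $H^\bullet(G,\F_p)$-linear and $\partial^0(1)=\chi$, where $\chi\in H^1(G,\F_p)$ is inflated from a generator of $H^1(\Z_p,\F_p)$ along the projection $G\twoheadrightarrow\Z_p$ that kills $G_0$; since $\cdd(\Z_p)=1$, this gives $\chi\cup\chi=0$.

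The first step is to translate (i)--(iii) into structural statements about this sequence. As $G_0$ is $H^\bullet$-quadratic, $H^\bullet(G_0,\F_p)$ is generated in degree $1$, and since $\phi^*$, $\res_{G_0,A}^\bullet$, $\res_{G_0,B}^\bullet$ are ring homomorphisms, evaluating $f^n$ on products of degree-$1$ classes shows that condition (iii) ($f^1=0$) forces $f^n=0$ for all $n\ge1$. Hence the sequence splits into short exact sequences
\[
0\longrightarrow H^{n-1}(A,\F_p)\xrightarrow{\ \partial^{n-1}\ }H^n(G,\F_p)\xrightarrow{\ \res_{G,G_0}^n\ }H^n(G_0,\F_p)\longrightarrow 0\qquad(n\ge1);
\]
in particular $\dim_{\F_p}H^n(G,\F_p)=\dim_{\F_p}H^n(G_0,\F_p)+\dim_{\F_p}H^{n-1}(A,\F_p)$ and every $\partial^{n-1}$ is injective. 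Next, condition (i) together with the quadraticity of $A$ gives that $\res_{G_0,A}^n$, hence $\res_{G,A}^n=\res_{G_0,A}^n\circ\res_{G,G_0}^n$, is surjective for all $n$; by $H^\bullet(G,\F_p)$-linearity of $\partial^\bullet$ this yields $\image(\partial^n)=\chi\cup H^n(G,\F_p)$ for all $n$. Fixing an $\F_p$-linear section $s$ of $\res_{G,G_0}^1$ with image $W$, so that $H^1(G,\F_p)=\F_p\chi\oplus W$, the map $W\to\chi\cup H^1(G,\F_p)$, $w\mapsto\chi\cup w$, has kernel $s(\kernel(\res_{G_0,A}^1))$ (using injectivity of $\partial^1$).

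That $H^\bullet(G,\F_p)$ is generated in degree $1$ then follows by induction on $n$: $\res_{G,G_0}^n$ is surjective and $H^n(G_0,\F_p)$ is spanned by products of $n$ degree-$1$ classes, each of which lifts to $H^1(G,\F_p)$, while $\kernel(\res_{G,G_0}^n)=\image(\partial^{n-1})=\chi\cup H^{n-1}(G,\F_p)$ is spanned by products of degree-$1$ classes by induction. For the quadraticity of the relations, put $V=H^1(G,\F_p)$ and $R=\kernel(V\otimes V\to H^2(G,\F_p))$; since $H^\bullet(G,\F_p)$ is generated in degree $1$ the canonical map $T(V)/(R)\twoheadrightarrow H^\bullet(G,\F_p)$ is onto, so by the definition of a quadratic algebra it suffices to show it is an isomorphism, for which — the reverse inequality being automatic — it is enough to prove $\dim_{\F_p}(T(V)/(R))_n\le\dim_{\F_p}H^n(G_0,\F_p)+\dim_{\F_p}H^{n-1}(A,\F_p)$ for all $n$. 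Here one writes down enough members of $R$: the element $\chi\otimes\chi$; the graded-commutator relations on $V$; the elements $\chi\otimes w$ and $w\otimes\chi$ for $w\in s(\kernel(\res_{G_0,A}^1))$; and, for each relation $u\cup u'=0$ of $H^2(G_0,\F_p)$, an element $s(u)\otimes s(u')+c(u,u')$ with $c(u,u')\in\chi\otimes W$, forced because $\res_{G,G_0}^2(s(u)\cup s(u'))=u\cup u'=0$ puts $s(u)\cup s(u')$ into $\chi\cup H^1(G,\F_p)$. Using the quadratic presentation of $H^\bullet(G_0,\F_p)$ on $W$, condition (ii) (which identifies $\kernel(\res_{G_0,A}^2)=\kernel(\res_{G_0,A}^1)\wedge H^1(G_0,\F_p)$ and thereby controls exactly which $\chi$-corrections $c(u,u')$ are redundant), and the quadratic presentation of $H^\bullet(A,\F_p)$ on $H^1(A,\F_p)\cong W/s(\kernel(\res_{G_0,A}^1))$, one shows that modulo $(R)$ every degree-$n$ monomial of $T(V)$ is congruent either to a monomial in $W$ or to $\chi$ times a monomial in $W$ whose image in $H^\bullet(A,\F_p)$ is nonzero; this gives the dimension bound and hence quadraticity.

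The main obstacle is this last bookkeeping: a linear section $s$ is not multiplicative, so the relations of $H^\bullet(G_0,\F_p)$ only pull back up to the correction terms $c(u,u')\in\chi\otimes W$, and one must show that these corrections — together with the relations $\chi\cup s(\kernel(\res_{G_0,A}^1))=0$ — are already consequences of degree-$2$ relations; this is precisely what (i)--(iii) are engineered to give, namely that $\res_{G_0,A}^\bullet$ is ``quadratic relative to degree $1$'', so that the ideal $\chi\cdot(T(V)/(R))$ reproduces the quadratic algebra $H^\bullet(A,\F_p)$. A cleaner alternative, mirroring the proof of Theorem~\ref{thm:cohomology amalgam}, is to avoid sections altogether: define the quadratic $\F_p$-algebra $B$ with $B^1=H^1(G_0,\F_p)\oplus\F_p\chi$ and $B^2=H^2(G_0,\F_p)\oplus H^1(A,\F_p)$ and the evident defining relations, verify from (i)--(iii) that its Hilbert series equals $\sum_n(\dim_{\F_p}H^n(G_0,\F_p)+\dim_{\F_p}H^{n-1}(A,\F_p))t^n$, and identify $B$ with $H^\bullet(G,\F_p)$ degree by degree through the short exact sequences above.
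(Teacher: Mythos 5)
Your proposal is correct and follows essentially the same route as the paper's proof: the Mayer--Vietoris sequence of the proper HNN-extension, vanishing of $f^n$ for all $n\geq 1$ from hypothesis (iii) together with multiplicativity of the restriction maps, the resulting short exact sequences, identification of $\kernel(\res_{G,G_0}^n)$ with $\chi\cup H^{n-1}(A,\F_p)$, generation in degree $1$, and hypothesis (ii) to ensure that the extra relations are generated in degree $2$. The only cosmetic differences are that the paper obtains $\image(\delta^1)=\alpha_t\wedge H^1(A,\F_p)$ from Proposition~\ref{prop:cupproduct} and the relations $[t,a]=a'$ rather than from the $H^\bullet(G,\F_p)$-module structure of the sequence, and it concludes by presenting $H^\bullet(G,\F_p)$ as the quotient of the quadratic algebra generated by $H^1(G_0,\F_p)\oplus\F_p\alpha_t$ by an ideal generated in degree $2$ --- precisely the ``cleaner alternative'' you sketch in your final paragraph --- instead of your explicit monomial-reduction dimension count.
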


Moreover, we exhibit several examples to show that each numbered condition in Theorem~\ref{thm:cohomology amalgam}
and Theorem~\ref{thm:HNN} is necessary (see Examples~\ref{example:amalg1}, \ref{example:amalg2}, \ref{ex:HNNcondi}, \ref{ex:HNNcondii} and \ref{ex:HNNcondiii}). In passing we find two new criteria to insure that an amalgam of pro-$p$ groups is \emph{proper}, see Proposition~\ref{prop:amalgunif} and Proposition~\ref{lem:amalgunifsubgroup}. In particular, in Proposition~\ref{prop:amalgunif} we show that the amalgamated free product of two uniform groups $G_1$ and $G_2$ over an isomorphic uniform subgroup $H$ is always proper, provided that the generators of $H$ are part of bases of both $G_1$ and $G_2$.

\begin{tcolorbox}
In the rest of this section $p$ will be a prime number greater or equal to $3$. For $p=2$ see Section~\ref{sec:p=2}.
\end{tcolorbox}

We proceed by collecting some known results on $p$-adic analytic pro-$p$ groups to characterise $H^\bullet$-quadratic groups in this class (see Section~\ref{sec:quadraticpadic} for the definitions).

\begin{thmABC}\label{thm:quadratic analytic}
 A $p$-adic analytic pro-$p$ group $G$ is $H^\bullet$-quadratic if and only if $G$ is uniform.
\end{thmABC}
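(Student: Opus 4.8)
The plan is to prove the two implications separately, using the well-understood cohomology of uniform groups for one direction and a minimality/rank argument for the other. For the "if" direction, suppose $G$ is uniform of dimension $d$. Then $G$ is a Poincaré duality group of dimension $d$ (this is classical: uniform pro-$p$ groups are $\mathrm{PD}^d$-groups, or at least one knows $H^\bullet(G,\F_p)\cong \bigwedge^\bullet H^1(G,\F_p)$ as graded algebras since $\Z_p^d$ and more generally all uniform groups have exterior $\F_p$-cohomology — this follows from Lazard's work). The exterior algebra on a degree-$1$ vector space is the prototypical quadratic algebra: its quadratic dual is a polynomial ring, and it is Koszul, hence in particular quadratic. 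So $G$ is $H^\bullet$-quadratic and this direction is essentially a citation.

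For the "only if" direction, let $G$ be $p$-adic analytic and $H^\bullet$-quadratic; I must show $G$ is uniform. The key numerical handles are $d = d(G) = \dim_{\F_p} H^1(G,\F_p)$ (minimal number of generators) and the dimension of $G$ as a $p$-adic manifold, call it $n = \dim(G)$. Since $G$ is $p$-adic analytic, it has a uniform open normal subgroup, so $G$ is in particular a pro-$p$ group of finite cohomological dimension $\cdd(G) = n$ when $G$ is torsion-free; note first that by Proposition~\ref{prop:properties quadratic}(a) (as $p\geq 3$ in this section) $G$ is torsion-free, so $\cdd_p(G) = n < \infty$. Now quadraticity forces $H^n(G,\F_p)$ to be spanned by products of degree-$1$ classes, so $H^n(G,\F_p)$ is a quotient of $\bigwedge^n H^1(G,\F_p)$, giving $1 \leq \dim H^n(G,\F_p) \leq \binom{d}{n}$, hence $n \leq d$. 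On the other hand, for any pro-$p$ group of finite cohomological dimension one has the general inequality $d(G) \leq \dim(G)$ coming from the fact that a torsion-free $p$-adic analytic group of dimension $n$ has a presentation whose generator-rank is at most $n$ (equivalently, its associated graded Lie algebra has dimension $\le n$, so $d = \dim H^1 \le n$). Therefore $d = n$. A torsion-free $p$-adic analytic pro-$p$ group whose minimal number of generators equals its dimension is uniform — this is a standard characterization (e.g. via the Lazard correspondence / the theory in the DDMS book "Analytic pro-$p$ groups": a torsion-free $p$-adic analytic group $G$ with $d(G) = \dim(G)$ is powerful, and torsion-free powerful pro-$p$ groups are uniform).

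The main obstacle — and the place where the argument needs care rather than citation — is establishing the inequality $\cdd_p(G) = n$ together with $\dim H^n(G,\F_p)\ge 1$ for torsion-free $p$-adic analytic $G$, and pinning down that $d(G)=\dim(G)$ genuinely implies uniformity without circularity. I would handle the cohomological dimension via: $G$ torsion-free $p$-adic analytic $\Rightarrow$ $G$ virtually has a uniform (hence $\mathrm{PD}^n$) open subgroup $U$, $\cdd_p(U)=n$, and $\cdd_p(G)=\cdd_p(U)=n$ since the index is a power of $p$ and $G$ is torsion-free (Serre's theorem on cohomological dimension of subgroups of finite index). The non-vanishing $H^n(G,\F_p)\ne 0$ then follows because $\cdd_p(G)=n$ means $H^n(G,-)$ is nonzero on some module, and for pro-$p$ groups with $\F_p$ the only simple module this forces $H^n(G,\F_p)\ne 0$. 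The final step ($d=n\Rightarrow$ uniform) I would quote from the structure theory of $p$-adic analytic pro-$p$ groups, citing the standard reference; the content there is that $d(G)=\dim(G)$ leaves no room for "extra" relations, forcing the lower central / Frattini series to behave as in the free uniform case. I would present these as lemmas, prove the cohomological-dimension bookkeeping in a line or two, and defer the powerful-implies-uniform step to a citation, since it is genuinely classical and reproving it would be a digression.
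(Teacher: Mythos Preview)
Your overall strategy matches the paper's exactly: establish torsion-freeness from Proposition~\ref{prop:properties quadratic}(a), then squeeze $\dd(G)\le \dim(G)=\cdd(G)\le \dd(G)$ to force equality, and conclude uniformity. The paper cites \cite[Prop.~1.2]{klopsch:rank} for $\dd(G)\le\dim(G)$ (your justification of this inequality via ``the associated graded Lie algebra has dimension $\le n$'' is too vague and not quite correct as stated), and Poincar\'e duality for $\dim(G)=\cdd(G)$, just as you do via Serre's theorem.

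The gap is in your final step. You want to cite ``torsion-free $p$-adic analytic with $d(G)=\dim(G)$ $\Rightarrow$ uniform'' directly from the DDMS book, but this implication is not there in that form, and your sketch (``leaves no room for extra relations'') is not a proof. In fact the statement is \emph{false} for $p=2$: the paper's own Example~\ref{ex:analyticp21} gives torsion-free $2$-adic analytic groups $G_d$ with $d(G_d)=\dim(G_d)=d+1$ that are not uniform. So the implication genuinely needs the quadraticity hypothesis again, not just the numerical equality. The paper closes the argument differently: from $\cdd(G)=\dd(G)$ together with quadraticity one gets $H^\bullet(G,\F_p)\cong\Lambda_\bullet H^1(G,\F_p)$ (cf.\ \cite[Prop.~4.3]{cq:bk}), and then \cite[Thm.~5.1.6]{sw:cpg} (Symonds--Weigel) says that a pro-$p$ group with exterior $\F_p$-cohomology is powerful; torsion-free plus powerful gives uniform. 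That Symonds--Weigel step is the substantive input you are missing.
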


Note that this provides a new characterisation of uniform pro-$p$ groups among $p$-adic analytic ones. As a direct consequence we have that if a $p$-adic analytic pro-$p$ group $G$ can be realised as  $G_K(p)$ for some $K$ containing a root of unity of order $p$ then every open subgroup of $G$ is uniform, i.e., $G$ is a hereditarily uniform pro-$p$ group. Note that  hereditarily uniform pro-$p$ groups have been classified and all those groups can be realised as maximal  pro-$p$ Galois groups (cf. \cite{cq:bk}, \cite{ware} and \cite{ks}; see also \cite{ks-j.algebra}).

We continue by looking at $H^\bullet$-quadratic groups from a more ``combinatorial'' point of view.
Namely we introduce a new class of pro-$p$ groups, called \emph{generalised Right Angled Artin pro-$p$ Groups}
--- \emph{$p$-RAAGs} for short.
Define a \emph{$p$-graph} $\Gamma=(\calG,f)$ to be an oriented graph $\calG = (\sV,\sE)$ together with labels $f(e)=(f_1(e),f_2(e))\in p\Z_p\times p\Z_p$
for every edge $e\in\sE$.
The $p$-RAAG associated to $\Gamma$ is the pro-$p$ group defined by the pro-$p$ presentation 
\begin{equation*}
 G_\Gamma = \pres{x_1,\ldots,x_n}{[x_i,x_j]= x_i^{f_1(e)} x_j^{f_2(e)}\ \text{for } e=(x_i,x_j)\in \sE} 
\end{equation*}
(see Definition~\ref{def:pRAAGs} for the precise details). The class of $p$-RAAGs is extremely interesting.
For instance, we can completely  determine the $\F_p$-cohomology algebra of an $H^\bullet$-quadratic $p$-RAAG
and this only depends on the underlying graph.

\begin{thmABC}\label{thm:pRAAGs quadratic}
Let $G_\Gamma$ be an $H^\bullet$-quadratic $p$-RAAG with associated $p$-graph $\Gamma=(\mathcal{G},f)$.
Then  
\begin{equation*}
 H^\bullet(G_\Gamma,\F_p)\cong \Lambda_\bullet(\mathcal{G}^{\mathrm{op}}).
\end{equation*}
\end{thmABC}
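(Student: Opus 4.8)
The plan is to leverage the hypothesis of quadraticity in order to reduce the computation to degrees one and two. Since $G_\Gamma$ is $H^\bullet$-quadratic, $H^\bullet(G_\Gamma,\F_p)$ coincides with the quadratic algebra having degree-one part $V:=H^1(G_\Gamma,\F_p)$ and space of quadratic relations $R:=\kernel\big(V\otimes V\xrightarrow{\ \cup\ }H^2(G_\Gamma,\F_p)\big)$. So it suffices to (i) produce a canonical basis $v_1,\dots,v_n$ of $V$; (ii) compute $H^2(G_\Gamma,\F_p)$ together with the cup-product map $V\otimes V\to H^2(G_\Gamma,\F_p)$; and (iii) recognise that the quadratic data $(V,R)$ so obtained is exactly that of $\Lambda_\bullet(\mathcal{G}^{\mathrm{op}})$.

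Step (i) is a Frattini argument. Write $F=F(x_1,\dots,x_n)$ for the free pro-$p$ group on the generators and $r_e:=x_j^{-f_2(e)}x_i^{-f_1(e)}[x_i,x_j]$ for the relator attached to $e=(x_i,x_j)\in\sE$. Since $f_1(e),f_2(e)\in p\Z_p$ we have $x_i^{-f_1(e)},x_j^{-f_2(e)}\in F^p$, so $r_e\in F^p[F,F]=\Phi(F)$; hence $G_\Gamma/\Phi(G_\Gamma)\cong F/\Phi(F)\cong\F_p^n$, the images of $x_1,\dots,x_n$ form a minimal generating set, and we let $v_1,\dots,v_n$ be the dual basis of $V$.

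Step (ii) is the technical core. We pass to the second graded piece $F_{(2)}/F_{(3)}$ of the Jennings--Zassenhaus ($p$-lower-central) series of $F$, which for $p$ odd splits as $\langle x_1^p,\dots,x_n^p\rangle\oplus\langle[x_i,x_j]:i<j\rangle$. Using that $F_{(2)}/F_{(3)}$ is abelian and that the $p$-th power map induces $F_{(1)}/F_{(2)}\to F_{(2)}/F_{(3)}$, together with the divisibility of the labels by $p$, a direct computation gives
\[
 r_e\ \equiv\ [x_i,x_j]\cdot(x_i^p)^{-\ol{f_1(e)/p}}\,(x_j^p)^{-\ol{f_2(e)/p}}\pmod{F_{(3)}},
\]
where the bar denotes reduction modulo $p$. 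Since the commutator parts of distinct relators are distinct basis vectors of $F_{(2)}/F_{(3)}$ (recall $\mathcal{G}$ is simple), the images of the $r_e$ there are $\F_p$-linearly independent; as $R^p[R,F]\subseteq F_{(3)}$, this forces the $r_e$ ($e\in\sE$) to be a minimal set of defining relations, so $\dim_{\F_p}H^2(G_\Gamma,\F_p)=|\sE|$, and via the transgression isomorphism $H^2(G_\Gamma,\F_p)\cong\Hom(R/R^p[R,F],\F_p)$ we obtain a basis $\{\rho_e:e\in\sE\}$ dual to the relators. Feeding the displayed expansions into the standard formula for the pairing of a cup product against a relator (free differential calculus / Magnus embedding, as in Serre or Neukirch--Schmidt--Wingberg) yields $\langle v_i\cup v_j,r_e\rangle=\pm1$ precisely when $\{i,j\}$ is the endpoint pair of $e$, and $0$ otherwise, whence
\[
 v_i\cup v_j=\pm\,\rho_e\ \text{ if $x_i,x_j$ span an edge }e\in\sE,\qquad v_i\cup v_j=0\ \text{ otherwise,}
\]
and $v_i\cup v_i=0$ automatically because $p$ is odd. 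Note that the labels $f_k(e)$ enter only through the Bocksteins $\beta v_i$ (dual to the $x_i^p$) and not through the cup product; this is exactly why the ring structure sees only the underlying graph.

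Step (iii) is then immediate: by (i)--(ii), $R=\kernel(V\otimes V\to H^2(G_\Gamma,\F_p))$ is spanned by the $v_i\otimes v_i$, by $v_i\otimes v_j+v_j\otimes v_i$ for all $i<j$, and by $v_i\otimes v_j$ for non-adjacent $i\ne j$ (a dimension count, $n^2-|\sE|=n+2\binom{n}{2}-|\sE|$, confirms this spanning set cuts out the whole kernel), so the quadratic data of $H^\bullet(G_\Gamma,\F_p)$ is precisely that of $\Lambda_\bullet(\mathcal{G}^{\mathrm{op}})$ by its definition; since $G_\Gamma$ is $H^\bullet$-quadratic, the two algebras coincide. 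The step I expect to require the most care is the congruence for $r_e$ modulo $F_{(3)}$ and the attendant sign and normalisation bookkeeping in the relator--cup-product pairing: it is precisely the hypothesis $f_k(e)\in p\Z_p$, together with $p$ being odd, that keeps the $p$-th power contributions from leaking into the commutator component and hence from disturbing the ring structure. (An alternative approach to (ii) would be to realise $G_\Gamma$ as an iterated proper amalgam and invoke Theorem~\ref{thm:cohomology amalgam}, but this yields only quadraticity, not the explicit algebra, and still demands comparable bookkeeping.)
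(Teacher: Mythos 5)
Your proof is correct and follows essentially the same route as the paper: the published argument is exactly ``quadraticity reduces everything to degrees $1$ and $2$'' combined with the computation of $H^2(G_\Gamma,\F_p)$ and of the cup product $H^1\otimes H^1\to H^2$ via the standard pairing of $\alpha_i\smile\alpha_j$ against the coefficient of $[x_i,x_j]$ in each relator modulo $F_{(3)}$ (Proposition~\ref{prop:cupproduct} and Lemma~\ref{lemma:H2 pRAAGs}), which is precisely what your Steps (i)--(iii) carry out in detail. One harmless slip: for the filtration used here ($F_{(3)}=F^p[[F,F],F]$, $p$ odd) the powers $x_i^p$ already lie in $F_{(3)}$, so $F_{(2)}/F_{(3)}$ has basis only the commutators $[x_i,x_j]$ and your extra $(x_i^p)$-factors vanish there; this only simplifies your congruence to $r_e\equiv[x_i,x_j]\bmod F_{(3)}$ and does not affect the independence of the relators or the cup-product pairing.
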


In Theorem~\ref{thm:galois pRAAGs} we show that there is a further condition, besides yielding
a $H^\bullet$-quadratic pro-$p$ RAAG, that a $p$-graph $\Gamma$ must satisfy in order to generate 
a $p$-RAAG which occurs as a Galois group $G_K(p)$ for some field $K$.

Next we prove that $p$-RAAGs provide an abundance of new examples of $H^\bullet$-quadratic pro-$p$ groups.

\begin{thmABC}\label{thm:pRAAGs mild}
 Let $\Gamma=(\mathcal{G},f)$ be a $p$-graph.
Then the following are equivalent.
\begin{itemize}
 \item[(i)] $\mathcal{G}$ is triangle-free.
 \item[(ii)] The generalised $p$-RAAG $G_\Gamma$ is mild.
 \item[(iii)] $G_\Gamma$ has cohomological dimension $\cdd(G_\Gamma)=2$.
\end{itemize}
In particular, if these conditions are satisfied, $G_\Gamma$ is quadratic.
\end{thmABC}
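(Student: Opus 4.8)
The plan is to run the cycle (i)$\Rightarrow$(ii)$\Rightarrow$(iii)$\Rightarrow$(i), deducing quadraticity from (ii) along the way, by playing the theory of mild pro-$p$ groups against the combinatorics of $\calG$. The first step is to identify the initial forms of the relations. Write $G_\Gamma=F/R$ for the minimal pro-$p$ presentation, with $F$ free pro-$p$ on $x_1,\dots,x_n$ and $R$ the closed normal subgroup generated by the relators $r_e=[x_i,x_j]\,x_j^{-f_2(e)}x_i^{-f_1(e)}$ for $e=(x_i,x_j)\in\sE$. Passing to $\gr(\F_p\dbl F\dbr)\cong\F_p\langle X_1,\dots,X_n\rangle$ (the graded algebra attached to the filtration by powers of the augmentation ideal, i.e.\ the Zassenhaus filtration), I would check that since $f_1(e),f_2(e)\in p\Z_p$ the factors $x_i^{-f_1(e)}$ and $x_j^{-f_2(e)}$ lie in the $p$-th filtration step and hence contribute in degree $\ge p\ge 3$; therefore the initial form of $r_e$ is the quadratic element $\xi_e:=X_iX_j-X_jX_i$. (This is exactly where $p\ge 3$ is used, which is why $p=2$ is dealt with separately.)

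For (i)$\Leftrightarrow$(ii): by Labute's theory of mild pro-$p$ groups, the defining relators $r_e$ form a mild system — so $G_\Gamma$ is mild — precisely when the family $(\xi_e)_{e\in\sE}$ is strongly free in $\F_p\langle X_1,\dots,X_n\rangle$, i.e.\ when the quadratic algebra $A_\Gamma:=\F_p\langle X_1,\dots,X_n\rangle/(\xi_e:e\in\sE)$ has Hilbert series $(1-nt+|\sE|\,t^2)^{-1}$. Now $A_\Gamma$ is the right-angled Artin algebra of $\calG$, and a standard computation (a PBW/monomial-basis argument, or Koszul duality with the exterior face ring of the flag complex of $\calG$) shows that its Hilbert series equals $q_\calG(t)^{-1}$, where $q_\calG(t)=\sum_{k\ge0}(-1)^k c_k(\calG)\,t^k$ is the clique polynomial of $\calG$ (with $c_k$ the number of $k$-cliques). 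Comparing, the two series coincide if and only if $c_k(\calG)=0$ for all $k\ge 3$, i.e.\ if and only if $\calG$ is triangle-free. I would isolate the Hilbert-series computation for $A_\Gamma$ as a separate lemma.

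For (ii)$\Rightarrow$(iii) and quadraticity: if $G_\Gamma$ is mild, Labute's theorem yields $\cdd(G_\Gamma)=2$ and, since the $\xi_e$ are homogeneous of degree $2$, identifies $H^\bullet(G_\Gamma,\F_p)$ with the quadratic dual $A_\Gamma^{!}$ (equivalently with the Koszul dual of $\gr(\F_p\dbl G_\Gamma\dbr)\cong A_\Gamma$); the quadratic dual of a quadratic algebra is quadratic, so $G_\Gamma$ is $H^\bullet$-quadratic, and one then recovers $H^\bullet(G_\Gamma,\F_p)\cong\Lambda_\bullet(\calG^{\mathrm{op}})$ as in Theorem~\ref{thm:pRAAGs quadratic}. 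The degenerate case $\sE=\varnothing$, where $G_\Gamma$ is free, should be excluded or noted as an aside.

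Finally, for (iii)$\Rightarrow$(i) I would argue by contraposition: assume $\calG$ contains a triangle on vertices $x_i,x_j,x_k$ and let $\Gamma'$ be the $p$-subgraph induced on $\{x_i,x_j,x_k\}$. Sending $x_\ell\mapsto 1$ for $\ell\notin\{i,j,k\}$ kills every relator with an endpoint outside the triangle and carries the remaining three relators to the defining relators of $G_{\Gamma'}$, so it defines a retraction $G_\Gamma\twoheadrightarrow G_{\Gamma'}$ splitting the obvious inclusion; hence $H^\bullet(G_{\Gamma'},\F_p)$ is a direct summand of $H^\bullet(G_\Gamma,\F_p)$ and $\cdd(G_{\Gamma'})\le\cdd(G_\Gamma)$. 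It then suffices to show $\cdd(G_{\Gamma'})\ge 3$, i.e.\ that the $p$-RAAG on a triangle has cohomological dimension at least $3$: for this I would invoke that a $p$-RAAG on a complete $p$-graph $K_m$ is a \emph{uniform} pro-$p$ group of dimension $m$ (again using that all correction exponents lie in $p\Z_p$, so that $[G_{\Gamma'},G_{\Gamma'}]\le\overline{G_{\Gamma'}^{\,p}}$, hence $G_{\Gamma'}$ is powerful, torsion-free and of rank $m$), whence $\cdd(G_{\Gamma'})=3>2$, contradicting (iii). The main obstacle is precisely this last point — showing that the $p$-RAAG on $K_3$ has dimension (and so cohomological dimension) exactly $3$, i.e.\ that its three relations do not force a collapse: powerfulness is immediate, but the torsion-freeness/efficiency of the presentation needs the structural results on uniform pro-$p$ groups developed earlier in the paper (or, alternatively, a direct verification that $H^3(G_{\Gamma'},\F_p)\ne 0$ from the $3$-generator, $3$-relator presentation).
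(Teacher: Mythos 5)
Your treatment of (i)$\Leftrightarrow$(ii) and of (ii)$\Rightarrow$(iii) (plus the quadraticity statement) is correct and is essentially the paper's argument: the initial forms of the relators are the commutators $X_iX_j-X_jX_i$ because the labels lie in $p\Z_p$, the algebra $A_\Gamma$ is the algebra $M_\bullet(G_\Gamma)$ of Definition~\ref{def:mild}, and its Hilbert series is the reciprocal of the clique polynomial, which equals $(1-dT+rT^2)^{-1}$ exactly when there are no $k$-cliques for $k\ge 3$ (the paper outsources this computation to Weigel, and gets quadraticity from Proposition~\ref{prop:mild cd2} and Lemma~\ref{lemma:H2 pRAAGs} rather than from Koszul duality, but these are cosmetic differences).

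The direction (iii)$\Rightarrow$(i) has two genuine gaps. First, the retraction $G_\Gamma\twoheadrightarrow G_{\Gamma'}$ does not exist in general: for a boundary edge $e=(x_a,x_b)$ with $x_a$ in the triangle and $x_b$ outside it, sending $x_b\mapsto 1$ carries the relator $[x_a,x_b]x_b^{-f_2(e)}x_a^{-f_1(e)}$ to $x_a^{-f_1(e)}$, so the quotient acquires the relation $x_a^{f_1(e)}=1$, which is not a relation of $G_{\Gamma'}$ unless that label vanishes. (This works for honest RAAGs, where all labels are $(0,0)$, but not for $p$-RAAGs.) The paper avoids this entirely: it takes $H=\overline{\langle x_i,x_j,x_k\rangle}$ as a \emph{closed subgroup} of $G_\Gamma$, never identifies it with $G_{\Gamma'}$, and uses $\cdd(H)\le\cdd(G_\Gamma)$ for closed subgroups. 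Second, and more seriously, your claim that the $p$-RAAG on $K_3$ is uniform of dimension $3$ is false: the Mennicke group of Example~\ref{ex:mennicke} is a \emph{finite} triangle $p$-RAAG, so torsion-freeness is not a technicality that "the structural results developed earlier" will supply --- it genuinely fails for some labels (this is exactly why Proposition~\ref{lem:unif}(b) states uniformity only under a torsion-freeness hypothesis). The repair is the paper's dichotomy: $H$ is powerful because the three pairwise commutators of its generators lie in $H^p$; if $H$ is torsion-free it is uniform and $\cdd(H)=3$ by Proposition~\ref{prop:uniform}, while if $H$ has torsion then $\cdd(H)=\infty$; in either case $\cdd(H)>2$, contradicting (iii). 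Your argument only runs the first branch and asserts it always occurs.
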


In fact, by a classical theorem of Erd\"os et al., the number of graphs on $n$ vertices that do not involve a
``triangle'' is asymptotic to $2^{n^2/4}$ for $n$ that tends to infinity;
while the total number of graphs on $n$ vertices is asymptotic to $2^{n^2/2}$ for $n$ that tends to infinity.
The condition of \emph{mildness} for a pro-$p$ group, introduced by J.~Labute in \cite{labute:mild},
is quite technical and we direct the reader to \S~\ref{sec:mild} and references therein for the definition.

In light of Theorem~\ref{thm:pRAAGs mild}, we start a careful investigation of $p$-RAAGs associated
to triangle $p$-graphs. It so happens that a classical example of Mennicke, of a \emph{finite} $3$-generated $3$-related pro-$p$ group, can be written as a triangle $p$-RAAG (cfr.\ Example~\ref{ex:mennicke}); so there are $p$-RAAGs that are not quadratic, because they contain non-trivial torsion. Furthermore, it is in general very hard to decide whether a given presentation of a pro-$p$ group yields a finite group. So we content ourselves to classify the possible \emph{$H^\bullet$-quadratic} $p$-RAAGs that arise from triangle $p$-RAAGs.

\begin{thmABC}\label{thm:quadratictrianglepraags}
Let $G$ be a $p$-RAAG associated to a triangle $p$-graph, i.e., let $$G = \pres{x,y,z}{[x,y]=x^{\a_1} y^{\a_2},\ [y,z]=y^{\b_2} z^{\b_3},\ [z,x]=z^{\c_3} x^{\c_1}},$$ where $\a_1,\a_2, \b_2,\b_3, \c_1,\c_3 \in p\mathbb{Z}_p$. If $G$ is
 $H^\bullet$-quadratic,  then either:
 \begin{itemize}
     \item[(a)] $G$ is a metabelian uniform pro-$p$ group; or 
     \item[(b)] $G$ is isomorphic to a $3$-dimensional uniform open subgroup of the $p$-Sylow subgroup of $\mathrm{SL}_2(\mathbb{Z}_p)$.
 \end{itemize}
\end{thmABC}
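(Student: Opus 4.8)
The plan is to show that an $H^\bullet$-quadratic $p$-RAAG on a triangle $p$-graph is automatically a $3$-dimensional uniform pro-$p$ group, and then to read off the dichotomy from the classification of $3$-dimensional $\Q_p$-Lie algebras. The first point is that \emph{every} triangle $p$-RAAG $G$ is powerful: $[G,G]$ is the normal closure in $G$ of $\{[x,y],[y,z],[z,x]\}$, and each of these elements is a product of $p$-th powers of the generators --- for instance $[x,y]=x^{\alpha_1}y^{\alpha_2}=(x^{\alpha_1/p})^p(y^{\alpha_2/p})^p\in\overline{G^p}$ since $\alpha_1,\alpha_2\in p\Z_p$ --- so $[G,G]\subseteq\overline{G^p}$, which for $p\ge3$ means $G$ is powerful. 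A finitely generated powerful pro-$p$ group is $p$-adic analytic, so by Theorem~\ref{thm:quadratic analytic} the quadratic group $G$ is uniform (equivalently, $G$ is torsion-free by Proposition~\ref{prop:properties quadratic}(a) and a finitely generated powerful torsion-free pro-$p$ group is uniform); and $d(G)=3$ because each defining relator lies in the Frattini subgroup of the free pro-$p$ group $F=F(x,y,z)$, so $\dim G=d(G)=3$.

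The key structural observation comes next. Let $L=L(G)$ be the $\Z_p$-Lie lattice of $G$, with $\Z_p$-basis $X=\log x$, $Y=\log y$, $Z=\log z$ (a minimal generating set of a uniform group maps to a basis of $L$), and let $\mathfrak g=L\otimes_{\Z_p}\Q_p$, a $3$-dimensional $\Q_p$-Lie algebra. I claim that $\Span_{\Q_p}(X,Y)$ is a Lie subalgebra of $\mathfrak g$. Indeed, by the same argument as above the closed subgroup $K=\langle x,y\rangle$ is powerful --- its derived subgroup is the normal closure of $[x,y]=x^{\alpha_1}y^{\alpha_2}\in\overline{K^p}$ --- and torsion-free, hence uniform, with $\dim K=d(K)\le2$; therefore $\mathrm{Lie}(K)$ is a Lie subalgebra of $\mathfrak g$ of dimension $\le2$ containing the $\Q_p$-linearly independent elements $X,Y$, so $\mathrm{Lie}(K)=\Span_{\Q_p}(X,Y)$ and this is a subalgebra. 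In particular $\mathfrak g$ contains a $2$-dimensional subalgebra (running the argument over all three edges exhibits three such, meeting pairwise along the lines $\Q_pX$, $\Q_pY$, $\Q_pZ$, but only one is needed).

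It then remains to classify the $3$-dimensional $\Q_p$-Lie algebras $\mathfrak g$ possessing a $2$-dimensional subalgebra. If $\mathfrak g$ is solvable it is metabelian --- a short calculation shows every $3$-dimensional solvable Lie algebra over a field of characteristic $0$ has $\mathfrak g''=0$ --- so $L$ is metabelian, hence so is $G=\exp(L)$ (its derived subgroup lands in the abelian subgroup $\exp([L,L])$), and we are in case~(a). If $\mathfrak g$ is not solvable it is $3$-dimensional simple, hence a $\Q_p$-form of $\mathfrak{sl}_2$, i.e.\ either $\mathfrak{sl}_2(\Q_p)$ or the anisotropic $\mathfrak{sl}_1(D)$ with $D$ the quaternion division algebra over $\Q_p$; but $\mathfrak{sl}_1(D)$ has no $2$-dimensional subalgebra --- it contains no copy of $\mathfrak{aff}(1)$, since $[b,a]=a$ forces $a^{-1}ba=b+1$ with $b$ and $b+1$ conjugate in $D^\times$ but of distinct reduced traces ($0$ and $2$, using $p$ odd), and no abelian $2$-dimensional subalgebra, since the centraliser in $\mathfrak{sl}_1(D)$ of any nonzero element is the line it spans --- so $\mathfrak g\cong\mathfrak{sl}_2(\Q_p)$. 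Then $G=\exp(L)$ is a $p$-adic analytic pro-$p$ group of dimension $3$ with Lie algebra $\mathfrak{sl}_2(\Q_p)$, hence, being pro-$p$, an open subgroup of a conjugate of the pro-$p$ Sylow subgroup of $\mathrm{SL}_2(\Z_p)$, and it is uniform: case~(b).

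The main obstacle is the claim in the second paragraph: recognising that the $2$-generated subgroups $\langle x,y\rangle$, $\langle y,z\rangle$, $\langle z,x\rangle$ must be uniform of dimension at most $2$, which is exactly what pins $\mathfrak g$ into the ``triangle'' shape so that the classification applies. The rest is routine modulo some standard $p$-adic Lie theory that nonetheless has to be invoked carefully: that a finitely generated powerful torsion-free pro-$p$ group is uniform with $\dim=d$; the dictionary, for uniform groups, between closed subgroups and Lie subalgebras and between minimal generating sets and $\Z_p$-bases of $L$; the classification of $3$-dimensional $\Q_p$-Lie algebras (in particular the anisotropic form of $\mathfrak{sl}_2$, which must be excluded); and the identification of uniform pro-$p$ groups with Lie algebra $\mathfrak{sl}_2(\Q_p)$ as open subgroups of the pro-$p$ Sylow of $\mathrm{SL}_2(\Z_p)$.
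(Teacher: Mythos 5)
Your proof is correct, and it takes a genuinely different route from the one in the paper. Both arguments share the same first step ($G$ is visibly powerful, hence $p$-adic analytic, hence uniform of dimension $3$ by Theorem~\ref{thm:quadratic analytic}), and both then pass to the Lazard Lie lattice $L$ and its $\Q_p$-Lie algebra $\mathfrak{g}$. From there the paper is computational: it writes down the induced presentation \eqref{eq:Lielattice} of $L$ with brackets $[X_i,X_j]=\a' X_i+\b' X_j$, imposes the Jacobi identity to get the system \eqref{eq:system} (Lemma~\ref{lem:system}), solves it to obtain the explicit families of Proposition~\ref{prop:families}, and then sorts these into metabelian ones and one commensurable with $\mathfrak{sl}_2$ (Lemmas~\ref{lem:xyz}, \ref{lem:metabelianorSL21}, \ref{lem:3gens}), citing an external reference to rule out the anisotropic form. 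You instead extract only soft information from the triangle shape: each edge subgroup $\langle x,y\rangle$ is itself powerful and torsion-free, hence uniform of dimension at most $2$, so $\mathfrak{g}$ contains a $2$-dimensional subalgebra; then the dichotomy follows from general structure theory of $3$-dimensional Lie algebras in characteristic $0$ (solvable $\Rightarrow$ metabelian is automatic in dimension $3$; non-solvable $\Rightarrow$ a form of $\mathfrak{sl}_2$, with the quaternionic form $\mathfrak{sl}_1(D)$ excluded by your nice trace argument showing it has no $2$-dimensional subalgebra). Your version is shorter, avoids both the explicit computation of the Lie bracket from \eqref{eq:Liebracket} and the Jacobi case analysis, and makes self-contained why the anisotropic form cannot occur; what it gives up is the explicit list of Lie lattices, which the paper uses afterwards to describe the three metabelian families and to set up Proposition~\ref{prop:SL21triangle}. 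The only step I would flag is the very last one, passing from $\mathfrak{g}\cong\mathfrak{sl}_2(\Q_p)$ to ``$G$ is an open subgroup of the $p$-Sylow of $\mathrm{SL}_2(\Z_p)$'' via the adjoint embedding into $\mathrm{PGL}_2(\Q_p)$ and a fixed-point argument on the tree --- but this is asserted at essentially the same level of detail in the paper's Lemma~\ref{lem:metabelianorSL21}, so it is not a defect of your argument relative to the original.
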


Additionally, if $G$ is metabelian in part (a) of the previous theorem,
it has to belong to one of three different explicitly described families of pro-$p$ groups. 
We investigate also part (b) in more detail: namely, we explicitly realise $\mathrm{SL}_2^1(\mathbb{Z}_p)$
as a triangle $p$-RAAG (see Proposition~\ref{prop:SL21triangle}).

To boot, we analyse whether all \emph{torsion-free} $p$-RAAGs are $H^\bullet$-quadratic.
We fall short of proving this in full generality, but we can use Theorem~\ref{thm:cohomology amalgam}
to deal with a large number of them (see \S~\ref{sec:sometrianglefull}). In particular, we can prove the following theorem for
$p$-RAAGs associated to \emph{chordal} graphs. A graph is chordal if all cycles of four or more vertices have a chord (see Definition~\ref{def:chordal}).

\begin{thmABC}\label{thm:chordal}
Let $\Gamma=(\calG,f)$ be a $p$-graph with $\calG$ a chordal graph, such that the associated $p$-RAAG $G_\Gamma$ is non-degenerate.
Then $G$ is a quadratic pro-$p$ group.
\end{thmABC}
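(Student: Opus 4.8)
The plan is to induct on the number $n$ of vertices of $\calG$, peeling off a simplicial vertex at each step and applying Theorem~\ref{thm:cohomology amalgam}. Recall that a chordal graph which is not complete admits a \emph{simplicial} vertex $v$, i.e.\ one whose open neighbourhood $N(v)$ is a clique; for such a $v$ the closed neighbourhood $N[v]=\{v\}\cup N(v)$ is then a \emph{proper} clique of $\calG$ (were $N[v]=\sV$, the graph $\calG$ would itself be complete, since $v$ simplicial makes $N[v]$ a clique). For the base case, if $\calG$ is complete then $G_\Gamma$ is a non-degenerate $p$-RAAG over a complete $p$-graph, which is uniform: by non-degeneracy its associated $\Z_p$-Lie algebra is free of rank $n$, and it is powerful since every defining relation reads $[x_i,x_j]=x_i^{f_1(e)}x_j^{f_2(e)}$ with $f_1(e),f_2(e)\in p\Z_p$. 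Hence $G_\Gamma$ is $H^\bullet$-quadratic by Theorem~\ref{thm:quadratic analytic}.

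For the inductive step, assume $\calG$ is not complete and fix a simplicial vertex $v$ with $N[v]\subsetneq\sV$. Let $\Gamma'=(\calG',f')$, $\Gamma_1=(\calG_1,f_1)$ and $\Gamma_0=(\calG_0,f_0)$ be the $p$-subgraphs of $\Gamma$ induced on $\sV\setminus\{v\}$, on $N[v]$ and on $N(v)$ respectively, with the inherited labels. Then $\calG'$, $\calG_1$, $\calG_0$ are all chordal, $\calG_1$ and $\calG_0$ are complete, and the three associated $p$-RAAGs are again non-degenerate (non-degeneracy concerns only the labels of the edges actually present). Since $N(v)$ is a clique contained in $N[v]$, a direct comparison of pro-$p$ presentations gives
\[
  G_\Gamma\;\cong\;G_{\Gamma'}\amalg_{G_{\Gamma_0}}G_{\Gamma_1},
\]
the defining relations of $G_{\Gamma_0}$ already occurring among those of $G_{\Gamma_1}$; when $N(v)=\emptyset$ this is simply a free pro-$p$ product, still covered by the argument below. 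By the inductive hypothesis $G_{\Gamma'}$ is $H^\bullet$-quadratic, and by the base case so are $G_{\Gamma_1}$ and $G_{\Gamma_0}$, both of them uniform. As $G_{\Gamma_0}$ is uniform and is generated by part of a basis of the uniform group $G_{\Gamma_1}$ and by part of the distinguished generating system of $G_{\Gamma'}$, Propositions~\ref{prop:amalgunif} and~\ref{lem:amalgunifsubgroup} ensure that this amalgam is \emph{proper}.

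It then remains to verify hypotheses (i) and (ii) of Theorem~\ref{thm:cohomology amalgam}. By Theorem~\ref{thm:pRAAGs quadratic}, $H^\bullet(G_{\Gamma'},\F_p)\cong\Lambda_\bullet((\calG')^{\mathrm{op}})$, $H^\bullet(G_{\Gamma_1},\F_p)\cong\Lambda_\bullet(\calG_1^{\mathrm{op}})$ and $H^\bullet(G_{\Gamma_0},\F_p)\cong\Lambda_\bullet(\calG_0^{\mathrm{op}})$; since these algebras are generated in degree $1$, since restriction is a homomorphism of graded algebras, and since in degree $1$ the restriction to $H^\bullet(G_{\Gamma_0},\F_p)$ is dual to the inclusion of the span of the generators lying in $N(v)$, each such restriction map is the canonical surjection killing the degree-$1$ classes $\chi_u$ with $u\notin N(v)$; in particular (i) holds. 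For (ii) one uses that the restriction then sends a degree-$2$ generator $\chi_u\wedge\chi_{u'}$ to $0$ exactly when at least one of $u,u'$ lies outside $N(v)$: on the complete factor $G_{\Gamma_1}$ this makes both $\kernel(\res^2_{G_{\Gamma_1},G_{\Gamma_0}})$ and $\kernel(\res^1_{G_{\Gamma_1},G_{\Gamma_0}})\wedge H^1(G_{\Gamma_1},\F_p)$ equal to $\Span\{\chi_v\wedge\chi_w : w\in N(v)\}$ (here $p$ odd gives $\chi_v\wedge\chi_v=0$), and on $G_{\Gamma'}$ it makes both $\kernel(\res^2_{G_{\Gamma'},G_{\Gamma_0}})$ and $\kernel(\res^1_{G_{\Gamma'},G_{\Gamma_0}})\wedge H^1(G_{\Gamma'},\F_p)$ equal to the span of the nonzero products $\chi_u\wedge\chi_{u'}$ with $\{u,u'\}\not\subseteq N(v)$. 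Hence (ii) holds, Theorem~\ref{thm:cohomology amalgam} applies, and the induction closes.

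The step demanding real care is the properness of the amalgam $G_{\Gamma'}\amalg_{G_{\Gamma_0}}G_{\Gamma_1}$: unlike the case of ordinary RAAGs, there is no obvious retraction of $G_\Gamma$ onto a sub-$p$-RAAG supported on a full subgraph, precisely because the relations carry the extra factors $x_i^{f_1(e)}$ and $x_j^{f_2(e)}$, so one genuinely needs the uniform-subgroup properness criteria of Propositions~\ref{prop:amalgunif} and~\ref{lem:amalgunifsubgroup} to conclude that $G_{\Gamma_0}$ embeds in both $G_{\Gamma'}$ and $G_{\Gamma_1}$. Once properness is in hand, the cohomological verification is --- thanks to Theorem~\ref{thm:pRAAGs quadratic} --- essentially routine bookkeeping with the quadratic algebras $\Lambda_\bullet$.
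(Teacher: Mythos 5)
Your argument is correct and follows essentially the same strategy as the paper: induct on the number of vertices, write $G_\Gamma$ as a proper amalgam over the $p$-RAAG of a complete full subgraph (properness coming from Proposition~\ref{lem:amalgunifsubgroup}, the complete piece being uniform by non-degeneracy), and apply Theorem~\ref{thm:cohomology amalgam}, whose hypotheses (i)--(ii) hold automatically for $p$-RAAGs. The only cosmetic difference is that you realise the clique-separator decomposition by peeling off a simplicial vertex, whereas the paper invokes the general recursive clique-pasting characterisation of chordal graphs (Proposition~\ref{prop:chordalgraph}); the rest of the verification is the same.
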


Morever, we can prove that all $p$-RAAGs arising from $p$-graphs on $5$ vertices are $H^\bullet$-quadratic,
if they are \emph{non-degenerate} (cf.\ Definition~\ref{defn:nondeg}).

Finally, we investigate the presence of free non-abelian closed subgroups in $H^\bullet$-quadratic pro-$p$ groups.
In the arithmetic case one has a ``Tits alternative type'' result: if
the field $K$ contains a roots of unity of order $p$, then either $G_K(p)$ contains a free non-abelian 
closed pro-$p$ subgroup, or every finitely generated subgroup is uniform
(cf. \cite[Thm.~B]{cq:bk}, \cite[Thm.~3]{ware} and \cite[\S~3.1]{CMQ:fast}).
We prove the following (see \S~\ref{sec:free subgp}).

\begin{thmABC}\label{thm:pRAAGs free subgroup}
Let $G_\Gamma$ be a $p$-RAAG, with associated $p$-graph $\Gamma=(\calG,f)$. 
Then either $G_{\Gamma}$ is a powerful pro-$p$ group, or it contains a free non-abelian closed pro-$p$ subgroup. 
\end{thmABC}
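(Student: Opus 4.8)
The plan is to dichotomise on whether the underlying graph $\calG$ is complete. If $\calG$ is complete I will show that $G_\Gamma$ is powerful, and if $\calG$ is not complete I will exhibit a free non-abelian closed pro-$p$ subgroup of $G_\Gamma$. Since ``complete'' and ``not complete'' are exhaustive, this proves the theorem; moreover the two alternatives are genuinely mutually exclusive, since a finitely generated powerful pro-$p$ group is $p$-adic analytic, hence of finite rank, and so cannot contain a free non-abelian pro-$p$ subgroup.

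Suppose first that $\calG$ is complete. For each pair $i\ne j$ the graph has an edge $e$ between $x_i$ and $x_j$, and the corresponding defining relation reads, after possibly interchanging $i$ and $j$, $[x_i,x_j]=x_i^{f_1(e)}x_j^{f_2(e)}$ with $f_1(e),f_2(e)\in p\Z_p$. Writing $f_1(e)=p\lambda$ and $f_2(e)=p\mu$ with $\lambda,\mu\in\Z_p$ we see that $x_i^{f_1(e)}=(x_i^{\lambda})^{p}$ and $x_j^{f_2(e)}=(x_j^{\mu})^{p}$ both lie in $G_\Gamma^{p}$, so $[x_i,x_j]\in G_\Gamma^{p}$ for all $i,j$. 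Consequently the images of $x_1,\dots,x_n$ in $G_\Gamma/G_\Gamma^{p}$ commute pairwise; as these images topologically generate, $G_\Gamma/G_\Gamma^{p}$ is abelian, i.e.\ $G_\Gamma$ is powerful (recall $p\ge 3$).

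Now suppose $\calG$ is not complete, and fix two non-adjacent vertices $x_1,x_2$. Let $\overline G$ be the quotient of $G_\Gamma$ by the closed normal subgroup generated by $x_3,\dots,x_n$, and eliminate these generators from the presentation. Every relation attached to an edge not incident to $x_1$ or $x_2$ becomes trivial; there is no relation between $x_1$ and $x_2$; and each edge joining $x_1$ (respectively $x_2$) to some $x_k$ with $k\ge 3$ contributes a relation of the shape $x_1^{a}=1$ (respectively $x_2^{b}=1$) with $a,b\in p\Z_p$. The surviving relations therefore involve $x_1$ and $x_2$ separately, so the presentation splits and $\overline G\cong C_1\amalg C_2$ is a free pro-$p$ product, where $C_i=\overline{\langle x_i\rangle}$ is a non-trivial procyclic pro-$p$ group ($C_i\cong\Z_p$ if no relation survives for $x_i$, and $C_i\cong\Z/p^{s}$ with $s\ge 1$ otherwise, since the relevant exponents lie in $p\Z_p$). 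Killing $p$-th powers gives a surjection $\overline G\twoheadrightarrow\Z/p\amalg\Z/p$, and it is classical (for instance, the kernel of the natural map $\Z/p\amalg\Z/p\to\Z/p\times\Z/p$ is, by the pro-$p$ Kurosh subgroup theorem, a free pro-$p$ group of rank $(p-1)^2\ge 2$ since $p\ge 3$) that $\Z/p\amalg\Z/p$ contains a free non-abelian closed pro-$p$ subgroup $N$. Finally I would pull $N$ back: let $H\le G_\Gamma$ be the preimage of $N$ under the composite surjection $G_\Gamma\twoheadrightarrow\overline G\twoheadrightarrow\Z/p\amalg\Z/p$; then $H$ is a closed subgroup mapping onto $N$, and since $N$ is a free pro-$p$ group, hence projective, this surjection splits, producing a closed subgroup of $G_\Gamma$ isomorphic to the free non-abelian pro-$p$ group $N$.

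The only genuinely delicate point is the second case. The crucial observation is that the non-adjacency of $x_1$ and $x_2$ is exactly what prevents the surviving relations from coupling $x_1$ and $x_2$, so that the quotient $\overline G$ is a free pro-$p$ product of procyclic groups and the standard structure theory of free pro-$p$ products (valid for $p$ odd) applies; the bookkeeping over edge orientations and over several edges possibly incident to $x_1$ or $x_2$ is routine and only affects the precise orders of $C_1,C_2$, which are irrelevant here.
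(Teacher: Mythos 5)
Your proof is correct and follows essentially the same route as the paper: both arguments reduce to a pair of non-adjacent vertices, use the fact that the edge labels lie in $p\mathbb{Z}_p$ to produce a surjection onto $C_p\ast C_p$ (completeness of $\mathcal{G}$ forcing powerfulness in the other case), locate a free non-abelian closed pro-$p$ subgroup there, and lift it back to $G_\Gamma$. The only cosmetic differences are that the paper takes the concrete free subgroup $\langle aba,\,bab\rangle$ and lifts it via the Hopfian property of $2$-generated pro-$p$ groups, whereas you take the kernel of $C_p\amalg C_p\to C_p\times C_p$ and lift via projectivity of free pro-$p$ groups.
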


As an immediate corollary of the previous theorem we deduce: a quadratic $p$-RAAG is either uniform, or it contains a free non-abelian closed pro-$p$ subgroup.

We also investigate the presence of free non-abelian subgroups in mild pro-$p$ groups. For this, in Proposition~\ref{prop:mild GoSha} we show that ---under certain conditions--- several mild pro-$p$ groups are generalised Golod-Shafarevic pro-$p$ groups (see Section~\ref{sec:ggs} for the definition). As a corollary, we deduce that all $H^\bullet$-quadratic groups with at most $3$ generators are either uniform or contain a closed free non-abelian pro-$p$ subgroup (see Corollary~\ref{coro:small free subgp}). Motivated by the aforementioned results, we formulate the following.

\begin{conjABC}\label{conj:subgp free}
Let $G$ be a finitely generated $H^\bullet$-quadratic pro-$p$ group. Then either $G$ is a uniform pro-$p$ group, or it contains a closed free non-abelian pro-$p$ subgroup.
\end{conjABC}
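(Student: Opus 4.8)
The plan is to read the conjecture as a dichotomy governed by the rank of $G$, and to separate the two alternatives according to whether $G$ is $p$-adic analytic. By Lazard's theory a finitely generated pro-$p$ group is $p$-adic analytic exactly when it has finite rank, so we split accordingly. If $G$ has finite rank, then $G$ is $p$-adic analytic and Theorem~\ref{thm:quadratic analytic} applies at once: an $H^\bullet$-quadratic $p$-adic analytic pro-$p$ group is uniform, which is the first alternative. Hence all the content lies in the opposite case, where $G$ has infinite rank and we must manufacture a free non-abelian closed pro-$p$ subgroup. The evidence that this is the right split is already visible in the excerpt: Corollary~\ref{coro:small free subgp} settles the case of at most three generators, and Theorem~\ref{thm:pRAAGs free subgroup} settles the case of $p$-RAAGs, in both instances producing precisely the uniform-or-free conclusion.

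For the infinite-rank case I would route the argument through Golod--Shafarevich theory. The decisive external input is Zelmanov's theorem that every Golod--Shafarevic pro-$p$ group contains a free non-abelian closed pro-$p$ subgroup, together with the generalised version underlying Proposition~\ref{prop:mild GoSha} and the notion of generalised Golod--Shafarevic group recalled in Section~\ref{sec:ggs}. It then suffices to prove that a finitely generated $H^\bullet$-quadratic pro-$p$ group of infinite rank is (generalised) Golod--Shafarevic; granting this, Zelmanov's theorem closes the second alternative and finishes the proof.

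To establish that implication I would exploit quadraticity at the level of the completed group algebra. For $H^\bullet$-quadratic $G$ the associated graded algebra $\gr_\bullet\,\F_p\dbl G\dbr$ with respect to the augmentation (Zassenhaus) filtration is a quadratic algebra, namely the quadratic dual of the cohomology algebra $H^\bullet(G,\F_p)$; in particular every defining relation sits in degree $2$, so that with $d=\dim_{\F_p}H^1(G,\F_p)$ generators and $r=\dim_{\F_p}H^2(G,\F_p)$ relations the relevant Golod--Shafarevic series is controlled by $1-dt+rt^2$, or by its weighted analogue in the generalised setting. The Golod--Shafarevic condition is that this series becomes negative on $(0,1)$, equivalently that $\dim_{\F_p}\gr_n\,\F_p\dbl G\dbr$ grows exponentially in $n$; when $H^\bullet(G,\F_p)$ is Koszul one even has the reciprocity $H_A(t)\,H_{A^!}(-t)=1$ tying the two Hilbert series together. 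The plan is to show that infinite rank forces exactly this exponential growth: finite rank corresponds to polynomial growth of the graded dimensions (the uniform regime), and for a quadratic, hence putatively Koszul, algebra one expects a gap ruling out intermediate growth, so that failure of polynomial growth already yields the strict inequality the Golod--Shafarevic bound needs.

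The hard part will be precisely this growth dichotomy, together with the bookkeeping when $G$ is not finitely presented. Two obstacles stand out. First, if $\dim_{\F_p}H^2(G,\F_p)=\infty$ the naive polynomial $1-dt+rt^2$ is meaningless and one must argue directly with the Hilbert series of $\gr_\bullet\,\F_p\dbl G\dbr$, proving an exponential lower bound on $\dim\gr_n$ from infinite rank alone; excluding intermediate (super-polynomial, sub-exponential) growth for general quadratic algebras is the central difficulty and is not known in this generality. Second, the passage from a Golod--Shafarevic bound to an honest free pro-$p$ subgroup requires the full strength of Zelmanov's machinery and, in the generalised case, a careful check of the weighted hypotheses of Section~\ref{sec:ggs}. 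An alternative, more group-theoretic route would be to induct on the rank by descending to a subgroup or quotient of strictly smaller rank and invoking the base case of Corollary~\ref{coro:small free subgp}; this founders on the fact that $H^\bullet$-quadraticity is not inherited by arbitrary subgroups or quotients, so that sustaining the inductive hypothesis is itself problematic. Reconciling such an induction with the free-construction criteria of Theorem~\ref{thm:cohomology amalgam} and Theorem~\ref{thm:HNN}, which do control quadraticity along amalgams and HNN-extensions, seems to me the most promising avenue for eventually removing the remaining gap and turning the conjecture into a theorem.
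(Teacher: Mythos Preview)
The statement you are attempting to prove is labelled as a \emph{Conjecture} in the paper, and indeed the paper does not prove it. It is stated as open, with the subsequent sentence listing only partial evidence: maximal pro-$p$ Galois groups (Theorem~\ref{thm:tits}), $p$-RAAGs (Theorem~\ref{thm:pRAAGs free subgroup}), certain mild groups (Proposition~\ref{prop:mild GoSha}), and groups with $\dd(G)\le 3$ (Corollary~\ref{coro:small free subgp}). There is therefore no ``paper's own proof'' to compare against.

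Your proposal is honestly presented as a strategy rather than a proof, and you already flag the main gap: the implication ``infinite rank $\Rightarrow$ (generalised) Golod--Shafarevich'' for quadratic pro-$p$ groups is not known. Two further points are worth naming concretely. First, your claim that for $H^\bullet$-quadratic $G$ the graded algebra $\gr_\bullet\F_p\dbl G\dbr$ is the quadratic dual of $H^\bullet(G,\F_p)$ is itself conjectural: this is essentially the content of the Positselski--Weigel Koszulity conjectures discussed in \S\ref{ssec:Galois}, which the paper verifies only for $p$-RAAGs and triangle-free cases, not in general. So your route through Hilbert-series reciprocity is stacking one open problem on another. Second, for $p=2$ the finite-rank branch of your dichotomy breaks down: Theorem~\ref{thm:quadratic analytic} is stated only for $p\ge 3$, and Examples~\ref{ex:analyticp21} and~\ref{ex:analyticp22} give quadratic $2$-adic analytic groups that are not uniform (the conjecture for $p=2$ should be read under the standing hypothesis of Remark~\ref{rem:p2}, which excludes Example~\ref{ex:analyticp22}, but Example~\ref{ex:analyticp21} still shows your clean split fails). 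In short, your outline captures the expected shape of an eventual argument, but each of its load-bearing steps is currently open.
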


\subsection{Case \texorpdfstring{$\boldsymbol{p=2}$}{p=2}}\label{sec:p=2}
As it often happens in the theory of pro-$p$ groups, one needs to take extra care in the case $p=2$. We will assume a technical condition on $H^\bullet$-quadratic pro-$2$ groups (see Remark~\ref{rem:p2}) that will insure that most of our proofs also work for the even prime. 

We remark that Theorems~\ref{thm:cohomology amalgam} and \ref{thm:HNN} hold without change for $p=2$. 
 Theorem~\ref{thm:quadratic analytic} does not hold for $p=2$, see Examples~\ref{ex:analyticp21} and \ref{ex:analyticp22}. 

The definition of $p$-RAAGs needs to be slightly modified for $p=2$ (see Section~\ref{sec:pRAAGs}), namely labels of edges have to be in $4\mathbb{Z}_2$. After this modification, Theorems~\ref{thm:pRAAGs quadratic}, \ref{thm:pRAAGs mild}, \ref{thm:quadratictrianglepraags}, \ref{thm:chordal} and \ref{thm:pRAAGs free subgroup} hold for $p=2$.

Finally, Proposition~\ref{prop:mild GoSha} also holds for $p=2$ under the additional assumption of Remark~\ref{rem:p2}.

\subsection{Structure of the article}
The main definitions and conventions of this article are laid out in Section~\ref{sec:Preliminaries}. We prove Proposition~\ref{prop:properties quadratic} in Section~\ref{sec:quadratic}. Theorems~\ref{thm:cohomology amalgam} and \ref{thm:HNN} are proved in Section~\ref{sec:combgrth}, where we also discuss the properness of amalgamated free products and HNN extensions in the category of pro-$p$ groups. Theorem~\ref{thm:quadratic analytic} is proved in Section~\ref{sec:quadraticpadic}. We introduce the class of $p$-RAAGs in Section~\ref{sec:pRAAGs}, where we also prove Theorems~\ref{thm:pRAAGs quadratic} and \ref{thm:pRAAGs mild}. Non-degenerate $p$-RAAGs are defined and studied in Section~\ref{sec:trianglefull}. Triangle $p$-RAAGs are analysed in Section~\ref{sec:triang}, which also includes a proof of Theorem~\ref{thm:quadratictrianglepraags}. Finally, in Section~\ref{sec:free subgp} we investigate ``Tits alternative behaviour'' in the classes of $p$-RAAGs and mild pro-$p$ groups, proving Theorem~\ref{thm:pRAAGs free subgroup}.

\section{Preliminaries}\label{sec:Preliminaries}

\subsection{Quadratic algebras}

An associative unital algebra $A_\bullet$ over the finite field $\F_p$ is graded if it decomposes
as the direct sum of $\F_p$-vector
spaces $A_\bullet=\bigoplus_{n\ge 0}A_n$ such that $A_n\cdot A_m\subseteq A_{n+m}$ for all $n,m\ge 0$.
The graded algebra $A_\bullet$ is of finite type if every space
$A_n$ has finite dimension. 
Hereinafter we will restrict ourself to  
graded $\F_p$-algebras of finite type,
and to finite dimensional vector spaces over $\F_p$. 

For an $\F_p$-vector space $V$, let $T_\bullet(V)=\bigoplus_{n\geq0}V^{\otimes n}$ denote the free graded tensor algebra generated by $V$. For $\Omega\subseteq V\otimes V$, let $(\Omega)$ denote the two-sided ideal generated by $\Omega$ in $T_\bullet(V)$. 

\begin{definition}\label{def:quadratic algebra}
 A graded algebra $A_\bullet$ is called \emph{quadratic} if one has an isomorphism
 \begin{equation}\label{eq:quadratic algebra}
 A_\bullet\cong   Q(A_1,\Omega) := \frac{T_\bullet(A_1)}{(\Omega)},
 \end{equation}
for some $\Omega\subseteq A_1\otimes A_1$. We will call $Q(A_1,\Omega)$ the \emph{quadratic algebra} generated by $A_1$ and $\Omega$.
\end{definition}

\begin{example}\label{ex:quad algebras}
Let $V$ be a vector space. Then 
\begin{itemize}
  \item[(a)] the tensor algebra $T_\bullet(V) = Q(V,0)$,
  \item[(b)] the trivial algebra $ Q(V,V^{\otimes 2})$, 
  \item[(c)] the symmetric algebra $S_\bullet(V) = Q(V,\{v\otimes w - w\otimes v\mid v,w\in V\})$ and 
  \item[(d)] the exterior algebra $\Lambda_\bullet(V) = Q(V, \{v\otimes v\mid v\in V\})$
  \end{itemize}
  are quadratic.  Moreover, for two quadratic algebras $A_\bullet,B_\bullet$, one has the following general constructions of new quadratic algebras (cf.~\cite[\S~3.1]{pp:quad}). 
 \begin{itemize}
  \item[(a)] The direct sum $C_\bullet=A_\bullet\oplus B_\bullet$ is the algebra with
  $C_n=A_n\oplus B_n$ for every $n\geq1$.
  \item[(b)] The wedge product $C_\bullet=A_\bullet\wedge B_\bullet$ is the algebra with 
  $C_n=\bigoplus_{i+j=n}A_i\wedge B_j$ for every $n\geq2$ (note that in \cite{pp:quad,MPQT} this algebra is
  called the skew-commutative tensor product $A_\bullet\otimes^{-1}B_\bullet$).
 \end{itemize}
\end{example}

\subsection{Presentations of pro-\texorpdfstring{$p$}{p} groups}
Throughout this paper, subgroups of pro-$p$ groups are assumed to be closed (in the pro-$p$ topology) and generators will be intended as topological generators. In particular, given two (closed) subgroups $H_1$ and $H_2$ of a pro-$p$ group $G$, the subgroup $[H_1,H_2]$ is the (closed) subgroup of G generated by the commutators $[g_1,g_2]$, with $g_i \in H_i$ for $i = 1,2$. Also, for a positive integer $n$, $G^n$ denotes the (closed) subgroup of $G$ generated by the $n$-th powers of elements of $G$. Similarly, all homomorphisms between pro-$p$ groups will be assumed continuous. 
Finally, we will always consider $\F_p$ as a trivial $G$-module.

We will now recall some basic facts and definitions for presentations of pro-$p$ groups. The experienced reader might wish to skip ahead to the next section. 

For a pro-$p$ group $G$, set $G_{(2)}=G^p [G,G]$ and 
\begin{equation} \label{eq:G3}
 G_{(3)}=\begin{cases} G^p [[G,G],G] & \text{if }p\geq3 \\ G^4 [[G,G],G] & \text{if }p=2 \end{cases}.
\end{equation}
Then $G_{(2)}$ and $G_{(3)}$ coincide with the second and third terms of the $p$-Zassenhaus filtration of $G$,
respectively (cf.\ \cite[\S~11.1]{ddms:padic}).
The pro-$p$ group $G$ is finitely generated if and only if $G/G_{(2)}$ is a vector space of finite dimension over $\F_p$, and 
$\dd(G):=\dim G/G_{(2)}$ is the minimal number of generators of $G$.
Moreover, the equality $H^1(G,\F_p)=\mathrm{Hom}(G,\F_p)$ implies the isomorphism
of vector spaces 
\begin{equation}\label{eq:H1}
 H^1(G,\F_p)\cong(G/G_{(2)})^*,
\end{equation}
where $\cdot^*=\mathrm{Hom(\cdot,\F_p)}$ denotes the dual for vector spaces.
Thus, $\dd(G)=\dim H^1(G,\F_p)$.

A presentation
\begin{equation}\label{eq:presentation}
 \xymatrix{ 1\ar[r] & R\ar[r] & F\ar[r] & G\ar[r] & 1 }
\end{equation}
of a finitely presented pro-$p$ group $G$ is said to be \emph{minimal} if one of the following equivalent conditions is satisfied:
\begin{itemize}
 \item[(i)] $F\to G$ induces an isomorphism $F/F_{(2)}\cong G/G_{(2)}$;
 \item[(ii)] $R\le F_{(2)}$.
 \end{itemize}
 If \eqref{eq:presentation} is minimal, it follows from \cite[\S~1.6]{nsw:cohn} that 
$$H^2(G,\F_p)\cong H^1(R,\F_p)^F\cong  (R/R^p[R,F])^*.$$
A minimal subset $\mathcal{R}\subseteq F$ which generates $R$ as normal subgroup is called a set of defining relations of $G$.
Thus, $\rr(G):=\dim H^2(G,\F_p)$ is the cardinality of a set of defining relations.

For a pro-$p$ group $G$ and a minimal presentation \eqref{eq:presentation}, set $d=\dd(G)$ and let $\mathcal{X}=\{x_1,\ldots,x_d\}$ be a basis of $F$. We will identify $\mathcal{X}$ with the induced basis of $G$ via $F\to G$.
Given $r\in R \subseteq F_{(2)}$, one may write
\begin{equation}\label{eq:shape r}
 r=\begin{cases}
    \prod_{i<j}[x_i,x_j]^{a_{ij}}\cdot r' & \text{if }p\neq2 \\ 
    \prod_{i=1}^d x_i^{2a_{ii}}\cdot\prod_{i<j}[x_i,x_j]^{a_{ij}}\cdot r' & \text{if }p=2 
   \end{cases} \qquad r'\in F_{(3)},
\end{equation}
with $0\leq a_{ij}<p$, and such numbers are uniquely determined by $r$
(cf.\ \cite[Prop.~1.3.2]{vogel:thesis}).

\subsection{Pro-\texorpdfstring{$p$}{p} groups and \texorpdfstring{$\mathbb{F}_p$}{Fp}-cohomology}

The $\mathbb{F}_p$-cohomology of a pro-$p$ group $G$ comes endowed with the cup-product
\[
 \smile: H^i(G,\F_p)\times H^j(G,\F_p)\longrightarrow H^{i+j}(G,\F_p)
\]
which is bilinear and graded-commutative, i.e.\ $\beta\smile \alpha=(-1)^{ij}\alpha\smile \beta$ for $\alpha\in H^i(G,\F_p)$
and $\beta\in H^j(G,\F_p)$, so that $H^\bullet(G,\F_p)=\bigoplus_{n\geq0}H^n(G,\F_p)$
is a graded algebra (cf.\ \cite[\S~I.4]{nsw:cohn}).

Let $G$ be a finitely presented pro-$p$ group with minimal presentation \eqref{eq:presentation}, with
$\mathcal{X}$ as above, and let $\mathcal{X}^*=\{\alpha_1,\ldots,\alpha_d\}$ be the basis of $H^1(G,\F_p)$
dual to $\mathcal{X}$.
Moreover, set $m=\rr(G)$ and let $\mathcal{R}=\{r_1,\ldots, r_m\}$ be a set of defining relations of $G$.
Cup-products of elements of degree 1 and defining relations are connected by the following
(cf.\ \cite[Prop.~1.3.2]{vogel:thesis} and \cite[Prop.~7.1]{MPQT}).

\begin{proposition}\label{prop:cupproduct}
 Let $G$, $\mathcal{X}$, $\mathcal{X}^*$ and $\mathcal{R}$ be as above.
 Then for every $r_h\in\mathcal{R}$ one has a bilinear pairing 
 \[
  \mathrm{trg}^{-1}(\alpha_i\smile \alpha_j).r_h=\begin{cases}
    -a_{ij} & \text{if }i<j \\ a_{ij} & \text{if }j<i \\  -\binom{p}{2}a_{ii} & \text{if }i=j,                                                                             
                                                                                \end{cases} \]
where $a_{ij}$ are the exponents in the expression of $r_h$ as in \eqref{eq:shape r},
interpreted as elements of $\F_p$.
In particular, $\alpha_i \smile\alpha_j\neq0$ if and only if $a_{ij}\neq0$ for some $r_h\in\mathcal{R}$.
\end{proposition}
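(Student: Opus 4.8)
The statement connects the cup product of degree-one cohomology classes with the coefficients $a_{ij}$ appearing in the expressions \eqref{eq:shape r} of the defining relations. The plan is to use the standard identification of $H^2(G,\F_p)$ with the dual of $R/R^p[R,F]$ coming from a minimal presentation, together with an explicit description of the transgression map in the Lyndon–Hochschild–Serre spectral sequence associated to \eqref{eq:presentation}. First I would recall that for a minimal presentation the five-term exact sequence gives an isomorphism (the \emph{transgression}) $\mathrm{trg}\colon H^1(R,\F_p)^F \to H^2(G,\F_p)$, since $F$ is free pro-$p$ and hence $H^2(F,\F_p)=0$; dually this is the isomorphism $H^2(G,\F_p)\cong (R/R^p[R,F])^*$ quoted in the excerpt. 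So evaluating a class $\mathrm{trg}^{-1}(\alpha_i\smile\alpha_j)$ on a defining relation $r_h$ makes sense once we fix the dual basis of $R/R^p[R,F]$ given by $\mathcal R=\{r_1,\dots,r_m\}$.

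\textbf{Key steps.} The main computational input is the compatibility of the transgression with cup products: the cup product $\alpha_i\smile\alpha_j$ in $H^2(G,\F_p)$ is the transgression of a ``product'' class in the $E_2$-page built from the degree-one classes. Concretely, one works in the free pro-$p$ group $F=F(x_1,\dots,x_d)$, where the Magnus-type embedding into the completed group algebra $\F_p\dbml X\dbmr$ (using the macros $\FppX$) lets one read off, for an element $r\in F_{(2)}$, the coefficients of the degree-$2$ monomials $X_iX_j$ in $r-1$: these are precisely controlled by the $a_{ij}$ of \eqref{eq:shape r}, because $[x_i,x_j]-1\equiv X_iX_j-X_jX_i$ and $x_i^{p}-1\equiv \binom{p}{2}X_i^2$ modulo higher degree and modulo $p$. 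Then the pairing of $\alpha_i\smile\alpha_j$ with $r_h$ is, up to sign conventions, exactly the coefficient extraction of $X_iX_j$ from $r_h$, which gives the three cases: $-a_{ij}$ for $i<j$, the antisymmetric partner $a_{ij}$ for $j<i$ (same relation, opposite order of the free variables), and $-\binom{p}{2}a_{ii}$ on the diagonal — the latter coming from the $x_i^{2a_{ii}}$ factor that is only present when $p=2$, where $\binom{p}{2}=1$ in $\F_p$, but the formula is written uniformly. I would cite \cite[Prop.~1.3.2]{vogel:thesis} and \cite[Prop.~7.1]{MPQT} for the precise normalization of these pairings rather than re-deriving the sign bookkeeping. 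The final sentence, that $\alpha_i\smile\alpha_j\neq 0$ iff some $a_{ij}\neq 0$, then follows because $\{r_1,\dots,r_m\}$ is dual to a basis of $R/R^p[R,F]\cong H^2(G,\F_p)^*$, so a class in $H^2(G,\F_p)$ is nonzero iff it pairs nontrivially with some $r_h$.

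\textbf{Main obstacle.} The genuine difficulty is not conceptual but the careful tracking of signs and of the factor $\binom{p}{2}$ through (i) the transgression isomorphism, (ii) the graded-commutativity $\alpha_j\smile\alpha_i=-\alpha_i\smile\alpha_j$ in degree $2$ (which forces the $i<j$ versus $j<i$ dichotomy and is consistent with $[x_j,x_i]=[x_i,x_j]^{-1}$), and (iii) the normalization of the Magnus expansion. For $p=2$ the diagonal terms genuinely enter and graded-commutativity no longer kills squares, which is exactly why the $p=2$ case of \eqref{eq:shape r} carries the extra $\prod x_i^{2a_{ii}}$ factor; one must check the formula $-\binom{p}{2}a_{ii}$ degenerates correctly (it equals $-a_{ii}=a_{ii}$ in $\F_2$, and is $0$ for odd $p$ where there is no diagonal contribution). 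I would organize the write-up so that the sign/normalization lemma is imported wholesale from the cited references, and only the translation ``coefficient of $X_iX_j$ in $r_h$'' $\leftrightarrow$ ``$a_{ij}$'' is spelled out, since that is the only part specific to the normal form \eqref{eq:shape r} used in this paper.
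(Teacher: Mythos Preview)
Your proposal is sound, and in fact goes further than the paper itself: the paper does not give a proof of this proposition at all but simply states it with the parenthetical citation ``(cf.\ \cite[Prop.~1.3.2]{vogel:thesis} and \cite[Prop.~7.1]{MPQT})''. Your sketch correctly identifies these same references as the source of the sign and normalization bookkeeping, and your outline via the five-term exact sequence, the transgression isomorphism $\mathrm{trg}\colon H^1(R,\F_p)^F\to H^2(G,\F_p)$, and the Magnus embedding reading off the degree-$2$ coefficients is exactly the standard argument behind those citations. Your handling of the $p=2$ diagonal term and the antisymmetry for $i<j$ versus $j<i$ is also correct. So there is nothing to compare: you have supplied the proof the paper outsources.
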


Moreover, one has the following (cf.\ \cite[Thm.~7.3]{MPQT}).

\begin{proposition}\label{prop:relations}
 Let $G$ be a finitely generated pro-$p$ group.
 Then the following are equivalent.
 \begin{itemize}
  \item[(i)] The cup-product induces an epimorphism $H^1(G,\F_p)^{\otimes2}\to H^2(G,\F_p)$.
  \item[(ii)] One has an equality $R\cap F_{(3)}=R^p[R,F]$  
  for every $i$.
  \end{itemize}
If the above two conditions hold, then $r_i\in R\smallsetminus F_{(3)}$ for any set of defining relations $\mathcal{R}=\{r_1,\ldots,r_m\}$ of $G$.
\end{proposition}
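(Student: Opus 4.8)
\textbf{Plan for the proof of Proposition~\ref{prop:relations}.}

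The plan is to establish the equivalence by unwinding both conditions into statements about the minimal presentation \eqref{eq:presentation}. Recall that for a minimal presentation we have $H^2(G,\F_p)\cong (R/R^p[R,F])^*$, and that the image of the cup product in degree $2$ is, by Proposition~\ref{prop:cupproduct}, detected precisely by the degree-$2$ part of the relations, i.e.\ by the exponents $a_{ij}$ appearing in the expression \eqref{eq:shape r} of each $r\in R$. The key object is the subspace $R\cap F_{(3)}$ of $R$: a relation $r$ lies in $F_{(3)}$ exactly when all its $a_{ij}$ vanish, so modulo $R^p[R,F]$ the quotient $(R\cap F_{(3)})/R^p[R,F]$ corresponds to the annihilator of the image of the cup product inside $H^2(G,\F_p)$.

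First I would make this precise via the pairing of Proposition~\ref{prop:cupproduct}: the transgression identifies $H^2(G,\F_p)$ with $(R/R^p[R,F])^*$, and under this identification the cup product $\smile\colon H^1(G,\F_p)^{\otimes 2}\to H^2(G,\F_p)$ is, up to sign, the dual of the linear map $R/R^p[R,F]\to H^1(G,\F_p)^{\otimes 2}{}^{*}$ sending $r\mapsto (a_{ij})_{i,j}$ (the collection of its degree-$2$ exponents, with the $p=2$ diagonal terms handled as in \eqref{eq:shape r}). Then: the cup product is surjective iff this "degree-$2$ symbol" map on $R/R^p[R,F]$ is \emph{injective} iff its kernel is trivial iff every nonzero class in $R/R^p[R,F]$ has some nonzero $a_{ij}$ iff $R\cap F_{(3)}\subseteq R^p[R,F]$. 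Since the reverse inclusion $R^p[R,F]\subseteq R\cap F_{(3)}$ is automatic (as $R\subseteq F_{(2)}$ forces $R^p\subseteq F_{(3)}$ and $[R,F]\subseteq [F_{(2)},F]\subseteq F_{(3)}$), this gives the equality in (ii). This is the heart of the argument; the main obstacle is bookkeeping the transgression/duality carefully, especially reconciling the $p=2$ case where $F_{(3)}=F^4[[F,F],F]$ and the diagonal squares $x_i^{2a_{ii}}$ enter the symbol.

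Finally, for the last assertion, suppose conditions (i)--(ii) hold and let $\mathcal{R}=\{r_1,\dots,r_m\}$ be a set of defining relations, so $\mathcal{R}$ maps to a basis of $R/R^p[R,F]$. If some $r_i\in F_{(3)}$, then $r_i\in R\cap F_{(3)}=R^p[R,F]$, so $r_i$ maps to $0$ in $R/R^p[R,F]$, contradicting that its image is part of a basis (here $m=\rr(G)\ge 1$, the case $m=0$ being vacuous). Hence $r_i\in R\smallsetminus F_{(3)}$ for every $i$. I would cite \cite[Thm.~7.3]{MPQT} for the precise form of the cup-product/transgression compatibility to keep the exposition short, since the computation is standard once the dictionary between $R/R^p[R,F]$, its degree-$2$ symbols, and $H^2$ is set up.
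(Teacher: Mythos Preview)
Your proposal is correct and follows the standard route: dualize the cup product via the transgression isomorphism $H^2(G,\F_p)\cong (R/R^p[R,F])^*$, identify the resulting ``symbol map'' with the assignment $r\mapsto (a_{ij})$ via Proposition~\ref{prop:cupproduct}, and observe that its kernel is precisely $(R\cap F_{(3)})/R^p[R,F]$; the reverse inclusion $R^p[R,F]\subseteq R\cap F_{(3)}$ and the final assertion are handled exactly as you describe. The paper itself does not give a proof of this proposition but simply records it with a reference to \cite[Thm.~7.3]{MPQT}, so your sketch is in fact more detailed than what appears there and matches the argument one finds in that reference.
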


Hence, if a pro-$p$ group $G$ can have a chance to be quadratic (see Definition~\ref{definition:Hquadratic}), it has to satisfy one of the equivalent conditions of Proposition~\ref{prop:relations}. In the rest of the paper we will mostly restrict ourselves to this situation. In fact, one of the easiest shapes of relations satisfying (iii) above is involved in the definition of $p$-RAAGs (see Section~\ref{eq:pRAAGdefn}).

\begin{remark}[Gauss reduction of relations]\label{rem:Gauss reduction}
 Let $G$ and $\mathcal{X}$ be as above and set $d=\dd(G)$, $m=\rr(G)$. We will consider the lexicographic order $<$ on the set of couples $\{(i,j) \mid 1\le i<j\le m\}$.
The quotient $F_{(2)}/F_{(3)}$ is an $\F_p$-vector space with basis $\{[x_i,x_j]F_{(3)} \mid 1\leq i<j\leq d\}$.
Suppose that $G$ satisfies the conditions of Proposition~2.4 and let $\mathcal{R}=\{r_1,\ldots,r_m\}$ be a set of defining relations of $G$. Then $R/R\cap F_{(3)}$ is a subspace with basis $\bar{\mathcal{R}}=\{r_h F_{(3)} \mid 1\leq h\leq m\}$ and we will write $$r_h F_{(3)} = \sum_{1\le i< j\le d} a(h,i,j) \cdot [x_i,x_j]F_{(3)}$$ with $a(h,i,j) \in \mathbb{F}_p$.
 Let $A=(a(h,i,j))_{h,(i,j)}$ be the $m\times {d \choose 2}$ matrix of the coefficients of the elements of $\mathcal{R}$ in $F_{(2)}/F_{(3)}$. 
Gauss reduction on $A$ yields a lexicographically ordered sequence $(i_1,j_1)<\ldots<(i_m,j_m)$ 
and a set $\mathcal{R}' = \{r_1',\ldots,r_m'\}\subseteq F$ of new relators for $G$ such that:
 $$r_h' \equiv \prod_{1\leq i<j\leq d}[x_i,x_j]^{b(h,i,j)}\mod F_{(3)},\quad h=1,\ldots,m$$ with exponents $b(h,i,j)\in\F_p$ such that $$b(h,i,j) = \begin{cases} 
 1 & \text{ if } (i,j) = (i_h,j_h) \\  
 0 & \text{ if } (i,j)<(i_h,j_h)    
 \end{cases}$$
The commutator $[x_{i_h},x_{j_h}]$ may be considered the ``leading term'' of $r_h$. This will be relevant in Section~\ref{sec:free subgp}.
\end{remark}

\subsection{\texorpdfstring{$H^\bullet$}{H}-quadratic pro-\texorpdfstring{$p$}{p} groups}\label{sec:quadratic}

We introduce the main object of our investigations.

\begin{definition}\label{definition:Hquadratic}
 A pro-$p$ group is called \emph{$H^\bullet$-quadratic} (or simply \emph{quadratic}) if the $\F_p$-cohomology algebra $H^\bullet(G,\F_p)$,
 endowed with the cup-product, is a quadratic algebra.
\end{definition}

In the rest of the paper we will refer to $H^\bullet$-quadratic pro-$p$ groups simply as \emph{quadratic pro-$p$ groups}. 

Let $G$ be a quadratic pro-$p$ group.
If $p\neq2$, then $\alpha\smile \alpha=0$ for every $\alpha\in H^\bullet(G,\F_p)$ by graded-commutativity.
If we further assume that $\alpha\smile \alpha=0$ for every $\alpha$ also in the case $p=2$,
then one has an epimorphism of quadratic algebras
\begin{equation}\label{eq:wedge}
\end{equation}
Recall that $\cdd(G)$ denotes the cohomological dimension of $G$ (cf.\ \cite[\S~III.3]{nsw:cohn}). In particular, if $G$ is finitely generated then one has the inequalities
\begin{equation}\label{eq:inequalities}
 \cdd(G)\leq \dd(G) \quad \text{and} \quad \rr(G)\leq\binom{\dd(G)}{2},
\end{equation}
so that $G$ is finitely presented.

\begin{notation}
With a slight abuse of notation, from now on we denote the cup product $\a \smile \b$ of $\a \in H^i(G,\mathbb{F}_p)$ and $\b \in H^j(G,\mathbb{F}_p)$ by $\a\wedge \b  \in H^{i+j}(G,\mathbb{F}_p)$. Moreover, when it is clear from context, the cup-product symbol will be omitted.
\end{notation}

\begin{example}
Free pro-$p$ groups are trivially quadratic, as $\cdd(G)=1$ for a free pro-$p$ group $G$ (cf.\ \cite[Prop.~3.5.17]{nsw:cohn}), i.e.\ $H^n(G,\F_p)=0$ for $n\geq2$.
\end{example}

Given a field $K$, let $G_K(p)$ denote the maximal pro-$p$ quotient of the absolute Galois group $G_K$ --- 
i.e.\ $G_K(p)$ is the Galois group of the maximal $p$-extension of $K$.
Then one has the following consequence of the Rost-Voevodsky Theorem (cf.\ \cite[\S~24.3]{ido:miln}),
which is one of the reasons of our interest in quadratic pro-$p$ groups.

\begin{theorem}\label{thm:GKp quadratic}
  Let $K$ be a field containing a root of 1 of order $p$.
Then the maximal pro-$p$ Galois group $G_{K}(p)$ of $K$ is quadratic.
\end{theorem}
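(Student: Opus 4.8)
The plan is to deduce Theorem~\ref{thm:GKp quadratic} from the Rost--Voevodsky theorem together with the standard transfer of quadraticity from $G_K$ to its maximal pro-$p$ quotient $G_K(p)$. The starting point is the Bloch--Kato conjecture (now theorem): for a field $K$ containing a primitive $p$-th root of unity, the norm residue map induces an isomorphism of graded $\F_p$-algebras
\begin{equation*}
 K^M_\bullet(K)/p \;\xrightarrow{\ \sim\ }\; H^\bullet(G_K,\F_p),
\end{equation*}
where $K^M_\bullet(K)$ is the Milnor $K$-theory ring of $K$. Since the Milnor $K$-ring is by construction generated in degree $1$ with defining relations the Steinberg relations $\{a\}\cdot\{1-a\}=0$, which are homogeneous of degree $2$, the quotient $K^M_\bullet(K)/p$ is a quadratic $\F_p$-algebra in the sense of Definition~\ref{def:quadratic algebra}. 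Hence $H^\bullet(G_K,\F_p)$ is quadratic.

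The second step is to pass from the profinite group $G_K$ to its maximal pro-$p$ quotient $G_K(p)=G_K/N$, where $N$ is the closed normal subgroup generated by all $\ell$-Sylow contributions for $\ell\neq p$ and the Frattini-type refinements. The key cohomological input is the inflation map $\mathrm{inf}^\bullet\colon H^\bullet(G_K(p),\F_p)\to H^\bullet(G_K,\F_p)$: one shows it is an isomorphism in degree $1$ (both compute $\Hom(-,\F_p)$ and $G_K^{\mathrm{ab}}/p \cong G_K(p)^{\mathrm{ab}}/p$), injective in degree $2$ by the five-term exact sequence of the Hochschild--Serre spectral sequence for $1\to N\to G_K\to G_K(p)\to 1$ (using $H^1(N,\F_p)^{G_K(p)}=0$ since $N$ has no continuous $\F_p$-quotient surviving the action), and more generally one invokes the fact that $\mathrm{inf}^\bullet$ identifies $H^\bullet(G_K(p),\F_p)$ with the subalgebra of $H^\bullet(G_K,\F_p)$ generated by $H^1$. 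This is precisely the content of \cite[\S~2]{cq:bk} cited in the introduction. Since the target is generated in degree $1$ and $\mathrm{inf}^\bullet$ is an isomorphism onto that subalgebra, and since a subalgebra of a quadratic algebra generated in degree $1$ that contains all of degree $1$ need not automatically be quadratic --- so here one uses that $\mathrm{inf}^\bullet$ is actually an isomorphism in all degrees onto the image, and the image, being all of $H^\bullet(G_K,\F_p)$ when that algebra is itself generated in degree $1$ (which holds by Step~1), gives $H^\bullet(G_K(p),\F_p)\cong H^\bullet(G_K,\F_p)$, hence quadratic.

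More carefully for the last point: by Step~1 the algebra $H^\bullet(G_K,\F_p)$ is generated in degree $1$; since $\mathrm{inf}^1$ is an isomorphism, the image of $\mathrm{inf}^\bullet$ is all of $H^\bullet(G_K,\F_p)$, so $\mathrm{inf}^\bullet$ is surjective; combined with the injectivity in each degree (which for pro-$p$ groups and trivial $\F_p$-coefficients follows from the standard vanishing $H^i(N,\F_p)^{G_K(p)}=0$ for all $i\ge 1$, as $N$ is generated by pro-$\ell$ parts and a Frattini argument forces the coinvariants to vanish), $\mathrm{inf}^\bullet$ is an isomorphism of graded $\F_p$-algebras. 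Therefore $H^\bullet(G_K(p),\F_p)$ is quadratic, which is the assertion.

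The main obstacle is not a computation but a bookkeeping point: one must be precise about why $\mathrm{inf}^\bullet$ is an isomorphism in all degrees, i.e.\ why the kernel $N$ of $G_K\twoheadrightarrow G_K(p)$ contributes nothing to $\F_p$-cohomology after taking $G_K(p)$-(co)invariants. This is where one genuinely needs the structure of $N$ (its maximal pro-$p$ quotient is trivial by construction, and a Hochschild--Serre / Lyndon argument then kills all higher terms), and it is the step I would spell out by citing \cite[\S~2]{cq:bk} rather than reproving. Everything else --- the identification of Milnor $K$-theory mod $p$ as a quadratic algebra, and the formal deduction of quadraticity of $H^\bullet(G_K(p),\F_p)$ from an algebra isomorphism with a quadratic algebra --- is immediate from the definitions.
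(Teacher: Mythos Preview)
Your approach is correct and is precisely the one the paper indicates: the theorem is stated there without proof, as a consequence of Rost--Voevodsky together with the passage from $G_K$ to $G_K(p)$ handled in \cite[\S~2]{cq:bk}. Your two-step outline (quadraticity of $K^M_\bullet(K)/p\cong H^\bullet(G_K,\F_p)$, then transport along $\mathrm{inf}^\bullet$) is exactly this.

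One point deserves correction. Your justification for the injectivity of $\mathrm{inf}^\bullet$ in all degrees --- that ``$N$ is generated by pro-$\ell$ parts and a Frattini argument forces $H^i(N,\F_p)^{G_K(p)}=0$'' --- is not the right reason and does not work as stated: the kernel $N=\Gal(\bar K_s/K(p))$ need not be pro-prime-to-$p$, and vanishing of $H^1(N,\F_p)$ alone does not give vanishing of higher $H^i(N,\F_p)$. The actual argument (which is what \cite[\S~2]{cq:bk} uses) is that $N=G_L$ for $L=K(p)$, and since $\mu_p\subset L$ while $H^1(G_L,\F_p)=L^*/(L^*)^p=0$, one gets $K^M_n(L)/p=0$ for all $n\ge 1$; applying Bloch--Kato \emph{again}, now to $L$, yields $H^n(N,\F_p)=0$ for all $n\ge 1$, so the Hochschild--Serre spectral sequence collapses and $\mathrm{inf}^\bullet$ is an isomorphism. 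Since you already defer this step to the cited reference, your proof stands; just replace the parenthetical reasoning with the correct one.
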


The well-known Artin-Schreier Theorem (and its pro-$p$ version) implies that the only finite group
which occurs as maximal pro-$p$ Galois group is $C_2$, the cyclic group of order~$2$.
We will now prove Proposition~\ref{prop:properties quadratic}.

\begin{proof}[\textbf{Proof of Proposition~\ref{prop:properties quadratic}}]
For (a), assume by contradiction that $G$ is not torsion-free.
 Then $G$ contains a subgroup $H$ which is cyclic of order $p$, so that $H^n(H,\F_p)\cong \F_p$
 for every $n\geq0$ and $\cdd(H)$ is infinite.
 Thus, also $\cdd(G)$ is infinite by \cite[Prop.~3.3.5]{nsw:cohn}, contradicting \eqref{eq:inequalities}. This proves claim (a).
%

Regarding (b), it is well known that the cohomology algebra of a $2$-elementary abelian group $C_2^d$ ($d\in \mathbb{N}$) with coefficients in the finite field $\F_2$ is the symmetric algebra $S_\bullet(H^1(C_2^d,\F_2))$. Thus, $C_2^d$ is quadratic. 

Now, let $G$ be a finite $2$-group and suppose that $G$ has a minimal presentation of the form $\langle X \vert R \rangle$ with $x,y \in X$ and either $x^{2^k}y^{-2^m}\in R$ or $x^{2^k}\in R$ for some $m,k\ge 2$. Then $G$ is not quadratic. Since the only finite abelian $2$-groups that do not have relations of the above form are elementary abelian, item (b) follows.

For (c), by item (b), all elements of $G$ must have order $2$ and hence $G$ is abelian.
\end{proof}

We were informed by J.\ Min\'a\v{c} that a similar result to our Proposition~\ref{prop:properties quadratic} has been obtained also by him together with D.\ Benson, S.\ Chebolu, C.\ Okay and J.\ Swallow, and it will appear in a forthcoming paper.

\begin{remark}[Proposition~\ref{prop:properties quadratic} for $p=2$]\label{rmk:propCp2}
We believe that, if a finite $2$-group is quadratic, then it must be elementary abelian. 
One can check this directly for several finite $2$-groups: dihedral groups $D_{2^n}$ (see \cite[Chap.~IV, Thm.~2.7]{atem}), quaternion group and generalised quaternion groups (see \cite[Chap.~IV, Thm.~2.9]{atem} and \cite[Chap.~IV, Lem.~2.11]{atem}) and extraspecial $2$-groups (see \cite[Rem.~2.13]{atem}). 

Unfortunately, we have not been able to verify our conviction in full generality, but we have two additional comments.
First, we can show---using a spectral sequence argument---that the Mennicke $2$-group $M$ of order $2^{11}$ given by the presentation $$M= \langle a, b, c \mid [a, b] = b^{-2}, [b, c] = c^{-2}, [c, a] = a^{-2} \rangle $$ is not quadratic. We have decided to not include a proof, as this is very similar to the cited examples in \cite[Chap.~IV]{atem}.

Secondly, the method of proof of item (b) in Proposition~\ref{prop:properties quadratic} cannot be applied in general, as there are $2$-groups
that do not satisfy the condition above. These include finite $2$-groups $G$ with balanced presentations (i.e.,  with $d(G) = r(G)$), which among other groups include the Mennicke $2$-group $M$, the quaternion group and the generalised quaternion groups.
\end{remark}

\begin{remark}\label{rem:p2}
 Henceforth we will always implicitely assume that $\alpha^2$ is trivial in $H^2(G,\F_2)$ for every 
 $\alpha\in H^1(G,\F_p)$ in the case $p=2$.
 By Proposition~\ref{prop:cupproduct} and \cite[Prop.~1.3.2]{vogel:thesis},
 this is equivalent
 to assuming that $a_{ii}=0$ for every $i\in\{1,\ldots,d\}$
 and for every defining relation $r_h\in\mathcal{R}$ of $G$. 
If $G=G_K(p)$ for some field $K$, then this holds if $\sqrt{-1}\in K$. 
\end{remark}

\subsection{Mild pro-\texorpdfstring{$p$}{p} groups}\label{sec:mild}
A good source of quadratic pro-$p$ groups is the class of \emph{mild} pro-$p$ groups.
Such groups were introduced by J.~P.~Labute in \cite{labute:mild} to study the Galois groups of pro-$p$
extensions of number fields with restricted ramification.
Since a quadratic pro-$p$ group needs to satisfy the conditions of Proposition~\ref{prop:relations},
we give the definition of mild pro-$p$ groups among those satisfying these conditions.
For the general definition and properties of mild pro-$p$ groups we refer to \cite{labute:mild,forre:mild,gartner:mild}.

\begin{definition}\label{def:mild}
 Let $G$ be a finitely presented pro-$p$ group with presentation \eqref{eq:presentation}
 and satisfying Proposition~\ref{prop:relations}. Consider the graded algebra $M_\bullet(G) = Q(V,\Omega)$, where $V\cong G/G_{(2)}= \langle x_1,\ldots,x_d\rangle$
and $$\Omega=\left\{\sum_{i<j}a_{ij}(x_i \otimes x_j-x_j \otimes x_i)\right\}$$ with $a_{ij}$ as in \eqref{eq:shape r}. 
The group $G$ is \emph{mild} if one has an equality of formal power series
 \begin{equation}\label{eq:series mild}
   \sum_{n\geq 0}\dim(M_n(G))\cdot T^n=\frac{1}{1-\dd(G)T+\rr(G)T^2}.
 \end{equation}
\end{definition}

The most interesting --- and useful --- feature of mild pro-$p$ groups is that they have cohomological dimension
equal to 2 (cf.\ \cite[Thm.~1.2]{labute:mild}). In fact, it is fairly easy to decide if a pro-$p$ group with cohomological dimension $2$ is quadratic.

\begin{proposition}\label{prop:mild cd2}
 Let $G$ be a finitely generated pro-$p$ group with $\cdd(G)=2$ which satisfies Proposition~\ref{prop:relations}.
 Write $H^2(G,\F_p)=H^1(G,\F_p)^{\otimes2}/\Theta$ for some subspace $\Theta$ of $H^1(G,\F_p)^{\otimes2}$.
 If \begin{equation}\label{eq:H3}
     \Theta \wedge H^1(G,\F_p)=H^1(G,\F_p)^{\otimes3},
    \end{equation}
then $G$ is quadratic.
\end{proposition}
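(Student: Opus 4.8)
The plan is to show that the cohomology algebra $H^\bullet(G,\F_p)$, which is generated in degree $1$ by the hypothesis that $G$ satisfies Proposition~\ref{prop:relations}, is isomorphic to the quadratic algebra $Q(H^1(G,\F_p),\Theta)$. Write $A_\bullet = H^\bullet(G,\F_p)$, $V = A_1 = H^1(G,\F_p)$, and $Q_\bullet = Q(V,\Theta) = T_\bullet(V)/(\Theta)$. Since $A_2 = V^{\otimes 2}/\Theta = Q_2$ by construction, and the cup product gives a surjection $V^{\otimes n} \to A_n$ for all $n$ (because $A_\bullet$ is generated in degree $1$), the relations $\Theta$ in degree $2$ lie in the kernel, so the cup product map factors through a surjective graded algebra homomorphism $\pi_\bullet\colon Q_\bullet \to A_\bullet$ which is an isomorphism in degrees $0,1,2$.

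The first key step is to observe that $\cdd(G) = 2$ forces $A_n = 0$ for all $n \ge 3$, so it suffices to prove $Q_n = 0$ for $n \ge 3$; and by induction it is enough to handle $n = 3$, since $Q_\bullet$ is generated in degree $1$, so $Q_{n} = Q_{n-3}\cdot Q_3 $ (more precisely $Q_n$ is spanned by products of $Q_3$ with lower-degree elements once $Q_3=0$, hence vanishes). The second key step is to compute $Q_3$. There is a general exact-sequence description of the degree-$3$ part of a quadratic algebra: $Q_3 = V^{\otimes 3}/(\Theta \otimes V + V \otimes \Theta)$. The hypothesis \eqref{eq:H3} says precisely that $\Theta \wedge V = V^{\otimes 3}$, where $\Theta \wedge V$ is (in the notation of the paper, via the wedge product of quadratic algebras, or simply) the image of $\Theta \otimes V + V \otimes \Theta$ inside $V^{\otimes 3}$ — so I would first reconcile the notation $\Theta\wedge H^1$ with $\Theta\otimes V + V\otimes\Theta$, noting that under the assumption $\alpha^2 = 0$ (Remark~\ref{rem:p2}) the relevant relation space already contains the symmetrizers $v\otimes v$, so the wedge and tensor versions of the span coincide on the relevant subspace. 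Granting this, \eqref{eq:H3} gives $Q_3 = V^{\otimes 3}/(\Theta\otimes V + V\otimes\Theta) = V^{\otimes 3}/V^{\otimes 3} = 0$.

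Putting these together: $Q_n = 0$ for $n \ge 3$, $Q_\bullet \to A_\bullet$ is surjective in every degree and an isomorphism in degrees $\le 2$, and $A_n = 0$ for $n \ge 3$ by $\cdd(G)=2$; hence $\pi_\bullet$ is an isomorphism in every degree, so $A_\bullet \cong Q(V,\Theta)$ is quadratic. I expect the main obstacle to be purely bookkeeping: making the identification of $\Theta \wedge H^1(G,\F_p)$ with the degree-$3$ relation space $\Theta\otimes V + V\otimes\Theta$ of the quadratic algebra $Q(V,\Theta)$ completely precise — in particular checking that the wedge-product conventions from Example~\ref{ex:quad algebras}(b) and the standing assumption that squares vanish make the two natural candidate subspaces of $V^{\otimes 3}$ agree, and that the cup-product surjection $V^{\otimes 3}\to A_3$ indeed kills $\Theta\otimes V + V\otimes\Theta$ (which is immediate since it kills $\Theta$ in degree $2$ and is an algebra map). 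Once that identification is in hand, the argument is a short dimension/degree count with no further computation.
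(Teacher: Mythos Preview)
Your proof is correct and follows essentially the same approach as the paper: use $\cdd(G)=2$ to kill $H^n$ for $n\ge 3$, and then check that the degree-$3$ part of the candidate quadratic algebra $Q(V,\Theta)$ vanishes, which is exactly what hypothesis~\eqref{eq:H3} says. The paper's proof is a two-sentence sketch of this same argument; you have simply fleshed it out, and in particular you have been more careful than the paper about reconciling the notation $\Theta\wedge H^1(G,\F_p)$ with the degree-$3$ relation space $\Theta\otimes V + V\otimes\Theta$ of $Q(V,\Theta)$.
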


\begin{proof}
 Since $H^n(G,\F_p)$ is trivial for $n\geq3$, one just needs to check whether $H^3(G,\F_p)=0$
 follows from the relations $\Theta$ in $H^2(G,\F_p)$. But this follows immediately from \eqref{eq:H3}.
\end{proof}

This fact has the following consequence.

\begin{remark}
In particular, Proposition~\ref{prop:mild cd2} holds for mild groups satisfying Proposition~\ref{prop:relations}.
\end{remark}

Usually, it is quite difficult to check whether a finitely presented pro-$p$ group is mild
using the definition.
Rather, one has the following handy criterion to check whether a pro-$p$ group is mild. 

\begin{proposition}[{\cite[p.~789]{gartner:mild}}]\label{prop:cd2 cupproduct}
 Let $G$ be a finitely presented pro-$p$ group such that $H^2(G,\F_p)\neq0$.
Assume that $H^1(G,\F_p)$ admits a decomposition $H^1(G,\F_p)=V_1\oplus V_2$ such that the following holds:
\begin{itemize}
 \item[(i)] the cup-product $V_1\otimes V_1\to H^2(G,\F_p)$ is trivial;
 \item[(ii)] the cup-product $V_1\otimes V_2\to H^2(G,\F_p)$ is surjective.
\end{itemize}
Then $G$ is mild.
\end{proposition}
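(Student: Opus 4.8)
The plan is to translate the two cohomological hypotheses into the Lie-algebra language in which \emph{mildness} is most naturally checked, and then invoke Labute's combinatorial criterion for strong freeness. Fix a minimal presentation \eqref{eq:presentation} with free basis $\mathcal X=\{x_1,\dots,x_d\}$ of $F$ and dual basis $\{\alpha_1,\dots,\alpha_d\}$ of $H^1(G,\F_p)$, chosen compatibly with the decomposition so that $\alpha_1,\dots,\alpha_e$ span $V_1$ and $\alpha_{e+1},\dots,\alpha_d$ span $V_2$, where $e=\dim V_1$. First I would observe that conditions (i) and (ii) together force the cup product $H^1(G,\F_p)^{\otimes 2}\to H^2(G,\F_p)$ to be onto, since it is already onto when restricted to $V_1\otimes V_2$; hence $G$ satisfies Proposition~\ref{prop:relations}, and every set of defining relations $\mathcal R=\{r_1,\dots,r_m\}$ has nonzero quadratic initial forms, which by \eqref{eq:shape r} (and by Remark~\ref{rem:p2} when $p=2$) read $r_h\equiv\prod_{i<j}[x_i,x_j]^{a^{(h)}_{ij}}\bmod F_{(3)}$ and span an $m$-dimensional subspace $\bar{\mathcal R}$ of $F_{(2)}/F_{(3)}\cong\Lambda^2(G/G_{(2)})$.

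Next I would make the dictionary precise via Proposition~\ref{prop:cupproduct}. Writing $V=G/G_{(2)}=U_1\oplus U_2$ with $U_1=\langle x_{e+1},\dots,x_d\rangle$ and $U_2=\langle x_1,\dots,x_e\rangle$, so that $V_1$ and $V_2$ are the annihilators of $U_1$ and $U_2$ respectively, the pairing of Proposition~\ref{prop:cupproduct} shows that condition~(i) is equivalent to each $\bar r_h$ having no $\Lambda^2 U_2$-component, i.e.\ $\bar{\mathcal R}\subseteq\Lambda^2 U_1\oplus(U_1\wedge U_2)$, while condition~(ii) is equivalent to the projection $\bar{\mathcal R}\to U_1\wedge U_2$ along $\Lambda^2 U_1\oplus\Lambda^2 U_2$ being injective. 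After a Gauss reduction (Remark~\ref{rem:Gauss reduction}) and, if necessary, a triangular change of the $Z=\{x_{e+1},\dots,x_d\}$ variables to absorb the $\Lambda^2 U_1$-parts of the $\bar r_h$ via Labute's Lazard-elimination lemma, one may therefore choose defining relations whose initial forms have the ``linked'' shape $\rho_h=\sum_{j>e}[\ell_{hj},x_j]$ with linear $\ell_{hj}\in U_2=\langle x_1,\dots,x_e\rangle$, and such that the $m$ coefficient vectors $(\ell_{h,e+1},\dots,\ell_{h,d})$, $h=1,\dots,m$, are linearly independent.

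Finally, this is exactly the hypothesis of Labute's mildness criterion (\cite{labute:mild}; see also \cite{forre:mild,gartner:mild}): a presentation whose relation initial forms can be put in this linked form with linearly independent coefficient vectors is \emph{strongly free}, so the associated algebra $M_\bullet(G)=Q(V,\Omega)$ has Hilbert series $1/(1-\dd(G)T+\rr(G)T^2)$, which is precisely the condition of Definition~\ref{def:mild}; hence $G$ is mild. I expect the main obstacle to be the middle step: verifying carefully that condition~(ii) --- surjectivity onto the \emph{whole} of $H^2(G,\F_p)$ --- matches the linear-independence requirement of the combinatorial criterion, and disposing of the $\Lambda^2 U_1$-components of the initial forms by the Lazard elimination so that Labute's theorem applies verbatim. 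The rest is routine bookkeeping, with the prime $p=2$ covered thanks to the standing assumption of Remark~\ref{rem:p2} that the $a_{ii}$ vanish, so that the initial forms are genuinely of commutator type.
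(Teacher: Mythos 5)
First, a caveat: the paper does not prove Proposition~\ref{prop:cd2 cupproduct} at all --- it is imported from G\"artner with a citation --- so there is no internal argument to measure yours against, and I am judging the reconstruction on its own terms. Your overall strategy is the standard one and is sound up to the middle of your second paragraph: condition (ii) gives surjectivity of the cup product, hence Proposition~\ref{prop:relations} holds and the defining relations have nonzero quadratic initial forms; by Proposition~\ref{prop:cupproduct}, condition (i) is exactly the vanishing of the $\Lambda_2 U_2$-components of these initial forms, and condition (ii) is exactly the linear independence of their mixed components in $U_1\wedge U_2$.

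The gap is the step you yourself flag as the main obstacle: the claim that a ``triangular change of the $Z$-variables'' puts the initial forms in the purely linked shape $\rho_h=\sum_{j>e}[\ell_{hj},x_j]$ with $\ell_{hj}\in U_2$. This is false in general. Take $d=4$, a single relation with initial form $[x_1,x_2]+[x_3,x_4]$, and $V_1=\langle\alpha_1\rangle$ (so $e=1$): hypotheses (i) and (ii) hold for this decomposition, but every element $\sum_j[\ell_j,x_j]$ with $\ell_j\in\langle x_1\rangle$ lies in $x_1\wedge V$ and is therefore a decomposable alternating $2$-form of rank at most $2$, whereas $x_1\wedge x_2+x_3\wedge x_4$ has rank $4$, and no linear change of basis alters the rank. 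Fortunately the step is also unnecessary: Labute's elimination theorem does not require the $\Lambda_2 U_1$-components to vanish. Writing $\mathfrak{r}$ for the ideal of the free Lie algebra generated by $x_{e+1},\dots,x_d$, condition (i) says precisely that the initial forms lie in $\mathfrak{r}$; their $\Lambda_2 U_1$-parts die in $\mathfrak{r}/[\mathfrak{r},\mathfrak{r}]$, which is a free $U(L(x_1,\dots,x_e))$-module on the remaining generators, and condition (ii) says the images there are linearly independent, which is what the theorem needs. Equivalently, order the generators so that $x_1,\dots,x_e$ are larger than $x_{e+1},\dots,x_d$: after Gaussian elimination on the mixed parts, the leading monomials of the initial forms are distinct words $x_ix_j$ with $i\le e<j$, which are combinatorially free whatever the $\Lambda_2 U_1$ tails may be, so Anick's criterion yields strong freeness, hence the Hilbert series of Definition~\ref{def:mild} and mildness. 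With the absorption step replaced by this observation, your argument closes.
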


We remark that the condition of mildness of a pro-$p$ group satisfying Proposition~\ref{prop:relations}
depends only on the ``shape'' of its defining relations modulo $F_{(3)}$.

\begin{remark}\label{rem:mild approximation}
Let $G$ be a mild quadratic pro-$p$ group with minimal presentation \eqref{eq:presentation}
and defining relations $\mathcal{R}=\{r_1,\ldots,r_m\}$. If $\tilde{\mathcal{R}}=\{\tilde r_1,\ldots,\tilde r_m\}$
is a subset of $F$ with $\tilde r_h\equiv r_h\bmod F_{(3)}$ for every $h=1,\ldots,m$, then also the pro-$p$
group $\tilde G=F/\tilde R$ with defining relations $\tilde{\mathcal{R}}$ is mild and quadratic --- since $H^2(\tilde G,\F_p)\cong H^2(G,\F_p)$ by Proposition~2.3.
\end{remark}

The previous remark will be used in Section~\ref{sec:free subgp}.

\subsection{Generalised Golod-Shafarevic pro-\texorpdfstring{$p$}{p} groups}\label{sec:ggs}
A generalised Go\-lod-Sha\-fa\-re\-vich pro-$p$ group is a pro-$p$ group which satisfies
a weighted version of the celebrated Golod-Shafarevich condition.
We briefly recall the definition and we reference the reader \cite[\S~4.1]{ershov} for a deeper treatment. 

Let $\mathcal{X}=\{x_1,\ldots,x_d\}$ and $U=\{u_1,\ldots,u_d\}$ be two finite sets of the same cardinality.
The free pro-$p$ group $F$ on $\mathcal{X}$ can be embedded in the \emph{free associative algebra}
$\F_p \langle\!\langle U \rangle\!\rangle$  over $U$ via the Magnus embedding $\iota: x_i\mapsto 1+u_i$.
Choose weights $w(u_i)\in \mathbb{R}_{\ge0}\cup \{\infty\}$ for $x_i\in \mathcal{X}$ and extend this
to a \emph{weight function} $w: \F_p \langle\!\langle U \rangle\!\rangle \to \mathbb{R}_{\ge0}\cup \{\infty\}$
by setting 
\begin{equation*}
w(1)=0, \text{\quad } w(0)=\infty,\quad w(u_{i_1}\ldots u_{i_k})= w(u_{i_1})+ \ldots + w(u_{i_k}) 
\end{equation*} 
\begin{equation*}
w\left(\sum c_\alpha m_\alpha\right) = \min\{w(m_\alpha) \mid c_\alpha \neq 0\}.
\end{equation*} 
For an element $f\in F$, define the \emph{valuation} of $f$ by $D(f) = w(\iota(f)-1)$ and, 
for a subset $S$ of $F$, define $H_{S,D}(T) = \sum_{s\in S} T^{D(s)}$. 

A pro-$p$ group $G$ is said to be \emph{generalised Golod-Shafarevic} if there exist:
\begin{enumerate}
 \item a minimal presentation \eqref{eq:presentation} for $G$, with generators $\mathcal{X}=\{x_1,\ldots,x_d\}$ 
 and relators $\mathcal{R}$;
 \item a weight function $w: \F_p \langle\!\langle U \rangle\!\rangle \to \mathbb{R}_{\ge0}\cup \{\infty\}$ 
 (with associated valuation $D$) on $F(\mathcal{X})$ and
 \item a real number $T_0\in (0,1)$
\end{enumerate}
 such that
\[
   1- H_{\mathcal{X},D}(T_0) +   H_{\mathcal{R},D}(T_0) <0.
\]
In Section~\ref{sec:free subgp} we will use the following theorem.

\begin{theorem}[{\cite[Thm.~7.1]{ershov}}]\label{thm:ershovfreesub}
 A generalised Golod-Shafarevic pro-$p$ groups contains a free non-abelian closed subgroup. 
\end{theorem}

\section{Combinatorial group theory for quadratic pro-\texorpdfstring{$p$}{p} groups}\label{sec:combgrth}

In the next section we concern ourselves with finding new ways to construct quadratic groups from old ones.

\subsection{Free and direct products}

The following results are well-known to experts.

\begin{proposition}\label{prop:cohomology freeproduct}
 Let $G_1,G_2$ be two finitely generated quadratic pro-$p$ groups.
\begin{itemize}
 \item[(a)] The free pro-$p$ product $G=G_1\ast G_2$ is again quadratic; furthermore, we have $H^\bullet(G,\F_p)\cong H^\bullet(G_1,\F_p)\oplus H^\bullet(G_2,\F_p)$.
 \item[(b)] The direct product $G=G_1\times G_2$ is again quadratic;
 furthermore, we have $H^\bullet(G,\F_p)\cong H^\bullet(G_1,\F_p)\wedge H^\bullet(G_2,\F_p)$.
\end{itemize}
\end{proposition}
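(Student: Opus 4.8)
The plan is to handle the two parts separately, using standard facts about the cohomology of free and direct products of pro-$p$ groups together with the cohomological descriptions of the direct sum and wedge product of graded algebras from Example~\ref{ex:quad algebras}.

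\textbf{Part (a): free products.} For the free pro-$p$ product $G=G_1\ast G_2$, the standard Mayer--Vietoris type argument (cf.\ \cite[Thm.~4.1.4]{nsw:cohn}) gives, for $n\geq 1$, an isomorphism $H^n(G,\F_p)\cong H^n(G_1,\F_p)\oplus H^n(G_2,\F_p)$, and this isomorphism is compatible with the inflation maps from the two factors. In degree $0$ everything is $\F_p$. I would check that under this identification the cup product on $H^\bullet(G,\F_p)$ restricted to each summand $H^\bullet(G_i,\F_p)$ agrees with the cup product on $H^\bullet(G_i,\F_p)$ (immediate, since inflation is a ring homomorphism), and that the cross terms $H^i(G_1,\F_p)\smile H^j(G_2,\F_p)$ vanish in degree $\geq 2$. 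The vanishing follows because such a product is the inflation along $G\to G_1$ of a class in $H^i(G_1,\F_p)$ cup the inflation along $G\to G_2$ of a class in $H^j(G_2,\F_p)$; restricting this product to either $G_1$ or $G_2$ kills it (one factor restricts to $0$), and since for $n\geq 2$ restriction to $G_1$ and $G_2$ jointly is injective on $H^n(G,\F_p)$ by the Mayer--Vietoris isomorphism, the product is $0$. Hence $H^\bullet(G,\F_p)\cong H^\bullet(G_1,\F_p)\oplus H^\bullet(G_2,\F_p)$ as graded algebras, which is the direct sum of algebras in the sense of Example~\ref{ex:quad algebras}. Since the direct sum of quadratic algebras is quadratic (loc.\ cit., citing \cite[\S~3.1]{pp:quad}), $G$ is quadratic.

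\textbf{Part (b): direct products.} For $G=G_1\times G_2$ with $G_1,G_2$ finitely generated pro-$p$, the K\"unneth formula for profinite groups (cf.\ \cite[Thm.~2.4.6 / Prop.~1.6.?]{nsw:cohn}, using that $\F_p$ is a field so all Tor terms vanish) gives a graded-algebra isomorphism $H^\bullet(G,\F_p)\cong H^\bullet(G_1,\F_p)\otimes H^\bullet(G_2,\F_p)$, where the tensor product of graded algebras carries the \emph{sign-twisted} multiplication $(a\otimes b)(a'\otimes b')=(-1)^{|b||a'|}(aa')\otimes(bb')$ --- this is precisely the skew-commutative tensor product $H^\bullet(G_1,\F_p)\otimes^{-1}H^\bullet(G_2,\F_p)$, which in the notation of Example~\ref{ex:quad algebras}(b) is the wedge product $H^\bullet(G_1,\F_p)\wedge H^\bullet(G_2,\F_p)$. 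I would spell out that the sign in the K\"unneth product for cup products is exactly the Koszul sign, so the identification with $\wedge$ is on the nose. Since the wedge product of two quadratic algebras is quadratic (Example~\ref{ex:quad algebras}, citing \cite[\S~3.1]{pp:quad}), it follows that $G$ is quadratic with $H^\bullet(G,\F_p)\cong H^\bullet(G_1,\F_p)\wedge H^\bullet(G_2,\F_p)$.

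\textbf{Expected main obstacle.} The genuinely delicate point is bookkeeping of signs and the precise form of the K\"unneth isomorphism in part (b): one must verify that the multiplicative structure transported through K\"unneth is the skew-commutative tensor product and not the naive one, so that it matches the wedge product of Example~\ref{ex:quad algebras} rather than some other quadratic algebra. (For $p=2$ the sign issue disappears, but one should still note the identification holds uniformly.) In part (a) the only thing needing a little care is the joint-injectivity of restrictions in degrees $\geq 2$ used to kill the cross terms; everything else is a direct citation. Since the statement is flagged as well-known, I would keep the write-up brief, citing \cite{nsw:cohn} for the cohomology computations and Example~\ref{ex:quad algebras} for the algebra side, and only expand the cross-term vanishing and the sign verification.
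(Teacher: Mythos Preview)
Your proposal is correct and follows essentially the same approach as the paper: the paper's proof simply cites \cite[\S~IV.1]{nsw:cohn} for part~(a), \cite[\S~II.4, Ex.~7]{nsw:cohn} for part~(b), and remarks that both also follow from the quadratic-algebra constructions after Example~\ref{ex:quad algebras}. Your write-up is a fleshed-out version of exactly these citations, including the cross-term vanishing and the K\"unneth sign check that the paper leaves implicit.
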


\begin{proof}
Statement (a) follows from \cite[\S~IV.1]{nsw:cohn}.
Statement (b) follows from \cite[\S~II.4, Ex.~7]{nsw:cohn}. Note that both statements also follow from the discussion following Example \ref{ex:quad algebras}.
\end{proof}

\begin{remark}\label{rem:directproduct}
 Let $G_1,G_2$ be two quadratic pro-$p$ groups which occur as maximal pro-$p$ Galois groups
 of fields. 
 By \cite[Thm.~C]{cq:bk}, the direct product $G_1\times G_2$ may occur as a maximal pro-$p$ Galois group
 only if one of the two groups is abelian.
\end{remark}

As we have seen above, quadratic pro-$p$ groups are closed under taking free and direct products. There are other universal ``free product-like'' constructions and, in the rest of this section, we will explore to what extent the class of quadratic pro-$p$ groups is closed under these constructions.

\subsection{Amalgamated products}

Let $G_1$ and $G_2$ be pro-$p$ groups and let $\phi_i\colon H\to G_i$ (for $i\in\{1,2\}$) be continuous
monomorphisms of pro-$p$ groups.
An \emph{amalgamated free pro-$p$ product} (or simply \emph{amalgam}) of $G_1$ and $G_2$ with \emph{amalgamated subgroup} $H$ is defined
to be the pushout
\[
 \xymatrix{ H\ar[r]^{\phi_1}\ar[d]_{\phi_2} & G_1\ar@{-->}[d]^{\psi_1} \\ G_2\ar@{-->}[r]^{\psi_2} & G }
\]
in the category of pro-$p$ groups, which is unique (cf.\ \cite[\S~9.2]{ribzal:book}).
We write $G=G_1\amalg_H G_2$.

An amalgamated free pro-$p$ product $G=G_1\amalg_H G_2$ is said to be \emph{proper}
if the homomorphisms $\psi_i$ ($i = 1, 2$) are monomorphisms.
In that case one identifies $G_1$, $G_2$ and $H$ with their images in $G$.
\begin{remark}
For a proper amalgam $G=G_1\amalg_H G_2$ it follows from  \cite[Prop.~9.2.13(a)]{ribzal:book} that $$\cdd(G)\le \max\{\cdd(G_1),\cdd(G_2),\cdd(H)+1\}.$$
\end{remark}

\begin{example}\label{example:demushkin amalg}
For $p\neq2$, let $G$ be a Demushkin group with presentation
\[
 G=\left\langle x_1,\ldots,x_d\mid x_1^q[x_1,x_2][x_3,x_4]\cdots[x_{d-1},x_d]=1 \right\rangle,
\]
with $d=\dd(G)\geq4$ and $q$ a power of $p$.
Let $G_1,G_2\le G$ be the subgroups generated by 
$x_1,x_2$ and by $x_3,\ldots,x_d$ respectively, and $H\le G$ the pro-cyclic subgroup
generated by $[x_2,x_1]x_1^{-q}=[x_3,x_4]\cdots[x_{d-1},x_d]$.
Then $G$ is (isomorphic to) the proper amalgam $G_1\amalg_H G_2$ (cf.\ \cite[Ex.~9.2.12]{ribzal:book}; the same holds also if $p=2$). Note that $\cdd(G_1)=\cdd(G_2)=1$, but $\cdd(G)=2$.
\end{example}

We come to the proof of the first of the two main results of this section: Theorem~\ref{thm:cohomology amalgam}.

\begin{remark}\label{remark:restriction}
For a pro-$p$ group $G$ and a subgroup $H\le G$, the restriction map 
\[
   \mathrm{res}_{G,H}^\bullet\colon H^\bullet(G,\F_p)\longrightarrow H^\bullet(H,\F_p),
\]
induced by $\mathrm{res}_{G,H}^n$ for every $n\geq0$, is a morphism of graded algebras
(cf.\ \cite[Prop.~1.5.3]{nsw:cohn}).
\end{remark}

\begin{proof}[\textbf{Proof of Theorem~\ref{thm:cohomology amalgam}}]
By hypothesis~(i), the maps $\res_{G_i,H}^1$ are surjective, for $i=1,2$.
Set $V_i=\kernel(\res_{G_i,H}^1)$, and let $W_i\subseteq H^1(G_i,\F_p)$ be a complement for $V_i$ for each $i$.
Then we may identify both $W_1$ and $W_2$ with $H^1(H,\F_p)$ via $\res^1_{G_1,H}$ and $\res_{G_2,H}^1$, respectively, and with an abuse of notation we write $W=W_1=W_2$, so that $H^1(G_i,\F_p)=V_i\oplus W$ for $i=1,2$.
Since $H$ is quadratic, one has an isomorphism of quadratic algebras 
\[
 H^\bullet(H,\F_p)\simeq \frac{\Lambda_\bullet(W)}{(\Omega_H)},
\]
for some subspace $\Omega_H\subseteq \Lambda_2(W)$.
Moreover, by hypothesis~(ii), there exist subspaces $\Omega_i\subseteq\Lambda_2(V_i)\oplus(V_i\wedge W)$ and isomorphisms of quadratic algebras
\[
 H^\bullet(G_i,\F_p)\simeq \frac{\Lambda_2(V_i\oplus W)}{(\Omega_i\cup\Omega_H)}
\]
 for $i=1,2$.
In particular, in $H^\bullet(G_i,\F_p)$ one has a relation $a+b=0$ with $a\in H^1(G_i,\F_p)\wedge V_i$ and $b\in \Lambda_2(W)$ if and only if $a\in \Omega_i$ and $b\in \Omega_H$.

Since $G=G_1\amalg_{H}G_2$ is a proper amalgam, by \cite[Prop.~9.2.13]{ribzal:book} the monomorphisms 
$H\to G_i$ and $G_i\to G$ for $i=1,2$ induce a long exact sequence in cohomology  
\[
 \xymatrix@C=0.5truecm{ 0\ar[r] & H^1(G,\F_p)\ar[r]^-{ f_G^1}& \cdots\ar[r]^-{ f_H^{n-1}} & H^{n-1}(H,\F_p) \ar`r[d]`[l] `[dlll] `[dll] [dll]   \\
&H^n(G,\F_p)\ar[r]^-{ f_G^n}& H^n(G_1,\F_p)\oplus H^n(G_2,\F_p)\ar[r]^-{ f_H^n} & H^n(H,\F_p) \ar`r[d]`[l] `[dlll] `[dll] [dll]   \\
& H^{n+1}(G,\F_p)\ar[r]^-{ f_G^{n+1}} & H^{n+1}(G_1,\F_p)\oplus H^{n+1}(G_2,\F_p)\ar[r]^-{ f_H^{n+1}} &\cdots  &}
\]
 for $n\geq1$, where 
\[  \begin{split}
   f_G^n(\xi) &=\left(\mathrm{res}_{G,G_1}^n(\xi),\mathrm{res}_{G,G_2}^n(\xi)\right), \\
   f_H^n(\eta,\eta')&=\mathrm{res}_{G_1,H}^n(\eta)-\mathrm{res}_{G_2,H}^n(\eta')
 \end{split} \]
for every $\xi\in H^n(G,\F_p)$ and $\eta\in H^n(G_1,\F_p)$, $\eta'\in H^n(G_2,\F_p)$ (cf.\ \cite{gildenribes:amalg}).

Since $G_1$, $G_2$ and $H$ are quadratic, Remark~\ref{remark:restriction} implies
that the restriction maps $\mathrm{res}_{G_1,H}^\bullet$ and $\mathrm{res}_{G_2,H}^\bullet$
are epimorphisms of quadratic algebras. Therefore also the maps $f_H^n$ are surjective for all $n\geq1$.
Thus, 
\[
 H^\bullet(G,\F_p)\cong \F_p\oplus\left(\bigoplus_{n\geq1}\kernel(f_H^n)\right),
\]
where $\F_p$ is the degree 0 part --- note that this is a morphism of graded algebras, as the cup-product commutes with the restriction maps (cf.\ \cite[Prop.~1.5.3]{nsw:cohn}).

For every $n\geq1$, we may decompose the map $f_H^n$ as 
\[
 \xymatrix@C=0.5truecm{ H^n(G_1,\F_p)\oplus H^n(G_2,\F_p)\ar[r]^-{ \varphi_n} & H^n(H,\F_p)\oplus H^n(H,\F_p)\ar[r]^-{\psi_n}
& H^n(H,\F_p)},\]
where $\varphi_n=\mathrm{res}_{G_1,H}^n\oplus\mathrm{res}_{G_2,H}^n$ and $\psi_n=\pi_n-\pi_n'$,
with $\pi_n$ and $\pi_n'$ the canonical projections onto the first and the second summand respectively.
Clearly, $\kernel(\psi_n)$ is the diagonal $\Delta(H^n(H,\F_p))\subseteq H^n(H,\F_p)\oplus H^n(H,\F_p)$.
Moreover, by hypothesis (ii) and by Remark~\ref{remark:restriction},
$\kernel(\mathrm{res}_{G_i,H}^\bullet)$ is generated as ideal of
$H^\bullet(G_i,\F_p)$ by $\kernel(\res_{G_i,H}^1)$ for $i=1,2$, so that
\begin{equation}\label{eq:ker resn Gi}
  \ker(\res_{G_i,H}^n)=\kernel(\res_{G_1,H}^1)\wedge H^{n-1}(G_i,\F_p)
\end{equation}
for each $n\geq1$ and $i=1,2$.
Therefore, one has 
\begin{equation}
  \begin{split}
  \kernel(f_H^n) & = \kernel(\varphi_n)\oplus \Delta(H^n(H,\F_p)) \\
  &\cong (V_1\wedge H^{n-1}(G_1,\F_p))\oplus (V_2\wedge H^{n-1}(G_2,\F_p))\oplus H^n(H,\F_p)
 \end{split}
\end{equation}
for all $n\geq1$ (here we identify $H^n(H,\F_p)$ with $H^n(G_i,\F_p)/\kernel(\res^n_{G_i,H})$).

Now let $A_\bullet$ be the quadratic algebra $\Lambda_\bullet(V_1\oplus W\oplus V_2)/(\Omega_G)$, with $\Omega_G=\Omega_1\oplus\Omega_2\oplus\Omega_H\oplus (V_1\wedge V_2)$.
In particular, one has the isomorphisms of graded algebras $A_\bullet/(V_1)\cong H^\bullet(G_2,\F_p)$,
$A_\bullet/(V_2)\cong H^\bullet(G_1,\F_p)$ and 
\begin{equation}\label{eq:cong W H}
 A_\bullet/(V_1\oplus V_2)\cong \Lambda_\bullet(W)/(\Omega_H)\cong H^\bullet(H,\F_p).
\end{equation}
Moreover, for every $n\geq 1$, we have the isomorphisms of vector spaces
\begin{equation}\label{eq:cong V1V2 HG}
 \frac{V_i\wedge (\Lambda_{n-1}(V_i\oplus W))}{(\Omega_i)_n}\cong V_i\wedge H^{n-1}(G_i,\F_p),
\end{equation}
where $(\Omega_i)_n$ denotes the part of degree $n$ of the ideal $(\Omega_i)$.
Let $$\phi_\bullet\colon A_\bullet\longrightarrow H^\bullet(G_1,\F_p)\oplus H_1(G_2,\F_p)$$
be the morphism of quadratic algebras given by
$ \phi_1\vert_{V_1}=\mathrm{id}_{V_1}\oplus 0$, $ \phi_1\vert_{V_2}=0\oplus \mathrm{id}_{V_2}$, and $\phi_1\vert_{W}=\mathrm{id}_W\oplus\mathrm{id}_W$ (here 0 denotes the 0-map).
Since $\psi_1\circ\phi_1\vert_{V_1\oplus V_2}=0$, and $f_H^1\circ\phi_1\vert_{W}=0$, by quadraticity of $A_\bullet$ one has that the image of $\phi_n$ is contained in $\kernel(f_H^n)$ for every $n\geq1$.
Moreover, by \eqref{eq:ker resn Gi}, \eqref{eq:cong W H} and \eqref{eq:cong V1V2 HG}, the map $\phi_n\colon A_n\to \kernel(f_H^n)$ is an isomorphism.
Therefore, $\phi_\bullet\colon A_\bullet\to \image(\phi_\bullet)$ is an isomorphism of quadratic algebras, and $H^\bullet(G,\F_p)$ is quadratic.
\end{proof}

\begin{remark}\label{rmk:hypotheses pRAAGs}
By duality \eqref{eq:H1}, the restriction maps \eqref{eq:surj cohomology amalgam}, with $k=1$,
are surjective if and only if the monomorphisms $H\to G_1$ and $H\to G_2$ induce monomorphisms 
\[
 \frac{H}{H_{(2)}}\longrightarrow \frac{G_1}{(G_1)_{(2)}},
 \quad \frac{H}{H_{(2)}}\longrightarrow \frac{G_1}{(G_1)_{(2)}}.
\]
In other words, one may find bases $\mathcal{X}_1$ and $\mathcal{X}_2$ for $G_1$ and $G_2$ respectively, such that
$H$ is generated as a subgroup of $G_1$ and $G_2$ by $\mathcal{X}_1\cap H$ and $\mathcal{X}_2\cap H$ respectively.

Let $\mathcal{X}_1$ and $\mathcal{X}_2$ be as above and, for $i\in\{1,2\}$, let $x_h,x_j\in \mathcal{X}_i\cap H$.
By Proposition~\ref{prop:cupproduct}, condition (ii) of the statement of Theorem~\ref{thm:cohomology amalgam}
holds in the following case: if $a_{hj}\neq0$ for some relation $r$ of $G_i$, then there is a relation
$[x_h,x_j]=t$, with $t\in\Phi(H)$. In particular, as it will be clear later, Theorem~\ref{thm:cohomology amalgam} works for generalised $p$-RAAGs: a 
relation of $G_i$ is also a relation of $H$ (see Section~\ref{sec:pRAAGs}).
\end{remark}

The following two examples of proper amalgams of quadratic pro-$p$ groups show that both conditions
of the statement of Theorem~\ref{thm:cohomology amalgam} are necessary.

\begin{example}[\textbf{Condition (i) is necessary}]\label{example:amalg1}
Let $G_1=\langle x_1,y_1\rangle$ and $G_2=\langle x_2,y_2\rangle$ be two 2-generated free
pro-$p$ groups, and set $z_1=[x_1,[x_1,  y_1]]$ and $z_2=[x_2,[x_2,y_2]]$.
Then $G_1$, $G_2$, $\langle z_1\rangle$ and $\langle z_2\rangle$ are all quadratic pro-$p$ groups.
Let $G$ be the proper amalgam $G_1\amalg_{z_1=z_2}G_2$ (the amalgam is proper by \cite[Thm.~3.2]{ribes:amalg}).
Then $G$ is not quadratic by \cite[Cor.~9.2]{CEM}.
Here the restriction maps $\res_{G_i,\langle z_i\rangle}^1$ are trivial for both $i=1,2$:
  indeed $z_i\in\Phi(G_i)$, so that $\alpha(z_i)=0$ for any $\alpha\in H^1(G_i,\F_p)$.
\end{example}

\begin{example}[\textbf{Condition (ii) is necessary}]\label{example:amalg2}
For $p\neq2$ let $G_1,G_2$ be the pro-$p$ groups with minimal presentations
\[
 G_1=\langle x_1,x_2,x_3\mid x_1^p[x_2,x_3]=1\rangle,\quad G_2=\langle x_2,x_3,x_4\mid x_4^p[x_2,x_3]=1\rangle.
\]
Then $G_1,G_2$ are isomorphic free-by-Demushkin pro-$p$ groups (cf.\ \cite{kochzal}).
Also, they are quadratic by \cite[Prop.~4.3]{cq:onerel}.
Let $H\le G_1,G_2$ be the closed subgroup generated by $x_2,x_3$.
Then $H$ is a free 2-generated pro-$p$ group by the Freiheitsatz (cf.\ \cite{romanov:freiheit}) and thus it is quadratic as well.

Set $G=G_1\amalg_H G_2$. By \cite[Ex.~9.2.6(a)]{ribzal:book}, $G$ is a proper amalgam. Let $\{\alpha_1,\ldots,\alpha_4\}$ be a basis of $H^1(G,\F_p)$
dual to $\{x_1,\ldots,x_4\}$. 
Then, since $\cdd(H)=1$, the long exact sequence in cohomology induces an isomorphism in degree 2
\begin{equation}\label{eq:example amalg isoH2}
  \res_{G,G_1}^2\oplus\res_{G,G_2}^2\colon H^2(G,\F_p)\overset{\sim}{\longrightarrow} H^2(G_1,\F_p)\oplus H^2(G_2,\F_p),
\end{equation}
with $H^2(G_1,\F_p)=\langle\res_{G,G_1}^2(\alpha_2\alpha_3)\rangle$ and
$H^2(G_2,\F_p)=\langle\res_{G,G_2}^2(\alpha_2\alpha_3)\rangle$, both isomorphic to $\F_p$.
Moreover, $\res_{G_i,H}^1$ is surjective and
$$\ker(\res_{G_i,H}^2)=H^2(G_i,\F_p)\neq\ker(\res_{G_i,H}^1)\wedge H^1(G_i,\F_p),$$
for both $i=1,2$.
By Proposition~\ref{prop:cupproduct}, the only element of $H^2(G,\F_p)$ generated in degree 1 is $\alpha_2\alpha_3$,
and since $H^2(G,\F_p)$ has dimension 2 by \eqref{eq:example amalg isoH2}, $H^\bullet(G,\F_p)$ is not quadratic.
\end{example}

\subsection{HNN extensions}

Let $G_0$ be a pro-$p$ group and let $\phi\colon A\to B$ be a continuous isomorphism between subgroups
$A,B\le G_0$.
A \emph{pro-$p$ HNN-extension} of $G_0$ with associated subgroups $A$ and $B$ is given by the pro-$p$ group
$G=\mathrm{HNN}(G_0,A,\phi)$, together with an element $t\in G$ and a continuous homomorphism $\psi\colon G_0\to G$ such that
$$t(\psi(a))t^{-1}=\psi\circ\phi(a)$$ for every $a\in A$, which satisfy the following universal property:
for any pro-$p$ group $G'$, any $g\in G'$ and any continuous homomorphism $f\colon G_0\to G'$
satisfying $g(f(a))g^{-1}=f\circ\phi(a)$ for all $a\in A$, there is a unique continuous
homomorphism $\tilde f\colon G\to G'$ with $\tilde f(t)=g$ such that $\tilde f\circ \psi=f$.
Such a pro-$p$ group $G$ is unique (cf.\ \cite[\S~9.4]{ribzal:book}).

If the homomorphism $\psi\colon G_0\to G$ is injective, the HNN extension is said to be \emph{proper}. 

\begin{remark}
Note that, for a proper HNN extension, it follows from  \cite[Prop.~9.4.2(a)]{ribzal:book} that $$\cdd(\mathrm{HNN}(G_0,A,\phi)) \le \max\{\cdd(G_0),\cdd(A)+1\}.$$
\end{remark}

\begin{example}\label{example:demushkin HNN}
 In \cite[Ex.~9.2.12]{ribzal:book}, it is shown that a Demushkin pro-$p$ group is an HNN-extension.
\end{example}

We come to the second main result of this section: Theorem~\ref{thm:HNN}.

\begin{remark}\label{remark:HNN thm}
 Condition (ii) in Theorem~\ref{thm:HNN} amounts to say that the morphism
 $$\bar\phi\colon \frac{A}{A\cap(G_0)_{(2)}}\longrightarrow\frac{B}{B\cap(G_0)_{(2)}},$$
induced by $\phi$, is the restriction of the identity of $G_0/(G_0)_{(2)}$ on $A/A\cap(G_0)_{(2)}$.
\end{remark}

\begin{proof}[\textbf{Proof of Theorem~\ref{thm:HNN}}]
 Since $G=\mathrm{HNN}(G_0,A,\phi)$ is proper, by \cite[Prop.~9.4.2]{ribzal:book}
 the monomorphisms $G_0\to G$ and $A\to G_0$ induce a long exact sequence in cohomology  
\[
 \xymatrix@C=0.8truecm{ 0\ar[r] &\F_p\ar[r] & H^1(G,\F_p)\ar[r]^-{\res_{G,G_0}^1 }& \cdots\ar[r]^-{ f_{G_0}^{n-1}} & H^{n-1}(A,\F_p) \ar`r[d]`[l] `[dlll]_-{\delta^{n-1}} `[dll] [dll]   \\
& & H^n(G,\F_p)\ar[r]^-{\res_{G,G_0}^n}& H^n(G_0,\F_p)\ar[r]^-{ f_{G_0}^n} & H^n(A,\F_p) \ar`r[d]`[l] `[dlll]_-{\delta^n} `[dll] [dll]   \\
& & H^{n+1}(G,\F_p)\ar[r]^-{\res_{G,G_0}^{n+1}} & H^{n+1}(G_0,\F_p)\ar[r]^-{f_{G_0}^{n+1}} &\cdots  &}
\]
 for $n\geq1$, where $f_{G_0}^n=\res_{G_0,A}^n-\phi^*\circ\res_{G_0,B}^n$,
with $\phi^*\colon H^n(B,\F_p)\to H^n(A,\F_p)$  the map induced by $\phi$ --- see also \cite{bieri:HNN}.

Since the map $\phi^*$ commutes with the cup product for every $n\geq1$ and $\alpha_1,\ldots,\alpha_n\in H^1(G_0,\F_p)$,
one has 
\[\begin{split}
   \res_{G_0,A}^n(\alpha_1\cdots\alpha_n) &= \res_{G_0,A}^1(\alpha_1)\cdots\res_{G_0,A}^1(\alpha_n) \\
 &= \phi^*\res_{G_0,B}^1(\alpha_1)\cdots\phi^*\res_{G_0,B}^1(\alpha_n)\\
 &= \phi^*\res_{G_0,B}^n(\alpha_1\cdots\alpha_n),
  \end{split}\]
and hence by hypothesis (ii) the maps $f_{G_0}^n$ are trivial for every $n\geq1$.
Therefore, for every $n\geq2$ one has a short exact sequence of vector spaces
\begin{equation}\label{eq:ses hnn cohomology}
  \xymatrix{ 0\ar[r] & H^{n-1}(A,\F_p)\ar[r]^{\delta^{n-1}} & H^n(G,\F_p)\ar[r]^-{\res_{G,G_0}^n} & H^n(G_0,\F_p)\ar[r] &0 }.
\end{equation}
We will identify $H^n(G_0,\F_p)$ and $H^n(A,\F_p)$ as subspaces of $H^n(G,\F_p)$.

Let $\alpha_t\in H^1(G,\F_p)$ be the generator of $\kernel(\res_{G,G_0}^1)$ --- i.e.\ $\alpha_t$ is dual to $t\in G$.
By Remark~\ref{remark:HNN thm}, for every $a\in A$ one has a relation $[t,a]=a'$, with $a'\in A\cap(G_0)_{(2)}$.
Hence, by Proposition~\ref{prop:cupproduct}, the cup-product $\alpha \alpha_t\in H^2(G,\F_p)$ is not trivial for every
$\alpha\in H^1(G_0,\F_p)$ such that $\res_{G_0,A}^1(\alpha)\neq0$. Therefore
$\image(\delta^{1})=\ker(\res_{G,G_0}^2)=\alpha_t\wedge H^1(A,\F_p)$.
Thus, for every $n\geq1$, one has an isomorphism of vector spaces
\begin{equation}\label{eq:HNN iso vectspa}
  H^n(G,\F_p)=H^n(G_0,\F_p)\oplus\left(\alpha_t\wedge H^{n-1}(A,\F_p)\right)
\end{equation}
and $H^\bullet(G,\F_p)$ is generated in degree 1.
Moreover, from \eqref{eq:HNN iso vectspa} one deduces
\[
  H^n(G,\F_p)\cong \frac{H^n(G_0,\F_p)\oplus (\alpha_t\wedge H^{n-1}(G_0,\F_p))}{\alpha_t\wedge\kernel(\res_{G_0,A}^{n-1})},
\]
for every $n\geq2$.
By hypothesis (i), $\kernel(\res_{G_0,A}^{n-1})=\kernel(\res_{G_0,A}^1)\wedge H^{n-2}(G_0,\F_p)$,
that is, it is generated in degree 1. Thus $\alpha_t\wedge\kernel(\res_{G_0,A}^{n-1})$
is generated in degree 2 for every $n\geq2$.
It follows that $H^\bullet(G,\F_p)$ is a quadratic algebra.
\end{proof}

The following is an example of a proper HNN-extension satisfying all the hypothesis of Theorem~\ref{thm:HNN}
and it is a new example of quadratic pro-$p$ group obtained in this way.

\begin{example}\label{example:HNN1}
 Let $G_0 =\langle x,y,z \rangle$ be free abelian pro-$p$ group. 
Set $A= \langle x,y \rangle \le G_0$ and let $\phi\colon A\to A$ be the isomorphism induced by $x\mapsto xy^p$
and $y\mapsto yz^p$. Consider $G=\mathrm{HNN}(G_0,A,\phi)$. It follows easily using \cite[Prop.~9.4.3(2)]{ribzal:book} that $G$ is a proper HNN-extension.
Thus, $G$ has a presentation
\[
 G=\langle x,y,z,t\mid [x,y]=[y,z]=[z,x]=1,[x,t]=y^p,[y,t]=z^p\rangle.
\]
Let $\{\alpha_x,\alpha_y,\alpha_z\}\subseteq H^1(G_0,\F_p)$ be a basis dual to $\{x,y,z\}$.
Then $\kernel(\res_{G_0,A}^1)=\langle\alpha_z\rangle$ and 
\[\kernel(\res_{G_0,A}^2)=\langle\alpha_x\alpha_z,\alpha_y\alpha_z\rangle=
\kernel(\res_{G_0,A}^1)\wedge H^1(G_0,\F_p).\]
Moreover, $\phi^*\colon H^1(A,\F_p)\to H^1(A,\F_p)$ is the identity.
Therefore, $G$ is quadratic pro-$p$ group.
\end{example}

On the other hand, Theorem~\ref{thm:HNN} can also be used to show that certain HNN-extensions are not proper.

\begin{example}
 Let $G$ be the pro-$p$ group with presentation $$\langle x_1,x_2,x_3,x_4 \mid [x_1,x_2][x_3,x_4]=1,[x_1,x_3]=1,[x_1,x_4][x_2,x_3]=1,[x_2,x_4]=1 \rangle.$$ 
Let $\{\alpha_1,\alpha_2,\alpha_3,\alpha_4\}$ be a basis of $H^1(G,\F_p)$ dual to $\{x_1,x_2,x_3,x_4\}$.
Then we have $\alpha_1\alpha_2=\alpha_3\alpha_4$, $\alpha_1\alpha_4=\alpha_2\alpha_3$ and therefore $\{\alpha_1\alpha_2,\alpha_1\alpha_3,\alpha_1\alpha_4,\alpha_2\alpha_4\}$
is a basis of $H^2(G,\F_p)$ by Proposition~2.3.
It is clear that $\alpha_i\alpha_j\alpha_k=0$ for any triple $(i,j,k)$; also it is easy to check that conditions (i)-(iii) of Theorem~\ref{thm:HNN} are satisfied.

Let $N,H\le G$ be the subgroups generated by $x_2,x_3,x_4$ and by $x_1$, respectively.
Then $N$ is normal in $G$. The short exact sequence \eqref{eq:ses cohom GGS} implies that $\cdd(G)\geq3$.
Thus $G$ is not quadratic.

Notice that we can realise $G$ as $\mathrm{HNN}(G_0,G_0,\phi)$ with $$G_0 = \langle x_2,x_4\mid [x_2,x_4]=1\rangle \ast \langle x_3 \rangle \cong (\Z_p\times \Z_p) \ast\Z_p ,\quad t=x_1$$ and $\phi(x_2)=x_2[x_3,x_4]$, $\phi(x_3)=x_3$, $\phi(x_4)=x_4[x_2,x_3]$.
Hence, $\mathrm{HNN}(G_0,G_0,\phi)$ is not proper by Theorem~\ref{thm:HNN}.
\end{example}

The following is a list of examples of proper HNN-extensions which are not quadratic
pro-$p$ groups, each of which does not satisfy one of the hypothesis of Theorem~\ref{thm:HNN}. That the given HNN-extensions are proper one can show easily using  \cite[Prop.~9.4.3(2)]{ribzal:book}.

\begin{example}[\textbf{Condition (i) is necessary}]\label{ex:HNNcondi}
Let $G_0 = \langle x,y \rangle$ be a free abelian pro-$p$ group. 
Set $A=\langle x^p \rangle$ and $B=\langle y^p \rangle$ and let $\phi\colon A\to B$ be the isomorphism induced by $x^p\mapsto y^p$. Consider $G=\mathrm{HNN}(G_0,A,\phi)$.
Thus $G$ has a presentation
\[
 G=\langle x,y,t\mid [x,y]=1,[x^p,t]=(x^{-1}y)^p\rangle,
\]
and $G$ is not quadratic, as Proposition~\ref{prop:relations} is not satisfied.
Indeed, the map $\res_{G_0,A}^1$ is not surjective.
\end{example}

\begin{example}[\textbf{Condition (ii) is necessary}]\label{ex:HNNcondii}
 Let $G_0$ be the pro-$p$ group with minimal presentation
\[
 G_0=\langle x,y,z\mid x^p[y,z]=1\rangle,
\]
let $A=B\le G_0$ be the subgroup generated by $y,z$ (in particular, $A$ is a free 2-generated pro-$p$
group by the Freiheitsatz, cf.\ \cite{romanov:freiheit}) and let $\phi\colon A\to A$ be the identity.
Consider $G=\mathrm{HNN}(G_0,A,\phi)$.
Thus, $G$ has a presentation
\[
 G=\langle x,y,z,t\mid x^p[y,z]=[t,y]=[t,z]=1 \rangle.
\]
Let $\{\alpha_x,\ldots,\alpha_t\}\subseteq H^1(G,\F_p)$ and, with a slight abuse of notation, consider $\alpha_x,\alpha_y,\alpha_z$
as elements of $H^1(G_0,\F_p)$.
Then $\kernel(\res_{G_0,A}^1)=\langle\alpha_x\rangle$ and 
\[
  \kernel(\res_{G_0,A}^2)=\langle\alpha_y\alpha_z\rangle\neq\kernel(\res_{G_0,A}^1)\wedge H^1(G_0,\F_p).
  \] 
On the other hand, the long exact sequence in cohomology induced by the HNN-extension
implies that $H^2(G,\F_p)=\langle\alpha_y\alpha_z,\alpha_t\alpha_y,\alpha_t\alpha_z\rangle$,
whereas $H^3(G,\F_p)=0$ as $\cdd(G_0)=2$ and $\cdd(A)=1$, so that $G$ is not quadratic.
\end{example}

\begin{example}[\textbf{Condition (iii) is necessary}]\label{ex:HNNcondiii}
Let $G_0 =\langle x,y \rangle$ be a free abelian pro-$p$ group. 
Set $A=\langle x \rangle$, $B=\langle y \rangle$ and let $\phi\colon A\to B$ be the isomorphism induced by $x\mapsto y$. Consider $G=\mathrm{HNN}(G_0,A,\phi)$.
Thus, $G$ has a presentation
\[
 G=\langle x,y,t\mid [x,y]=1,[x,t]=x^{-1}y\rangle=\langle x,t\mid [x,[x,t]]=1\rangle
\]
and $G$ is not quadratic, as Proposition~\ref{prop:relations} is not satisfied.
Indeed, the map $f_{G_0}^1$ is not trivial.
\end{example}

\section{Analytic pro-\texorpdfstring{$p$}{p} groups}\label{sec:quadraticpadic}

A pro-$p$ group $G$ is said to be \emph{powerful} if $p \geq 3$ and
$[G,G]\leq G^p$, or $p=2$ and $[G,G]\leq G^4$.  Here, $[G,G]$ and
$G^p$ denote the commutator subgroup and the 
subgroup generated by all $p$th powers. Recall that the \emph{descending $p$-central series of $G$} is defined inductively by $P_1(G)=G$ and $P_{n+1}(G) = P_n(G)^p [P_n(G),G]$ for $n\ge 1$.
A pro-$p$ group $G$ is called \emph{uniform}  if
it is finitely generated, powerful and  $$\lvert P_i(G) : P_{i+1}(G) \rvert = \lvert G : P_2(G) \rvert$$
for all $i \in \N$.
It is worth noting that a powerful finitely generated pro-$p$ group is uniform if and only if it is torsion-free (see \cite[Thm.~4.5]{ddms:padic}).
Uniform pro-$p$ groups play an important role in the theory of $p$-adic Lie groups (see \cite{ddms:padic}). 

In light of Lazard's work we have the following (cf.\ \cite{lazard:padic}, see also \cite[Thm.~5.1.5]{sw:cpg}).

\begin{proposition}\label{prop:uniform}
 Let $G$ be a uniform pro-$p$ group.
 Then $$H^\bullet(G,\F_p)\cong\Lambda_\bullet H^1(G,\F_p).$$
\end{proposition}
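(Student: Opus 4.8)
The plan is to recognize this as Lazard's classical computation and to organize the proof so that the genuinely nontrivial facts are imported from \cite{lazard:padic} (see also \cite[Thm.~5.1.5]{sw:cpg}), while the ring-theoretic conclusion follows formally. Put $d=\dd(G)=\dim_{\F_p}H^1(G,\F_p)$. Since $\alpha\smile\alpha=0$ for every $\alpha\in H^1(G,\F_p)$ (automatic for $p$ odd by graded-commutativity, and true for $p=2$ under the convention of Remark~\ref{rem:p2}), the inclusion $H^1(G,\F_p)\hookrightarrow H^\bullet(G,\F_p)$ extends uniquely to a homomorphism of graded $\F_p$-algebras
\[
 \Phi_\bullet\colon \Lambda_\bullet\big(H^1(G,\F_p)\big)\longrightarrow H^\bullet(G,\F_p),
\]
which is the identity in degrees $0$ and $1$; it suffices to show that $\Phi_\bullet$ is an isomorphism.

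First I would record the two structural inputs about a uniform pro-$p$ group $G$ of dimension $d$ that the argument uses, both consequences of Lazard's work: (1) $G$ is torsion-free $p$-adic analytic and is an orientable Poincar\'e-duality group at $p$ of dimension $d$, so that $\cdd(G)=d$, $H^d(G,\F_p)\cong\F_p$, and the cup product $H^k(G,\F_p)\times H^{d-k}(G,\F_p)\to H^d(G,\F_p)$ is a perfect pairing for every $k$; and (2) $H^\bullet(G,\F_p)$ is generated as an $\F_p$-algebra by $H^1(G,\F_p)$. Input (2) immediately gives that $\Phi_\bullet$ is surjective, since its image is the subalgebra of $H^\bullet(G,\F_p)$ generated by $H^1(G,\F_p)$. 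For injectivity I would argue in top degree first: $\Lambda_d(H^1(G,\F_p))\cong\F_p$ and $H^d(G,\F_p)\cong\F_p$ are both one-dimensional, so the surjection $\Phi_d$ between them is an isomorphism; fixing a basis $\alpha_1,\dots,\alpha_d$ of $H^1(G,\F_p)$ this means $\omega:=\alpha_1\smile\cdots\smile\alpha_d\neq0$. Then, given $0\neq\eta\in\Lambda_k(H^1(G,\F_p))$, non-degeneracy of the wedge pairing $\Lambda_k\times\Lambda_{d-k}\to\Lambda_d$ on the $d$-dimensional space $H^1(G,\F_p)$ produces $\theta\in\Lambda_{d-k}(H^1(G,\F_p))$ with $\eta\wedge\theta=\alpha_1\wedge\cdots\wedge\alpha_d$, whence $\Phi_k(\eta)\smile\Phi_{d-k}(\theta)=\Phi_d(\eta\wedge\theta)=\omega\neq0$, so $\Phi_k(\eta)\neq0$. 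Thus $\Phi_\bullet$ is injective as well, hence an isomorphism of graded algebras, and as a byproduct one recovers $\dim_{\F_p}H^k(G,\F_p)=\binom{d}{k}$.

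I expect the real obstacle to be the structural inputs (1) and (2) rather than the formal deduction above: these encode Lazard's identification of the continuous $\F_p$-cohomology of a uniform $G$ with the cohomology of the abelian graded Lie algebra $\bigoplus_{i\ge1}P_i(G)/P_{i+1}(G)$ (together with its $p$-operation), and Poincar\'e duality for compact $p$-adic analytic groups. Since the proposition is phrased as a citation of \cite{lazard:padic} and \cite[Thm.~5.1.5]{sw:cpg}, the natural route is to quote these references for (1) and (2) — or even to quote directly that the cohomology ring is exterior — and the paragraphs above then serve to make explicit how little extra is needed to pass from ``$H^d\cong\F_p$, Poincar\'e duality, and degree-$1$ generation'' to the full statement $H^\bullet(G,\F_p)\cong\Lambda_\bullet H^1(G,\F_p)$.
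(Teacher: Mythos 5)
Your argument is correct, and it takes essentially the same route as the paper, which offers no proof at all beyond the citation of Lazard (and of \cite[Thm.~5.1.5]{sw:cpg}, where the exterior-algebra isomorphism is stated directly). Your reduction to the two Lazard inputs, followed by the formal surjectivity/duality bookkeeping, is a sound and slightly more explicit packaging of that same citation.
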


We can now prove Theorem~\ref{thm:quadratic analytic}. 

\begin{proof}[\textbf{Proof of Theorem~\ref{thm:quadratic analytic}}]
If $G$ is uniform, then the claim follows by Proposition~\ref{prop:uniform}.

Assume now that $G$ has quadratic $\F_p$-cohomology.
Since $G$ is torsion-free, one has $\dd(G)\leq\dim(G)$, by \cite[Prop.~1.2]{klopsch:rank}. Moreover, $G$ is a Poincar\'e-Duality pro-$p$ group
so that $\dim(G)=\cdd(G)$. Now by (\ref{eq:inequalities}), 
\[
 \dd(G)\leq\dim(G)=\cdd(G)\leq \dd(G),
\]
 so $\cdd(G)=\dd(G)$ and $H^\bullet(G,\F_p)\cong\Lambda_\bullet(H^1(G,\F_p))$
(cf.\ \cite[Prop.~4.3]{cq:bk}).
In particular, $G$ is powerful by \cite[Thm.~5.1.6]{sw:cpg}, and hence uniform.
\end{proof}

The following examples show that the above theorem cannot be extended to $p=2$, even for torsion-free groups.

\begin{example}\label{ex:analyticp21}
Let $G_d= N_d\rtimes H$ be the pro-$2$ group with $$N_d = \langle x_1,\ldots,x_d \mid [x_i,x_j]=1,\ 1\le i<j\le d \rangle \cong \mathbb{Z}_2^d,\quad H=\langle y \mid \ \rangle \cong \mathbb{Z}_2$$ and $y^{-1}x_i y=x_i^{-1}$, for $1\le i\le d$. 
 Then $G_d$ is $2$-adic analytic and torsion-free, but it is not uniform.
Moreover the group $G_d$ is quadratic, by \cite[Thm.~3.16]{qw:cyclotomic}.
Thus, the hypothesis $p\ge 3$ in the above theorem cannot be removed.
 \end{example}

There are also infinite quadratic pro-$2$ groups with torsion. 
 
 \begin{example}\label{ex:analyticp22}
The infinite dihedral pro-$2$ group $D_{\infty}= \Z_2\rtimes C_2$, with action given by inversion, is isomorphic to the free pro-$2$ product $C_2\ast C_2$. By Proposition~\ref{prop:cohomology freeproduct}, we have 
\[  H^\bullet(G,\F_2) \cong H^\bullet(C_2,\F_2)\oplus H^\bullet(C_2,\F_p)   \cong \F_2[X_1]\oplus \F_2[X_2]\]
with $X_1,X_2$ indeterminates. Hence $G$ is $2$-adic analytic and quadratic, but it contains torsion elements.
\end{example}

\begin{remark}
Since there are uncountably many uniform pro-$p$ groups which are not {com\-men\-su\-ra\-ble} (cf.\ \cite{snopche:uncountably}), we have that there are uncountably many non-commensurable quadratic pro-$p$ groups. Taking free products, it is then easy to see that there are also uncountably many \emph{non-isomorphic} quadratic non-uniform pro-$p$ groups. 
\end{remark}

\section{generalised \texorpdfstring{$p$}{p}-RAAGs}\label{sec:pRAAGs}

Right angled Artin groups (RAAGs for short) are a combinatorial construction that plays a prominent role in Geometric Group Theory. These can be defined as (abstract) groups given by a presentation where all relators are commutators of generators. One possible way to obtain a pro-$p$ group out of a RAAG is to consider the pro-$p$ completion of the group; this yields pro-$p$ groups whose structure remains quite close to that of a RAAG. In this section we introduce a generalised construction of RAAGs for pro-$p$ groups. For more evidence of the novelty and flexibility of this construction see Section~\ref{sec:triang}.

\subsection{\texorpdfstring{$p$}{p}-Graphs and \texorpdfstring{$p$}{p}-RAAGs}\label{sec:pgraphpraag}
 
 We will state some conventions that we will keep for the rest of the article.

An \emph{(oriented) graph} is a couple $\mathcal{G}=(\sV,\sE)$ where $\sV$ is a finite set, whose elements
are called \emph{vertices}, and $\sE \subseteq\sV^{2}$, whose elements
are called \emph{edges}.
For an edge $e=(x_1,x_2)\in \sE$, $o(e):=x_1$ and $t(e):=x_2$ are called the \emph{origin} and the \emph{terminus} 
of $e$, respectively. The \emph{opposite edge} of $e=(x_1,x_2)\in \sE$ is $\overline{e}:=(x_2,x_1) \in\sV^2$.
We denote the set of \emph{opposite edges} of edges in $\mathcal{G}=(\sV,\sE)$ as $\overline{\sE}$. 

Let $\mathcal{G}=(\sV,\sE)$ be a graph. A \emph{loop} in $\mathcal{G}$ is an edge $e\in \sE$ with $\vert\{o(e),t(e)\}\vert=1$. An \emph{(unoriented) circuit of lenght $2$} in $\mathcal{G}$ is couple $\{e,\overline{e}\}$ for $e\in \mathcal{G}$. An \emph{(unoriented) circuit} in $\mathcal{G}$ is a sequence of distinct edges $e_1,\ldots,e_n$
with $n\ge 3$ such that $\{o(e_i), t(e_{i})\} \cap \{o(e_{i+1}), t(e_{i+1})\} \neq \emptyset$, for $i=1,\ldots,n-1$
and $\{o(e_n), t(e_{n})\} \cap \{o(e_{1}), t(e_{1})\} \neq \emptyset$. 
A graph $\mathcal{G}$ is said to be \emph{combinatorial} if it has no loops and no circuits of lenght $2$. 
Note that, in particular, a combinatorial graph has a natural ``orientation'', i.e.\ only one of the pairs $(x_1,x_2)$, $(x_2,x_1)$ with $x_1,x_2 \in \sV$ can appear in $\sE$ and $\sE \cap \overline{\sE}=\emptyset$.
A \emph{$p$-labelling} of a combinatorial graph $\mathcal{G}=(\sV,\sE)$ is a function $f:\sE \to p\mathbb{Z}_p \times p\mathbb{Z}_p$ if $p\ge 3$, and $f:\sE \to 4\mathbb{Z}_2 \times 4\mathbb{Z}_2$ if $p=2$. 

\begin{definition}\label{def:graph}
Let $\mathcal{G}=(\sV,\sE)$ be a combinatorial graph.
\begin{itemize}
 \item[(a)] The graph $\mathcal{G}$ is said to be \emph{complete} if $\overline{\sE}=\sV^2 \smallsetminus (\sE  \cup \{(v,v)\mid v\in \sV\})$.
 \item[(b)] A couple $\mathcal{G}' = (\sV',\sE')$ with $\sV'\subseteq \sV$ and $\sE' \subseteq \sE$ is said to be a \emph{subgraph} of $\mathcal{G}$. 
 \item[(c)] A subgraph $\mathcal{G}'= (\sV',\sE')$ of $\mathcal{G}$ is said to be \emph{full} if $\sE'=\sE\cap(\sV')^2$.
\item[(d)] A full subgraph $\calG'$ of $\calG$ is said to be a \emph{clique} of $\calG$ if $\calG'$ is complete.
\end{itemize}
\end{definition}

\begin{definition}
A \emph{$p$-graph} is a couple $\Gamma=(\mathcal{G},f)$ where $\mathcal{G}=(\sV,\sE)$ is a combinatorial graph and $f$ is a $p$-labelling of $\mathcal{G}$.
\end{definition}

Throughout the paper all graphs and $p$-graphs will be finite. 

\begin{definition}\label{def:pRAAGs}
Let $\Gamma= (\sV,\sE,f)$ be a $p$-graph with $p$-labelling $(f_1,f_2):\sE\to p\mathbb{Z}_p \times p\mathbb{Z}_p$. 
 The \textit{generalised Right Angled Artin {pro-$p$} group} ($p$-RAAG for short) associated to $\Gamma$, denoted by $G_\Gamma$, is the pro-$p$ group defined by the following pro-$p$ presentation: 
\begin{equation}\label{eq:pRAAGdefn}
 G_\Gamma = \pres{\sV}{[o(e),t(e)]= o(e)^{f_1(e)} t(e)^{f_2(e)}\ \text{for } e\in \sE} 
\end{equation}
\end{definition}

We present a couple of examples to clarify the definition.

\begin{example}
Let $\mathcal{G}$ be a graph, let $c\equiv (0,0)\in p\mathbb{Z}_p \times p\mathbb{Z}_p$ be the constant $p$-labelling on $\mathcal{G}$ and set $\Gamma=(\mathcal{G},c)$; then $G_\Gamma$ is the pro-$p$ completion of the abstract RAAG associated to $\calG$. 
\end{example}
\begin{example}\label{ex:pRAAGs}
Let $\Gamma_1$ and $\Gamma_2$ be the $p$-graphs

\begin{center}
\begin{minipage}{0.1\textwidth}
$\Gamma_1 =$
\end{minipage}
\begin{minipage}{0.15\textwidth}
\[
   \xymatrix{x_1 \ar[r]^{(a,b)} & x_2}
\]
\end{minipage}
\begin{minipage}{0.2\textwidth}
 \begin{center} and \end{center}
\end{minipage}
\begin{minipage}{0.1\textwidth}
\qquad $\Gamma_2 =$
\end{minipage}
\begin{minipage}{0.2\textwidth}
 \[
    \xymatrix{ & x_2 & \\ x_1 \ar[ur]^{(\lambda,p^2)} \ar[rr]_{(0,0)}&  & x_3  }
 \]
\end{minipage}
\end{center}

\noindent with $a,b,\lambda\in p\mathbb{Z}_p$. Then the pro-$p$ groups $G_{\Gamma_1}$ and $G_{\Gamma_2}$ are defined by the presentations
\[
G_{\Gamma_1} = \pres{x_1,x_2}{[x_1,x_2] = x_1^a x_2^b} \text{\quad and}
\]
\[
G_{\Gamma_2} =  \pres{x_1,x_2,x_3}{[x_1,x_2]= x_1^{\lambda} x_2^{p^2},\ [x_1,x_3] = 1},
\]
respectively.
\end{example}

We will see in Lemma~\ref{lem:isom}, that $G_{\Gamma_1}$ is a $2$-generated Demushkin group, hence a uniform pro-$p$ group.

\begin{remark}\label{rem:pRAAGs H2}
 One of our main motivations to introduce generalised $p$-RAAGs is that $G_\Gamma$ satisfies the equivalent conditions of Proposition~\ref{prop:relations} and Remark~\ref{rmk:hypotheses pRAAGs}. 
\end{remark}

\subsection{Cohomology of quadratic \texorpdfstring{$p$}{p}-RAAGs}

For a set $S$, we will denote by $\F_p\langle S \rangle$ the vector space with basis $S$.

Let $\Gamma=(\mathcal{G},f)$ be a $p$-graph. In this subsection we will show that the $\mathbb{F}_p$-cohomology of a quadratic $p$-RAAG $G_\Gamma$ is completely determined by its ``underlying graph'' $\mathcal{G}$. Recall that, the opposite graph $\mathcal{G}^{\mathrm{op}} = (\sV^{\mathrm{op}},\sE^{\mathrm{op}})$ is defined by $\mathcal{V}^{\mathrm{op}}=\mathcal{V}$, and $\mathcal{E}^{\mathrm{op}}=\mathcal{V}^2\smallsetminus (\mathcal{E}\cup \overline{\sE})$. For instance,
\[
 \begin{minipage}{0.1\textwidth}
    \begin{center}$\mathcal{G}$ = \end{center}
   \end{minipage}
   \begin{minipage}{0.3\textwidth}
\xymatrix{ \bullet \ar[r] \ar[dr] & \bullet \ar[d] \\ \bullet \ar[u] & \bullet \ar[l]}   
   \end{minipage}
   \begin{minipage}{0.3\textwidth}
    \begin{center}$\xrightarrow{\phantom{aaa}\mathrm{op}\phantom{aaa}}$ \end{center}
   \end{minipage}
   \begin{minipage}{0.1\textwidth}
    \begin{center}$\mathcal{G}^{\mathrm{op}}$ = \end{center}
   \end{minipage}
  \begin{minipage}{0.3\textwidth}
\xymatrix{ \bullet & \bullet \ar@/_/[dl] \\ \bullet \ar@/_/[ur]  & \bullet }   
\end{minipage}
 \]

\begin{definition}\label{def:gammaop}
 
Let $\mathcal{G}=(\sV,\sE)$ be a graph. Define the algebra $\Lambda_\bullet(\mathcal{G}^{\mathrm{op}}) = Q(\F_p\langle\sV\rangle,\Omega)$ with
\[
 \Omega=\{v\otimes w + w\otimes v \mid (v,w)\notin\sE\cup \overline{\sE}\}\subseteq \F_p\langle\sV\rangle^{\otimes 2}
\]
(cf.\ \cite[\S~4.2.2]{weigel:koszul}).
Namely, we kill the wedge product of two vertices if they are not connected in $\mathcal{G}$.
In particular, $\Lambda_\bullet(\mathcal{G}^{\mathrm{op}})$ is quadratic and graded-commutative.
 \end{definition}

Let $G_\Gamma$ be a generalised $p$-RAAG with associated $p$-graph $\Gamma=(\mathcal{G},f)$ and underlying graph $\mathcal{G}=(\sV,\sE)$.
Clearly, by \eqref{eq:H1} one has $H^1(G_\Gamma,\F_p)\cong\Lambda_1(\mathcal{G}^{\mathrm{op}})$.
In degree $2$ one has the following.

\begin{lemma}\label{lemma:H2 pRAAGs}
 Let $G_\Gamma$ be a generalised $p$-RAAG with associated $p$-graph $\Gamma=(\mathcal{G},f)$ and underlying graph $\mathcal{G}=(\sV,\sE)$.
 Then $H^2(G_\Gamma,\F_p)\cong \Lambda_2(\mathcal{G}^{\mathrm{op}})$.
\end{lemma}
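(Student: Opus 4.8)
The plan is to compute $H^2(G_\Gamma,\F_p)$ directly from the minimal presentation \eqref{eq:pRAAGdefn}, using the machinery of Section~2. By Remark~\ref{rem:pRAAGs H2}, $G_\Gamma$ satisfies the equivalent conditions of Proposition~\ref{prop:relations}, so the cup-product $H^1(G_\Gamma,\F_p)^{\otimes 2}\to H^2(G_\Gamma,\F_p)$ is surjective; hence $H^2(G_\Gamma,\F_p)$ is a quotient of $\Lambda_2(\F_p\langle \sV\rangle)$ and it suffices to identify the kernel. Write $d=\#\sV$, label the vertices $x_1,\dots,x_d$, and let $\{\alpha_1,\dots,\alpha_d\}$ be the dual basis of $H^1(G_\Gamma,\F_p)$. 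The defining relators are $r_e=[o(e),t(e)]\,o(e)^{-f_1(e)}t(e)^{-f_2(e)}$ for $e\in\sE$; since the labels lie in $p\Z_p$ (resp. $4\Z_2$), we have $r_e\equiv [x_i,x_j]\bmod F_{(3)}$ when $e=(x_i,x_j)$.

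First I would invoke Proposition~\ref{prop:relations}, which tells us that $R\cap F_{(3)}=R^p[R,F]$, so that in the minimal presentation the relators $r_e$ form, modulo $F_{(3)}$, a linearly independent set in $F_{(2)}/F_{(3)}$ — indeed the $[x_i,x_j]F_{(3)}$ for distinct vertex pairs are a basis of $F_{(2)}/F_{(3)}$ and each $r_e$ maps to a distinct basis element. This already gives $\rr(G_\Gamma)=\dim H^2(G_\Gamma,\F_p)=\#\sE$. Next, applying Proposition~\ref{prop:cupproduct} to each relator $r_e$ with $e=(x_i,x_j)$: the only nonzero exponent is $a_{ij}=1$, so the pairing shows $\alpha_i\wedge\alpha_j\neq 0$ precisely when $\{x_i,x_j\}$ spans an edge (in either orientation), and $\alpha_k\wedge\alpha_\ell=0$ whenever $(x_k,x_\ell)\notin \sE\cup\overline{\sE}$. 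Combined with the dimension count $\#\sE$, this forces $H^2(G_\Gamma,\F_p)$ to have the $\{\alpha_i\wedge\alpha_j : (x_i,x_j)\in\sE\}$ as a basis, with the relations being exactly $\alpha_k\wedge\alpha_\ell=0$ for non-edges — which is precisely the degree-$2$ part of $\Lambda_\bullet(\mathcal{G}^{\mathrm{op}})$ as in Definition~\ref{def:gammaop}.

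To turn this into a clean isomorphism, I would exhibit the natural surjection $\Lambda_2(\F_p\langle\sV\rangle)\twoheadrightarrow H^2(G_\Gamma,\F_p)$ (existence by Proposition~\ref{prop:relations}(i)), observe it kills every $\alpha_k\wedge\alpha_\ell$ with $(x_k,x_\ell)\notin\sE\cup\overline{\sE}$ (by Proposition~\ref{prop:cupproduct}), hence factors through $\Lambda_2(\mathcal{G}^{\mathrm{op}})=\Lambda_2(\F_p\langle\sV\rangle)/\Omega_2$, and then note that both sides have dimension $\#\sE$ (the source by counting edges in $\mathcal{G}$, recalling $\mathcal{G}$ is combinatorial so each edge is counted once; the target by the computation of $\rr(G_\Gamma)$ above). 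A surjection of finite-dimensional vector spaces of equal dimension is an isomorphism, and since it is induced by the cup product it respects the algebra structure inherited in degree $\le 2$.

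The main obstacle — though a mild one — is bookkeeping around orientations and the combinatorial-graph hypothesis: one must be careful that $\Lambda_2(\mathcal{G}^{\mathrm{op}})$ is defined by killing $v\otimes w+w\otimes v$ for \emph{non-edges}, so its dimension equals the number of unordered pairs that \emph{are} edges, i.e. $\#\sE$ since $\sE\cap\overline\sE=\emptyset$; and that Proposition~\ref{prop:cupproduct}'s sign conventions ($-a_{ij}$ for $i<j$ versus $a_{ij}$ for $j<i$) do not affect \emph{vanishing}, which is all we need. A secondary point to verify carefully is that the relators $r_e$, as written with the power corrections, genuinely give a \emph{minimal} presentation and that no relator lies in $F_{(3)}$ — but this is exactly Remark~\ref{rem:pRAAGs H2} together with the last sentence of Proposition~\ref{prop:relations}. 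No deeper input (e.g. knowledge of $H^n$ for $n\ge 3$, or quadraticity of $G_\Gamma$) is required for this lemma.
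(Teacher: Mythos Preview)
Your argument is correct and follows essentially the same route as the paper: both proofs read off the shape of the relators modulo $F_{(3)}$ and invoke Proposition~\ref{prop:cupproduct} to identify which cup products $\alpha_i\alpha_j$ survive in $H^2$. The paper compresses the linear-independence step (your dimension count $\rr(G_\Gamma)=\#\sE$) into a single appeal to Proposition~\ref{prop:cupproduct}, whereas you spell out explicitly why the $\alpha_i\wedge\alpha_j$ for edges are linearly independent and span; but the substance is the same.
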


\begin{proof}
 Set $d=\dd(G_\Gamma)$. Let $\mathcal{X}=\{x_1,\ldots,x_d\}$ and $\mathcal{R}$ be the basis and
  the set of defining relations induced by $\Gamma$, respectively. Also let $\mathcal{X}^*=\{\alpha_1,\ldots,\alpha_d\}$ be the dual basis of $\mathcal{X}$ in $H^1(G_\Gamma,\F_p)$.
Then by Proposition~\ref{prop:cupproduct} one has 
\begin{equation}\label{eq:H2alpha}
 H^2(G_\Gamma,\F_p)=\bigoplus_{\substack{i<j\\(x_i,x_j)\in\sE}}\F_p\cdot\alpha_i\alpha_j,
\end{equation}
with $\alpha_j\alpha_i=\alpha_i\alpha_j$ for all $i,j$.
Finally, \eqref{eq:H2alpha} coincides with $\Lambda_2(\F_p\langle\sV\rangle)/(\Omega)$,
with $\Omega$ as in Definition~\ref{def:gammaop}.
\end{proof}

Once we know that a $p$-RAAG is quadratic, the previous lemma completely determines the $\mathbb{F}_p$-cohomology. 

\begin{proof}[\textbf{Proof of Theorem~\ref{thm:pRAAGs quadratic}}]
 The proof follows at once from the definition of quadraticity and Lemma~\ref{lemma:H2 pRAAGs}.
\end{proof}

In the setting of Theorem~\ref{thm:pRAAGs quadratic}, we may describe the cohomology algebra of $G_\Gamma$ as follows.
Let $\mathcal{X}$ be a basis of $G_\Gamma$ induced by the homomorphism $F\to G_\Gamma$ in \eqref{eq:presentation} and let $\mathcal{X}^*=\{\alpha_1,\ldots,\alpha_d\}$ be its dual basis of $H^1(G_\Gamma,\F_p)$. Fix an integer $n$ with $1\leq n\leq \cdd(G_\Gamma)$.
Then, by Theorem~\ref{thm:pRAAGs quadratic}, we have
$$\beta=\alpha_{i_1}\cdots\alpha_{i_n}\neq0,\qquad\text{with }1\leq i_1<\ldots<i_n\leq n,$$
if, and only if, $(x_{i_j},x_{i_l})\in\sE$ for every
$i_j,i_l\in I=\{i_1,\ldots,i_n\}$.
Namely, $\beta$ is not trivial in $H^n(G_\Gamma,\F_p)$ if, and only if, there exists a clique
$\mathcal{G}_I$ of $\mathcal{G}$ with $\sV(\mathcal{G}_I)=\{x_{i_1},\ldots,x_{i_n}\}$.
Let us denote by $\mathrm{Cl}_n(\Gamma)$ the set of $n$-cliques in $\Gamma$. In particular, one has the following.

\begin{coro}
Let $G_\Gamma$ be a quadratic $p$-RAAG with associated $p$-graph $\Gamma=(\mathcal{G},f)$ and underlying graph $\mathcal{G}=(\sV,\sE)$. For $1\leq n\leq\cdd(G_\Gamma)$ and $1\leq i_1<\ldots<i_n\leq n$, the assignment
\[ 
 \alpha_{i_1}\cdots\alpha_{i_n}\longmapsto \begin{cases}\mathcal{G}_I & \text{if }\alpha_{i_1}\cdots\alpha_{i_n}\neq0,\\
                                            0  & \text{if }\alpha_{i_1}\cdots\alpha_{i_n}=0, \end{cases} \]
with $\mathcal{G}_I$ the $n$-clique of $\Gamma$ defined as above,
induces an isomorphism of vector spaces $$H^n(G_\Gamma,\F_p)\overset{\sim}{\longrightarrow}\F_p\langle\mathrm{Cl}_n(\Gamma)\rangle.$$
\end{coro}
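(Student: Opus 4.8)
The plan is to read off the statement from Theorem~\ref{thm:pRAAGs quadratic} together with the elementary structure of the algebra $\Lambda_\bullet(\mathcal{G}^{\mathrm{op}})$. First I would observe that, being quadratic, $H^\bullet(G_\Gamma,\F_p)$ is generated in degree $1$ and is graded-commutative with $\alpha_i^2=0$ for every $i$ (automatic for $p$ odd; for $p=2$ this is the standing assumption of Remark~\ref{rem:p2}); hence every class in $H^n(G_\Gamma,\F_p)$ is an $\F_p$-linear combination of the ordered squarefree monomials $\alpha_{i_1}\cdots\alpha_{i_n}$ with $i_1<\cdots<i_n$. Since $\alpha_i\alpha_j=0$ whenever $(x_i,x_j)\notin\sE\cup\overline{\sE}$ (Lemma~\ref{lemma:H2 pRAAGs}), such a monomial vanishes unless $I=\{i_1,\ldots,i_n\}$ is the vertex set of an $n$-clique $\mathcal{G}_I$ of $\mathcal{G}$. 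Thus the family $\{\,\alpha_{i_1}\cdots\alpha_{i_n} : I\in\mathrm{Cl}_n(\Gamma)\,\}$ already spans $H^n(G_\Gamma,\F_p)$, which in particular gives $\dim_{\F_p}H^n(G_\Gamma,\F_p)\le|\mathrm{Cl}_n(\Gamma)|$.

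The crux is the matching lower bound. For this I would use the structure of $\Lambda_\bullet(\mathcal{G}^{\mathrm{op}})$ as the exterior face ring (exterior Stanley--Reisner ring) of the clique complex of $\mathcal{G}$ (cf.\ \cite[\S~4.2.2]{weigel:koszul}): it admits as an $\F_p$-basis the squarefree monomials supported on cliques, so $\dim_{\F_p}\Lambda_n(\mathcal{G}^{\mathrm{op}})=|\mathrm{Cl}_n(\Gamma)|$, whence $\dim_{\F_p}H^n(G_\Gamma,\F_p)=|\mathrm{Cl}_n(\Gamma)|$ by Theorem~\ref{thm:pRAAGs quadratic}. If one prefers to avoid the citation, the same fact can be proved directly by a short diamond-lemma argument: order the squarefree monomials by the degree-lexicographic order; the only reductions induced by the defining relations are ``transpose two adjacent letters at the cost of a sign'' and ``delete a monomial containing a pair $(x_i,x_j)\notin\sE\cup\overline{\sE}$''; these reductions are confluent, and the irreducible monomials are exactly the clique monomials, so they form a basis.

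It then only remains to assemble the isomorphism. Comparing the two previous steps, the spanning set $\{\,\alpha_{i_1}\cdots\alpha_{i_n} : I\in\mathrm{Cl}_n(\Gamma)\,\}$ has exactly $|\mathrm{Cl}_n(\Gamma)|=\dim_{\F_p}H^n(G_\Gamma,\F_p)$ members, so it is a basis (in particular its members are pairwise distinct and linearly independent). Matching this basis with the distinguished basis $\mathrm{Cl}_n(\Gamma)$ of $\F_p\langle\mathrm{Cl}_n(\Gamma)\rangle$ produces the linear isomorphism $H^n(G_\Gamma,\F_p)\xrightarrow{\sim}\F_p\langle\mathrm{Cl}_n(\Gamma)\rangle$; every monomial $\alpha_{i_1}\cdots\alpha_{i_n}$ whose index set is not a clique vanishes in $H^n(G_\Gamma,\F_p)$ and is sent to $0$, so the prescription agrees with the one in the statement and is well defined. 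The only genuinely non-formal ingredient is the basis statement for $\Lambda_\bullet(\mathcal{G}^{\mathrm{op}})$ invoked in the middle step --- i.e.\ that the crude dimension bound of the first step is actually attained --- and this is a standard property of right-angled exterior algebras, available either by citation or by the rewriting argument sketched above.
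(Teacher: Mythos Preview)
Your proof is correct and follows essentially the same route as the paper: the corollary is stated there without a separate proof, being deduced directly from Theorem~\ref{thm:pRAAGs quadratic} and the observation (in the paragraph preceding the corollary) that a monomial $\alpha_{i_1}\cdots\alpha_{i_n}$ is nonzero in $\Lambda_\bullet(\mathcal{G}^{\mathrm{op}})$ if and only if $I$ indexes a clique. You simply make explicit the one point the paper leaves implicit, namely that the clique monomials form an $\F_p$-basis of $\Lambda_n(\mathcal{G}^{\mathrm{op}})$, and your citation to \cite[\S~4.2.2]{weigel:koszul} (or the diamond-lemma sketch) is an appropriate way to justify it.
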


Since we have a clear picture of the cohomology ring of a quadratic $p$-RAAG,
we can describe more precisely how the underlying graph influences the cohomology.

\subsection{Triangle-free \texorpdfstring{$p$}{p}-RAAGs}

In this section we prove that several graphs always yield quadratic pro-$p$ groups. 
The graph $\mathcal{G}=(\sV,\sE)$ is said to be \emph{triangle-free}, if there are no triples $\{x_i,x_j,x_h\}\subseteq\sV$
such that $$(x_i,x_j),(x_i,x_h),(x_j,x_h)\in\sE \cup\overline{\sE}.$$

\begin{proof}[\textbf{Proof of Theorem~\ref{thm:pRAAGs mild}}]
Assume $\mathcal{G}$ is triangle-free.
By Remark~\ref{rem:pRAAGs H2}, $G_\Gamma$ satisfies the conditions of Proposition~\ref{prop:relations}.
Consider the quadratic algebra $A_\bullet(\mathcal{G}) =Q(\F_p\langle \sV \rangle,\Omega)$ where $$ \Omega = \{x_i\otimes x_j -x_j\otimes x_i \mid i<j,\ (x_i,x_j) \in \sE\cup \overline{\sE}\}.$$
By the definition of a $p$-RAAG, the coefficients $a_{ij}$ from \eqref{eq:shape r} are either zero or one, depending on whether $(x_i,x_j) \in \sE$.
Hence $A_\bullet(\mathcal{G}) = M_\bullet(G_\Gamma)$ from Definition~\ref{def:mild}.
   By \cite[\S~4.2.2]{weigel:koszul}, one has an equality of formal power series
\begin{equation*}
 \sum_{n\geq0}\dim(A_n(\mathcal{G}))\cdot T^n=\frac{1}{1-dT+rT^2},
\end{equation*}
where $d=|\sV|$ and $r=|\sE|$. This yields (ii).

Condition (ii) implies (iii) by \cite[Thm.~2.12]{gartner:mild}.

Assume that $\cdd(G_\Gamma)=2$, and suppose that $\Gamma$ contains a triangle $T=(\mathcal{T},f\vert_{\mathcal{T}})$
as a full subgraph, with $\sV(\mathcal{T})=\{x_1,x_2,x_3\}$.
Let $H$ be the subgroup of $G_\Gamma$ generated by $x_1,x_2,x_3$.
Then $\cdd(H)\leq\cdd(G_\Gamma)$ by \cite[Prop.~3.3.5]{nsw:cohn} and $H$ is powerful.
If $H$ is torsion-free, then $\cdd(H)=3$ by Proposition~\ref{prop:uniform} --- in contradiction with (iii).
On the other hand, if $G$ has non-trivial torsion, then $\cdd(H)=\infty$ --- again contradicting (iii). Thus, (iii) implies (i).

Finally, if the three conditions hold, Proposition~\ref{prop:mild cd2} and Lemma~\ref{lemma:H2 pRAAGs}
imply that $H^\bullet(G_\Gamma,\F_p)$ is a quadratic algebra.
\end{proof}

For example, we can show that every ``cycle'' $p$-graph yields a quadratic pro-$p$ group.

\begin{example}
 Let $G$ be the pro-$p$ group with minimal presentation
 \[
  G=\left\langle x_1,\ldots,x_d\mid x_1^p[x_1,x_2]=x_2^p[x_2,x_3]=\ldots=x_d^p[x_d,x_1]=1\right\rangle,
 \]
with $d\geq4$.
By Theorem~\ref{thm:pRAAGs mild}, $G$ is quadratic.
Note that the abelianization $G/[G,G]$ is a $p$-elementary abelian group; moreover, $G$ does not occur 
as a maximal pro-$p$ Galois group by Theorem~\ref{thm:galois pRAAGs}.
\end{example}

The above theorem shows that many $p$-RAAGs are quadratic, since every $p$-graph with triangle-free underlying graph yields a quadratic group. The precise magnitude of triangle-free graphs was calulated by Erd\H{o}s, Kleitman and Rothschild; we record their result below.

\begin{theorem}{\cite{MR0463020}}\label{lem:mostpRAAGs}
 The number of triangle-free graphs on $n$ vertices is asymptotic to $2^{n^2/4 + o(n^2)}$ for $n$ tending to infinity. 
\end{theorem}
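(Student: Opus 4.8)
The plan is to determine $\log_2 f(n)$, where $f(n)$ denotes the number of triangle-free graphs on the labelled vertex set $\{1,\dots,n\}$, by matching an elementary lower bound with a substantially harder upper bound; the content of the theorem is that the two meet at $n^2/4$. For the lower bound, fix a balanced bipartition $\{1,\dots,n\}=A\sqcup B$ with $|A|=\lfloor n/2\rfloor$: every graph all of whose edges run between $A$ and $B$ is bipartite, hence triangle-free, and there are exactly $2^{|A|\,|B|}=2^{\lfloor n^2/4\rfloor}$ of them, so $f(n)\ge 2^{\lfloor n^2/4\rfloor}$, i.e.\ $\log_2 f(n)\ge n^2/4-O(1)$. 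This half is routine.

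The upper bound is the crux: the idea is that triangle-free graphs are clustered around the (few) bipartite graphs, which I would make quantitative via the graph container method. First I would invoke the container lemma for triangles: for each $\varepsilon>0$ there is a family $\mathcal C$ of graphs on $\{1,\dots,n\}$ with $\log_2|\mathcal C|=O(n^{3/2}\log n)=o(n^2)$ such that every triangle-free graph on $\{1,\dots,n\}$ is a subgraph of some $C\in\mathcal C$, and every $C\in\mathcal C$ contains at most $\varepsilon n^3$ triangles. Then I would pass from ``few triangles'' to ``few edges'': by Erd\H{o}s--Simonovits supersaturation, for each $\gamma>0$ there is $\varepsilon>0$ so that any $n$-vertex graph with at most $\varepsilon n^3$ triangles has at most $(1/4+\gamma)n^2$ edges, whence $e(C)\le(1/4+\gamma)n^2$ for all $C\in\mathcal C$. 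Summing over the containers,
\[
 f(n)\ \le\ \sum_{C\in\mathcal C}2^{\,e(C)}\ \le\ |\mathcal C|\cdot 2^{(1/4+\gamma)n^2}\ =\ 2^{(1/4+\gamma+o(1))n^2};
\]
since $\gamma>0$ is arbitrary this gives $\log_2 f(n)\le n^2/4+o(n^2)$, and combined with the lower bound $f(n)=2^{n^2/4+o(n^2)}$.

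The main obstacle is the container lemma itself; the supersaturation step and the final summation are bookkeeping. Its heart is the \emph{graph container algorithm}, a greedy (or probabilistic) procedure which assigns to each triangle-free graph a small ``fingerprint'' set of edges from which a container is reconstructed, the crucial input being quantitative control on how triangles spread over the edge set---for instance, that a graph with many edges but few triangles contains a vertex of large degree whose neighbourhood spans few edges. If one prefers to avoid this machinery, the alternative is the original Erd\H{o}s--Kleitman--Rothschild fingerprinting argument, or the closely related Kleitman--Winston counting lemma, which is more delicate but self-contained; the container route is merely the cleanest modern packaging of the same idea. I expect essentially all the difficulty to lie in establishing the container lemma with the stated $o(n^2)$ bound on $\log_2|\mathcal C|$.
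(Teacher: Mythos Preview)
Your argument is correct, but note that the paper does not actually prove this theorem: it is simply quoted as a known result from the literature, with a citation to the original Erd\H{o}s--Kleitman--Rothschild paper, and used as a black box to justify the claim that there are many triangle-free graphs.

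Since you have supplied a proof where the paper gives none, a brief comparison with the original is in order. Your route via hypergraph containers plus supersaturation is the clean modern packaging and is entirely valid; it has the advantage of being completely modular and of generalising effortlessly to $H$-free graphs for other $H$. The original 1976 argument is rather different in flavour: Erd\H{o}s, Kleitman and Rothschild prove the stronger structural statement that almost every triangle-free graph is bipartite, by an inductive cleaning procedure that iteratively locates vertices whose neighbourhoods force a near-bipartition. That yields the asymptotic count as a corollary, since the number of bipartite graphs is easily seen to be $2^{n^2/4+o(n^2)}$. Your approach gives the count directly without the structural theorem, at the cost of importing the container lemma; theirs is self-contained but more delicate. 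Either way, the enumeration statement you were asked about follows.
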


Proposition~\ref{prop:mild cd2} raises the following questions, one the ``dual'' of the other.
 
\begin{ques}
 Is every mild pro-$p$ group satisfying Proposition~\ref{prop:relations} quadratic?
 \end{ques}
 \begin{ques}
 Is every finitely presented quadratic pro-$p$ group of cohomological dimension 2 mild?
\end{ques}

By Theorem~\ref{thm:pRAAGs mild}, the above questions have a positive answer for $p$-RAAGs.

\subsection{Triangle-ful \texorpdfstring{$p$}{p}-RAAGs}\label{sec:trianglefull}

In the previous section we saw that a triangle-free $p$-graph always yields a quadratic pro-$p$ group. In particular, the $p$-RAAG associated to a triangle-free $p$-graph is always torsion-free. This is also the case if every edge of $\Gamma$ is labelled by $(0,0)$, i.e.\ $G_\Gamma$ is the pro-$p$ completion of a RAAG.

On the other hand, it turns out that general $p$-RAAGs have a surprisingly rich structure. For instance, it is possible for a $p$-RAAG to be finite.

\begin{example}\label{ex:mennicke}
The $p$-RAAG $G_\Gamma$ associated to the $p$-graph
 \[
   \xymatrix{& x_2 \ar[dr]^{(0,-p)} & \\ x_1 \ar[ur]^{(0,-p)} &  & x_3 \ar[ll]^{(0,-p)}}
 \]
is given by the presentation $$G=\left\langle x_1, x_2, x_3 \mid [x_1,x_2]=x_2^{-p},  [x_2,x_3]=x_3^{-p},  [x_3,x_1]=x_1^{-p} \right\rangle.$$
This is a finite $p$-group (see \cite[\S~4.4, Ex.~2(e)]{ser:gal}).
\end{example}

Of course a $p$-RAAG as in Example~\ref{ex:mennicke} cannot be quadratic, by Proposition~\ref{prop:properties quadratic} and Remark~\ref{rmk:propCp2}. Hence we need to somehow exclude the possibility that ``triangles collapse the group''. One such condition is given by the following definition.

\begin{definition}\label{defn:nondeg}
The $p$-RAAG $G_\Gamma$ associated to the $p$-graph $\Gamma=(\sV,\sE,f)$ is said to be \emph{non-degenerate} if there exist a subset $\widetilde{\sE} \subseteq \sV^2$ and a $p$-labelling $\widetilde{f}: \widetilde{\sE} \to p\mathbb{Z}_p \times p\mathbb{Z}_p$ such that
 \begin{enumerate}
  \item  $\widetilde{\mathcal{G}} = (\sV,\widetilde{\sE})$ is a combinatorial graph,
  \item  $\sE \subseteq \widetilde{\sE}$ and $\widetilde{f}_{\vert\sE} = f$,
  \item  $G_{(\widetilde{\mathcal{G}},\widetilde{f})}$ is a uniform pro-$p$ group.
 \end{enumerate}
\end{definition} 

The above definition just says that a $p$-RAAG is non-degenerate if its $p$-graph can be ``completed'' to the combinatorial $p$-graph of a uniform pro-$p$ group.

\begin{example}
We will see later that the $p$-RAAG $G_{\Gamma_1}$ from Example~\ref{ex:pRAAGs} is a uniform pro-$p$ group. Hence it is non-degenerate.
 
 We can complete the $p$-graph $\Gamma_2$ from Example~\ref{ex:pRAAGs} as below; see Section~\ref{sec:triang} for the details. 
 \[
 \begin{minipage}{0.1\textwidth}
    \begin{center}$\Gamma_2$ = \end{center}
   \end{minipage}
   \begin{minipage}{0.3\textwidth}
    \xymatrix{& x_2 & \\ x_1 \ar[ur]^{(\lambda,p^2)} &  & x_3 \ar[ll]^{(0,0)}}
   \end{minipage}
   \begin{minipage}{0.1\textwidth}
    \begin{center}$\longmapsto$ \end{center}
   \end{minipage}
   \begin{minipage}{0.1\textwidth}
    \begin{center}$\widetilde{\Gamma}_2$ = \end{center}
   \end{minipage}
  \begin{minipage}{0.3\textwidth}
    \xymatrix{& x_2 \ar[dr]^{(0,0)} & \\ x_1 \ar[ur]^{(\lambda,p^2)} &  & x_3 \ar[ll]^{(0,0)}}
   \end{minipage}
 \]
 On the other hand, again using the methods from Section~\ref{sec:triang}, one can show that the $p$-graph $\Gamma_3$ below  cannot be completed to give a uniform group.
 \[
 \begin{minipage}{0.1\textwidth}
    \begin{center}$\Gamma_3$ = \end{center}
   \end{minipage}
   \begin{minipage}{0.3\textwidth}
    \xymatrix{ x_2 \ar[r]^{(p,p)} & x_3 \ar[d]^{(p,p)} \\ x_1 \ar[u]^{(p,p)} & x_4 \ar[l]^{(p,0)}}
   \end{minipage}
 \] 
\end{example}

\subsubsection{Complete $p$-graphs and $p$-subgraphs}
We will now exhibit a criterion to check if a $p$-RAAG associated to a complete $p$-graph is non-degenerate (or equivalently uniform). 
%

\begin{proposition}\label{lem:unif}
\begin{enumerate}[(a)]
\item Let $G=\langle x_1,\ldots,x_d\rangle$ be a uniform pro-$p$ group with $d = d(G)$  and assume that for all $1\le i<j\le d$ there exist $\a_{ij}, \beta_{ij} \in p\Z_p$ such that $[x_i,x_j] = x_i^{\a_{ij}} x_j^{\beta_{ij}}$. 
Then there exists a complete $p$-graph $\Gamma$ such that $G=G_\Gamma$.
\item Let $\Gamma$ be a complete $p$-graph and let $G_\Gamma= \langle x_1,\ldots,x_d \rangle$ be its associated generalised $p$-RAAG. Then $G_\Gamma$  is uniform if and only if every triple of generators $x_r,x_s,x_t$ in $G_\Gamma$ generates a torsion free pro-$p$ group (which must be uniform of dimension 3). In particular, $G_\Gamma$ is quadratic if and only if it is torsion free, and therefore uniform.
\end{enumerate}
\end{proposition}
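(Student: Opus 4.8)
The organizing idea is the following rigidity statement, which I would prove once and use twice: \emph{if $H$ is a finitely generated powerful pro-$p$ group with $d(H)=d$ and there is a continuous surjection $H\twoheadrightarrow U$ onto a uniform pro-$p$ group $U$ of dimension $d$, then $H$ is uniform of dimension $d$ and the surjection is an isomorphism.} To see this, recall that for powerful $H$ one has $P_{i+1}(H)=P_i(H)^p=H^{p^i}$, and $H^{p^i}=\langle g_1^{p^i},\dots,g_d^{p^i}\rangle$ whenever $H=\langle g_1,\dots,g_d\rangle$ (cf.\ \cite[Ch.~3]{ddms:padic}); hence every factor $P_i(H)/P_{i+1}(H)$ can be generated by $d$ elements, so $|P_i(H):P_{i+1}(H)|\le p^d$ and $|H:P_{i+1}(H)|\le p^{di}$ for all $i$. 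On the other hand a surjection $H\twoheadrightarrow U$ carries $P_i(H)$ onto $P_i(U)$, and $|U:P_{i+1}(U)|=p^{di}$ since $U$ is uniform with $d(U)=\dim U=d$; so all these inequalities are equalities, $H$ is uniform of dimension $d$, and (additivity of $\dim$ in extensions, together with torsion-freeness of uniform groups) the kernel of $H\twoheadrightarrow U$ is finite, hence trivial.

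Part (a) is then immediate. Let $\mathcal{G}$ be the complete combinatorial graph on $\{x_1,\dots,x_d\}$ with edge set $\{(x_i,x_j):i<j\}$ and $p$-labelling $f(x_i,x_j)=(\alpha_{ij},\beta_{ij})$ — a legitimate labelling since $\alpha_{ij},\beta_{ij}\in p\mathbb{Z}_p$ — and put $\Gamma=(\mathcal{G},f)$. Since the relations $[x_i,x_j]=x_i^{\alpha_{ij}}x_j^{\beta_{ij}}$ hold in $G$, there is a surjection $G_\Gamma\twoheadrightarrow G$, whence $d(G_\Gamma)=d$; and $G_\Gamma$ is powerful because each $[x_i,x_j]=(x_i^{\alpha_{ij}/p})^p(x_j^{\beta_{ij}/p})^p\in G_\Gamma^p$, so $G_\Gamma/G_\Gamma^p$ is abelian and $[G_\Gamma,G_\Gamma]\le G_\Gamma^p$. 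The rigidity statement applied to $G_\Gamma\twoheadrightarrow G$ gives $G\cong G_\Gamma$.

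For the forward direction of (b): if $G_\Gamma$ is uniform it is torsion-free, hence so is every closed subgroup $\langle x_r,x_s,x_t\rangle$; such a subgroup is powerful by the same power computation as above, so it is uniform, and its dimension equals its number of generators, which is $3$ because $\bar x_r,\bar x_s,\bar x_t$ are linearly independent in $G_\Gamma/\Phi(G_\Gamma)\cong\mathbb{F}_p^d$. The final assertion is then formal: $G_\Gamma$ is always powerful, so it is uniform as soon as it is torsion-free (\cite[Thm.~4.5]{ddms:padic}); uniform groups are quadratic by Proposition~\ref{prop:uniform}; and conversely a quadratic pro-$p$ group is torsion-free by Proposition~\ref{prop:properties quadratic}(a), as $p\ge3$.

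The substance is the backward direction of (b), which I expect to be the hard part. The cases $d\le2$ are clear ($G_\Gamma$ is trivial, $\mathbb{Z}_p$, or a $2$-generated Demushkin group by Lemma~\ref{lem:isom}, so uniform, while the hypothesis on triples is vacuous), so assume $d\ge3$ and that every $\langle x_r,x_s,x_t\rangle$ is torsion-free. As in the forward direction each $\langle x_i,x_j,x_k\rangle$ — and hence each $\langle x_i,x_j\rangle$, being a closed subgroup of one of these — is uniform; let $L_{\{i,j\}}\subseteq L_{\{i,j,k\}}$ be the associated Lazard $\mathbb{Z}_p$-Lie lattices, which by functoriality of $\log$ on closed subgroups are compatible. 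Then $c_{ij}:=[\log x_i,\log x_j]$ lies in $\mathbb{Z}_p\log x_i+\mathbb{Z}_p\log x_j$ and depends only on $\{i,j\}$. Transporting along $\log x_i\mapsto e_i$, this defines an antisymmetric $\mathbb{Z}_p$-bilinear bracket on $L:=\bigoplus_{i=1}^d\mathbb{Z}_p e_i$ with $[e_i,e_j]=\bar c_{ij}$; it takes values in $pL$ since $[L_{\{i,j\}},L_{\{i,j\}}]\subseteq pL_{\{i,j\}}$, and the Jacobi identity holds on every triple $e_i,e_j,e_k$ because it is computed inside the genuine Lie algebra $L_{\{i,j,k\}}$ (the span of $e_i,e_j,e_k$ being closed under the bracket). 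Thus $L$ is a powerful $\mathbb{Z}_p$-Lie lattice, and Lazard's correspondence (\cite{lazard:padic}, \cite[Thm.~5.1.5]{sw:cpg}) produces a uniform pro-$p$ group $G=\exp(L)$ of dimension $d$ topologically generated by $\exp(e_1),\dots,\exp(e_d)$, in which $[\exp e_i,\exp e_j]=(\exp e_i)^{\alpha_{ij}}(\exp e_j)^{\beta_{ij}}$ holds for $i<j$ — this holds already in the uniform subgroup $\exp(\mathbb{Z}_p e_i+\mathbb{Z}_p e_j)\cong\langle x_i,x_j\rangle$, being the very relation of $G_\Gamma$ that entered the definition of $c_{ij}$. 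Hence the universal property of $G_\Gamma$ gives a surjection $G_\Gamma\twoheadrightarrow G$, $x_i\mapsto\exp e_i$, and the rigidity statement forces $G_\Gamma\cong G$, uniform. The delicate points to pin down will be the compatibility of the local Lazard lattices $L_{\{i,j\}}\subseteq L_{\{i,j,k\}}$ and the transfer of the presentation relations to $G=\exp(L)$; everything else is routine manipulation of powerful and uniform pro-$p$ groups.
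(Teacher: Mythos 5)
Your proof is correct, and for the substantive half --- the backward direction of (b) --- it follows the same architecture as the paper: assemble the global powerful $\Z_p$-Lie lattice $L$ from the $3$-dimensional Lazard lattices of the triple subgroups (checking Jacobi triple-by-triple inside those genuine lattices), exponentiate via Lazard to get a uniform group $G_L$ in which the defining relations of $G_\Gamma$ hold, and compare $G_\Gamma$ with $G_L$. Where you genuinely diverge is in how the comparison is closed and in part (a). The paper disposes of (a) by citing \cite[Prop.~4.32]{ddms:padic}, and it proves that the surjection between $G_\Gamma$ and $G_L$ has trivial kernel by a five-term inflation--restriction argument, which needs the computation $\dim H^2(G_\Gamma,\F_p)=\binom{d}{2}$ from Lemma~\ref{lemma:H2 pRAAGs} (itself resting on Propositions~\ref{prop:cupproduct} and \ref{prop:relations}). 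Your ``rigidity'' lemma --- a $d$-generated powerful group surjecting onto a $d$-dimensional uniform group is itself uniform and the surjection is an isomorphism, proved by comparing the orders $|H:P_{i+1}(H)|\le p^{di}=|U:P_{i+1}(U)|$ --- replaces both the citation in (a) and the cohomological step in (b) with one elementary counting argument that uses no cohomology at all; this is a clean simplification and arguably more robust (it also sidesteps the paper's slightly garbled direction of the map $G_L\to G$). Two small points worth tightening when you write it up: a closed subgroup of a uniform group need not be uniform, so for $\langle x_i,x_j\rangle$ you should say explicitly that it is \emph{powerful} (via $[x_i,x_j]=x_i^{\alpha_{ij}}x_j^{\beta_{ij}}\in\langle x_i,x_j\rangle^p$) and torsion-free, hence uniform, rather than deducing uniformity from being a closed subgroup of $\langle x_i,x_j,x_k\rangle$; and your route to the relation $[\exp e_i,\exp e_j]=(\exp e_i)^{\alpha_{ij}}(\exp e_j)^{\beta_{ij}}$ through the rank-$2$ sublattice $\Z_p e_i+\Z_p e_j\cong L_{\langle x_i,x_j\rangle}$ is actually preferable to the paper's appeal to $\exp([X_r,X_s])=[\exp X_r,\exp X_s]$, so keep it.
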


\begin{proof} 
If $G$ is a uniform pro-$p$ group as in $(a)$, then there is a presentation of the required form over a complete $p$-graph $\Gamma$
by \cite[Proposition~4.32]{ddms:padic}. This proves $(a)$.

Let $\Gamma$ be a complete $p$-graph.
Modulo reversing some arrows, the $p$-RAAG $G_\Gamma$ has a presentation
\begin{equation}\label{eq:presuniform}
  G_\Gamma = \pres{x_1,\ldots,x_d}{[x_i,x_j] = x_i^{\a_{ij}} x_j^{\beta_{ij}},\ 1\le i<j\le d}.
\end{equation}
Clearly $G_\Gamma$ is a powerful group. Let $H_{r,s,t}$ be the subgroup of  $G_\Gamma$  generated by a triple of generators
$x_r,x_s,x_t$ in  $G_\Gamma$ . If  $G_\Gamma$ is uniform, then it is torsion free and therefore  $H_{r,s,t}$ is  torsion free as well. Now suppose that  $H_{r,s,t}$ is torsion free for every triple of generators $x_r,x_s,x_t$ in $G_\Gamma$.  Then $H_{r,s,t}$ is  uniform of dimension $3$.
Hence the $\mathbb{Z}_p$-Lie algebra $L_{H_{r,s,t}}$ associated to $H_{r,s,t}$ has the presentation
\begin{equation}\label{eq:presliealg}
  L_{H_{r,s,t}} = \left\langle X_r, X_s, X_t  \ \left\vert \ \begin{matrix} [X_r,X_s] = \alpha_{r,s}' X_r + \beta_{r,s}' X_s,\\ [X_s,X_t] = \alpha_{s,t}' X_s + \beta_{s,t}' X_t,\\ [X_t,X_r] = \alpha_{t,r}' X_t + \beta_{t,r}' X_r \end{matrix}  \right. \right\rangle
\end{equation}
for some $\alpha_{ij}', \beta_{ij}'\in p\mathbb{Z}_p$. In fact, \eqref{eq:presliealg} follows from \eqref{eq:presuniform} by direct computation using \eqref{eq:Liebracket} and the hypothesis. 
Define the powerful $\mathbb{Z}_p$-Lie algebra $L$ given by the presentation
\[
  L = \pres{X_1, \ldots, X_d}{[X_i,X_j] = \a_{ij}' X_i + \beta_{ij}' X_j,\ 1\le i<j\le d}.
\]
It is easy to see that the free $ \Z_p$-module $L$ with basis $X_1,\ldots,X_d$ has a Lie algebra structure with the given value of the Lie bracket on the generators; it suffices to check the Jacobi identity for triples of generators. Consider the uniform pro-$p$ group $G_L$ associated to $L$
via the Lazard correspondence using the map $\exp: L\to G_L$. 
Using the properties of the commutator Campbell-Hausdorff formula $\Psi: L\to G_L$
(\cite[Lem.~7.12(iii)]{ddms:padic}), one can show that
$$\exp ([X_r,X_s]) = [\exp(X_r), \exp(X_s)]=  \exp(X_r)^{\a_{rs}} \exp(X_s)^{\b_{rs}}.$$
Thus the map $G_L \to G$ defined by $\exp(X_r) \mapsto x_r$ yields a surjective homomorphism,
which induces the long exact sequence 
\[
 \xymatrix@C=1.2truecm{ 0\ar[r] & H^1(G,\F_p)\ar[r]^-{\inf_{G_L,N}^1}& H^1(G_L,\F_p)\ar[r]^-{\res_{G_L,N}^1} 
& H^1(N,\F_p)^G \ar`r[d]`[l] `[dlll] `[dll] [dll]   \\
 & H^{2}(G,\F_p)\ar[r]^-{\inf_{G_L,N}^2} & H^{2}(G_L,\F_p) &  &}
\]
where $N$ denotes the kernel of $G_L\to G$.
The map $\inf_{G_L,N}^1$ is an isomorphism because we may identify the bases of $G_L$ and of $G$.
Moreover, also the map $\inf_{G_L,N}^2$ is an isomorphism, since Proposition~\ref{prop:uniform} yields
$H^2(G,\F_p)\cong H^2(G_L,\F_p)\cong\Lambda_2H^{1}(G,\F_p)$.
Therefore, $H^1(N,\F_p)^G$ is trivial, so that $N=\kernel(G_L\to G)$ is trivial too.
\end{proof}

Proposition~\ref{lem:unif} also gives a handy criterion to check if a complete $p$-RAAG is non-degenerate. In fact, we believe that deciding whether a generalised $p$-RAAG is degenerate boils down to the same question for $p$-subgraphs that are triangles. This can be a very difficult task. We attempt to give some partial answers in Section~\ref{sec:triang} where we will study the groups arising from $p$-graphs of the form
\[ 
  \xymatrix{& x_2 \ar[dr]^{(\b_2,\b_3)} & \\ x_1 \ar[ur]^{(\a_1,\a_2)}&  & x_3 \ar[ll]^{(\c_3,\c_1)}} 
\]
We will call these groups \emph{triangle $p$-RAAGs}.

Furthermore, it is hard to decide whether the $p$-RAAG associated to a $p$-subgraph $\Gamma_1\subset \Gamma$ embeds into the $p$-RAAG associated to $\Gamma$. This can be addressed for complete $p$-subgraphs of non-degenerate $p$-graphs.

\begin{lemma}\label{lemma:complete in nondeg}
 Let $\Gamma=((\sV,\sE),f)$ be a $p$-graph such that the associated $p$-RAAG $G_\Gamma$ is non-degenerate,
 and let $\Delta=((\sV_\Delta,\sE_\Delta),f_\Delta)$ be a complete $p$-subgraph of $\Gamma$.
Then the $p$-RAAG $G_\Delta$ is uniform, and it embeds in $G_\Gamma$. 
\end{lemma}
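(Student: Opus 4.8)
The idea is to realise $G_\Delta$ inside a uniform pro-$p$ group that dominates $G_\Gamma$. By Definition~\ref{defn:nondeg}, fix a $p$-graph $\widetilde\Gamma=((\sV,\widetilde\sE),\widetilde f)$ with $\sE\subseteq\widetilde\sE$, $\widetilde f_{\vert\sE}=f$ and $G_{\widetilde\Gamma}$ uniform. Because $\Delta$ is complete, every pair of distinct vertices of $\sV_\Delta$ is joined by an edge of $\sE_\Delta\subseteq\sE\subseteq\widetilde\sE$; since $\widetilde{\mathcal G}$ is combinatorial, this forces $\widetilde\sE\cap\sV_\Delta^2=\sE_\Delta$ and $\widetilde f_{\vert\sE_\Delta}=f_\Delta$, so $\Delta$ is a complete full $p$-subgraph of $\widetilde\Gamma$ as well. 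There are then homomorphisms $G_\Delta\xrightarrow{\ \iota\ }G_\Gamma\xrightarrow{\ q\ }G_{\widetilde\Gamma}$, each sending a generator $x_v$ to $x_v$ (well defined since every defining relator of the source is a defining relator of the target), with $q$ surjective. Hence it suffices to prove that $q\circ\iota$ is injective with uniform image: this yields both that $G_\Delta$ is uniform and, since $q\circ\iota$ factors through $\iota$, that $\iota$ is injective.

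Let $H=\overline{\langle x_v\mid v\in\sV_\Delta\rangle}\le G_{\widetilde\Gamma}$ and let $L=L_{G_{\widetilde\Gamma}}$ be the associated powerful $\Z_p$-Lie algebra, a free $\Z_p$-module with basis $\{X_v:=\log x_v\mid v\in\sV\}$. For distinct $v,w\in\sV_\Delta$ the relation $[x_v,x_w]=x_v^{\widetilde f_1}x_w^{\widetilde f_2}$ shows that $\langle x_v,x_w\rangle$ is a torsion-free powerful, hence uniform (\cite[Thm.~4.5]{ddms:padic}), subgroup of $G_{\widetilde\Gamma}$ with $\{x_v,x_w\}$ as minimal generating set, so its Lie algebra inside $L$ is $\Z_p X_v\oplus\Z_p X_w$; in particular $[X_v,X_w]\in\Z_p X_v\oplus\Z_p X_w$. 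Therefore $L_\Delta:=\bigoplus_{v\in\sV_\Delta}\Z_p X_v$ is a Lie subalgebra of $L$, and since it is a $\Z_p$-module direct summand one gets $[L_\Delta,L_\Delta]\subseteq[L,L]\cap L_\Delta\subseteq pL\cap L_\Delta=pL_\Delta$ (with $p$ replaced by $4$ when $p=2$); thus $L_\Delta$ is a uniform Lie algebra and, by the Lazard correspondence (\cite{ddms:padic}), $\exp(L_\Delta)$ is a uniform subgroup of $G_{\widetilde\Gamma}$ of dimension $|\sV_\Delta|$ generated by $\{x_v\mid v\in\sV_\Delta\}$, i.e.\ $\exp(L_\Delta)=H$. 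Since $\widetilde f_{\vert\sE_\Delta}=f_\Delta$, all defining relators of $G_\Delta$ hold in $H$, so $q\circ\iota$ factors as $G_\Delta\xrightarrow{\ \rho\ }H\hookrightarrow G_{\widetilde\Gamma}$ with $\rho$ surjective; it remains to check that $\rho$ is an isomorphism.

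Finally I would compare cohomology. As $\dd(G_\Delta)=|\sV_\Delta|=\dd(H)$, the surjection $\rho$ induces an isomorphism $H^1(H,\F_p)\cong H^1(G_\Delta,\F_p)$, so $\kernel(\rho)\le(G_\Delta)_{(2)}$. By Lemma~\ref{lemma:H2 pRAAGs}, since $\Delta$ is complete (so $\Delta^{\mathrm{op}}$ has no edges), $H^2(G_\Delta,\F_p)\cong\Lambda_2(\Delta^{\mathrm{op}})\cong\Lambda_2 H^1(G_\Delta,\F_p)$, while $H^2(H,\F_p)\cong\Lambda_2 H^1(H,\F_p)$ by Proposition~\ref{prop:uniform}. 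The inflation map $\inf^2\colon H^2(H,\F_p)\to H^2(G_\Delta,\F_p)$ is a ring homomorphism compatible with the degree-one isomorphism, hence it sends the basis $\{\alpha_i\wedge\alpha_j\}_{i<j}$ of $H^2(H,\F_p)$ to the basis $\{\alpha_i'\wedge\alpha_j'\}_{i<j}$ of $H^2(G_\Delta,\F_p)$ and is an isomorphism. Feeding this into the five-term exact sequence of $1\to\kernel(\rho)\to G_\Delta\to H\to1$ squeezes $H^1(\kernel(\rho),\F_p)^{H}$ between two isomorphisms, so it vanishes, whence $\kernel(\rho)=1$ by the standard argument for pro-$p$ groups. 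Thus $G_\Delta\cong H$ is uniform and embeds in $G_\Gamma$. I expect the Lie-algebraic step of the second paragraph to be the real work — in particular checking that $L_\Delta$ is a direct-summand Lie subalgebra and that $\exp(L_\Delta)$ is precisely $H$, of dimension $|\sV_\Delta|$ — while the cohomological comparison is routine.
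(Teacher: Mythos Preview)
Your proof is correct and follows the same overall strategy as the paper: factor the natural map $G_\Delta\to G_\Gamma$ through the uniform completion $G_{\widetilde\Gamma}$, show that the image $H$ of $G_\Delta$ in $G_{\widetilde\Gamma}$ is uniform with $\dd(H)=|\sV_\Delta|$, and then run the five-term sequence (exactly as in the proof of Proposition~\ref{lem:unif}(b)) to kill the kernel.

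The one place you work harder than necessary is the second paragraph. You identify the Lie subalgebra $L_\Delta\subseteq L_{G_{\widetilde\Gamma}}$, check it is a powerful direct summand, and then argue that $\exp(L_\Delta)=H$. The paper bypasses all of this with a two-line observation: $H=\image(q\circ\iota)$ is a \emph{quotient} of the powerful group $G_\Delta$, hence powerful; and it is a \emph{subgroup} of the torsion-free group $G_{\widetilde\Gamma}$, hence torsion-free; so $H$ is uniform by \cite[Thm.~4.5]{ddms:padic}. That $\dd(H)=|\sV_\Delta|$ then follows because the $X_v$ ($v\in\sV_\Delta$) are part of a $\Z_p$-basis of $L_{G_{\widetilde\Gamma}}$, so $\dim H\ge|\sV_\Delta|$, while $H$ is generated by $|\sV_\Delta|$ elements. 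Your Lie-algebraic detour reproves exactly this, but the ``powerful image inside torsion-free'' shortcut is what makes the paper's argument a genuine lemma-sized proof rather than a page. Your instinct that the cohomological comparison is routine is right; it is the Lie step that you over-engineered.
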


\begin{proof}
 Let $\tilde\Gamma$ be a complete $p$-graph which completes $\Gamma$.
 Then $\Delta$ is a $p$-subgraph of $\tilde\Gamma$, and one has the morphisms of pro-$p$ groups
 \[
\xymatrix{ G_\Delta\ar[r]^-{\phi_\Delta} & G_\Gamma\ar[r]^-{\phi_{\tilde\Gamma}} & G_{\tilde\Gamma}} , 
\]
with $G_{\tilde\Gamma}$ uniform.
Set $\psi=\phi_{\tilde\Gamma}\circ\phi_\Delta$.
Since $\Delta$ is complete, $G_\Delta$ is powerful, and therefore also $\image(\psi)$
is powerful.
Moreover, $G_{\tilde\Gamma}$ is torsion-free, thus also $\image(\psi)$ is torsion-free, 
and hence uniform.
Finally, both $G_\Delta$ and $\image(\psi)$ are minimally generated by $\sV_\Delta$.
Hence, $\psi\colon G_\Delta\to\image(\psi)$ is an isomorphism.
\end{proof}

In the rest of this section we will try to apply Theorem~\ref{thm:cohomology amalgam} to show that
several families of $p$-RAAGs yield quadratic pro-$p$ groups.
We have seen in Remark~\ref{rmk:hypotheses pRAAGs} that $p$-RAAGs automatically satisfy the cohomological hypotheses
of Theorem~\ref{thm:cohomology amalgam}. It turns out that the main obstruction to apply the theorem
is the fact that we do not have a general criterion to decide whether a certain $p$-RAAG $G_\Gamma$ is
a \emph{proper} amalgam of two $p$-RAAGs associated to full subgraphs of $\Gamma$.
In the next subsection, we will add two novel criteria for an amalgam of pro-$p$ groups to be proper.

\subsection{Proper amalgams of \texorpdfstring{$p$}{p}-RAAGs}

We will show below that the amalgam of two uniform pro-$p$ groups over a uniform subgroup $H$ is always proper, provided that the generators of $H$ are part of the minimal generating sets of both groups. This adds a new criterion to the known criteria from \cite[\S~9.2]{ribzal:book}. We will add yet another new criterion for properness in later in this Section.

\begin{proposition}\label{prop:amalgunif}
Let $1\le d\le k \le n$. Let $G_1=\langle x_1,\ldots,x_k\rangle$ and $G_2= \langle x_d\ldots,x_n \rangle$ be uniform pro-$p$ groups with the isomorphic closed uniform subgroup $H=\langle x_d,\ldots,x_k \rangle$. Then the amalgamated free pro-$p$ product $G=G_1 \amalg_H G_2$ is proper.
\begin{proof}
First note that it is sufficient to show that $H^{p^n} = H\cap G_i^{p^n}$ for every $n$ and $i=1,2$. In fact, we can then apply \cite[Thm.~9.2.4]{ribzal:book} and we are done.

By symmetry, it is sufficient to show the property for $i=1$. It is clear that $H^{p^n} \le H\cap G_1^{p^n}$ for all $n$. By \cite[Thm.~2.7]{ddms:padic}, we have that $G_1^{p^n} = \langle x_1^{p^n},\ldots,x_k^{p^n} \rangle$ and $H^{p^n} = \langle x_d^{p^n},\ldots,x_k^{p^n} \rangle$. Suppose by contraddiction that there is $g\in (H\cap G_1^{p^n}) \smallsetminus H^{p^n}$. Then $$ g= x_1^{a_1 p^n} \ldots x_{d-1}^{a_{d-1} p^n} x_d^{a_d p^n} \ldots x_k^{a_k p^n}$$ for some $a_i,b_j \in \mathbb{Z}_p$. Since $x_d^{a_d p^n} \ldots x_k^{a_k p^n} \in H$, we also have $x_1^{a_1 p^n} \ldots x_{d-1}^{a_{d-1} p^n} \in H$. Thus there exist $c_d,\ldots,c_k\in \mathbb{Z}_p$ such that 
$$x_1^{a_1 p^n} \ldots x_{d-1}^{a_{d-1} p^n}\cdot x_d^{c_d p^n} \ldots x_{k}^{c_{k} p^n}  = 1.$$ Since $G_1$ is uniform, we can conclude that $$a_1=\ldots =a_{d-1} = c_d =\ldots = c_k=0,$$ and hence $g\in H^{p^n}$, which yields a contradiction.
\end{proof}
\end{proposition}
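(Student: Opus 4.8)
The plan is to deduce properness from the classical criterion of Ribes and Zalesskii together with the structure theory of uniform pro-$p$ groups. By \cite[Thm.~9.2.4]{ribzal:book}, to show that $G=G_1\amalg_H G_2$ is proper it suffices to exhibit, for $i=1,2$, a base of neighbourhoods of $1$ in $G_i$ consisting of open normal subgroups whose traces on $H$ coincide and form a base of neighbourhoods of $1$ in $H$. Since $G_1$, $G_2$ and $H$ are finitely generated, the families $\{G_i^{p^n}\}_{n\ge1}$ and $\{H^{p^n}\}_{n\ge1}$ are such bases --- each $G_i^{p^n}$ is open and characteristic --- so the whole statement reduces to the single equality
\[
H^{p^n}=H\cap G_i^{p^n}\qquad\text{for all }n\ge1,\ i=1,2,
\]
and by symmetry it is enough to treat $i=1$.

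Next I would assemble the facts needed about $G_1$. By the theory of powerful and uniform pro-$p$ groups (\cite[Thm.~2.7]{ddms:padic}, \cite[Prop.~4.32]{ddms:padic}): the subgroup $G_1^{p^n}=\langle x_1^{p^n},\dots,x_k^{p^n}\rangle$ is again uniform with $x_1^{p^n},\dots,x_k^{p^n}$ as a basis; every element of $G_1$ has a unique expression $x_1^{a_1}\cdots x_k^{a_k}$ with $a_j\in\Z_p$; and, crucially, because $H=\langle x_d,\dots,x_k\rangle$ is uniform with basis $x_d,\dots,x_k$ sitting inside the basis of $G_1$, an element of $G_1$ lies in $H$ precisely when its first $d-1$ coordinates vanish. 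The symmetric statements hold for $G_2$.

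The remaining inclusion is then short. The inclusion $H^{p^n}\subseteq H\cap G_1^{p^n}$ is clear. Conversely, let $g\in H\cap G_1^{p^n}$ and write $g=x_1^{a_1p^n}\cdots x_k^{a_kp^n}$ with $a_j\in\Z_p$, using the basis of $G_1^{p^n}$. The tail $x_d^{a_dp^n}\cdots x_k^{a_kp^n}$ lies in $H^{p^n}\subseteq H$, hence so does the head $x_1^{a_1p^n}\cdots x_{d-1}^{a_{d-1}p^n}$; its first $d-1$ coordinates in $G_1$ are $a_1p^n,\dots,a_{d-1}p^n$, so membership in $H$ together with uniqueness of coordinates forces $a_1=\cdots=a_{d-1}=0$. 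Therefore $g=x_d^{a_dp^n}\cdots x_k^{a_kp^n}\in H^{p^n}$, which completes the proof for $i=1$; the case $i=2$ is identical.

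The computation is minimal; the point that requires care is the bookkeeping in the second paragraph --- making the monomial coordinate systems on $G_1$, on $G_1^{p^n}$ and on $H$ mutually compatible, so that ``lying in $H$'' is genuinely detected by the vanishing of the first $d-1$ coordinates --- and checking that the cofinal chains $\{G_i^{p^n}\}$ and $\{H^{p^n}\}$ literally meet the hypotheses of \cite[Thm.~9.2.4]{ribzal:book}. These are standard consequences of uniformity, but they are exactly where the assumption that the generators of $H$ form part of bases of both $G_1$ and $G_2$ is used.
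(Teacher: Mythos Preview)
Your proposal is correct and follows essentially the same route as the paper: reduce properness via \cite[Thm.~9.2.4]{ribzal:book} to the equality $H^{p^n}=H\cap G_i^{p^n}$, then use the unique ordered-product coordinates in the uniform group $G_1$ to force the ``head'' $x_1^{a_1p^n}\cdots x_{d-1}^{a_{d-1}p^n}$ to be trivial. The only difference is cosmetic---you argue directly rather than by contradiction, and you are more explicit than the paper about the compatibility of the coordinate systems on $G_1$, $G_1^{p^n}$ and $H$, which is indeed the one point that deserves a sentence of justification.
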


We now need an auxilliary lemma on free products of pro-$p$ groups and their Frattini subgroups.
Recall that the Frattini series of a pro-$p$ group $G$ is defined inductively by $\Phi^1(G)=G$ and $\Phi^{n+1}(G) = \Phi^n(G)^p [\Phi^n(G),\Phi^n(G)]$.

\begin{lemma}\label{lem:frattini}
Let $H$ and $K$ be pro-$p$ groups and let $G= H\ast K$. Then $$\Phi^n(H) = H\cap \Phi^n(G) \text{\quad for all $n$}.$$
\begin{proof}
Clearly $\Phi^n(H) \le H\cap \Phi^n(G)$. For the other inclusion, we first observe that there exists a retraction $r:G\to H$, i.e.\ a continuous homomorphism such that $r_{\vert H} = \mathrm{id}_H$, given by $h\mapsto h$ and $k\mapsto 1$ for $h\in H$ and $k\in K$. Hence $r(\Phi^n(G)) \le \Phi^n(r(G))$ and \begin{multline*} \Phi^n(G)\cap H = r(\Phi^n(G)\cap H) \le r(\Phi^n(G)) \cap r(H) \le \\ \le \Phi^n(r(G)) \cap H = \Phi^n(H) \cap H = \Phi^n(H).\end{multline*}
\end{proof}
\end{lemma}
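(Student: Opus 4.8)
The statement to prove is Lemma~\ref{lem:frattini}: for pro-$p$ groups $H,K$ and $G = H \ast K$, one has $\Phi^n(H) = H \cap \Phi^n(G)$ for all $n$.

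\medskip

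The plan is to exploit the universal property of the free pro-$p$ product to build a retraction from $G$ onto $H$, and then to chase the Frattini subgroups through it. First I would observe that the assignment $h \mapsto h$ for $h \in H$ and $k \mapsto 1$ for $k \in K$ extends, by the universal property of $G = H \ast K$ in the category of pro-$p$ groups, to a continuous homomorphism $r \colon G \to H$; by construction $r|_H = \mathrm{id}_H$, so $r$ is a retraction. The inclusion $\Phi^n(H) \le H \cap \Phi^n(G)$ is immediate, since $\Phi^n$ is monotone under inclusions of pro-$p$ groups and $\Phi^n(H) \le \Phi^n(G)$ while clearly $\Phi^n(H) \le H$.

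\medskip

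For the reverse inclusion, the key point is that any continuous homomorphism $\varphi$ of pro-$p$ groups satisfies $\varphi(\Phi^n(G)) \le \Phi^n(\varphi(G))$ — this follows by induction on $n$ from the fact that $\varphi$ carries $p$-th powers to $p$-th powers and commutators to commutators, together with continuity to pass to closures. Applying this with $\varphi = r$, and using that $r$ fixes $H$ pointwise, I would compute
\[
 \Phi^n(G) \cap H = r\bigl(\Phi^n(G) \cap H\bigr) \le r(\Phi^n(G)) \cap r(H) \le \Phi^n(r(G)) \cap H = \Phi^n(H) \cap H = \Phi^n(H),
\]
where the first equality uses $r|_H = \mathrm{id}_H$, the middle inequalities use that images of intersections lie in intersections of images together with $r(\Phi^n(G)) \le \Phi^n(r(G))$, and $r(G) = H$ since $r$ is a retraction onto $H$. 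This gives $H \cap \Phi^n(G) \le \Phi^n(H)$, completing the proof.

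\medskip

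I do not anticipate a serious obstacle here: the only thing requiring care is the existence of a \emph{continuous} retraction, which is guaranteed by the universal property of the free pro-$p$ product (the trivial map on $K$ and the identity on $H$ are both continuous, so their amalgamation is a well-defined continuous homomorphism out of $G$), and the behaviour of the Frattini series under continuous homomorphisms, which is standard. One should just be slightly careful that all subgroups and homomorphisms are taken in the topological sense — powers and commutator subgroups are understood as closed subgroups throughout, consistently with the conventions fixed earlier in the paper — so that the closures involved in defining $\Phi^n$ are compatible with taking continuous images.
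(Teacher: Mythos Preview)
Your proof is correct and follows essentially the same approach as the paper's: build the retraction $r\colon G\to H$ from the universal property of the free pro-$p$ product, use that continuous homomorphisms carry $\Phi^n(G)$ into $\Phi^n(r(G))$, and conclude via the identical chain of inclusions. The only difference is that you spell out a bit more justification (the universal property and the inductive behaviour of $\Phi^n$ under homomorphisms) than the paper does.
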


We are almost ready to state our criterion for an amalgam of $p$-RAAGs to be proper. First one more proposition which might be of independent interest.

\begin{proposition}\label{prop:frattiniunifsubgroup}
Let $G=G_\Gamma=\langle x_1,\ldots,x_d\rangle $ be a non-degenerate $p$-RAAG. Suppose that the subgroup $H=\langle x_k\ldots, x_d\rangle$ is uniform for some $1 \le k\le d$. Then $$ \Phi^n(H) = H\cap \Phi^n(G) $$ for every $n\in \mathbb{N}$.
\begin{proof}
Clearly $\Phi^n(H) \le H\cap \Phi^n(G)$. Consider the free pro-$p$ group $F=F(x_1,\ldots,x_{k-1})$ generated by $x_1,\ldots,x_{k-1}$ and the canonical projection $\varphi: F\ast H \to G$ given by $x_i\mapsto x_i$ for $i=1,\ldots,d$. 
Also, denote by $\widetilde{G}$ the uniform quotient of $G$ from Definition~\ref{defn:nondeg} and by $\pi: G\to \widetilde{G}$ the associated projection.
Then the map $(\pi \circ \varphi)_{\vert H}$ is an isomorphism, since $H$ is uniform by hypothesis. We will use the same symbol for the three different copies of $H$ to make the notation lighter.

Suppose by contradiction that there is $x\in (H\cap \Phi^n(G)) \smallsetminus \Phi^n(H)$. Then, by Lemma~\ref{lem:frattini}, $\pi(x)\in H\cap \Phi^n(G) = \Phi^n(H)$. Moreover, there exists $y\in \Phi^n(H)= H\cap \Phi^n(F\ast H)$ such that $\pi(\varphi(y)) = \pi(x)$. Since $y\in H$ and $\pi_{\vert H}$ is also an isomorphism, we deduce that $\varphi(y)=x$. Thus $x\in \varphi(\Phi^n(H)) \le \Phi^n(H)$,  which yields a contradiction.   
\end{proof}
\end{proposition}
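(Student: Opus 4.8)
The plan is to transport the statement into the uniform completion of $G$ furnished by non-degeneracy and then pull the conclusion back. Let $\widetilde{G}=G_{\widetilde{\Gamma}}$ be a uniform $p$-RAAG completing $\Gamma$ in the sense of Definition~\ref{defn:nondeg}. Since $\widetilde{\Gamma}$ is obtained from $\Gamma$ by adjoining edges, hence extra defining relations, there is a canonical projection $\pi\colon G=G_\Gamma\twoheadrightarrow\widetilde{G}$ fixing each generator, and we set $\widetilde{H}=\pi(H)=\langle x_k,\dots,x_d\rangle\le\widetilde{G}$. As the inclusion $\Phi^n(H)\le H\cap\Phi^n(G)$ is automatic from functoriality of the Frattini series, everything rests on the reverse inclusion, which I would obtain from two facts: (1) $\pi$ is injective on $H$; and (2) $\widetilde{H}$ is \emph{Frattini-saturated} in the uniform group $\widetilde{G}$, i.e.\ $\widetilde{H}\cap\Phi^n(\widetilde{G})=\Phi^n(\widetilde{H})$ for all $n$.

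For (1): the image $\widetilde{H}$ is torsion-free (it sits inside the torsion-free group $\widetilde{G}$) and powerful (a continuous image of the powerful group $H$), hence uniform by \cite[Thm.~4.5]{ddms:padic}; moreover $\dd(\widetilde{H})=d-k+1=\dd(H)$, since the generators $x_k,\dots,x_d$ stay independent modulo $\Phi(\widetilde{G})\supseteq\Phi(\widetilde{H})$, resp.\ modulo $\Phi(G)\supseteq\Phi(H)$, being part of a minimal generating set. Hence $\pi|_H\colon H\to\widetilde{H}$ is a surjection of uniform pro-$p$ groups of the same dimension, so its kernel is $0$-dimensional and therefore finite, and since $H$ is torsion-free the kernel is trivial. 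In particular $\pi|_H$ is an isomorphism carrying $\Phi^n(H)$ onto $\Phi^n(\widetilde{H})$.

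For (2): I would invoke the standard structure of uniform pro-$p$ groups (cf.\ \cite[Thm.~2.7]{ddms:padic} and \cite[\S~4]{ddms:padic}). Every element of $\widetilde{G}$ has a unique expression $x_1^{\lambda_1}\cdots x_d^{\lambda_d}$, the terms of the Frattini series are the power subgroups $\widetilde{G}^{p^c}$, and $\widetilde{G}^{p^c}$ consists exactly of the elements all of whose coordinates $\lambda_i$ lie in $p^c\mathbb{Z}_p$; the same description holds inside the uniform group $\widetilde{H}$ relative to the basis $x_k,\dots,x_d$, and for $h\in\widetilde{H}$ the expression $x_k^{\mu_k}\cdots x_d^{\mu_d}$ is simultaneously its $\widetilde{H}$-expression and its $\widetilde{G}$-expression (with $\lambda_1=\dots=\lambda_{k-1}=0$). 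Thus $h\in\widetilde{G}^{p^c}$ if and only if all $\mu_i\in p^c\mathbb{Z}_p$, i.e.\ if and only if $h\in\widetilde{H}^{p^c}$; choosing $c$ so that $\Phi^n(\widetilde{G})=\widetilde{G}^{p^c}$ yields $\widetilde{H}\cap\Phi^n(\widetilde{G})=\Phi^n(\widetilde{H})$.

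With (1) and (2) the argument closes at once: if $x\in H\cap\Phi^n(G)$ then $\pi(x)\in\widetilde{H}\cap\Phi^n(\widetilde{G})=\Phi^n(\widetilde{H})=\pi(\Phi^n(H))$, so $\pi(x)=\pi(x')$ for some $x'\in\Phi^n(H)$, and injectivity of $\pi|_H$ forces $x=x'\in\Phi^n(H)$. I expect step (1)---ruling out that $H$ collapses in the uniform completion---to be the genuine obstacle: this is precisely the point where the hypothesis that $H$ is uniform is used, since without it the dimension count that kills $\kernel(\pi|_H)$ breaks down. A variant would be to obtain the injectivity of $\pi|_H$ by factoring it through the free product $F\ast H$ with $F$ free on $x_1,\dots,x_{k-1}$ and appealing to Lemma~\ref{lem:frattini}, but the dimension argument above seems the most direct route.
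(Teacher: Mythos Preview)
Your proof is correct and follows the same overall strategy as the paper: project to the uniform completion $\widetilde{G}$, show that $\pi|_H$ is an isomorphism onto $\widetilde{H}$, establish the Frattini-saturation $\widetilde{H}\cap\Phi^n(\widetilde{G})=\Phi^n(\widetilde{H})$ inside the uniform group, and pull back. Your step~(2) is precisely the coordinate computation used in Proposition~\ref{prop:amalgunif}, and your step~(1) makes explicit the dimension count that the paper leaves implicit when it asserts that $(\pi\circ\varphi)|_H$ is an isomorphism ``since $H$ is uniform''.

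The only difference is that the paper routes the final step through the free product $F\ast H$ and Lemma~\ref{lem:frattini}, whereas you close directly from the injectivity of $\pi|_H$. As you yourself note in your final paragraph, the free-product detour is a variant rather than a necessity: once $\pi|_H$ is known to be an isomorphism, $\pi(x)\in\Phi^n(\widetilde{H})$ already forces $x\in\Phi^n(H)$, and the passage through $F\ast H$ adds nothing. So your argument is in fact slightly more streamlined than the paper's.
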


Let us go back to the task at hand, that is showing the properness of amalgams over uniform subgroups in certain non-degenerate cases.

\begin{proposition}\label{lem:amalgunifsubgroup}
 Let $G_1= G_{\Gamma_1}$ and $G_2 = G_{\Gamma_2}$ be non-degenerate $p$-RAAGs with underlying $p$-graphs $\Gamma_1$ and $\Gamma_2$, respectively. Let $\Gamma'$ be a common isomorphic complete $p$-subgraph of $\Gamma_1$ and $\Gamma_2$ and let $H=G_{\Gamma'}$. Then the amalgamated product $G_1 \amalg_H G_2$ is proper.
\begin{proof}
By Proposition~\ref{prop:frattiniunifsubgroup}, we have that $H\cap \Phi^n(G_i)=\Phi^n(H)$ for $i=1,2$ and every $n\in \mathbb{N}$. Now the result follows from \cite[Thm.~9.2.4]{ribzal:book}, where we can take $U_{i n} = \Phi^{n}(G_i)$.
\end{proof}
\end{proposition}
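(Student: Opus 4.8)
The plan is to reduce the properness of the amalgam $G_1\amalg_H G_2$ to the Ribes--Zalesskii criterion \cite[Thm.~9.2.4]{ribzal:book}, which states that the amalgam is proper provided one can find, for each $i=1,2$, a filtration of $G_i$ by open normal subgroups $U_{in}$ such that $U_{in}\cap H$ forms a filtration of $H$ by open normal subgroups and the amalgams $(G_i/U_{in})\amalg_{H/(U_{in}\cap H)}(\cdot)$ behave well; in practice, as in the proof of Proposition~\ref{prop:amalgunif}, it suffices to exhibit a sequence of characteristic subgroups $U_{in}\trianglelefteq G_i$ with $\bigcap_n U_{in}=1$ and $U_{in}\cap H=U_{in}'$ where $U_{in}'$ is the corresponding characteristic subgroup of $H$. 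Here the natural choice is the Frattini series: take $U_{in}=\Phi^n(G_i)$.

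First I would note that $\Gamma'$, being a common complete $p$-subgraph of $\Gamma_1$ and $\Gamma_2$, determines $H=G_{\Gamma'}$, and by Lemma~\ref{lemma:complete in nondeg} (applied to each $\Gamma_i$, which is non-degenerate) the group $H$ is uniform and embeds in both $G_1$ and $G_2$; so the amalgam makes sense and $H$ really is a common subgroup. Moreover the vertex set $\sV_{\Gamma'}$ is a subset of the generating set of each $G_i$, so $H$ is generated by a subset of a minimal generating set of $G_i$. This is exactly the hypothesis needed to invoke Proposition~\ref{prop:frattiniunifsubgroup}: for a non-degenerate $p$-RAAG $G_i$ with a uniform subgroup $H$ generated by part of the generating set, one has $\Phi^n(H)=H\cap\Phi^n(G_i)$ for every $n\in\mathbb{N}$.

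Granting that identity for $i=1$ and $i=2$, I would feed $U_{in}=\Phi^n(G_i)$ into \cite[Thm.~9.2.4]{ribzal:book}: these are open normal (indeed characteristic) subgroups of $G_i$ with trivial intersection, and $U_{1n}\cap H=\Phi^n(H)=U_{2n}\cap H$ by Proposition~\ref{prop:frattiniunifsubgroup}, so the induced filtration on $H$ is the same viewed inside $G_1$ or inside $G_2$ and is cofinal among open normal subgroups of $H$. Hence the hypotheses of the Ribes--Zalesskii properness criterion are met and $G=G_1\amalg_H G_2$ is proper. This is the whole argument; the only genuine work has already been packaged into Proposition~\ref{prop:frattiniunifsubgroup}, so the remaining step is essentially a citation.

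The main obstacle, conceptually, is ensuring that $\Phi^n$ behaves compatibly with the amalgam structure, i.e.\ that passing to $H$ does not lose information about the Frattini quotients — this is precisely what Proposition~\ref{prop:frattiniunifsubgroup} guarantees via the retraction trick of Lemma~\ref{lem:frattini} together with non-degeneracy, and without that input one could not in general control $H\cap\Phi^n(G_i)$. Once Proposition~\ref{prop:frattiniunifsubgroup} is in hand, there is no further difficulty.
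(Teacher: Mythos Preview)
Your proof is correct and follows exactly the same approach as the paper: invoke Proposition~\ref{prop:frattiniunifsubgroup} to obtain $H\cap\Phi^n(G_i)=\Phi^n(H)$ for $i=1,2$ and all $n$, then apply \cite[Thm.~9.2.4]{ribzal:book} with $U_{in}=\Phi^n(G_i)$. Your additional remarks (invoking Lemma~\ref{lemma:complete in nondeg} to ensure $H$ is uniform and embeds in both $G_i$) simply make explicit what the paper leaves implicit.
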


This generalises the well-known fact that amalgams over pro-cyclic subgroups are always proper.

\begin{example}
Let $K_1$ and $K_2$ be non-degenerate $p$-RAAGs and let $H$ be a uniform pro-$p$ group. Consider the groups $G_1=K_1\times H$ and $G_2= H\times K_2$. Then $G=G_1\amalg_H G_2$ is a proper amalgam  by \cite[Ex.~9.2.6(c)]{ribzal:book}. Note that, if $H$ is $p$-RAAG, then properness follows also from Proposition~\ref{lem:amalgunifsubgroup}.

Moreover, if both $K_1$ and $K_2$ are quadratic, by Proposition~\ref{prop:cohomology freeproduct}
and Proposition~\ref{prop:uniform} also $G_1$ and $G_2$ are quadratic, and for both $i=1,2$ one has 
\[\begin{split}
   H^1(G_i,\F_p) &= H^1(K_i,\F_p)\oplus H^1(H,\F_p),\\
   H^2(G_1,\F_p) &= H^2(K_i,\F_p)\oplus (H^1(K_i,\F_p)\wedge H^1(H,\F_p))\oplus H^2(H,\F_p),
\end{split} \]
and the restriction maps $\res_{G_i,H}^1$ and $\res_{G_i,H}^2$ are the projections onto the second summand
of $H^1(G_i,\F_p)$ and the third summand of $H^2(G_i,\F_p)$ respectively.
Hence, all the hypothesis of Theorem~\ref{thm:cohomology amalgam} are satisfied
and $G$ is quadratic. 
\end{example}

\subsection{Some quadratic triangle-ful \texorpdfstring{$p$}{p}-RAAGs}\label{sec:sometrianglefull}

Next we will produce several examples of triangle-ful $p$-RAAGs that are quadratic. First of all we remark that all ``small'' non-degenerate $p$-RAAGs are quadratic.

\begin{lemma}\label{lem:smallpRRAG}
Let $\Gamma=((\sV,\sE),f)$ be a $p$-graph.
\begin{enumerate}
\item If $\norm{\sV}\le 3$ and $\norm{\sE}\le 2$, then $G_\Gamma$ is quadratic. 
\item Suppose that $\norm{\sV}= 3$ and $\norm{\sE} = 3$ and that $G_\Gamma$ is non-degenerate. Then $G_\Gamma$ is quadratic.
\end{enumerate}
\begin{proof}
In the first case, $G_\Gamma$ is either a free pro-$p$ group, a Demushkin group or an amalgamated free product of a free pro-$p$ group and a Demushkin group over a pro-cyclic subgroup. The result follows from \cite[Thm.~3.2]{ribes:amalg} and Theorem~\ref{thm:cohomology amalgam}.

In the second case, $G_\Gamma$ is a uniform pro-$p$ group by Proposition~\ref{lem:unif}. The result follows from Proposition~\ref{prop:uniform}.
\end{proof}
\end{lemma}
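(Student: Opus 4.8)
The plan is to dispose of the two cases of Lemma~\ref{lem:smallpRRAG} separately, in each case reducing $G_\Gamma$ to a pro-$p$ group whose quadraticity is already established, either directly or via Theorem~\ref{thm:cohomology amalgam}.

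For part (1), I would first observe that with $\norm{\sV}\le 3$ and $\norm{\sE}\le 2$, the underlying graph $\calG$ is triangle-free (three edges would be needed for a triangle on three vertices), so one could invoke Theorem~\ref{thm:pRAAGs mild} directly. But to match the structural description in the statement, I would instead enumerate the shapes of $\Gamma$ up to relabelling vertices. If $\norm{\sE}=0$, then $G_\Gamma$ is free pro-$p$, hence quadratic. If $\norm{\sE}=1$, say the single edge is $e=(x_1,x_2)$, then $G_\Gamma\cong G_{\Gamma_1}\ast\langle x_3\rangle$ (a free pro-$p$ product when the third vertex is present), where $G_{\Gamma_1}$ is the $2$-generator $p$-RAAG on the edge; by Lemma~\ref{lem:isom} this is a $2$-generated Demushkin group, hence uniform, hence quadratic by Proposition~\ref{prop:uniform}, and the free product is quadratic by Proposition~\ref{prop:cohomology freeproduct}(a). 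If $\norm{\sE}=2$, the two edges must share a vertex (the ``path'' $x_1 - x_2 - x_3$, possibly with the middle vertex as the common one after relabelling), since a configuration with $\norm{\sV}=3$ and two vertex-disjoint edges is impossible. Writing $G_1=\langle x_1,x_2\rangle$ and $G_2=\langle x_2,x_3\rangle$ for the $p$-RAAGs on the two edges and $H=\langle x_2\rangle\cong\Z_p$ for the shared pro-cyclic subgroup, the pro-$p$ presentation of $G_\Gamma$ exhibits it as the amalgam $G_1\amalg_H G_2$; this amalgam is proper by \cite[Thm.~3.2]{ribes:amalg} (amalgams over pro-cyclic subgroups are proper). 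The groups $G_1,G_2$ are $2$-generated Demushkin (hence uniform, hence quadratic), and $H\cong\Z_p$ is quadratic. The cohomological hypotheses (i),(ii) of Theorem~\ref{thm:cohomology amalgam} hold: by Remark~\ref{rmk:hypotheses pRAAGs}, $p$-RAAGs automatically satisfy these hypotheses, since $x_2$ is part of a basis of both $G_1$ and $G_2$ and no relation of $G_i$ involves a nonzero $a_{hj}$ with both $x_h,x_j\in H$. Hence $G_\Gamma$ is quadratic by Theorem~\ref{thm:cohomology amalgam}.

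For part (2), with $\norm{\sV}=3$, $\norm{\sE}=3$, the graph is a triangle, so $G_\Gamma$ is a triangle $p$-RAAG with three generators and three relations. Since $G_\Gamma$ is assumed non-degenerate, its $p$-graph can be completed to a complete $p$-graph; but a triangle on three vertices is \emph{already} complete, so non-degeneracy says precisely that $G_\Gamma = G_{\widetilde\Gamma}$ for a complete $p$-graph $\widetilde\Gamma$ with $G_{\widetilde\Gamma}$ uniform. Alternatively, and more directly, Proposition~\ref{lem:unif}(b) applies: $\Gamma$ is a complete $p$-graph, and non-degeneracy forces $G_\Gamma$ to be torsion-free (otherwise the unique completion, which is $\Gamma$ itself, would fail to be uniform), so by Proposition~\ref{lem:unif}(b) $G_\Gamma$ is uniform. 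Then Proposition~\ref{prop:uniform} gives $H^\bullet(G_\Gamma,\F_p)\cong\Lambda_\bullet H^1(G_\Gamma,\F_p)$, which is a quadratic algebra by Example~\ref{ex:quad algebras}(d). Hence $G_\Gamma$ is quadratic.

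The only mild subtlety — and the closest thing to an obstacle — is bookkeeping in part (1): making sure the case analysis on $(\norm{\sV},\norm{\sE})$ is exhaustive (including the degenerate possibilities $\norm{\sV}<3$, which only make the group have fewer generators and hence reduce to a sub-case), and confirming that in the two-edge case the shared vertex always gives a pro-cyclic amalgamated subgroup regardless of edge orientations. Both are routine, since reversing an arrow in a $p$-graph does not change the isomorphism type of the associated $p$-RAAG (one simply rewrites the defining relation), and the properness of amalgams over $\Z_p$ together with the automatic verification of the hypotheses of Theorem~\ref{thm:cohomology amalgam} for $p$-RAAGs (Remark~\ref{rmk:hypotheses pRAAGs}) handles every such configuration uniformly.
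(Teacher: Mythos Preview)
Your proposal is correct and follows essentially the same route as the paper: case analysis in part~(1) leading to free groups, Demushkin groups, or a proper amalgam over a pro-cyclic subgroup handled by \cite[Thm.~3.2]{ribes:amalg} together with Theorem~\ref{thm:cohomology amalgam}, and in part~(2) the observation that a non-degenerate triangle $p$-RAAG is uniform via Proposition~\ref{lem:unif}, hence quadratic by Proposition~\ref{prop:uniform}. Your extra remark that part~(1) could be dispatched immediately by Theorem~\ref{thm:pRAAGs mild} (triangle-freeness) is a valid shortcut the paper does not take; and your two-edge decomposition as an amalgam of \emph{two} Demushkin groups over $\Z_p$ is slightly cleaner than the paper's phrasing (``free pro-$p$ group and a Demushkin group''), but the argument is the same in substance.
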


\begin{remark}\label{rmk:newquadpraags}
 We now exhibit some operations that, starting from quadratic $p$-RAAGs, produce new quadratic $p$-RAAGs. We point out that, in all the following examples, we can apply Theorem~\ref{thm:cohomology amalgam} because of Remark~\ref{rmk:hypotheses pRAAGs}.
\begin{enumerate}[(a)]
\item \textbf{Disjoint union.} If $\Gamma$ is the union of two disjoint subgraphs $\Gamma_1$ and $\Gamma_2$, then $G_\Gamma$ is isomorphic to $G_{\Gamma_1}\ast G_{\Gamma_2}$. Therefore, if $\Gamma_1$ and $\Gamma_2$ are quadratic $p$-RAAGs, then $G_\Gamma$ is a quadratic $p$-RAAG as well. Moreover, free products of quadratic $p$-RAAGs are quadratic $p$-RAAGs.
\item \textbf{Mirroring.} Let $G_\Gamma$ be a quadratic $p$-RAAG. Then the amalgamated product $G =G_\Gamma \amalg_{G_{\Gamma'}} G_\Gamma$ of $G_\Gamma$ with itself over  $G_{\Gamma'}$ (identified via the identity map) , where $\Gamma'$ is any full subgraph of $\Gamma$,  is proper by \cite[Ex.~9.2.6(a)]{ribzal:book}. Hence such a $G$ is quadratic.
\item \textbf{Amalgam over uniform subgroups.} The $p$-RAAGs obtained from Proposition~\ref{prop:amalgunif} and Proposition~\ref{lem:amalgunifsubgroup}.
\item \textbf{RAAGs.} Pro-$p$ completions of abstract RAAGs can be written as a series of proper HNN-extensions, hence they are quadratic. This fact was already proved by Riley and Weigel (unpublished).
\end{enumerate}
\end{remark}

\begin{example}
Let $\Gamma$ be a $p$-graph obtained from a complete $p$-graph by removing one edge. Using Proposition~\ref{prop:amalgunif}, we can show that the $p$-RAAG $G_\Gamma$ is quadratic. One can make similar considerations using Proposition~\ref{lem:amalgunifsubgroup}.
\end{example}

Next we will show that another special class of triangle-ful $p$-graphs that yields several new quadratic $p$-RAAGs.

\begin{definition}\label{def:chordal}
 A graph $\calG$ is \emph{chordal} (or \emph{triangulated}) if it contains no circuits other than triangles as full subgraphs.
\end{definition}

Chordal graphs are characterized by the following property (cf.\ \cite[Prop.~5.5.1]{graphbook}).

\begin{proposition}\label{prop:chordalgraph}
 A graph is chordal if and only if it can be constructed
recursively by pasting along complete subgraphs, starting from
complete graphs.
\end{proposition}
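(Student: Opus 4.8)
I would argue by induction on $|\sV|$, treating the two implications separately. Recall that \emph{pasting} $\calG_1$ and $\calG_2$ \emph{along a complete subgraph} means forming $\calG=\calG_1\cup\calG_2$ where the intersection graph $\calG_1\cap\calG_2$ is complete, and note that a full (induced) subgraph of a chordal graph is trivially chordal. The base case of the recursion is a complete graph, where there is nothing to prove.

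For the ``if'' direction it suffices to show that pasting two chordal graphs along a complete subgraph yields a chordal graph. Write $\calG=\calG_1\cup\calG_2$ with $S:=\sV(\calG_1)\cap\sV(\calG_2)$ complete, and suppose for contradiction that $\calG$ has a full circuit $C$ with $|C|\ge 4$. If $\sV(C)$ lies entirely in $\sV(\calG_1)$ or in $\sV(\calG_2)$, then (since $\calG_1\cap\calG_2$ is complete) $C$ is a full circuit of a chordal graph, a contradiction. Otherwise $C$ has a vertex in $\sV(\calG_1)\setminus S$ and a vertex in $\sV(\calG_2)\setminus S$; as there are no edges of $\calG$ between these two sets, $S$ separates them, so each of the two arcs of $C$ joining those two vertices passes through $S$, giving $|\sV(C)\cap S|\ge 2$. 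Since $S$ is complete, any two vertices of $\sV(C)\cap S$ that are non-consecutive on $C$ form a chord; hence all vertices of $\sV(C)\cap S$ are pairwise consecutive on $C$, which for $|C|\ge 4$ forces $\sV(C)\cap S=\{u,v\}$ with $u,v$ consecutive. But then $C-\{u,v\}$ is a connected path disjoint from $S$, hence contained in one of the two separated sets, contradicting that $C$ meets both. So $\calG$ is chordal.

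For the ``only if'' direction, the crucial step is the structural fact that a chordal graph $\calG$ which is not complete has a full complete subgraph separating it. Pick non-adjacent vertices $a,b$, let $S$ be an inclusion-minimal $a$--$b$ separator, and let $C_a,C_b$ be the components of $\calG-S$ containing $a$ and $b$; minimality of $S$ forces every $s\in S$ to have a neighbour in $C_a$ and one in $C_b$. To see $S$ is a clique, fix $s\ne t$ in $S$ and choose a shortest path $P_a$ from $s$ to $t$ with interior in $C_a$ and, likewise, a shortest path $P_b$ with interior in $C_b$. If $s\not\sim t$, both have length $\ge 2$, so $C:=P_a\cup P_b$ is a circuit of length $\ge 4$, and it is chordless: each $P_i$ is an induced path because it is shortest, interior vertices of $P_a$ and of $P_b$ lie in different components of $\calG-S$ and so are non-adjacent, and an edge from $s$ or $t$ to an interior vertex of $P_b$ not consecutive to it on $C$ would yield a strictly shorter $s$--$t$ path through $C_b$. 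This contradicts chordality, so $s\sim t$ and $\calG[S]$ is complete. Finally $\calG=\calG[C_a\cup S]\cup\calG[\sV\setminus C_a]$ exhibits $\calG$ as a pasting along the complete subgraph $\calG[S]$ of two \emph{proper} full subgraphs; these are chordal with fewer vertices, so by the inductive hypothesis each is built by pasting along complete subgraphs starting from complete graphs, whence so is $\calG$.

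The main obstacle is the structural lemma in the ``only if'' part: that a minimal separator in a chordal graph induces a complete subgraph. The delicate point is verifying that the circuit $C=P_a\cup P_b$ one builds is genuinely chordless; the only candidate chords join a vertex of $S$ to the interior of the path on the opposite side, and ruling these out is precisely where the minimality of the chosen shortest paths is used. Everything else, including the two recursions, is routine bookkeeping.
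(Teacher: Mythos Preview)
Your proof is correct and is essentially the standard argument (Dirac's theorem on minimal separators in chordal graphs, followed by the obvious recursive decomposition). The paper does not give its own proof of this proposition; it merely cites \cite[Prop.~5.5.1]{graphbook} (Diestel's \emph{Graph Theory}), and the argument you have written is precisely the one found there. One cosmetic remark: in your chordlessness check for $C=P_a\cup P_b$, the clause about ``an edge from $s$ or $t$ to an interior vertex of $P_b$'' is already subsumed by ``$P_b$ is induced because it is shortest'', since $s,t\in P_b$; the genuinely needed cases are (i) each $P_i$ induced, (ii) no edges between interior$(P_a)$ and interior$(P_b)$, and (iii) $s\not\sim t$.
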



Now we will prove Theorem~\ref{thm:chordal}.

\begin{proof}[\textbf{Proof of Theorem~\ref{thm:chordal}}]
We proceed by induction on the size of $\Gamma$.
Clearly, the theorem holds if $\Gamma$ has only one vertex.
Therefore, we can assume that $\Gamma$ is a graph with more than one vertex.
If $\Gamma$ is complete, then $G= G_{\Gamma}$ is a uniform pro-$p$ group, and therefore quadratic (cf. Proposition~\ref{lem:unif}--(b)).
Otherwise, by Proposition~\ref{prop:chordalgraph}, there are proper full subgraphs $\Gamma_1,\Gamma_2,\Delta$ of $\Gamma$, with $\Delta$ complete such that $\Gamma$ is obtained by pasting together $\Gamma_1$ and $\Gamma_2$, and $\Delta= \Gamma_1 \cap \Gamma_2$.
Thus 
\begin{equation}\label{eq:amalg chordal}
 G_{\Gamma} = G_{\Gamma_1} \amalg_{G_\Delta} G_{\Gamma_2},
\end{equation}
where $G_{\Delta}$ is a uniform pro-$p$ group, since $G_\Gamma$ is non-degenerate. 
Clearly, also $G_{\Gamma_1}$ and $G_{\Gamma_2}$ are non-degenerate, and $\Gamma_1,\Gamma_2$ are chordal.
Thus, by Proposition~\ref{lem:amalgunifsubgroup}, the amalgam~\eqref{eq:amalg chordal} is proper.
By induction, $G_{\Gamma_1}$ and $G_{\Gamma_2}$ are quadratic pro-$p$ groups. 
Hence, by Theorem~B, $G$ is a quadratic pro-$p$ group.
\end{proof}

We believe that all non-degenerate $p$-RAAGs are quadratic, but this might be very hard to prove given the limited knowledge of properness of amalgamated free products in the category of pro-$p$ groups.

 As evidence of the power of our methods, we remark that every $p$-graph on at most $4$ vertices always yields a quadratic $p$-RAAG.
Moreover, all $p$-graphs on $5$ vertices but 
those with underlying graph $\mathcal{H}$ as in Figure~\ref{fig:2} can be handled with the same methods. 
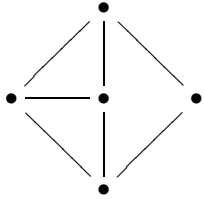
\begin{figure}[h]
\[ 
  \xymatrix{ & \bullet \ar@{-}[dr] \ar@{-}[dl] \ar@{-}[d] & \\   \bullet \ar@{-}[dr] \ar@{-}[r] & \bullet \ar@{-}[d] & \bullet  \ar@{-}[dl] \\  & \bullet & \\ & & } 
\vspace{-1cm}
\]
\caption{The graph $\mathcal{H}$}\label{fig:2}
\end{figure}

If the $p$-labels on $\mathcal{H}$ are ``symmetric along the horizontal axis'', this $p$-graph can be handled via mirroring (i.e.\ Remark~\ref{rmk:newquadpraags}(b) above).

On the other hand, there are instances where we cannot decide, in general, whether a non-degenerate $p$-graph
always yields a quadratic group.

\subsection{Generalised \texorpdfstring{$p$}{p}-RAAGs and Galois groups}\label{ssec:Galois}

Throughout this subsection, $K$ denotes a field containing a root of unity of order $p$, and also $\sqrt{-1}$ if 
$p=2$.
The following result shows that for some fields $K$, the maximal pro-$p$ Galois group $G_{K}(p)$
is a generalised $p$-RAAG.

\begin{proposition}\label{prop:solvable galois}
Let $G$ be a finitely generated solvable pro-$p$ group which occurs as $G_{K}(p)$ for some field $K$.
Then $G\cong G_\Gamma$ for a complete $p$-graph $\Gamma$.
\end{proposition}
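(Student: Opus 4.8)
The plan is to use the classification of finitely generated solvable pro-$p$ groups that are maximal pro-$p$ Galois groups, combined with Proposition~\ref{lem:unif}(a). The key input is a theorem of Ware (and its subsequent refinements by Quadrelli, Mináč--Tân and others): if $K$ contains a root of unity of order $p$ and $G_K(p)$ is solvable and finitely generated, then $G_K(p)$ is either abelian (hence $\cong \mathbb{Z}_p^n$ for some $n$, since it is torsion-free by the Artin--Schreier theorem and quadraticity) or of the form $\mathbb{Z}_p^k \rtimes \mathbb{Z}_p$, where $\mathbb{Z}_p$ acts on $\mathbb{Z}_p^k$ by multiplication by a fixed $1+p\lambda \in 1+p\mathbb{Z}_p$ (for $p=2$, by $1+4\lambda$). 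Equivalently, $G$ admits a minimal generating set $x_1,\ldots,x_{k},t$ with relations $[x_i,x_j]=1$ for $1\le i<j\le k$, and $[x_i,t]=x_i^{p\lambda}$ for $1\le i\le k$.

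First I would invoke Theorem~\ref{thm:GKp quadratic} to record that $G$ is quadratic, and Proposition~\ref{prop:properties quadratic}(a) together with the standing assumption of Remark~\ref{rem:p2} to ensure $G$ is torsion-free. Next I would cite the aforementioned structure theorem to reduce to the two cases above. In the abelian case $G\cong \mathbb{Z}_p^n$ is uniform, so Proposition~\ref{lem:unif}(a) applies directly (all commutators $[x_i,x_j]=1$ are trivially of the required form $x_i^{\alpha_{ij}}x_j^{\beta_{ij}}$ with $\alpha_{ij}=\beta_{ij}=0$), yielding a complete $p$-graph $\Gamma$ with $G=G_\Gamma$. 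In the metabelian case $G=\mathbb{Z}_p^k\rtimes_\lambda\mathbb{Z}_p$, the point is that $G$ is again uniform: it is powerful because every commutator lies in $G^p$ (indeed $[x_i,t]=x_i^{p\lambda}\in G^p$), it is finitely generated, and it is torsion-free, so by \cite[Thm.~4.5]{ddms:padic} it is uniform. Moreover every pairwise commutator has the shape $[x_i,x_j]=1 = x_i^0x_j^0$ and $[x_i,t]=x_i^{p\lambda}=x_i^{p\lambda}t^0$, which is exactly the hypothesis of Proposition~\ref{lem:unif}(a). Hence there is a complete $p$-graph $\Gamma$ on the vertex set $\{x_1,\ldots,x_k,t\}$ with $G\cong G_\Gamma$.

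The main obstacle I expect is pinning down the precise form of the classification of finitely generated solvable maximal pro-$p$ Galois groups and making sure all cases are genuinely of the uniform shape demanded by Proposition~\ref{lem:unif}(a) --- in particular, ruling out actions of $\mathbb{Z}_p$ on $\mathbb{Z}_p^k$ that are not scalar, which would produce commutators $[x_i,t]$ not expressible as $x_i^{\alpha}t^{\beta}$. This is where quadraticity (equivalently the Bloch--Kato/Rost--Voevodsky consequence, Theorem~\ref{thm:GKp quadratic}) does the real work: a non-scalar or non-uniform metabelian action would force cohomology in degree $\geq 3$ incompatible with being an exterior algebra, or would contradict torsion-freeness. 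Concretely I would argue that quadraticity forces $H^\bullet(G,\mathbb{F}_p)\cong\Lambda_\bullet(H^1)$ type behaviour on the relevant subgroups, which by Theorem~\ref{thm:quadratic analytic} (applicable since a finitely generated solvable pro-$p$ group of finite rank is $p$-adic analytic) makes $G$ uniform outright; then Proposition~\ref{lem:unif}(a) finishes the proof once one checks --- using that $G$ is metabelian, so $[G,G]$ is abelian and generated by the $[x_i,x_j]$ --- that a uniform metabelian pro-$p$ group admits a basis in which all commutators of basis elements are of the form $x_i^{\alpha_{ij}}x_j^{\beta_{ij}}$.
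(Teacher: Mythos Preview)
Your proposal is correct and terminates at the same point as the paper's proof (Proposition~\ref{lem:unif}(a)), but it takes a more explicit route. You invoke the full classification of finitely generated solvable maximal pro-$p$ Galois groups to write $G$ concretely as $\Z_p^n$ or $\Z_p^k\rtimes_\lambda\Z_p$ with scalar action, and then verify the commutator shape by hand. The paper instead cites only the weaker consequence \cite[Cor.~4.9]{cq:bk} that $G$ is uniform and every $2$-generated closed subgroup of $G$ is again uniform (hence a $2$-generated Demushkin group); this immediately gives $[x_i,x_j]\in\langle x_i,x_j\rangle$, and unique ordered-product expression in that $2$-dimensional uniform group yields $[x_i,x_j]=x_i^{\alpha_{ij}}x_j^{\beta_{ij}}$ with $\alpha_{ij},\beta_{ij}\in p\Z_p$, so Proposition~\ref{lem:unif}(a) applies directly. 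The paper's approach is shorter and completely sidesteps the obstacle you flag in your last paragraph about ruling out non-scalar actions: one never needs the global structure of $G$, only the local fact that each pair of basis elements generates a uniform subgroup. Your approach has the compensating advantage of making the complete $p$-graph $\Gamma$ fully explicit. Note also that your appeal to Theorem~\ref{thm:quadratic analytic} at the end is circular as stated, since ``finitely generated solvable'' does not by itself give finite rank (e.g.\ $\Z_p\wr\Z_p$); analyticity is really a consequence of the very classification you are invoking, so it is cleaner to rely on the classification alone.
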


\begin{proof}
By \cite[Cor.~4.9]{cq:bk}, if $G$ is solvable then it is uniform, and moreover every 2-generated subgroup of $G$
is again uniform --- i.e. it is a 2-generated Demushkin group.
Thus, given a basis $\mathcal{X}=\{x_1,\ldots,x_d\}$ of $G$, one has $[x_i,x_j]\in\langle x_i,x_j\rangle$
and Proposition~\ref{lem:unif}  yields the claim.
\end{proof}

Clearly, the Bloch-Kato conjecture implies that being quadratic is a necessary condition
for a generalised $p$-RAAG to occur as maximal pro-$p$ Galois group for some field $K$.
It is natural asking whether there are further conditions that a generalised $p$-RAAG must fulfill in order
to occur as maximal pro-$p$ Galois group of a field $K$.
For example, one has the following obstruction.

\begin{example}\label{example:galois square}
 Let $\Gamma=(\calG,f)$ be the square $p$-graph with labels all equal to $(0,0)$.
Then, by \cite[Thm.~5.6]{cq:bk}, the $p$-RAAG $G_\Gamma$ can not be realized as $G_{K}(p)$ for any field $K$. 
\end{example}

We will see shortly another necessary condition about how the edges of a $p$-graph $\Gamma=(\calG,f)$ and their labels patch together.
Let $(x_i,x_j)$ be an edge of $\calG$ and let $H_{ij}=\langle x_i,x_j\rangle$ be the subgroup
of the $p$-RAAG $G_\Gamma$ generated by $x_i$ and $x_j$.
By Example~\ref{ex:pRAAGs} and Lemma~\ref{lem:isom}, if $H_{ij}$ is uniform then it is a 2-generated
Demushkin group and there exist $u_{ij},w_{ij}\in H_{ij}$ such that 
\begin{equation}\label{eq:pres cyc}
 H_{ij}=\langle u_{ij},w_{ij}\mid u_{ij}w_{ij}u_{ij}^{-1}=w_{ij}^{1+\lambda_{ij}}\rangle,
\end{equation}
for some $\lambda_{ij}\in p\Z_p$.
In particular, the structure of $H_{ij}$ induces a homomorphism of pro-$p$ groups 
\begin{equation}\label{eq:cyc morph}
 \theta_{ij}\colon H_{ij}\to1+p\Z_p, \qquad \theta_{ij}(u_{ij})=1+\lambda_{ij},\ \theta_{ij}(w_{ij})=1.
\end{equation}
Recall that $1+p\Z_p=\{1+p\lambda\mid\lambda\in\Z_p\}$, which is isomorphic to $\Z_p$ if $p\neq2$,
and to $C_2\oplus\Z_2$ if $p=2$.

\begin{definition}\label{defi:kummerian graph}
A $p$-graph $\Gamma=(\calG,f)$, with undelying graph $\calG=(\sV,\sE)$, is called \emph{cyclotomic} if:
\begin{itemize}
\item[(a)] for every edge $(x_i,x_j)\in\sE$, the subgroup $H_{ij}$ is uniform, and
 \item[(b)] for all edges $(x_i,x_j),(x_j,x_h)\in\sE$ we have $\theta_{ij}(x_j)=\theta_{jh}(x_j)$ (cf.\ \eqref{eq:cyc morph}).
\end{itemize}
\end{definition}

Namely, in a cyclotomic $p$-graph $\Gamma=(\calG,f)$
the homomorphisms $\theta_{ij}$ induced by the edges of $\calG$, which are 2-generated Demushkin groups,
agree on common vertices.
The following shows that being cyclotomic is a necessary condition for a $p$-graph in order
to give rise to a generalised $p$-RAAG which is a maximal pro-$p$ Galois group.

\begin{theorem}\label{thm:galois pRAAGs}
Let $K$ be a field containing a root of unity of order $p$ and let $\Gamma=(\calG,f)$ be a $p$-graph with underlying graph $\calG=(\sV,\sE)$. Suppose that $G_\Gamma\cong G_{K}(p)$ for some field $K$.
Then $\Gamma$ is cyclotomic. 
\end{theorem}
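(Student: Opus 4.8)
The plan is to check the two conditions in Definition~\ref{defi:kummerian graph} separately, using that $G_\Gamma\cong G_K(p)$ is torsion-free and carries its arithmetic (cyclotomic) orientation. First I would establish condition~(a): that $H_{ij}=\langle x_i,x_j\rangle$ is uniform for every edge $(x_i,x_j)\in\sE$. The point is that $H_{ij}$ is automatically powerful and $2$-generated, and is torsion-free simply because it sits inside $G_K(p)$. Writing the label of $e=(x_i,x_j)$ as $(p\mu_1,p\mu_2)$ --- resp.\ $(4\mu_1,4\mu_2)$ if $p=2$ --- with $\mu_1,\mu_2\in\Z_p$, the defining relation yields $[x_i,x_j]=x_i^{p\mu_1}x_j^{p\mu_2}=(x_i^{\mu_1})^{p}(x_j^{\mu_2})^{p}\in H_{ij}^{p}$ (resp.\ $H_{ij}^{4}$), hence $[H_{ij},H_{ij}]\le H_{ij}^{p}$ (resp.\ $H_{ij}^{4}$) and $H_{ij}$ is powerful; since $x_i,x_j$ belong to a minimal generating set of $G_\Gamma$, their images in $G_\Gamma/\Phi(G_\Gamma)$ are linearly independent, so $\dd(H_{ij})=2$; and $H_{ij}\le G_K(p)$ is torsion-free --- for $p$ odd by Proposition~\ref{prop:properties quadratic}(a) together with Theorem~\ref{thm:GKp quadratic}, and for $p=2$ because $\sqrt{-1}\in K$ forces $K$ not to be formally real, whence $G_K(2)$ is torsion-free by the pro-$2$ Artin--Schreier theorem. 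As a finitely generated powerful torsion-free pro-$p$ group is uniform (cf.\ \cite[Thm.~4.5]{ddms:padic}), $H_{ij}$ is uniform, hence a $2$-generated Demushkin group, and the homomorphism $\theta_{ij}\colon H_{ij}\to 1+p\Z_p$ of~\eqref{eq:cyc morph} is defined.

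For condition~(b) I would bring in the cyclotomic character. Since $\mu_p\subseteq K$ (and $\mu_4\subseteq K$ when $p=2$), the extension $K(\mu_{p^\infty})/K$ is a pro-$p$ extension, so the Galois action on $p$-power roots of unity defines a continuous homomorphism $\theta\colon G_\Gamma=G_K(p)\twoheadrightarrow\Gal(K(\mu_{p^\infty})/K)\hookrightarrow 1+p\Z_p$. The crucial claim is that $\theta|_{H_{ij}}=\theta_{ij}$ for every edge $(x_i,x_j)\in\sE$. Granting it, for any two edges $(x_i,x_j),(x_j,x_h)\in\sE$ meeting at $x_j$ one gets $\theta_{ij}(x_j)=\theta(x_j)=\theta_{jh}(x_j)$, which is exactly condition~(b), and $\Gamma$ is cyclotomic.

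The hard part is the claim $\theta|_{H_{ij}}=\theta_{ij}$, and this is where the arithmetic input is concentrated. To prove it I would invoke the structure of $2$-generated subgroups of maximal pro-$p$ Galois groups of fields containing $\mu_p$ (\cite{ware,cq:bk}; see also \cite[\S~3.1]{CMQ:fast}), or equivalently the fact that $(G_K(p),\theta_K)$ is a Kummerian oriented pro-$p$ group, a property inherited by closed subgroups equipped with the restricted orientation (cf.\ \cite{qw:cyclotomic}). Concretely: $H_{ij}$ is not a free pro-$p$ group, since $[x_i,x_j]=x_i^{f_1(e)}x_j^{f_2(e)}$ is a nontrivial relation among $x_i$ and $x_j$; hence $H_{ij}$ is a Demushkin group whose Demushkin relation is governed by the cyclotomic character, i.e.\ there is a Demushkin basis $u,w$ of $H_{ij}$ with $uwu^{-1}=w^{\theta(u)}$ and $\theta(w)=1$ --- which is precisely the statement $\theta|_{H_{ij}}=\theta_{ij}$. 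So the only genuinely arithmetic step is that a Demushkin subgroup of $G_K(p)$ inherits the ambient cyclotomic orientation as its own canonical orientation; the powerful/$2$-generated/torsion-free analysis in~(a) and the reduction of~(b) to the claim are routine.
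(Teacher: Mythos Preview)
Your proof is correct and follows essentially the same line as the paper: introduce the global cyclotomic character $\theta\colon G_K(p)\to 1+p\Z_p$, show that each $H_{ij}$ is a $2$-generated uniform (hence Demushkin) subgroup whose canonical orientation $\theta_{ij}$ coincides with $\theta|_{H_{ij}}$, and read off condition~(b) from this compatibility. The only minor difference is in establishing~(a): the paper invokes \cite[Thm.~4.6]{cq:bk} directly (a non-free closed subgroup of $G_K(p)$ of this shape is uniform), whereas you argue more elementarily that $H_{ij}$ is powerful, $2$-generated and torsion-free, hence uniform by \cite[Thm.~4.5]{ddms:padic}---both are fine, and the key arithmetic input (that the Demushkin orientation on $H_{ij}$ agrees with the restricted cyclotomic character) is the same in each.
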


\begin{proof}
Let $$\theta\colon G_\Gamma\longrightarrow1+p\Z_p$$ be the $p$th cyclotomic character induced by the action of $G_{K}(p)$ on the 
roots of unity of order a power of $p$ lying in the maximal pro-$p$ extension $K(p)$ of $K$
(cf., e.g., \cite[Def.~7.3.6]{nsw:cohn}).
For an edge $(x_i,x_j)\in\sE$, the subgroup $H_{ij}$ is the maximal pro-$p$ Galois group
of the subextension $K(p)/K(p)^{H_{ij}}$.

Since $H_{ij}$ is not free, then $H_{ij}$ is uniform by \cite[Thm.~4.6]{cq:bk},
with $\theta_{ij}\colon H_{ij}\to1+p\Z_p$ the cyclotomic character of the extension $K(p)/K(p)^{H_{ij}}$,
which coincides with the restriction $\theta\vert_{H_{ij}}$.
Therefore, for all edges $(x_i,x_j),(x_j,x_h)\in\sE$, one has 
$ \theta_{ij}(x_j)=\theta(x_j)=\theta_{jh}(x_j)$.
\end{proof}

The following result shows that cyclotomic $p$-graphs are also a good source of non-de\-ge\-ne\-ra\-te $p$-RAAGs.

\begin{proposition}\label{prop:cyclotomic graph}
Let $\Gamma=(\calG,f)$ be a cyclotomic $p$-graph with underlying graph $\calG=(\sV,\sE)$.
Then the $p$-RAAG $G_{\Gamma}$ is non-degenerate.
\end{proposition}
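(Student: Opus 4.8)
The plan is to exhibit an explicit completion of $\Gamma$ to a complete $p$-graph $\widetilde\Gamma$ whose $p$-RAAG $G_{\widetilde\Gamma}$ is uniform; by Definition~\ref{defn:nondeg} this gives non-degeneracy of $G_\Gamma$. The completion will be built through the Lazard correspondence from a powerful $\Z_p$-Lie lattice assembled out of the cyclotomic characters attached to the edges of $\Gamma$, and condition~(b) of Definition~\ref{defi:kummerian graph} is exactly what makes this assembly consistent. I argue for $p$ odd, as in the rest of the section; for $p=2$ one replaces $p\Z_p$ by $4\Z_2$ throughout.

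First I would extract a global character from the edge data. For each edge $e=(x_i,x_j)\in\sE$, condition~(a) of Definition~\ref{defi:kummerian graph} makes $H_{ij}=\langle x_i,x_j\rangle$ uniform, hence a $2$-generated Demushkin group as in~\eqref{eq:pres cyc}, with cyclotomic character $\theta_{ij}\colon H_{ij}\to 1+p\Z_p$ as in~\eqref{eq:cyc morph}; here $[H_{ij},H_{ij}]$ is procyclic and $\theta_{ij}(g)$ is the scalar by which $g$ acts on it by conjugation. By condition~(b), any two edges through a common vertex $v$ agree at $v$, so there is a well-defined $\theta\colon\sV\to 1+p\Z_p$ with $\theta(x_k)=\theta_e(x_k)$ for every edge $e$ incident to $x_k$ (and $\theta(x_k)=1$ if $x_k$ lies on no edge). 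Set $\lambda(x_k):=\log\theta(x_k)\in p\Z_p$. Next, let $\widetilde L$ be the free $\Z_p$-module on $X_1,\dots,X_d$, $d=|\sV|$, with the alternating bilinear bracket given on the basis by
\[
 [X_i,X_j]:=\lambda(x_i)X_j-\lambda(x_j)X_i\qquad(i\ne j).
\]
A direct computation shows that the Jacobi identity holds on all basis triples, so $\widetilde L$ is a $\Z_p$-Lie algebra, and it is powerful since its structure constants lie in $p\Z_p$. The Lazard correspondence (as in the proof of Proposition~\ref{lem:unif}) then yields a uniform pro-$p$ group $\widetilde G=G_{\widetilde L}$ of dimension $d$ with basis $\tilde x_i:=\exp(X_i)$; since $[X_i,X_j]$ lies in the rank-$2$ subalgebra $\langle X_i,X_j\rangle$, the commutator Campbell--Hausdorff formula (\cite[Lem.~7.12(iii)]{ddms:padic}), exactly as there, gives $[\tilde x_i,\tilde x_j]=\tilde x_i^{\tilde\a_{ij}}\tilde x_j^{\tilde\b_{ij}}$ with $\tilde\a_{ij},\tilde\b_{ij}\in p\Z_p$. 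By Proposition~\ref{lem:unif}(a) there is thus a complete $p$-graph $\widetilde\Gamma=(\widetilde{\calG},\widetilde f)$ on $\sV$ with $G_{\widetilde\Gamma}\cong\widetilde G$; orienting the edges of $\widetilde{\calG}$ to extend those of $\calG$ (possible because $\calG$ is combinatorial) gives $\sE\subseteq\widetilde{\sE}$.

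It remains to check that $\widetilde f|_{\sE}=f$. Fix an edge $e=(x_i,x_j)\in\sE$; I must show $\langle\tilde x_i,\tilde x_j\rangle\le\widetilde G$ is isomorphic to $H_{ij}$ via $\tilde x_i\mapsto x_i,\ \tilde x_j\mapsto x_j$. This subgroup corresponds to the subalgebra $\langle X_i,X_j\rangle\subseteq\widetilde L$, whose bracket is $\lambda(x_i)X_j-\lambda(x_j)X_i$ by construction. On the other hand, the Lie lattice $L_{ij}$ of $H_{ij}$, written in the basis $\log x_i,\log x_j$, has $[\log x_i,\log x_j]=c_1\log x_i+c_2\log x_j$ for some $c_1,c_2\in p\Z_p$; since $\mathrm{ad}(\log x_i)$ (resp.\ $\mathrm{ad}(\log x_j)$) acts on the procyclic line $[L_{ij},L_{ij}]$ by the scalar $c_2$ (resp.\ $-c_1$), comparison with the conjugation action defining $\theta_{ij}$ forces $c_2=\log\theta_{ij}(x_i)=\lambda(x_i)$ and $c_1=-\log\theta_{ij}(x_j)=-\lambda(x_j)$; this is the content of~\eqref{eq:Liebracket}. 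Hence $\langle X_i,X_j\rangle\cong L_{ij}$ in these bases, so $\langle\tilde x_i,\tilde x_j\rangle\cong H_{ij}$ compatibly with generators, and $\widetilde f(e)=f(e)$. Therefore $\widetilde\Gamma$ is a completion of $\Gamma$ with $G_{\widetilde\Gamma}$ uniform, and $G_\Gamma$ is non-degenerate.

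The main obstacle is this last step: translating the prescribed commutator exponents $(f_1(e),f_2(e))$ of an edge into the Lie bracket coming from the global datum $\lambda$, i.e.\ checking via the Lie correspondence that the explicit relations of $\widetilde G$ restrict to the given relations of $\Gamma$ on every edge. The bookkeeping that makes this possible simultaneously for all edges is exactly condition~(b) of Definition~\ref{defi:kummerian graph} --- without it the local characters $\theta_{ij}$ need not glue into a single $\lambda$; once $\lambda$ is available, the construction of $\widetilde L$, the Jacobi verification, and the passage through Lazard's correspondence are routine, following the pattern of Proposition~\ref{lem:unif}.
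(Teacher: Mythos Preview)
Your argument is correct and arrives at essentially the same uniform completion as the paper, but by a different and more self-contained route. The paper does not build the Lie lattice $\widetilde L$ directly; instead it invokes the theory of \emph{Kummerian} pro-$p$ pairs from \cite{eq:kummer}: one glues the edge characters $\theta_{ij}$ into a homomorphism $\theta\colon G_\Gamma\to 1+p\Z_p$, observes that the defining relators lie in the normal subgroup $K(F)$, and then applies \cite[Thm.~5.6]{eq:kummer} to conclude that $\bar G_\Gamma:=G_\Gamma/K(G_\Gamma)$ is torsion-free and splits as $\Z_p^m\rtimes(G_\Gamma/\ker\theta)$ with the $\theta$-action; this is uniform, and since the edge relations of $\Gamma$ visibly survive in the quotient, Proposition~\ref{lem:unif}(a) produces the completing $p$-graph.

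Your approach trades this black-box citation for an explicit construction: the metabelian powerful Lie lattice with bracket $[X_i,X_j]=\lambda(x_i)X_j-\lambda(x_j)X_i$ (whose Jacobi identity is immediate), plus the verification via $\exp(\mathrm{ad})$ that the $2$-dimensional sublattice on each edge reproduces the Lie lattice of $H_{ij}$ in the given generators, so the labels match. This buys you independence from \cite{eq:kummer} and makes the completion totally explicit; the paper's route, on the other hand, explains \emph{why} such a completion must exist (it is the universal Kummerian quotient) and connects the result to the Galois-theoretic framework used in \S\ref{ssec:Galois}. One small remark: your appeal to ``\eqref{eq:Liebracket}'' for the identity $c_2=\log\theta_{ij}(x_i)$ is slightly off---what you actually use is that conjugation by $\exp(X)$ corresponds to $\exp(\mathrm{ad}\,X)$ under Lazard, not the limit formula \eqref{eq:Liebracket} itself.
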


\begin{proof}
By Remark~\ref{rmk:newquadpraags}(a), we may assume that $\calG$ is connected, so that every vertex $x_i\in\sV$ belongs to some edge.
Let $\theta\colon G_\Gamma\to1+p\Z_p$ be the homomorphism induced by the homomorphisms $\theta_{ij}$ for every edge
of $\Gamma$ --- since $\Gamma$ is cyclotomic, $\theta$ is well defined.
Moreover, let \eqref{eq:presentation} be the minimal presentation of $G_\Gamma$ induced by $\Gamma$ and let 
$\hat\theta\colon F\to1+p\Z_p$ be the composition of the projection $F\to G_\Gamma$ with $\theta$.

Consider the following normal subgroups
\[\begin{split}
   K(G_\Gamma)&=\left\langle\left. y^{-\theta(x)}xyx^{-1}\right\vert x\in G_\Gamma,y\in\kernel(\theta) \right\rangle \lhd G_\Gamma,\\
   K(F)&=\left\langle\left. y^{-\hat\theta(x)}xyx^{-1}\right\vert x\in F,y\in\kernel(\hat\theta) \right\rangle 	\lhd F
 \end{split}\]
 (cf.\ \cite[\S~3]{eq:kummer}).
Note that $K(G_\Gamma)\le\Phi(G_\Gamma)\cap\kernel(\theta)$ and $K(F)\le\Phi(F)\cap\kernel(\hat\theta)$.
 By \eqref{eq:pres cyc}, $R$ is generated as normal subgroup of $F$ by the relations 
$$w_{ij}^{-\hat\theta(u_{ij})}u_{ij}w_{ij}u_{ij}^{-1},\qquad \text{for }(x_i,x_j)\in\sE(\calG),$$
and consequently $R\le K(F)$.
Therefore, \cite[Thm.~5.6]{eq:kummer} implies that the quotient $\bar G_\Gamma:=G_\Gamma/K(G_\Gamma)$ is torsion-free
and it splits as semi-direct product
\[
 \bar G_\Gamma\cong\Z_p^m\rtimes (G_\Gamma/\kernel(\theta)),\qquad \text{for some }m\geq0,\]
with action $\bar xz\bar x^{-1}= z^{\bar\theta(\bar x)}$ for all $z\in\Z_p^m$ and $\bar x\in\bar G_\Gamma$, 
where $\bar\theta\colon \bar G_\Gamma\to1+p\Z_p$ is the morphism induced by $\theta$
(namely, the pro-$p$ group $G_\Gamma$ endowed with the morphism $\theta$ is ``Kummerian'', following the language of \cite{eq:kummer}).
In particular, $[x,y]\in\langle x,y\rangle\le \bar G_\Gamma$.
Therefore, by Proposition~\ref{lem:unif}~(a) there exists a complete $p$-graph $\tilde \Gamma$ such that 
$\bar G_{\Gamma}\cong G_{\tilde\Gamma}$.
Since $\sV(\tilde\Gamma)=\sV(\Gamma)$, $\tilde\Gamma$ is a completion of $\Gamma$.
\end{proof}

The class of \emph{Koszul graded algebras} is a particular class of quadratic algebras,
singled out by Priddy in \cite{priddy} --- the definition of Koszul graded algebra
is highly technical; we refer to \cite[Ch.~2]{pp:quad} and to \cite[\S~2]{MPQT}.
Recently, Koszul graded algebras became of great interest in the context of Galois cohomology
(see, e.g., \cite{pos:K,posi:koszul,MPQT}).
In particular, Positselski conjectured in \cite{posi:koszul} that the cohomology algebra $H^\bullet(G_{K}(p),\F_p)$ is Koszul, if $G_{K}(p)$ is finitely generated.
Moreover, Weigel conjectured in \cite{weigel:collection} that the graded group algebra
\[
 \mathrm{gr}(\F_pG_{K}(p))=\bigoplus_{n\geq0}I^n/I^{n+1},\qquad I^0=\F_pG_{K}(p), 
\]
where $I$ denotes the augmentation ideal of the group algebra $\F_pG_{K}(p)$ (cf.\ \cite[\S~3.2]{MPQT}),
is also a Koszul graded algebra.
Usually it is quite hard to check whether a graded algebra is Koszul. 
Nonetheless, in the setting of generalised $p$-RAAGs we can easily deduce the following.

\begin{coro} Let $\Gamma=(\calG,f)$ be a $p$-graph and let $G_\Gamma$ be the associated $p$-RAAG.
\begin{itemize}
\item[(i)] If $G_\Gamma$ is quadratic, then
the cohomology algebra $H^\bullet(G_\Gamma,\F_p)$ is Koszul.
\item[(ii)] If $\Gamma$ is triangle-free, then the graded group algebra $\mathrm{gr}(\F_pG_\Gamma)$ is Koszul.
\end{itemize}
\end{coro}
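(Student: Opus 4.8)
The plan is to deduce both statements from the structural descriptions obtained in Theorems~\ref{thm:pRAAGs quadratic} and \ref{thm:pRAAGs mild}, combined with the (known) Koszulity of the ``graph algebras'' attached to $\mathcal{G}$. For (i), if $G_\Gamma$ is quadratic then Theorem~\ref{thm:pRAAGs quadratic} identifies $H^\bullet(G_\Gamma,\F_p)$ with $\Lambda_\bullet(\mathcal{G}^{\mathrm{op}})$, so it suffices to recall that this exterior graph algebra (the exterior face algebra of the clique complex of $\mathcal{G}$, Definition~\ref{def:gammaop}) is Koszul. This is contained in \cite[\S~4.2.2]{weigel:koszul} --- the same reference used above for its Hilbert series --- and can also be checked directly by exhibiting a quadratic Gr\"obner basis: after fixing a total order on $\sV$, the defining relations of $\Lambda_\bullet(\mathcal{G}^{\mathrm{op}})$ already form a Gr\"obner basis, since every overlap comes from a monomial $x_ix_jx_k$ with $i<j<k$ containing a non-edge, and flagness of the clique complex forces each such monomial to reduce unambiguously to $0$; a quadratic Gr\"obner basis implies Koszulity (cf.\ \cite[Ch.~2]{pp:quad}).

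For (ii), if $\mathcal{G}$ is triangle-free then $G_\Gamma$ is mild by Theorem~\ref{thm:pRAAGs mild}. For a mild pro-$p$ group $G$ satisfying Proposition~\ref{prop:relations}, the graded group algebra $\mathrm{gr}(\F_p G)$ is isomorphic to $\F_p\dbml U\dbmr$ modulo the two-sided ideal generated by the degree-$2$ initial forms of the defining relations, and the defining property of mildness is precisely that these quadratic relations form a strongly free (``combinatorially free'') sequence, whence $\mathrm{gr}(\F_p G)$ is Koszul; I would cite \cite{labute:mild} together with \cite{MPQT} for this. In the case at hand the labels $f_1(e),f_2(e)$ lie in $p\Z_p$ (resp.\ $4\Z_2$), so under the Magnus embedding the relators $[o(e),t(e)]\,o(e)^{-f_1(e)}t(e)^{-f_2(e)}$ have leading term the commutator, and hence $\mathrm{gr}(\F_p G_\Gamma)$ is the right-angled Artin algebra of $\mathcal{G}$, which is the quadratic dual of $\Lambda_\bullet(\mathcal{G}^{\mathrm{op}})$; Koszulity then also follows from part (i), since a quadratic algebra is Koszul if and only if its quadratic dual is (cf.\ \cite[Ch.~2]{pp:quad}).

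The only non-formal inputs are the Koszulity of the graph algebra $\Lambda_\bullet(\mathcal{G}^{\mathrm{op}})$ and, for (ii), the passage ``mild $\Rightarrow$ $\mathrm{gr}(\F_p G_\Gamma)$ is quadratic with no unexpected initial relations and is Koszul'', which uses the full strength of Labute's strong-freeness criterion rather than only the Hilbert-series identity of Definition~\ref{def:mild}. I expect this second point to be the main obstacle to a fully self-contained write-up; since both facts are nevertheless available in the literature, the remaining work is essentially matching up the algebras and quoting the appropriate statements.
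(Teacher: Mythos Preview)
Your proposal is correct and follows essentially the same approach as the paper: for (i) you combine Theorem~\ref{thm:pRAAGs quadratic} with the Koszulity of $\Lambda_\bullet(\mathcal{G}^{\mathrm{op}})$ from \cite[\S~4.2.2]{weigel:koszul}, and for (ii) you combine Theorem~\ref{thm:pRAAGs mild} with the Koszulity of $\mathrm{gr}(\F_pG)$ for mild groups from \cite{MPQT} (the paper cites \cite[Thm.~8.4]{MPQT} specifically). Your additional Gr\"obner-basis sketch for (i) and the quadratic-duality alternative for (ii) are valid extra arguments, but they go beyond what the paper actually does, which is simply to invoke the two cited results.
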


\begin{proof}
Statement (i) follows from Theorem~\ref{thm:pRAAGs quadratic} and \cite[\S~4.2.2]{weigel:koszul}.
Statement (ii) follows from Theorem~\ref{thm:pRAAGs mild} and \cite[Thm.~8.4]{MPQT}.
\end{proof}

Thus, generalised $p$-RAAGs provide a huge source of pro-$p$ groups for which Positselski's and Weigel's 
\emph{Koszulity conjectures} hold.
The above result raises the following question.

\begin{ques}
 Let $G_\Gamma$ be a quadratic $p$-RAAG with associated $p$-graph $\Gamma$.
Is the graded algebra $\mathrm{gr}(\F_p G_\Gamma)$ Koszul?
\end{ques}

\section{Triangle \texorpdfstring{$p$}{p}-RAAGs}\label{sec:triang}

In the following section we will slightly change the focus of our investigation. Until now we were mainly concerned with finding new examples of quadratic pro-$p$ groups in the family of $p$-RAAGs. Here we will mainly be concerned with the determination of the isomorphism classes of quadratic $p$-RAAGs arising from triagle $p$-graphs. These will be called \emph{triangle $p$-RAAGs}. 

\subsection{The Lazard correspondence}

Given a powerful pro-$p$ group $G$ and $n\in \mathbb{N}$, we have $P_{n+1}(G)=G^{p^n}=\{ x^{p^n} ~|~ x\in G \}$ (see \cite[Thm.~3.6]{ddms:padic}). Moreover, if $G$ is uniform, then the mapping $ x \mapsto x^{p^n}$ is a homeomorphism from $G$ onto $G^{p^n}$ (see \cite[Lem.~4.10]{ddms:padic}). This shows that each element $x\in G^{p^n}$ admits a unique $p^n$th root in $G$, which we denote by $x^{p^{-n}}$.

As in the case of pro-$p$ groups, a $\mathbb{Z}_p$-Lie algebra $L$ is called \emph{powerful} if $L\cong \mathbb{Z}_p^d$ for some $d>0$ as $\mathbb{Z}_p$-module and $(L,L)_{Lie}\subseteq pL$ if $p$ is odd, or $(L,L)_{Lie} \subseteq 4L$ if $p=2$.

 If $G$ is an analytic pro-$p$ group, then it has a characteristic open subgroup which is uniform.
 For every open uniform subgroup $H \leq G$, $\mathbb{Q}_p[H]$ can be made into a normed $\mathbb{Q}_p$-algebra,
call it $\mathit{A}$, and $log(H)$, considered as a subset of the completion $\hat{A}$ of $A$,
will have the structure of a Lie algebra over $\mathbb{Z}_p$. 
There is a different construction of an intrinsic Lie algebra over $\mathbb{Z}_p$ for uniform groups.
The uniform group $U$ and its Lie algebra over $\mathbb{Z}_p$, call it $L_U$, are identified as sets,
and the Lie operations are defined by
\begin{equation}\label{eq:Liesum}
g+h=\lim_{n \to \infty}(g^{p^n}h^{p^n})^{p^{-n}}
\end{equation}
and 
\begin{equation}\label{eq:Liebracket}
 (g,h)_{Lie}=\lim_{n \to \infty}[g^{p^n},h^{p^n}]^{p^{-2n}}=\lim_{n\to \infty} (g^{-p^n}h^{-p^n}g^{p^n}h^{p^n})^{p^{-2n}} .
\end{equation}

It turns out that $L_U$ is a powerful $\mathbb{Z}_p$-Lie algebra and it is isomorphic to the $\mathbb{Z}_p$-Lie algebra $log(U)$ (cf.\ \cite[Cor.~7.14]{ddms:padic}).

On the other hand, if $L$ is a powerful $\mathbb{Z}_p$-Lie algebra, then the Campbell-Hausdorff formula induces
a group structure on $L$; the resulting group is a uniform pro-$p$ group.
If this construction is applied to the $\mathbb{Z}_p$-Lie algebra $L_U$ associated to a uniform group $U$,
one recovers the original group.
Indeed, the assignment $U\mapsto L_U$ gives an equivalence between the category of uniform pro-$p$ groups
and the category of powerful $\mathbb{Z}_p$-Lie algebras (see \cite[Thm.~9.10]{ddms:padic}).

In light of Theorem~\ref{thm:pRAAGs mild} and Proposition~\ref{lem:unif}, the structure of quadratic $p$-RAAGs
associated to triangle $p$-graphs is of particular interest.
So we are interested in torsion-free pro-$p$ groups defined by presentations of the form 
\begin{equation}
 G_{A'} =\langle x_1,x_2,x_3 \mid [x_1,x_2]=x_1^{\a_1'} x_2^{\a_2'}, [x_2,x_3]=x_2^{\b_2'} x_3^{\b_3'}, [x_3,x_1]=x_3^{\c_3'} x_1^{\c_1'} \rangle
\end{equation}
with parameters $A'=(\a_1',\a_2', \b_2',\b_3', \c_1',\c_3') \in p\mathbb{Z}_p^6$. Since the torsion-free group $G_{A'}$ is uniform, we can associate to it the $\mathbb{Z}_p$-Lie lattice $L_{G_{A'}}$.  It follows from \eqref{eq:Liebracket} that $L_{G_{A'}}$ has a ($\mathbb{Z}_p$-Lie algebra) presentation of the form
\begin{equation}\label{eq:Lielattice}
 L_A = \left\langle  x_1,x_2,x_3\ \left\vert\ \begin{matrix}
                                        [x_1,x_2] = \a_1 x_1 + \a_2 x_2,\ [x_2,x_3] = \b_2 x_2 + \b_3 x_3, \\
                                        [x_3,x_1] = \c_1 x_1 + \c_3 x_3 
                                       \end{matrix} \right.  \right\rangle
\end{equation}
for some $\a_1,\a_2$, $\b_2,\b_3$, $\c_1,\c_3 \in p\mathbb{Z}_p$. 
Hence, to understand the structure of triangle $p$-RAAGs, it is important to determine all $3$-dimensional $\mathbb{Z}_p$-Lie lattices of type \eqref{eq:Lielattice}. This will be done in the next subsection.
  
\subsection{Triangle Lie algebras} 

\begin{lemma}\label{lem:system}
 Let $L$ be a free $\mathbb{Z}_p$-module with basis $\{x_1,x_2,x_3\}$ and let $\a_1,\a_2$, $\b_2,\b_3$, $\c_1,\c_3 \in p\mathbb{Z}_p$. Consider the bilinear function $[\cdot,\cdot] : L\times L \to L$ defined by $[x_i,x_i]=0$ for $i=1,2,3$ and 
 \[
   [x_1,x_2] = \a_1 x_1 + \a_2 x_2,\quad [x_2,x_3] = \b_2 x_2 + \b_3 x_3,\quad [x_3,x_1] = \c_1 x_1 + \c_3 x_3.
 \]
Then $[\cdot,\cdot]$ defines a bracket on $L$ if and only if the following system is satisfied:
\begin{equation}\label{eq:system}
  \begin{cases}
    \a_1 \b_2 - \c_1 \b_3 =0 \\
    \c_1 \a_2 - \b_2 \c_3 = 0 \\
    \a_1 \c_3 - \a_2 \b_3 = 0
  \end{cases} 
\end{equation}
\end{lemma}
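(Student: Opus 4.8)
The plan is to use the fact that a $\mathbb{Z}_p$-bilinear map $[\cdot,\cdot]\colon L\times L\to L$ on a free $\mathbb{Z}_p$-module is a Lie bracket exactly when it is alternating and satisfies the Jacobi identity, and to check each of these conditions on the basis $\{x_1,x_2,x_3\}$. First I would note that extending the prescribed values bilinearly, with the convention $[x_j,x_i]=-[x_i,x_j]$ together with $[x_i,x_i]=0$, produces an \emph{antisymmetric} bilinear map (both $[u,v]$ and $-[v,u]$ are bilinear and agree on basis pairs). Since $L\cong\mathbb{Z}_p^3$ is torsion-free and antisymmetry gives $2[u,u]=[u,u]+[u,u]=0$, we get $[u,u]=0$ for every $u\in L$; hence the bracket is alternating for \emph{any} choice of the parameters $\a_1,\ldots,\c_3$. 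Thus the only possible obstruction to $[\cdot,\cdot]$ being a Lie bracket is the Jacobi identity, and the statement reduces to showing that the Jacobi identity holds if and only if \eqref{eq:system} holds.

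Next I would reduce the Jacobi identity to a single evaluation. Set $J(u,v,w)=[[u,v],w]+[[v,w],u]+[[w,u],v]$; this expression is $\mathbb{Z}_p$-trilinear in $(u,v,w)$, it is visibly invariant under cyclic permutations of its arguments, and using antisymmetry of $[\cdot,\cdot]$ one checks directly that $J(u,u,v)=0$ identically. From trilinearity together with vanishing on repeated arguments it follows (by the usual polarisation $0=J(u+v,u+v,w)$) that $J$ changes sign under a transposition, hence $J$ is alternating; consequently $J$ vanishes on all of $L\times L\times L$ if and only if it vanishes on the unique strictly increasing basis triple, i.e.\ $J(x_1,x_2,x_3)=0$.

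Finally I would compute $J(x_1,x_2,x_3)=[[x_1,x_2],x_3]+[[x_2,x_3],x_1]+[[x_3,x_1],x_2]$ by substituting the three defining relations, using antisymmetry to rewrite $[x_1,x_3]$, $[x_2,x_1]$ and $[x_3,x_2]$, and then collecting the coefficients of $x_1$, $x_2$ and $x_3$ separately. A short bookkeeping gives that these three coefficients equal, up to sign, $\a_1\b_2-\c_1\b_3$, $\c_1\a_2-\b_2\c_3$ and $\a_1\c_3-\a_2\b_3$ respectively; since $\{x_1,x_2,x_3\}$ is a $\mathbb{Z}_p$-basis of $L$, the identity $J(x_1,x_2,x_3)=0$ is equivalent to the simultaneous vanishing of all three, which is precisely system~\eqref{eq:system}. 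I do not expect a genuine obstacle in this argument: the computation in this last step is entirely routine, and the only point deserving a sentence of justification is the reduction of the previous paragraph — that an alternating trilinear map on a module of rank $3$ is determined by its value on a single basis triple.
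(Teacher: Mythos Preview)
Your proposal is correct and follows exactly the approach of the paper, which simply states that one need only check the Jacobi identity and that a straightforward computation yields the system~\eqref{eq:system}. Your write-up merely spells out in more detail what the paper leaves implicit (why alternating is automatic, why it suffices to evaluate the Jacobiator on the single basis triple, and the bookkeeping of the coefficients).
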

\begin{proof}
 We only need to check that \eqref{eq:system} is satisfied if and only if the function $[\cdot,\cdot]$ satisfies the Jacobi identity. Now, a straight-forward computation yields the claim.
\end{proof}

\begin{proposition}\label{prop:families}
 Let $\a_1,\a_2$, $\b_2,\b_3$, $\c_1,\c_3 \in p\mathbb{Z}_p$ and consider the $\mathbb{Z}_p$-Lie lattice $L_A$ defined in \eqref{eq:Lielattice}. Then there exist $\eta,\rho,\mu,\lambda \in p\mathbb{Z}_p$, satisfying $\eta \rho - \mu \lambda =0$, such that $L_A$ is isomorphic to one of the following:
 \begin{itemize}
   \setlength\itemsep{1em}
  \item $L_1(\eta,\rho,\mu,\lambda) = \langle x, y, z \vert [x,y]= \eta y,\ [y,z]= \mu y,\ [z,x] = \lambda z + \rho x   \rangle.$  
  \item $L_2(\eta,\mu) = \langle x, y, z \vert [x,y]= 0,\ [y,z]= \eta y + \mu z,\ [z,x] = 0  \rangle.$
  \item $L_3(\eta,\mu) = \langle x, y, z \vert [x,y]= 0,\ [y,z]= \eta z,\ [z,x] = \mu z   \rangle.$
  \item For $\eta\mu\lambda \neq 0$, $$L_4(\eta,\mu,\lambda) = \langle x,y,z \vert [x,y] =\eta x + \mu y, [y,z] = \lambda y- \eta z, [z,x] = - \lambda x - \mu z \rangle.$$
  \item For $\eta\mu\lambda \neq 0$, $$L_\ast(\eta,\mu,\lambda) = \langle x,y,z \vert [x,y] =\eta x + \mu y, [y,z] = \lambda y+ \eta z, [z,x] = \lambda x + \mu z \rangle.$$
 \end{itemize}
 \end{proposition}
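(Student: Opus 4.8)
The plan is to recast the classification as a problem in linear algebra over $\mathbb{Z}_p$ and then run a short case analysis. Encode the bracket of $L_A$ by the $3\times 3$ matrix
\[
 M \;=\; \begin{pmatrix} 0 & \c_1 & \a_1 \\ \b_2 & 0 & \a_2 \\ \b_3 & \c_3 & 0 \end{pmatrix}
\]
via $[u,v]=M(\iota(u\wedge v))$, where $\iota\colon \bigwedge^2 L \xrightarrow{\ \sim\ } L$ is the isomorphism induced by a generator of $\bigwedge^3 L\cong\mathbb{Z}_p$. Then $[L_A,L_A]=\image(M)$, a change of $\mathbb{Z}_p$-basis $P\in\mathrm{GL}_3(\mathbb{Z}_p)$ acts on $M$ by $M\mapsto\varepsilon\,PMP^{\mathrm t}$ for a unit $\varepsilon$, and—writing $M=s+n$ with $s=\tfrac12(M+M^{\mathrm t})$, $n=\tfrac12(M-M^{\mathrm t})$ and $\mathbf n$ the vector attached to $n$—a direct computation, which also recovers Lemma~\ref{lem:system}, shows that \eqref{eq:system} is exactly the condition $s\mathbf n=0$. (The factors $\tfrac12$ are harmless since $p$ is odd; for $p=2$ one works with $M+M^{\mathrm t}$ and uses that the structure constants lie in $4\mathbb{Z}_2$.) Thus the proposition reduces to normalising a zero-diagonal $M$ with $s\mathbf n=0$ under this congruence action, the normal forms $L_1,\dots,L_\ast$ all having zero-diagonal matrices.

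First I would dispose of the extremes of $\rk(M)=\rk[L_A,L_A]$. If $\rk(M)=0$ then $L_A$ is abelian, so $L_A\cong L_2(0,0)$. If $\rk(M)=3$ then $\det M\neq 0$; using the identity $\det(s+n)=\det s+\mathbf n^{\mathrm t}s\,\mathbf n$ for $3\times 3$ matrices together with $s\mathbf n=0$, one gets $\det M=\det s$, and $\det s=0$ whenever $\mathbf n\neq 0$ (as $s$ is then singular), so $\mathbf n=0$, i.e.\ $M$ is symmetric. But a \emph{zero-diagonal symmetric} matrix is literally $M=\bigl(\begin{smallmatrix}0&\lambda&\eta\\\lambda&0&\mu\\\eta&\mu&0\end{smallmatrix}\bigr)$ with $\det M=2\eta\mu\lambda\neq 0$, and reading off the presentation shows that $L_A$ is already equal to $L_\ast(\eta,\mu,\lambda)$ with $\eta\mu\lambda\neq 0$ — no base change is needed here, which is the conceptual reason $L_\ast$ appears.

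The substance is the case $\rk(M)\in\{1,2\}$, i.e.\ $\det M=0$, in which $L_A$ is solvable. Here I would use the cyclic symmetry $x_1\mapsto x_2\mapsto x_3\mapsto x_1$, which permutes the three defining relations and preserves the triangle shape, to cut down the cases, and then argue according to which of the vectors $v_1=\a_1 x_1+\a_2 x_2\in\langle x_1,x_2\rangle$, $v_2=\b_2 x_2+\b_3 x_3$, $v_3=\c_1 x_1+\c_3 x_3$ vanish and what rank-$\le 2$ submodule they span: each configuration exhibits either a coordinate line $\langle x_j\rangle$ or the rank-$2$ submodule $[L_A,L_A]$ as an ideal, one adapts a $\mathbb{Z}_p$-basis to the resulting flag of ideals, and then clears the induced structure constants. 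The outcome in this regime is $L_1(\eta,\rho,\mu,\lambda)$ — with $\eta\rho=\mu\lambda$ forced, since that is precisely the surviving instance of \eqref{eq:system} for the $L_1$-shape — together with the more degenerate shapes $L_2$, $L_3$ and $L_4$, which overlap with $L_1$. The main obstacle is the clearing step: over the PID $\mathbb{Z}_p$ one cannot rescale freely, so after replacing a $p$-divisible generator by a primitive one, absorbing the leftover unit factors, and re-expressing the bracket, one must check at every turn that the change of basis has unit determinant and that all structure constants stay in $p\mathbb{Z}_p$; bookkeeping the $p$-valuations of the six constants so that the ``leading'' $p$-power lands in the correct slot of the normal form is the fiddly heart of the argument.
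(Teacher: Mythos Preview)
Your approach is correct in outline and genuinely different from the paper's. The paper argues directly with the six coefficients: it first treats the case $\alpha_1=0$ by a short tree of subcases (landing in $L_1$, $L_2$ or $L_3$), reduces ``$\alpha_1\neq0$ but some coefficient vanishes'' to this via the permutation symmetry of the triangle, and then, when all six coefficients are nonzero, introduces the ratios $q=\beta_2/\gamma_1=\beta_3/\alpha_1$, $r=\gamma_1/\beta_2=\gamma_3/\alpha_2$, $s=\alpha_1/\beta_3=\alpha_2/\gamma_3$, shows $q=r=s=\pm1$, and reads off $L_\ast$ (for $+1$) and $L_4$ (for $-1$). Your route instead organises the case split by $\mathrm{rk}(M)$, and your rank-$3$ argument is a nice conceptual replacement for the paper's ratio computation: the identity $\det(s+n)=\det s+\mathbf n^{\mathrm t}s\,\mathbf n$ together with $s\mathbf n=0$ forces $\mathbf n=0$, so $M$ is symmetric with zero diagonal and one is literally at $L_\ast$. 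This also explains structurally why $L_4$ (whose matrix is \emph{antisymmetric}) never has rank $3$ and hence falls into your solvable regime, whereas in the paper $L_4$ and $L_\ast$ appear side by side as the two sign choices.

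Where your sketch is thinner than the paper is the $\mathrm{rk}(M)\in\{1,2\}$ case. The paper only ever uses permutations of $\{x_1,x_2,x_3\}$ (and the induced sign changes), which automatically preserve the zero-diagonal ``triangle'' shape of $M$; so after each move one is still comparing against the list $L_1,\dots,L_4$. Your proposed move---``adapt a $\mathbb Z_p$-basis to the flag of ideals''---does \emph{not} in general preserve the zero diagonal: for instance, if $\mathrm{rk}[L_A,L_A]=2$ and you take $e_1,e_2$ spanning $[L_A,L_A]$, then $[e_1,e_3],[e_2,e_3]\in\langle e_1,e_2\rangle$ need not lie in $\langle e_1,e_3\rangle$, $\langle e_2,e_3\rangle$ respectively. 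So an extra step is required to bring the resulting presentation back to triangle form before matching it against $L_1$--$L_4$. This is not fatal---the constraint that $L_A$ \emph{started} in triangle form restricts the possible $[L_A,L_A]$ severely (e.g.\ in rank $1$ the three bracket vectors $v_1,v_2,v_3$ lie in distinct coordinate planes, so at most two are nonzero and they must then be multiples of a coordinate axis, which already puts you at $L_1$ or $L_3$)---but you should either make this explicit, or, more simply, follow the paper and restrict your base changes in this regime to the permutation symmetries, which keeps you inside the zero-diagonal world throughout.
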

\begin{proof}
Since $L_A$ is a Lie lattice, the parameters have to satisfy \eqref{eq:system}, by Lemma~\ref{lem:system}. First suppose that $\a_1=0$. It is easy to check the statement in this case, see Figure~\ref{fig:1} below for a schematic treatment.
\begin{figure}[h]
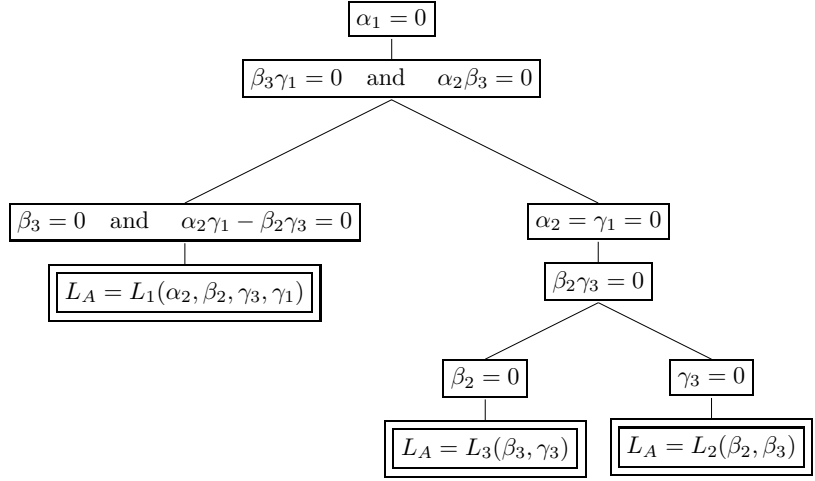

$$ 
\resizebox{0.80\linewidth}{!}{%
\Tree[.{\framebox{$\a_1 = 0$}} [ .{\framebox{$\b_3 \c_1= 0 \text{\quad and \quad} \a_2 \b_3 =0$}} [.{\framebox{$\b_3 = 0  \text{\quad and \quad} \a_2 \c_1 -\b_2 \c_3 =0$}} [.\framebox{{\framebox{$L_A= L_1(\a_2,\b_2,\c_3,\c_1)$}}} ]] [.\framebox{$\a_2 = \c_1 =0$}  [.{\framebox{$\b_2 \c_3 = 0$}}  [.{\framebox{$\b_2=0$}} \framebox{\framebox{$L_A=L_3(\b_3,\c_3)$}} ]   [.{\framebox{$\c_3 = 0$}}  \framebox{\framebox{$L_A = L_2(\b_2,\b_3) $}} ]]]]]}
 $$
 \caption{Proof of Proposition~\ref{prop:families} for $\a_1=0$.}\label{fig:1}
 \end{figure}

 If $\a_1\neq 0$ and some other coefficient is zero, we can use a change of basis to go back to the above case.
 
Suppose now that all coefficients are non-zero. Define the constants $$ q = \frac{\b_2}{\c_1} = \frac{\b_3}{\a_1},\quad r = \frac{\c_1}{\b_2} = \frac{\c_3}{\a_2},\quad s= \frac{\a_1}{\b_3} = \frac{a_2}{\c_3} .$$ Then $\a_1 \b_2 = \b_3 \c_1$, together with the definition of $r$, implies that $\a_1 = \b_3 r$. Since $\a_1= s \b_1$, we must have $r=s$. Furthermore $\b_2 \c_3 = \a_2 \c_1$, together with the definition of $q$, implies that $q \c_3 = \a_2$. Since $\a_2 = \c_3 s$, we must have $s=q$. So $q=r=s$.

Finally note that $\b_2 = q \c_1$ and $\c_1 = \b_2 r = \b_2 q$, therefore $q= \pm 1$. So either $q=r=s = 1$ or $q=r=s =-1$.
\end{proof}

The unusual numbering in the previous lemma will become clear after Lemma~\ref{lem:metabelianorSL21}. Recall that a non-zero element $z\in \mathbb{Z}_p$ can be written as formal power series $z=\sum_{n=N}^\infty a_n p^n$, for $a_n\in \{0,\ldots,p-1\}$ with $a_N\neq 0$, and we define $v_p(z)=N$.

\begin{lemma}\label{lem:xyz}
Each of the algebras of Proposition~\ref{prop:families} can be written in the form 
\[
  L_{\a,\b,\c}^{\xi_1,\xi_2,\xi_3} = \pres{x,y,z}{[x,y]= \a \xi_1 , [y,z] = \b \xi_2 , [z,x]= \c \xi_3}
\]
where $\a,\b,\c \in p\mathbb{Z}_p$, and $\xi_1\in \{0,x,y\}$, $\xi_2 \in \{0,y,z\}$ and $\xi_3 \in \{0,z,x,y\}$. Moreover, we have the following isomorphisms of Lie algebras
\[
   L_1(\eta,\rho,\mu,\lambda) \cong L_{0,\lambda,\eta}^{0,x,y},\  L_2(\eta,\mu) \cong L_{0,\mu,\eta}^{0,z,z},\ L_3(\eta,\mu) \cong L_{0,\eta,\mu}^{0,z,z}
\]
\[
   L_4(\eta,\mu,\lambda) \cong L_{0,\eta,-\eta}^{0,y,x} \text{ and }  L_\ast(\eta,\mu,\lambda) \cong L_{\eta,\eta,-2\rho}^{x,z,y}.
\]
\begin{proof}
The isomorphisms above can again be obtained by base-change. We will spell out the details in a few cases for the sake of clarity. 
 
 Clearly $L_3(\eta,\mu)\cong L_{0,\eta,\mu}^{0,z,z}$. 
 Consider the algebra $L_2(\eta,\mu)$. Without loss of generality we can assume $v_p(\eta) \le v_p(\mu)$. The change of basis $$u=x-\frac{\mu}{\eta}z, \quad v=y+\frac{\mu}{\eta} z, \quad t=z$$ yields 
\[
 [u,v]  = \mu u,\quad [v,t]  = \eta u,\quad [t,u]  = 0.
\]
and $L_2(\eta,\mu)\cong L_3(\mu,\eta)$. Hence $L_2(\eta,\mu) \cong L_{\mu,\eta,0}^{y,y,0} \cong L_{0,\eta,\mu}^{0,z,z} \cong L_3(\mu,\eta) $.
The calculations for the other cases are completely analogous and will be omitted.
\end{proof}
\end{lemma}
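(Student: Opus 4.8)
The plan is to establish the five displayed isomorphisms one at a time, in each case by writing down an explicit change of the ordered basis $\{x,y,z\}$ which lies in $\mathrm{GL}_3(\Z_p)$ and which carries the three defining bracket relations of the given algebra of Proposition~\ref{prop:families} onto those of the asserted normal form $L^{\xi_1,\xi_2,\xi_3}_{\a,\b,\c}$. Since the source algebra is a genuine $\Z_p$-Lie lattice — its parameters satisfy \eqref{eq:system} by Lemma~\ref{lem:system} — a $\Z_p$-linear bijection from it to the free module on $\{x,y,z\}$ equipped with the bracket of the target presentation is automatically an isomorphism of $\Z_p$-Lie lattices as soon as it intertwines the structure constants (in particular the target relations then satisfy the Jacobi identity, being the image of the source bracket). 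Moreover, by construction every $L^{\xi_1,\xi_2,\xi_3}_{\a,\b,\c}$ has each of its three brackets equal either to $0$ or to a scalar multiple of a single basis vector; hence the first assertion of the lemma follows immediately once the five isomorphisms are in hand.

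I would begin with the two immediate cases. The isomorphism $L_3(\eta,\mu)\cong L^{0,z,z}_{0,\eta,\mu}$ is an equality of presentations. For $L_2(\eta,\mu)$ one first normalises, using the obvious permutation-and-sign symmetries of its presentation, so as to assume $v_p(\eta)\le v_p(\mu)$; then $\mu/\eta\in\Z_p$, the substitution $u=x-\tfrac{\mu}{\eta}z$, $v=y+\tfrac{\mu}{\eta}z$, $t=z$ has determinant $1$ and hence lies in $\mathrm{GL}_3(\Z_p)$, and a short computation using $[x,y]=[x,z]=0$ and $[z,y]=-(\eta y+\mu z)$ shows that it brings the relations of $L_2(\eta,\mu)$ into the shape $L^{y,y,0}_{\mu,\eta,0}$, which after the cyclic relabelling $x\mapsto y\mapsto z\mapsto x$ is $L_3(\mu,\eta)\cong L^{0,z,z}_{0,\mu,\eta}$. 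This is the one calculation I would record in full; the remaining three are of exactly the same kind.

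For $L_1$, $L_4$ and $L_\ast$ I would proceed in the same way, in each case letting the constraint among the parameters furnished by Lemma~\ref{lem:system} dictate the appropriate substitution: that constraint is precisely what makes the relevant shearing of the basis both well defined and effective in absorbing the off-diagonal part of each bracket, leaving a bracket proportional to a single basis vector. The solvable algebra $L_4(\eta,\mu,\lambda)$ and the $\mathfrak{sl}_2$-type algebra $L_\ast(\eta,\mu,\lambda)$ require a little more care, but the underlying procedure is unchanged: write down the change of basis, expand the three brackets, and read off that the result is in normal form.

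The main obstacle is therefore not the verification of the bracket relations — these are routine bilinear computations, as in the $L_2$ case above — but the $p$-adic bookkeeping needed to guarantee that every change of basis used is an automorphism of the $\Z_p$-lattice and not only of $L\otimes_{\Z_p}\Q_p$. In practice this is controlled exactly as in the $L_2$ case: one first exploits the evident permutation and sign symmetries of the presentation to arrange that the parameter one needs to invert has the smallest $p$-adic valuation among those occurring, and then confirms that the resulting transition matrix has determinant a $p$-adic unit.
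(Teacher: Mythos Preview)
Your proposal is correct and follows essentially the same line as the paper's own proof: both argue by explicit base change, both note that $L_3$ is literally in the asserted form, both carry out only the $L_2$ case in detail using the very same substitution $u=x-\tfrac{\mu}{\eta}z$, $v=y+\tfrac{\mu}{\eta}z$, $t=z$ after normalising to $v_p(\eta)\le v_p(\mu)$, and both then declare the remaining cases analogous. Your version is slightly more careful than the paper in two respects---you make explicit the requirement that the transition matrix lie in $\mathrm{GL}_3(\Z_p)$ (and how the valuation normalisation guarantees this), and you record the cyclic relabelling that identifies the result with $L_3(\mu,\eta)$---but the underlying argument is identical.
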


\begin{lemma}\label{lem:metabelianorSL21}
 Let $L=L_{\a,\b,\c}^{\xi_1,\xi_2,\xi_3}$ be the Lie algebra defined in Lemma~\ref{lem:xyz}. Then $L$ is not metabelian if, and only if, $L = L_{\eta,\eta,-2\rho}^{x,z,y}$. Moreover, in this case $L$ is commensurable to the Lie algebra of $\mathrm{SL}_2^1(\mathbb{Z}_p)$.

 \begin{proof}
  From the proof of Lemma~\ref{lem:xyz}, we can see that it is sufficient to check that $L_1(\eta,\mu,\lambda,\rho)$ and $L_3(\eta,\mu)$ are metabelian for every allowed choice of coefficients. It is clear that $L_3(\eta,\mu)$ is metabelian. 
  
  The derived subalgebra of $L_{0,\lambda,\eta}^{0,y,x}$ is generated by $\lambda y$ and $\eta x$ and these elements commute in $L_{0,\lambda,\eta}^{0,y,x}$.
  
  The last claim follows from the fact that $\mathbb{Q}_p \otimes L \cong \mathfrak{sl}_2(\mathbb{Q}_p)$ (see for instance \cite[Prop.~2.30]{ns:selfsim}). 
 \end{proof}
\end{lemma}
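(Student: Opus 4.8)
The plan is to prove Lemma~\ref{lem:metabelianorSL21} in two parts: first the metabelianity dichotomy, then the commensurability claim. For the dichotomy, observe that by Lemma~\ref{lem:xyz} every algebra $L_{\a,\b,\c}^{\xi_1,\xi_2,\xi_3}$ arising from Proposition~\ref{prop:families} is isomorphic to one of $L_1,L_2,L_3,L_4$ or $L_\ast$ (in the normal forms $L_{0,\lambda,\eta}^{0,x,y}$, $L_{0,\mu,\eta}^{0,z,z}$, $L_{0,\eta,\mu}^{0,z,z}$, $L_{0,\eta,-\eta}^{0,y,x}$, $L_{\eta,\eta,-2\rho}^{x,z,y}$ respectively), and since $L_2\cong L_3$ it suffices to check that $L_1$, $L_3$ and $L_4$ are metabelian while $L_\ast = L_{\eta,\eta,-2\rho}^{x,z,y}$ is not. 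For the three ``easy'' families I would simply exhibit the derived subalgebra explicitly and verify it is abelian: for $L_3(\eta,\mu)$ the derived algebra is $\langle z\rangle$, which is $1$-dimensional hence abelian; for $L_1$ in the form $L_{0,\lambda,\eta}^{0,y,x}$ the derived algebra is spanned by $\lambda y$ and $\eta x$, and one computes $[\lambda y,\eta x]=\lambda\eta[y,x] = -\lambda\eta\cdot\eta y$… wait — in this normal form $[x,y]=0$, so $[y,x]=0$ and the derived algebra is abelian; for $L_4$ in the form $L_{0,\eta,-\eta}^{0,y,x}$ one argues identically. For $L_\ast$, I would show the second derived algebra is nonzero: the derived algebra $L_\ast'$ contains $[x,y]=\eta x+\mu y$, $[y,z]=\lambda y+\eta z$, $[z,x]=\lambda x+\mu z$, and bracketing two of these (say $[[x,y],[z,x]]$) yields a nonzero element since $\eta\mu\lambda\neq 0$, so $L_\ast$ is not metabelian.

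For the commensurability claim, I would invoke the fact (referenced in the statement via \cite[Prop.~2.30]{ns:selfsim}) that $\mathbb{Q}_p\otimes_{\mathbb{Z}_p} L_\ast\cong\mathfrak{sl}_2(\mathbb{Q}_p)$: the $3$-dimensional simple Lie algebra over $\mathbb{Q}_p$ is unique, and the defining relations of $L_\ast(\eta,\mu,\lambda)$ after extending scalars and rescaling the basis become the standard relations of $\mathfrak{sl}_2$. Concretely, set $X=\eta^{-1}x$ (or an analogous rescaling) to normalise the structure constants to $\pm 1,\pm 2$, recognise the Chevalley-type relations, and conclude $\mathbb{Q}_p\otimes L_\ast\cong\mathfrak{sl}_2(\mathbb{Q}_p)$. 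Since $\mathfrak{sl}_2^1(\mathbb{Z}_p)$ is a $\mathbb{Z}_p$-Lie lattice whose $\mathbb{Q}_p$-span is also $\mathfrak{sl}_2(\mathbb{Q}_p)$, and any two full $\mathbb{Z}_p$-lattices in the same finite-dimensional $\mathbb{Q}_p$-vector space that are closed under a common bracket are commensurable (some $p^N$-multiple of one sits inside the other and vice versa, and the bracket closedness is automatic after scaling), it follows that $L_\ast$ and the Lie algebra of $\mathrm{SL}_2^1(\mathbb{Z}_p)$ are commensurable.

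I expect the main obstacle to be the bookkeeping in the metabelianity check for $L_\ast$: one must make sure that the candidate nonzero element of $L_\ast''$ really is nonzero, which requires tracking the structure constants carefully under the constraint $\eta\rho-\mu\lambda=0$ coming from \eqref{eq:system} (in the $L_\ast$ normalisation $q=r=s=1$, so the constraint relates $\eta,\mu,\lambda$ in a specific way), and confirming that this relation does not accidentally force $L_\ast''=0$. A clean way around this is to use the scalar extension directly: $\mathbb{Q}_p\otimes L_\ast\cong\mathfrak{sl}_2(\mathbb{Q}_p)$ is perfect (equal to its own derived algebra), hence certainly not metabelian, and metabelianity of $L_\ast$ would pass to $\mathbb{Q}_p\otimes L_\ast$; so the non-metabelianity of $L_\ast$ follows immediately from the commensurability statement, and the two halves of the lemma can be proved together. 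The only genuinely computational part that remains is the explicit identification $\mathbb{Q}_p\otimes L_\ast\cong\mathfrak{sl}_2(\mathbb{Q}_p)$, which is routine once the right rescaling of the basis is written down, and in any case is deferred to the cited reference.
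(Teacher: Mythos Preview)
Your proposal is correct and follows essentially the same approach as the paper: reduce via Lemma~\ref{lem:xyz} to checking that $L_1$ and $L_3$ (hence $L_2$ and $L_4$) are metabelian by explicitly computing their derived subalgebras, and handle $L_\ast$ via the identification $\mathbb{Q}_p\otimes L_\ast\cong\mathfrak{sl}_2(\mathbb{Q}_p)$ from the cited reference. Your observation that the non-metabelianity of $L_\ast$ already follows from this identification (since $\mathfrak{sl}_2(\mathbb{Q}_p)$ is perfect) is a tidy unification the paper leaves implicit, but otherwise the arguments coincide.
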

 
 \subsection{Solvable triangle groups}

We start by determining the isomorphism classes of $p$-RAAGs with exactly one edge. 

\begin{lemma}\label{lem:isom}
 Let $G_{\a,\b}$ be the pro-$p$ group defined by $$\pres{x,y}{[x,y]=x^\a y^\b}$$ with $\a,\b\in p^{1+\varepsilon}\mathbb{Z}$ ($\varepsilon =0$ if $p$ is odd and $\varepsilon=1$ if $p=2$) and set $c=\min \{v_p(\a),v_p(\b)\}$. Then $G_{\a,\b} \cong G_{p^{c},0}$.
\end{lemma}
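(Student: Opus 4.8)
The plan is to show that the pro-$p$ group $G_{\alpha,\beta}=\langle x,y\mid [x,y]=x^\alpha y^\beta\rangle$ is a $2$-generated Demushkin group and that its isomorphism type is controlled by a single invariant, which one then identifies with $c=\min\{v_p(\alpha),v_p(\beta)\}$. First I would observe that $G_{\alpha,\beta}$ is powerful (the single relation forces $[x,y]\in G^p$, resp. $G^4$ when $p=2$, since $\alpha,\beta\in p^{1+\varepsilon}\mathbb{Z}_p$) and $2$-generated with one defining relation lying in $F_{(2)}$; by the inequalities in \eqref{eq:inequalities} it is torsion-free (a powerful finitely generated pro-$p$ group with $\mathrm{cd}$ finite is torsion-free), hence uniform of dimension $2$. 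So $G_{\alpha,\beta}$ is a $2$-generated Demushkin group.

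Next I would pass to the associated $\mathbb{Z}_p$-Lie lattice $L=L_{G_{\alpha,\beta}}$ via the Lazard correspondence (Section~\ref{sec:triang}): using \eqref{eq:Liebracket}, $L$ is free of rank $2$ on $X,Y$ with $[X,Y]=\alpha X+\beta Y$ (the powers become linear terms; no Jacobi obstruction appears in rank $2$). The task then reduces to a purely linear-algebra normal form problem: classify rank-$2$ $\mathbb{Z}_p$-Lie lattices with bracket $[X,Y]=\alpha X+\beta Y$, $\alpha,\beta\in p^{1+\varepsilon}\mathbb{Z}_p$, up to $\mathbb{Z}_p$-linear change of basis. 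Writing $v=\beta X-\alpha Y$ (a primitive vector after dividing by $p^c$, i.e. replacing $(\alpha,\beta)$ by $(\alpha/p^c,\beta/p^c)$ which has a unit coordinate) one gets $[X,v]=0$ and $[X,Y]\in p^c\cdot(\text{free module})$, so completing $v/p^c\cdot(\text{unit})$ to a basis $\{X',Y'\}$ one can arrange $[X',Y']=p^c X'$, i.e. $L\cong L_{G_{p^c,0}}$. Undoing the Lazard correspondence gives $G_{\alpha,\beta}\cong G_{p^c,0}$.

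I would then note that I should double-check the two degenerate extremes: if $\min\{v_p(\alpha),v_p(\beta)\}=\infty$, i.e. $\alpha=\beta=0$, then $G_{0,0}\cong\mathbb{Z}_p^2$ is already of the asserted form (with "$p^\infty=0$"); and if exactly one of $\alpha,\beta$ is nonzero the above normal-form step is immediate. The main obstacle is making the change-of-basis argument at Lie-algebra level fully rigorous over $\mathbb{Z}_p$ — in particular verifying that after dividing out $p^c$ the vector $(\alpha/p^c,\beta/p^c)$ has a unit entry (true by definition of $c$ as the $p$-valuation of the gcd) so that it extends to a $\mathbb{Z}_p$-basis, and then checking the resulting bracket is exactly $[X',Y']=p^c X'$ after possibly rescaling $X'$ by a unit. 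Once the Lie-lattice normal form is pinned down, transporting the isomorphism back through the equivalence of categories between uniform pro-$p$ groups and powerful $\mathbb{Z}_p$-Lie lattices (\cite[Thm.~9.10]{ddms:padic}) is routine, since a Lie-algebra isomorphism automatically respects the Hausdorff group operation.
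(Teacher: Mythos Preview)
Your approach is genuinely different from the paper's. The paper does not pass through the Lazard correspondence at all: it observes that $G_{\alpha,\beta}$ has positive deficiency, invokes \cite[Thm.~4]{HS:deficiency} to conclude it is a pro-$p$ duality group of dimension~$2$ (hence a $p$-adic analytic Demushkin group), then cites the classification \cite[Prop.~7.1]{kgs:small} of such groups to get $G_{\alpha,\beta}\cong\langle x,y\mid [x,y]=x^{p^k}\rangle$ for some $k$, and finally reads off $k=c$ by comparing abelianizations. Your Lie-theoretic route is more hands-on and avoids these external classification results, which is attractive; in fact it essentially merges the paper's Lemma~\ref{lem:isom} and Lemma~\ref{lem:2gens} into a single argument.

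However, there are two genuine gaps. First, invoking \eqref{eq:inequalities} to get finite cohomological dimension is circular: those inequalities are stated for \emph{quadratic} pro-$p$ groups, and you have not yet shown $G_{\alpha,\beta}$ is quadratic. A clean fix is to note that $G_{\alpha,\beta}$ is mild (Definition~\ref{def:mild} with $d=2$, $r=1$ gives $M_\bullet\cong\mathbb{F}_p[X,Y]$ and the Hilbert series $1/(1-T)^2=1/(1-2T+T^2)$), hence $\cdd(G_{\alpha,\beta})=2$, hence torsion-free, hence uniform.

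Second, and more seriously, the assertion that the Lazard bracket satisfies $[X,Y]=\alpha X+\beta Y$ \emph{with the same $\alpha,\beta$} is false in general. Already for $\beta=0$ one computes from \eqref{eq:Liebracket} that $(x,y)_{Lie}=\log(1+\alpha)\cdot x$, not $\alpha x$. The Baker--Campbell--Hausdorff corrections mean you only get $[X,Y]=\alpha' X+\beta' Y$ for some $\alpha',\beta'\in p^{1+\varepsilon}\mathbb{Z}_p$ (this is exactly why the paper uses primed coefficients $\alpha_{ij}',\beta_{ij}'$ in \eqref{eq:presliealg}). Your normal-form argument then gives $L\cong L_{G_{p^{c'},0}}$ with $c'=\min\{v_p(\alpha'),v_p(\beta')\}$, and you still owe the equality $c'=c$. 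This can be recovered by comparing abelianizations: $G_{\alpha,\beta}^{\mathrm{ab}}\cong\mathbb{Z}_p\times\mathbb{Z}_p/p^c\mathbb{Z}_p$ on one hand, and $G_{\alpha,\beta}^{\mathrm{ab}}\cong L/[L,L]\cong\mathbb{Z}_p\times\mathbb{Z}_p/p^{c'}\mathbb{Z}_p$ on the other --- but note that this last step is precisely the invariant the paper's proof isolates, so the two arguments converge at the end.
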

\begin{proof}
 For $\a=\b=0$, the group $G_{0,0}$ is isomorphic to $\mathbb{Z}_p^2$ and the statement is clear. We can then suppose that $\a\neq 0$ and $v_p(\a) \le v_p(\b)\le \infty$. 
 
 Note that $G_{\a,\b}$ is a $2$-generated powerful pro-$p$ group. In particular, $[G_{\a,\b},G_{\a,\b}]$ is a finitely generated normal subgroup of $G_{\a,\b}$ and it is non-trivial. In fact, one can easily show that $$G_{\a,\b}/[G_{\a,\b},G_{\a,\b}] \cong \mathbb{Z}_p \times \left(\mathbb{Z}_p/p^{v_p(\a)}\mathbb{Z}_p\right).$$
Since $G_{\a,\b}$ has positive deficiency, \cite[Thm.~4]{HS:deficiency} yields that $G_{\a,\b}$ is a pro-$p$ duality group of dimension $2$. Hence, $G_{\a,\b}$ is a $p$-adic analytic Demushkin group of dimension $2$. By \cite[Prop.~7.1]{kgs:small}, $G_{\a,\b}$ is isomorphic to the group 
 \[
    H= \pres{x,y}{[x,y]= x^{p^k}}
 \]
for some positive integer $k$ ($k\ge 2$ for $p=2$). Comparing the abelianisations of $G_{\a,\b}$ and $H$, we conclude that $k= v_p(\a)$. 
\end{proof}

\begin{lemma}\label{lem:2gens}
 Let $\a,\b$ and $G_{\a,\b}$ be as above. Consider the powerful $\mathbb{Z}_p$-Lie lattice $L_{\a,\b}$ defined by $$\pres{x,y}{[x,y] = \a x+ \b y}$$ and the associated pro-$p$ group $G_{L_{\a,\b}}$ under the Lazard correspondence. Then 
 \[
   G_{L_{\a,\b}} \cong G_{\a,\b}.
 \]
\end{lemma}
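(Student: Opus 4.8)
The plan is to recognise both $G_{\a,\b}$ and $G_{L_{\a,\b}}$ as the same $2$-dimensional Demushkin group by matching their abelianisations, using the Lazard correspondence together with Lemma~\ref{lem:isom}. (The case $\a=\b=0$ is trivial, since then $L_{\a,\b}\cong\Z_p^2$ as an abelian Lie lattice and both sides equal $\Z_p^2$, so we assume $(\a,\b)\neq(0,0)$.) First, since $\a,\b\in p^{1+\varepsilon}\Z_p$, the lattice $L_{\a,\b}$ is free of rank $2$ over $\Z_p$ on $\{x,y\}$ and powerful, hence $G_{L_{\a,\b}}$ is a uniform pro-$p$ group of dimension $2$, minimally generated by $\exp(x),\exp(y)$; in particular it is a $2$-generated Demushkin group. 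By Lemma~\ref{lem:isom} the group $G_{\a,\b}$ is likewise a $2$-generated Demushkin group (indeed $G_{\a,\b}\cong G_{p^{c},0}$ with $c=\min\{v_p(\a),v_p(\b)\}$). By \cite[Prop.~7.1]{kgs:small} (the classification already invoked in the proof of Lemma~\ref{lem:isom}), every such group is isomorphic to $\langle x,y\mid[x,y]=x^{p^{k}}\rangle$ for a unique $k$, and $k$ is read off from the abelianisation via $G^{\mathrm{ab}}\cong\Z_p\oplus\Z/p^{k}\Z$.

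It thus remains to compute the two abelianisations. For $G_{\a,\b}$ this is carried out in the proof of Lemma~\ref{lem:isom}: $G_{\a,\b}^{\mathrm{ab}}\cong\Z_p\oplus\Z/p^{c}\Z$. For $G_{L_{\a,\b}}$ I would use that the Lazard correspondence identifies $G_{L_{\a,\b}}/\overline{[G_{L_{\a,\b}},G_{L_{\a,\b}}]}$ with $L_{\a,\b}/[L_{\a,\b},L_{\a,\b}]$ as topological abelian groups, the map $\exp$ being a homeomorphism carrying $[L_{\a,\b},L_{\a,\b}]$ onto the closure of the commutator subgroup. Since $[L_{\a,\b},L_{\a,\b}]=\Z_p(\a x+\b y)$ and $\a x+\b y=p^{c}\,w$ for a primitive vector $w\in L_{\a,\b}$, extending $w$ to a $\Z_p$-basis of $L_{\a,\b}$ gives $L_{\a,\b}/[L_{\a,\b},L_{\a,\b}]\cong\Z_p\oplus\Z/p^{c}\Z$. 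Hence $G_{L_{\a,\b}}^{\mathrm{ab}}\cong\Z_p\oplus\Z/p^{c}\Z$ as well, so both $G_{\a,\b}$ and $G_{L_{\a,\b}}$ are isomorphic to $\langle x,y\mid[x,y]=x^{p^{c}}\rangle$, and therefore to each other.

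The only delicate point is the identification of $G_{L}^{\mathrm{ab}}$ with $L/[L,L]$ under the Lazard correspondence; this is standard for powerful $\Z_p$-Lie lattices (cf.\ \cite[Ch.~9]{ddms:padic}), since on $L/[L,L]$ the Campbell--Hausdorff multiplication reduces to ordinary addition. An alternative route, parallel to the proof of Proposition~\ref{lem:unif}, would be to express the defining relation of $G_{L_{\a,\b}}$ in the form $[\exp x,\exp y]=(\exp x)^{c_1}(\exp y)^{c_2}$ via the commutator Campbell--Hausdorff formula and then apply Lemma~\ref{lem:isom} to $G_{c_1,c_2}$; but this forces one to track the (unit) discrepancy between $(c_1,c_2)$ and $(\a,\b)$, which the abelianisation comparison sidesteps entirely, so I would present the argument as above.
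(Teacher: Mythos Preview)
Your argument is correct and takes a genuinely different route from the paper's proof. The paper performs an explicit change of basis in $L_{\a,\b}$ (setting $u=x+(\b/\a)y$, $v=y$) to reduce to $L'_{\a,\b}=\langle u,v\mid[u,v]=\a u\rangle_{\mathrm{Lie}}$, then computes directly from the Lazard bracket \eqref{eq:Liebracket} that the Lie algebra of the standard group $H_a=\langle x,y\mid[x,y]=x^{a}\rangle$ (with $a=p^{v_p(\a)}$) is $\langle x,y\mid[x,y]=ax\rangle_{\mathrm{Lie}}$, and finishes by matching these two Lie lattices and invoking Lemma~\ref{lem:isom}. You instead bypass all explicit basis changes by noting that both $G_{\a,\b}$ and $G_{L_{\a,\b}}$ are $2$-generated $p$-adic analytic Demushkin groups, so the classification of \cite[Prop.~7.1]{kgs:small} reduces the problem to comparing a single invariant, the torsion in the abelianisation; you then read this off on the Lie side via $G_L^{\mathrm{ab}}\cong L/[L,L]$. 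Your approach is cleaner and more conceptual, while the paper's has the side benefit of exhibiting $L_{H_a}$ explicitly, which feeds into the later computations in Section~\ref{sec:triang}. One small point you glide over is \emph{why} a $2$-dimensional uniform pro-$p$ group is Demushkin; this is immediate from Proposition~\ref{prop:uniform}, since $H^\bullet(G,\F_p)\cong\Lambda_\bullet(\F_p^2)$ gives $\dim H^2=1$ with non-degenerate cup product, but it is worth saying.
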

\begin{proof}
 Without loss of generality we may suppose that $v_p(\a)\le v_p(\b) \le \infty$. We will first perform a change of basis in $L_{\a,\b}$: set $u = x+ (\b/\a) y$ and $v= y$, then $L_{\a,\b}' = \pres{u,v}{[u,v] = \a u}_{\mathrm{Lie}}$ is clearly isomorphic to $L_{\a,\b}$. 
 
 Let $a=p^{v_p(\a)}$. Working directly with the Lie bracket definition of the uniform $p$-adic analytic pro-$p$ group $H_{a} = \pres{x,y}{[x,y]=x^{a}}$, it is easy to see that its associated Lie algebra $L_{H_a}$ is isomorphic to $\pres{x,y}{[x,y]= a x}_{\mathrm{Lie}}$. By a standard Lie-theoretic computation, it follows that $L_{\a,\b}'$ is isomorphic to $L_{H_a}$. Therefore $G_{L_{\a,\b}}  \cong G_{L_{H_{a}}}$. By the Lazard correspondence $G_{L_{H_{a}}} \cong H_a$ and finally $H_a \cong G_{\a,\b}$, by Lemma~\ref{lem:isom}. 
\end{proof}

In a similar fashion to the previous section, we are going to define some groups that will turn out to correspond to the above Lie algebras. For $\a,\b,\c \in p\mathbb{Z}_p$, and $\xi_1\in \{0,x,y\}$, $\xi_2\in \{0,y,z\}$ and $\xi_3\in \{0,z,x,y\}$, define the pro-$p$ group
\[
   G_{\a,\b,\c}^{\xi_1,\xi_2,\xi_3} = \pres{x,y,z}{[x,y] = \xi_1^\a,\ [y,z] = \xi_2^\b,\ [z,x] = \xi_3^\c}
\] 
Also define $$G_2(\a,\b) = \pres{x,y,z}{[x,y] = 1,\ [y,z] = y^\a z^\b,\ [z,x] = 1}.$$ It turns out that the picture for groups is analogous to that of Lie lattices (cf.\ Proposition~\ref{prop:families}).

\begin{lemma}\label{lem:3gens}
The groups $G_2(\a,\b)$, $G_{0,\b,\c}^{0,z,z}$ and $G_{0,\b,\c}^{0,y,x}$ are metabelian uniform pro-$p$ groups of dimension $3$ for every choice of parameters. Moreover, 
\[
  L_{G_2(\a,\b)} \cong L_2(\a,\b)\cong L_{0,\b,\a}^{0,z,z}, \quad L_{G_{0,\b,\c}^{0,z,z}} \cong L_{0,\b,\c}^{0,z,z} \text{\ \ and \ \ } L_{G_{0,\b,\c}^{0,y,x}} \cong L_{0,\b,\c}^{0,y,x}.
\]
In particular, $G_{0,\b,\c}^{0,z,z} \cong G_2(\b,\c)$.
\end{lemma}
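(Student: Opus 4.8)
The plan is to handle the three families in parallel, following the template already set for $2$-generated groups in Lemmas~\ref{lem:isom} and \ref{lem:2gens}. For each group $G$ in the statement I would first realise $G$ as an iterated direct or semidirect product of copies of $\Z_p$. This makes $G$ poly-$\Z_p$, hence torsion-free, and a one-line check shows $G$ is powerful: every defining relator is a commutator of generators times a product of $p^{1+\varepsilon}$-th powers, so $[x_i,x_j]\in G^{p^{1+\varepsilon}}$, whence $G/G^{p^{1+\varepsilon}}$ is abelian and $[G,G]\le G^{p^{1+\varepsilon}}$. Since $G$ is $3$-generated with all relators in the Frattini subgroup we have $d(G)=3$, so $G$ is uniform of dimension $3$ by \cite[Thm.~4.5]{ddms:padic}. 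Metabelianity is immediate from the product decomposition, the derived subgroup being contained in the abelian base.

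Concretely: in $G_2(\a,\b)$ the generator $x$ is central and the presentation is exactly that of $\Z_p\times G_{\a,\b}$, where $G_{\a,\b}$ is a $2$-generated Demushkin group by Lemma~\ref{lem:isom} (in fact $G_{\a,\b}\cong\Z_p\rtimes\Z_p$, uniform of dimension $2$ and metabelian). In $G_{0,\b,\c}^{0,z,z}$ the relations read $[x,y]=1$, $y^{-1}zy=z^{1-\b}$, $x^{-1}zx=z^{1+\c}$, so $\langle z\rangle\cong\Z_p$ is normal with quotient $\langle x,y\rangle\cong\Z_p^2$ and $G_{0,\b,\c}^{0,z,z}\cong\Z_p\rtimes\Z_p^2$. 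In $G_{0,\b,\c}^{0,y,x}$ the relations read $[x,y]=1$, $z^{-1}yz=y^{1+\b}$, $z^{-1}xz=x^{1-\c}$, so $\langle x,y\rangle\cong\Z_p^2$ is normal with quotient $\langle z\rangle\cong\Z_p$ and $G_{0,\b,\c}^{0,y,x}\cong\Z_p^2\rtimes\Z_p$.

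Next I would identify the associated $\Z_p$-Lie lattices. For $G_2(\a,\b)$ this is clean: $L_{G_2(\a,\b)}\cong\Z_p\oplus L_{G_{\a,\b}}\cong\Z_p\oplus L_{\a,\b}=L_2(\a,\b)$ by Lemma~\ref{lem:2gens}, and $L_2(\a,\b)\cong L_{0,\b,\a}^{0,z,z}$ by Lemma~\ref{lem:xyz}. For the two semidirect products I would compute the Lie bracket of the generators directly from \eqref{eq:Liebracket}; for instance $y^{-p^n}z^{p^n}y^{p^n}=z^{p^n(1-\b)^{p^n}}$ gives $[y^{p^n},z^{p^n}]=z^{p^n(1-(1-\b)^{p^n})}$, and since $1-(1-\b)^{p^n}=-p^n\log_p(1-\b)\bigl(1+O(p^{n+1})\bigr)$ the limit $[y^{p^n},z^{p^n}]^{p^{-2n}}$ converges to $z^{-\log_p(1-\b)}$; the remaining brackets (including those for $G_{0,\b,\c}^{0,y,x}$) are obtained the same way. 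This puts $L_{G_{0,\b,\c}^{0,z,z}}$ in the family with superscript $(0,z,z)$ and $L_{G_{0,\b,\c}^{0,y,x}}$ in the family with superscript $(0,y,x)$, with second and third coefficients of the same $p$-valuation as $\b$ and $\c$; a rescaling of the generators then normalises the presentation (for the $(0,z,z)$ family one can rescale $x$ and $y$ independently, matching the coefficients to $\b$ and $\c$). Alternatively, one may construct the powerful Lie lattice $L_{0,\b,\c}^{0,z,z}$ first, form its Lazard group, and reproduce verbatim the identification argument from the proof of Proposition~\ref{lem:unif}(b) via the commutator Campbell-Hausdorff formula \cite[Lem.~7.12(iii)]{ddms:padic}. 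Either route also re-derives metabelianity, via Lemma~\ref{lem:metabelianorSL21}.

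Finally, the concluding isomorphism $G_{0,\b,\c}^{0,z,z}\cong G_2(\b,\c)$ is formal: by Lemma~\ref{lem:xyz} together with the base change $x\mapsto-x$, $y\mapsto-y$ on the $(0,z,z)$ family, $L_{G_2(\b,\c)}\cong L_2(\b,\c)\cong L_{0,\c,\b}^{0,z,z}\cong L_{0,\b,\c}^{0,z,z}\cong L_{G_{0,\b,\c}^{0,z,z}}$, and since both groups are uniform the equivalence between uniform pro-$p$ groups and powerful $\Z_p$-Lie lattices \cite[Thm.~9.10]{ddms:padic} yields $G_{0,\b,\c}^{0,z,z}\cong G_2(\b,\c)$. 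The step I expect to be the main obstacle is the explicit $p$-adic evaluation of the brackets in \eqref{eq:Liebracket} and the bookkeeping needed to present the resulting Lie lattices exactly in the normalised forms; everything else is assembling Lemmas~\ref{lem:isom}, \ref{lem:2gens}, \ref{lem:xyz}, \ref{lem:metabelianorSL21} with standard facts on uniform pro-$p$ groups.
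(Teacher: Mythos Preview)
Your approach is correct and runs parallel to the paper's, but with two genuine differences worth noting. First, for uniformity: you read off an explicit splitting $G_{0,\b,\c}^{0,z,z}\cong\Z_p\rtimes\Z_p^2$ and $G_{0,\b,\c}^{0,y,x}\cong\Z_p^2\rtimes\Z_p$, conclude torsion-freeness from the poly-$\Z_p$ structure, and then invoke \cite[Thm.~4.5]{ddms:padic}. The paper instead, for $G_{0,\b,\c}^{0,y,x}$, builds two surjections onto $2$-generated Demushkin groups and uses them to bound $\dim G\ge 3$, then applies the criterion $d(G)=\dim(G)\Rightarrow$ uniform from \cite{ks}. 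Your route is more direct and makes metabelianity immediate; the paper's route avoids having to check that the obvious semidirect product really exhausts the presentation (a step you leave implicit, though it follows at once from the surjection onto the semidirect product). Second, for the ``in particular'' clause $G_{0,\b,\c}^{0,z,z}\cong G_2(\b,\c)$: you deduce it from the Lie-algebra identifications via the Lazard equivalence \cite[Thm.~9.10]{ddms:padic}, whereas the paper produces an explicit change of generators inside the group, solving for $\delta\in\Z_p$ with $[y^\delta x,z]=1$ so that $\{y^\delta x,y,z\}$ exhibits the $G_2$-presentation directly. The paper's computation is self-contained at the group level; yours is cleaner but leans on the correspondence. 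One caution on your normalisation step: for the $(0,y,x)$ family, rescaling generators only scales the two parameters by a \emph{common} unit (rescaling $x$ or $y$ leaves them fixed, only $z\mapsto\nu z$ acts), so ``rescaling independently'' is special to the $(0,z,z)$ case; the paper is equally terse here, and for the application in Theorem~\ref{thm:quadratictrianglepraags} only the family identification matters.
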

\begin{proof}
First of all notice that these groups are powerful pro-$p$ groups by definition. 
  We first consider $G_{0,\b,\c}^{0,y,x}$. Define the homomorphisms
    \[
       \varphi_1: G_{0,\b,\c}^{0,y,x} \to \pres{z,x}{[z,x] = x^\c} \text{ and } \varphi_2: G_{0,\b,\c}^{0,y,x} \to \pres{y,z}{[y,z] = y^\b}. 
    \]
    By Lemma~\ref{lem:2gens}, the subgroup generated by $x$ and $z$ is uniform of dimension $2$. Moreover, the kernel of $\varphi_1$ is generated by $y$ and hence infinite, because $y$ has infinite image via $\varphi_2$. In particular, the dimension of $G_{0,\b,\c}^{0,y,x}$ as a $p$-adic analytic group must satisfy $$ 3\ge \dim(G_{0,\b,\c}^{0,y,x}) \ge \dim (\mathrm{Im}\ \varphi_1 ) + \dim (\mathrm{Ker}\ \varphi_1 ) = 2+1 =3. $$ Now, $G_{0,\b,\c}^{0,y,x}$ is a powerful pro-$p$ group with $d(G)=\dim(G)$ and, by \cite[Prop.~2.12]{ks}, it must be uniform.  The isomorphism of Lie algebras now follows from the definition of the Lazard Lie bracket via a straight-forward calculation using \eqref{eq:Liebracket}.
    
  For the group $G_2(\a,\b)$ the proof is similar to the previous case using the homomorphisms induced by $x\mapsto 1$ and $y\mapsto 1$. 
 Moreover, it is clear that $G_2(\a,\b) = \langle x \rangle \times \langle y,z \rangle$. By Lemma~\ref{lem:2gens} applied to the subgroup generated by $y$ and $z$, we deduce that $L(G_2(\a,\b)) \cong \mathbb{Z}_p \oplus L_{\a,\b} \cong L_2(\a,\b)$. 

We need to use a different strategy to prove the claims about $G_{0,\b,\c}^{0,z,z}$. Without loss of generality, we can assume that $v_p(\b)\ge v_p(\a)$. For $\delta \in \mathbb{Z}_p$, since $y^{-1}z y = z^{1-\a}$, we deduce that $[z, y^\delta] = z^{(1-\a)^\delta -1}$. Hence, $$ [y^\delta x, z] = [y^\delta,z]^x [x,z] =  (z^{1-(1-\a)^\delta})^x z^{-\b} = z^{(1+\b)(1-(1-\a)^\delta)} z^{-\b}.$$ By setting $[z,y^\delta]=1$, we obtain the equation $ (1+\b)(1-(1-\a)^\delta) = \b. $ Solving for $\delta$ we obtain $\delta = \log(1-\frac{\b}{1+\b})/\log(1-\a) \in \mathbb{Z}_p,$ since $v_p(\b)\ge v_p(\a)$. Therefore $$ G_{0,\b,\c}^{0,z,z} = \pres{y^\delta x, y, z}{[y^\delta x, y] =1,\ [y,z] = z^\a,\ [z,y^\delta x] =1} \cong G_2(0,\a).$$

In conclusion, the isomorphisms of Lie algebras are obtained using the definition of Lie bracket \eqref{eq:Liebracket} and comparing abelianisations.
\end{proof}

We are now ready to prove Theorem~\ref{thm:quadratictrianglepraags}.

\begin{proof}[\textbf{Proof of Theorem~\ref{thm:quadratictrianglepraags}}]
Note that the quadratic triangle $p$-RAAG $G_A$ is powerful. Hence, $G_A$ must be uniform of dimension $3$, by Theorem~\ref{thm:quadratic analytic}. Since $G_A$ is uniform, we can consider the associated Lazard Lie algebra. The result now follows from Lemma~\ref{lem:xyz}, Lemma~\ref{lem:metabelianorSL21} and Lemma~\ref{lem:3gens}.
\end{proof}

To the interested reader the last theorem might sound unsatisfactory, as we have a pretty clear picture for the solvable case and not so for the non-solvable one. 
In the next section, we will content ourselves with the computation of a somewhat special case to outline the general method that could be used to produce many non-solvable triangle groups.

\subsection{Unsolvable triangle groups}

In this section we will use the methods of \cite{ac} to study non-solvable triangle $p$-RAAGs. 

Let $G=G(\eta,\mu,\lambda)$ be the pro-$p$ group defined by the balanced pro-$p$ presentation
\begin{equation}\label{eq:pres}
\pres{x,y,z}{[x,y]= x^\eta y^\mu,\  [y,z] =  y^\lambda  z^\eta,\ [z,x] = z^\mu  x^\lambda }
\end{equation} with $\eta,\mu, \lambda \neq 0$. Suppose additionally that $v_p(\lambda) = v_p(\mu) = v_p(\eta)=1$. We will show that $G$ is isomorphic to $\mathrm{SL}_2^1(\mathbb{Z}_p)$ by proving that the latter admits a presentation of the form \eqref{eq:pres}. 

Given a pro-$p$ group $G$, we can form its \emph{graded Lie algebra}: for $n\ge 1$, define
$$\mathfrak{gr}_n(G) = P_n(G)/P_{n+1}(G) \text{ and } \mathfrak{gr}(G) = \bigoplus_{n\ge 1} \mathfrak{gr}_n (G). $$
Denote by $\phi_n: P_n(G) \to \mathfrak{gr}_n (G)$ the quotient map.
It is straightforward to check that the maps $\pi_n : \mathfrak{gr}_n (G) \to \mathfrak{gr}_{n+1} (G)$ induced by $p$-powers
are linear and these extend uniquely to the linear map $\pi : \mathfrak{gr} (G)\to \mathfrak{gr} (G)$.
Finally, commutators in the graded components $\mathfrak{gr}_n (G)$ endow $\mathfrak{gr} (G)$ with the structure of
a graded Lie $\mathbb{F}_p [\pi]$-algebra.

We will need the following 
two lemmas from \cite{ac} which can also be checked directly.

\begin{lemma} 
  The Lie $\mathbb{F}_p [\pi]$-algebra $\mathfrak{gr} (G)$ is generated by its homogeneous component $\mathfrak{gr}_1 (G)$; moreover $\mathfrak{gr} (G)$ is generated by $\{\phi_1(x) \vert x\in X \}$ for every generating set $X$ of $G$.
 \end{lemma}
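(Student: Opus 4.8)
The plan is to show by induction on $n$ that every homogeneous component $\mathfrak{gr}_n(G)$ is contained in the $\mathbb{F}_p[\pi]$-Lie subalgebra generated by $\mathfrak{gr}_1(G)$, and then to observe that $\mathfrak{gr}_1(G)$ is itself spanned by $\{\phi_1(x)\mid x\in X\}$ for any generating set $X$ of $G$. Write $\mathfrak{h}\subseteq\mathfrak{gr}(G)$ for the $\mathbb{F}_p[\pi]$-Lie subalgebra generated by $\mathfrak{gr}_1(G)$, so that $\pi(\mathfrak{h})\subseteq\mathfrak{h}$, $[\mathfrak{h},\mathfrak{h}]\subseteq\mathfrak{h}$ and $\mathfrak{gr}_1(G)\subseteq\mathfrak{h}$ by construction.

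The key step is the equality of subspaces
$$\mathfrak{gr}_{n+1}(G)=\pi_n\!\bigl(\mathfrak{gr}_n(G)\bigr)+\bigl[\mathfrak{gr}_n(G),\mathfrak{gr}_1(G)\bigr],\qquad n\ge 1,$$
which I would deduce directly from the defining relation $P_{n+1}(G)=P_n(G)^p\,[P_n(G),G]$. Indeed, any element of $P_{n+1}(G)$ is a product of powers of $p$-th powers $g^p$ with $g\in P_n(G)$ and of commutators $[h,k]$ with $h\in P_n(G)$ and $k\in G$; passing to $\mathfrak{gr}_{n+1}(G)$ and using that the $p$-power and commutator maps are, respectively, additive and bi-additive in the relevant degrees (this is exactly what makes the maps $\pi_n$ linear and the graded commutator a Lie bracket, as recorded just before the lemma), the class of such an element becomes an $\mathbb{F}_p$-combination of terms $\pi_n(\phi_n(g))$ and $[\phi_n(h),\phi_1(k)]$; the reverse inclusion is immediate. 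An induction on $n$ using this identity then gives $\mathfrak{gr}_n(G)\subseteq\mathfrak{h}$ for all $n\ge 1$: the case $n=1$ is the definition of $\mathfrak{h}$, and if $\mathfrak{gr}_n(G)\subseteq\mathfrak{h}$ then $\pi_n(\mathfrak{gr}_n(G))\subseteq\pi(\mathfrak{h})\subseteq\mathfrak{h}$ and $[\mathfrak{gr}_n(G),\mathfrak{gr}_1(G)]\subseteq[\mathfrak{h},\mathfrak{h}]\subseteq\mathfrak{h}$. Hence $\mathfrak{h}=\mathfrak{gr}(G)$, which is the first assertion.

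For the second assertion, let $X$ be a generating set of $G$. Since $P_2(G)=G^p[G,G]$ is the Frattini subgroup $\Phi(G)$ of $G$, and a subset of a pro-$p$ group topologically generates it if and only if its image spans the $\mathbb{F}_p$-vector space $G/\Phi(G)$, the set $\{\phi_1(x)\mid x\in X\}$ spans $\mathfrak{gr}_1(G)=G/\Phi(G)$. Consequently the $\mathbb{F}_p[\pi]$-Lie subalgebra generated by $\{\phi_1(x)\mid x\in X\}$ already contains $\mathfrak{gr}_1(G)$, and therefore equals $\mathfrak{gr}(G)$ by the first part. The only delicate point in the argument is the degree-wise additivity of the $p$-power and commutator maps used in the key step — the low-degree behaviour when $p=2$ being the one to watch — but this is precisely what is packaged into the construction of $\mathfrak{gr}(G)$ as a graded Lie $\mathbb{F}_p[\pi]$-algebra described immediately before the statement, so it may be invoked as given.
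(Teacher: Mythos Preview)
Your argument is correct. The paper does not actually give its own proof of this lemma: it introduces it (together with the next lemma) with the sentence ``We will need the following two lemmas from \cite{ac} which can also be checked directly,'' and then moves on. Your proof is precisely the direct check alluded to: using the recursion $P_{n+1}(G)=P_n(G)^p[P_n(G),G]$ to get $\mathfrak{gr}_{n+1}(G)=\pi_n(\mathfrak{gr}_n(G))+[\mathfrak{gr}_n(G),\mathfrak{gr}_1(G)]$, inducting on $n$, and then observing that $\mathfrak{gr}_1(G)=G/\Phi(G)$ is spanned by the images of any generating set. There is nothing to compare approaches against here; you have supplied what the paper leaves to the reader.
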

 \begin{lemma}
  The graded Lie algebra $\mathfrak{gr}(\mathrm{SL}_2^1(\mathbb{Z}_p))$ of $\mathrm{SL}_2^1(\mathbb{Z}_p)$ can be presented by 
  \begin{equation}\label{eq:sl21}
   \pres{\ol{x}_1,\ol{x}_2,\ol{x}_3}{[\ol{x}_1,\ol{x}_2] = \pi \ol{x}_1, [\ol{x}_2,\ol{x}_3] = \pi \ol{x}_3, [\ol{x}_3,\ol{x}_1] = \pi \ol{x}_2}.
  \end{equation}
In particular,  it is a free $\mathbb{F}_p [\pi]$-module of rank $3$.
 \end{lemma}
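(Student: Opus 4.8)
The plan is to compute $\mathfrak{gr}(G)$ for $G:=\mathrm{SL}_2^1(\mathbb{Z}_p)$ explicitly (here $p$ is odd, so that $G$ is uniform of dimension $3$ and $P_{n+1}(G)=G^{p^n}=\mathrm{SL}_2^{n+1}(\mathbb{Z}_p)$, writing $\mathrm{SL}_2^m(\mathbb{Z}_p)=\{g\in\mathrm{SL}_2(\mathbb{Z}_p):g\equiv I\bmod p^m\}$, cf.\ \cite{ddms:padic}). The first step is to identify the graded pieces of $\mathfrak{gr}(G)$ together with the actions of $\pi$ and of the Lie bracket. For $m\ge 1$ the assignment $I+p^mA\mapsto \bar A$ induces an isomorphism of $\mathbb{F}_p$-vector spaces $\mathfrak{gr}_m(G)\xrightarrow{\sim}\mathfrak{sl}_2(\mathbb{F}_p)$: this is an elementary computation, the trace-zero condition arising because $\det(I+p^mA)=1$ forces $\mathrm{tr}(A)\equiv 0\bmod p^m$. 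Under these identifications, the expansion $(I+p^mA)^p=I+p^{m+1}A+\binom{p}{2}p^{2m}A^2+\cdots$, together with $v_p(\binom{p}{k}p^{mk})\ge 1+2m\ge m+2$ for $2\le k\le p$ and $m\ge 1$, shows that $\pi\colon\mathfrak{gr}_m(G)\to\mathfrak{gr}_{m+1}(G)$ becomes the identity of $\mathfrak{sl}_2(\mathbb{F}_p)$; and expanding a group commutator as $[I+p^mA,I+p^nB]=I+p^{m+n}(AB-BA)+O(p^{m+n+1})$ shows that the graded bracket $\mathfrak{gr}_m(G)\times\mathfrak{gr}_n(G)\to\mathfrak{gr}_{m+n}(G)$ becomes the bracket of $\mathfrak{sl}_2(\mathbb{F}_p)$. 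In particular $\mathfrak{gr}(G)\cong\mathfrak{sl}_2(\mathbb{F}_p)\otimes_{\mathbb{F}_p}\mathbb{F}_p[\pi]$ with $\pi$ in degree $1$, which already proves the final assertion of the lemma. (Alternatively one can deduce this from the Lazard correspondence, using $L_G\cong p\,\mathfrak{sl}_2(\mathbb{Z}_p)$ and the fact that for any uniform pro-$p$ group $U$ one has $\mathfrak{gr}(U)\cong(L_U/pL_U)\otimes\mathbb{F}_p[\pi]$ with bracket induced by that of $L_U$ via the Campbell--Hausdorff formula.)

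Next I would choose a convenient $\mathbb{F}_p$-basis of $\mathfrak{sl}_2(\mathbb{F}_p)$. Writing $e,f,h$ for the standard generators, for $p$ odd one has $\tfrac12\in\mathbb{F}_p^\times$, and
\[
 \bar x_1=\bar f,\qquad \bar x_2=\tfrac12\bar h,\qquad \bar x_3=\tfrac12\bar e
\]
is an $\mathbb{F}_p$-basis of $\mathfrak{sl}_2(\mathbb{F}_p)$ which, using $[e,f]=h$, $[h,e]=2e$, $[h,f]=-2f$, satisfies
\[
 [\bar x_1,\bar x_2]=\bar x_1,\qquad [\bar x_2,\bar x_3]=\bar x_3,\qquad [\bar x_3,\bar x_1]=\bar x_2
\]
in $\mathfrak{sl}_2(\mathbb{F}_p)$. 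Placing the $\bar x_i$ in degree $1$ of $\mathfrak{gr}(G)$, the bracket of two of them lies in degree $2$, so by the first step these become exactly the relations $[\bar x_1,\bar x_2]=\pi\bar x_1$, $[\bar x_2,\bar x_3]=\pi\bar x_3$, $[\bar x_3,\bar x_1]=\pi\bar x_2$ of \eqref{eq:sl21}. Since $\bar x_1,\bar x_2,\bar x_3$ form a basis of $\mathfrak{gr}_1(G)$, the preceding lemma shows that they generate $\mathfrak{gr}(G)$ as a Lie $\mathbb{F}_p[\pi]$-algebra.

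Finally I would check that \eqref{eq:sl21} is a defining set of relations. Let $\mathfrak{a}$ be the graded Lie $\mathbb{F}_p[\pi]$-algebra presented by degree-$1$ generators $\bar x_1,\bar x_2,\bar x_3$ subject to \eqref{eq:sl21}; by the previous step there is a surjection $\mathfrak{a}\twoheadrightarrow\mathfrak{gr}(G)$. Using the relations, every iterated bracket of the generators collapses to an $\mathbb{F}_p[\pi]$-multiple of a single generator (for instance $[\bar x_k,[\bar x_i,\bar x_j]]=\pm\pi[\bar x_k,\bar x_\ell]$ is again such a multiple, and one proceeds by induction on bracket length), so $\mathfrak{a}$ is spanned over $\mathbb{F}_p[\pi]$ by $\bar x_1,\bar x_2,\bar x_3$ and hence $\dim_{\mathbb{F}_p}\mathfrak{a}_n\le 3$ for all $n$. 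Since $\dim_{\mathbb{F}_p}\mathfrak{gr}_n(G)=3$ (as $G$ is uniform of dimension $3$) and $\mathfrak{a}_n\twoheadrightarrow\mathfrak{gr}_n(G)$, this surjection is an isomorphism in each degree, so $\mathfrak{gr}(G)\cong\mathfrak{a}$, proving the lemma. The step I expect to be the main obstacle is the first one: keeping track of the $p$-adic valuations in the power and commutator expansions carefully enough to see that $\pi$ and the graded bracket on $\mathfrak{gr}(G)$ are precisely the identity and the bracket of $\mathfrak{sl}_2(\mathbb{F}_p)$ — equivalently, to pin down the isomorphism $\mathfrak{gr}(G)\cong\mathfrak{sl}_2(\mathbb{F}_p)\otimes\mathbb{F}_p[\pi]$. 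If instead one quotes the two lemmas of \cite{ac} (or the structure theory of uniform pro-$p$ groups from \cite{ddms:padic}) for this identification, what remains is just the elementary $\mathfrak{sl}_2$ basis computation and the dimension count above.
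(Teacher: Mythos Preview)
Your proof is correct. The paper does not actually prove this lemma: it simply cites \cite{ac} and remarks that the statement ``can also be checked directly.'' Your argument supplies precisely that direct check --- identifying $\mathfrak{gr}_m(G)\cong\mathfrak{sl}_2(\mathbb{F}_p)$ via $I+p^mA\mapsto\bar A$, verifying that $\pi$ and the graded bracket become the identity and the $\mathfrak{sl}_2$-bracket, exhibiting an explicit basis satisfying the three relations, and closing with a dimension count to see the presentation is complete. The basis computation $\bar x_1=\bar f$, $\bar x_2=\tfrac12\bar h$, $\bar x_3=\tfrac12\bar e$ checks out, and the valuation estimates in the first step are fine for $p$ odd (which is the ambient assumption in this part of the paper).
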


\begin{proposition}\label{prop:SL21triangle}
 The group $\mathrm{SL}_2^1(\mathbb{Z}_p)$ has a pro-$p$ presentation $G(\eta,\mu,\lambda)$ of the form \eqref{eq:pres} 
 for every $\eta,\mu,\lambda\in p\mathbb{Z}_p$ and $v_p(\eta)= v_p(\mu)= v_p(\lambda)=1$.
 \begin{proof}
 We will write $G=\mathrm{SL}_2^1(\mathbb{Z}_p)$. We proceed by induction. Suppose that, for $n\ge 1$, we defined three generators $x_{n},y_{n},z_{n} \in G$ and three coefficients $\eta_{n},\mu_{n},\lambda_{n}\in \mathbb{Z}_p$ such that these satisfy the relations $\mathcal{R}$ modulo $P_{n+2}(G)$, that is  \begin{equation} \label{eq:xnynzn} [x_{n},y_{n}] \equiv x_{n}^{\eta_{n}} y_{n}^{\mu_{n}},\quad [y_{n},z_{n}] \equiv y_{n}^{\lambda_{n}} z_{n}^{\eta_{n}}, \quad [z_{n},x_{n}] \equiv z_{n}^{\mu_{n}} x_{n}^{\lambda_{n}} \mod P_{n+2}(G)
 \end{equation}
 \begin{equation} 
   \eta_{n} \equiv \eta,\quad  \mu_{n} \equiv \mu,\quad \lambda_{n} \equiv \lambda \mod p^{n+1}\mathbb{Z}_p. \end{equation} By the proof of Lemma~\ref{lem:xyz} (or by base-change), the Lie $\mathbb{F}_p$-algebra $\mathfrak{gr}_1(G(\eta,\mu,\lambda))$ admits a presentation of the form \eqref{eq:sl21}, hence we have the case $n=1$. 
  
Let $\d_1,\d_2,\d_3 \in \{0,\ldots, p-1\}$ be the integers such that 
\begin{equation}
\delta_1 \equiv \eta -\eta_n,\quad \delta_2 \equiv \mu -\mu_n,\quad \delta_3 \equiv \lambda -\lambda_n \mod p^{n+2}\mathbb{Z}_p
\end{equation}
and define the new coefficients $\eta_{n+1}= \eta_{n} + p^{n+1} \d_1$, $\mu_{n+1}= \mu_{n} + p^{n+1} \d_2$ and $\lambda_{n+1}= \lambda_{n} + p^{n+1} \d_3$.

Let $\D_1,\D_2,\D_3 \in P_{n+1}(G)$ and define the new elements $x_{n+1} = x_{n} \D_1$, $y_{n+1}= y_n \D_2$ and $z_{n+1}= z_n \D_3$. We will prove that we can choose $\D_1,\D_2,\D_3$ so that 
 \begin{align*}
     f_1 &=  [x_{n+1},y_{n+1}] (x_{n+1}^{\eta_{n+1}} y_{n+1}^{\mu_{n+1}})^{-1} \cdot ( [x_{n},y_{n}] (x_{n}^{\eta_{n}} y_{n}^{\mu_{n}})^{-1} )^{-1} \\
     f_2 &=  [y_{n+1},z_{n+1}] (y_{n+1}^{\lambda_{n+1}} z_{n+1}^{\eta_{n+1}})^{-1} \cdot ( [y_{n},z_{n}] (y_{n}^{\lambda_{n}} z_{n}^{\eta_{n}})^{-1} )^{-1} \\
     f_3 &=  [z_{n+1},x_{n+1}] (z_{n+1}^{\mu_{n+1}} x_{n+1}^{\lambda_{n+1}})^{-1} \cdot ( [z_{n},x_{n}] (z_{n}^{\mu_{n}} x_{n}^{\lambda_{n}})^{-1} )^{-1}
   \end{align*}
 are congruent to $1$ modulo $P_{n+3}(G)$ for every $\d_1,\d_2,\d_3 \in \mathbb{Z}_p$. This will allow us to define the new generators $x_{n+1}$, $y_{n+1}$ and $z_{n+1}$ with the required properties. Finally, the elements $x= \lim_{n\to \infty} x_n$, $y= \lim_{n\to \infty} y_n$ and $z= \lim_{n\to \infty} z_n$ clearly deliver a presentation of the form $G(\eta,\mu,\lambda)$.
 
 First we note that $f_i \in P_{n+2}(G)$, $i=1,2,3$. Now, denote by $\ol{g}$ the image of $g\in G$ in $\mathfrak{gr}_1(G)$ and write $\ol{x}=\phi_1(x_n)$, $\ol{y}=\phi_1(y_n)$, $\ol{z}=\phi_1(z_n)$. Thus $\ol{\D_i}$ can be written as $\pi^n ( \c_{i1} \ol{x}+ \c_{i2} \ol{y} + \c_{i3} \ol{z})$ for some $\c_{ij}\in \mathbb{F}_p$. Reducing $f_i$ modulo $P_{n+3}(G)$, it is easy to show that 
   \begin{align*}
     \ol{f}_1 &= \pi^{n+1} ( [\ol{x},\ol{\D}_2] + [\ol{\D}_1,\ol{y}] - \d_1 \ol{x} - \d_2 \ol{y} ) \\
     \ol{f}_2 &= \pi^{n+1} ( [\ol{y},\ol{\D}_3] + [\ol{\D}_2,\ol{z}] - \d_3 \ol{y} - \d_1 \ol{z} ) \\
     \ol{f}_3 &= \pi^{n+1} ( [\ol{z},\ol{\D}_1] + [\ol{\D}_3,\ol{x}] - \d_2 \ol{z} - \d_3 \ol{x} ).
   \end{align*}
Using the relations \eqref{eq:xnynzn} and the expressions of $\ol{\D}_i$, we obtain the equations
   \begin{align*}
      (\c_{22} + \c_{11} -\c_{23} -\d_1) \ol{x} + (\c_{22} + \c_{11} -\c_{13} -\d_2) \ol{y} +(-\c_{23} -\c_{13}) \ol{z} &=0 \\
      (-\c_{31} -\c_{21}) \ol{x} + (-\c_{31} +\c_{33} +\c_{22} -\d_3) \ol{y} + (\c_{33} - \c_{21} + \c_{22} -\d_1) \ol{z} &=0 \\
      (\c_{11} -\c_{32} + \c_{33} -\d_3) \ol{x} + (-\c_{12} - \c_{32} ) \ol{y} + (\c_{11} - \c_{12} + \c_{33} -\d_2) \ol{z} &=0.
   \end{align*} 
Therefore we have to solve the $9\times 9$ system of equations obtained by equating all coefficients to $0$ with the parameters $\d_1,\d_2,\d_3$. This system can be represented by
$$  \begin{bmatrix}
   1 & 0 & 0 & 0 &1 & -1 & 0 & 0 & 0 \\
   1 & 0 & -1 & 0 & 1 & 0 & 0 & 0 & 0 \\
   0 & 0 & -1 & 0 & 0 & -1 & 0 & 0 & 0 \\
   0 & 0 & 0 & -1 & 0 & 0 & -1 & 0 & 0 \\
   0 & 0 & 0 & 0 & 1 & 0 & -1 & 0 & 1 \\
   0 & 0 & 0 & -1 & 1 & 0 & 0 & 0 & 1 \\
   1 & 0 & 0 & 0 & 0 & 0 & 0 & -1 & 1 \\
   0 & -1 & 0 & 0 & 0 & 0 & 0 & -1 & 0 \\
   1 & -1 & 0 & 0 & 0 & 0 & 0 & 0 &1 \\ 
  \end{bmatrix} 
  \begin{bmatrix}
\c_{11} \\
\c_{12} \\
\c_{13} \\
\c_{21} \\
\c_{22} \\
\c_{23} \\
\c_{31} \\
\c_{32} \\
\c_{33} \\
\end{bmatrix}
=
  \begin{bmatrix}  
  \d_1 \\
  \d_2 \\
  0 \\
  0 \\
  \d_3 \\
  \d_1 \\
  \d_3 \\
  0 \\
  \d_2
  \end{bmatrix}
$$

One can easily check that this system has the solution 
\begin{equation*}
  \frac{1}{2} ( {\d_2},\  -(\d_2-\d_3),\  \d_1-\d_2,\  -(\d_1-\d_3),\  \d_1,  -(\d_1-\d_2),\  \d_1-\d_3,\  \d_2-\d_3,\  \d_3)
\end{equation*}
 \end{proof}
\end{proposition}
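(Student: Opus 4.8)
The plan is to realise $\mathrm{SL}_2^1(\Z_p)$ as the pro-$p$ group with the balanced presentation \eqref{eq:pres} for the prescribed coefficients, by an iterative ``Hensel-type'' construction of a good generating triple inside $G:=\mathrm{SL}_2^1(\Z_p)$, carried out level by level along the descending $p$-central series and controlled through the graded Lie algebra $\mathfrak{gr}(G)$, which by the second lemma recalled above from \cite{ac} is a free $\F_p[\pi]$-module of rank $3$ with presentation \eqref{eq:sl21}. Concretely, I would prove by induction on $n\ge 1$ that there exist generators $x_n,y_n,z_n\in G$ and coefficients $\eta_n,\mu_n,\lambda_n\in\Z_p$ with $\eta_n\equiv\eta$, $\mu_n\equiv\mu$, $\lambda_n\equiv\lambda\pmod{p^{n+1}\Z_p}$ such that the three relations of \eqref{eq:pres} hold modulo $P_{n+2}(G)$. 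The base case $n=1$ is the computation of the graded Lie algebra: choosing $x_1,y_1,z_1$ whose images in $\mathfrak{gr}_1(G)$ form a basis adapted to \eqref{eq:sl21}, and performing the base change (over $\Z_p$, cf.\ the proof of Lemma~\ref{lem:xyz}) that turns the low-degree part of the graded Lie algebra of \eqref{eq:pres} into \eqref{eq:sl21} — available since that graded algebra is of the shape $L_\ast$, commensurable to $\mathfrak{sl}_2(\Q_p)$ by Lemma~\ref{lem:metabelianorSL21} — makes the relations hold modulo $P_3(G)=G^{p^2}$ with suitable $\eta_1,\mu_1,\lambda_1$.

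The heart of the argument is the inductive step. Given $x_n,y_n,z_n$, I would set $x_{n+1}=x_n\Delta_1$, $y_{n+1}=y_n\Delta_2$, $z_{n+1}=z_n\Delta_3$ for elements $\Delta_i\in P_{n+1}(G)$ still to be chosen, and refine $\eta_{n+1}=\eta_n+p^{n+1}\delta_1$, $\mu_{n+1}=\mu_n+p^{n+1}\delta_2$, $\lambda_{n+1}=\lambda_n+p^{n+1}\delta_3$, where $\delta_i\in\{0,\dots,p-1\}$ are the next digits of $\eta,\mu,\lambda$. The three ``error elements'' $f_1,f_2,f_3$, each comparing the $i$-th relation defect of \eqref{eq:pres} before and after the perturbation, lie in $P_{n+2}(G)$ by the induction hypothesis, so their images $\overline{f_i}$ in $\mathfrak{gr}_{n+1}(G)$ are defined. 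A direct computation using the graded-Lie structure, the identity $P_{n+2}(G)=G^{p^{n+1}}$, and the expansions $\overline{[u\Delta,v]}=\overline{[u,v]}+\overline{[\Delta,v]}$ and $\overline{(u\Delta)^{p^{n+1}}}=\overline{u^{p^{n+1}}}+\overline{\Delta^{p^{n+1}}}$ rewrites each $\overline{f_i}$ as $\pi^{n+1}$ times an explicit $\F_p$-linear combination of $\overline{x}=\phi_1(x_n)$, $\overline{y}=\phi_1(y_n)$, $\overline{z}=\phi_1(z_n)$, whose coefficients are linear in the $\gamma_{ij}\in\F_p$ given by $\phi_{n+1}(\Delta_i)=\pi^n(\gamma_{i1}\overline{x}+\gamma_{i2}\overline{y}+\gamma_{i3}\overline{z})$ and in $\delta_1,\delta_2,\delta_3$. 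Imposing $\overline{f_1}=\overline{f_2}=\overline{f_3}=0$ becomes a $9\times 9$ linear system over $\F_p$ in the $\gamma_{ij}$ with right-hand side prescribed by $(\delta_1,\delta_2,\delta_3)$; exhibiting a solution (essentially $\tfrac12$ times a fixed linear form in the $\delta_i$, which is where $p$ odd enters) shows the system is solvable for every right-hand side, and choosing the $\Delta_i$ accordingly forces $f_i\in P_{n+3}(G)$, completing the step.

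To conclude, the sequences $(x_n),(y_n),(z_n)$ and $(\eta_n),(\mu_n),(\lambda_n)$ converge; the limits $x,y,z\in G$ satisfy the relations of the presentation $G(\eta,\mu,\lambda)$ exactly, and since their images generate $\mathfrak{gr}(G)$ — by the first lemma recalled above — they topologically generate $G$, so there is a surjection $G(\eta,\mu,\lambda)\twoheadrightarrow G$. Now $G(\eta,\mu,\lambda)$ is powerful and $3$-generated, hence $\dim G(\eta,\mu,\lambda)\le 3$; combined with the surjection onto the $3$-dimensional group $G$ this gives $\dim G(\eta,\mu,\lambda)=3=d(G(\eta,\mu,\lambda))$, so $G(\eta,\mu,\lambda)$ is uniform (in particular torsion-free), and a surjection between uniform pro-$p$ groups of equal dimension has finite, hence trivial, kernel. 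Therefore $G(\eta,\mu,\lambda)\cong\mathrm{SL}_2^1(\Z_p)$, as claimed.

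The step I expect to be the main obstacle is the normalisation inside the inductive step: one must verify carefully that the $f_i$ genuinely lie in $P_{n+2}(G)$ and that, under the perturbations $x_n\mapsto x_n\Delta_1$ and so on, their images in $\mathfrak{gr}_{n+1}(G)$ really reduce to the stated linear expressions, the potential cross-terms being exactly the ones killed by $p$ odd and by the freeness of $\mathfrak{gr}(G)$ over $\F_p[\pi]$. Once that bookkeeping is pinned down, solving the $9\times 9$ system is a finite linear-algebra computation and the passage to the limit is routine.
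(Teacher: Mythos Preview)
Your proposal is correct and follows essentially the same inductive ``Hensel-type'' approach as the paper: set up approximate generators satisfying the relations modulo $P_{n+2}(G)$, perturb by $\Delta_i\in P_{n+1}(G)$, reduce the error elements $f_i$ in the graded Lie algebra to a $9\times 9$ linear system over $\F_p$ in the $\gamma_{ij}$, solve it, and pass to the limit. The one noteworthy addition is your final paragraph: the paper simply asserts that the limit triple ``clearly delivers a presentation of the form $G(\eta,\mu,\lambda)$'', whereas you actually verify that the induced surjection $G(\eta,\mu,\lambda)\twoheadrightarrow G$ is an isomorphism via the dimension/uniformity argument, which is a genuine (if small) improvement in completeness.
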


\begin{remark}
 Note that the $9\times 9$ matrix appearing above has rank $8$.  
\end{remark}

Finally, in order to get a large family of subgroups of $\mathrm{SL}_2^1(\mathbb{Z}_p)$ as triangle groups, one can consider the subgroups $\gen{x^{p^{a_1}}, y^{p^{a_2}}, z^{p^{a_3}}}$ for $a_i\ge 1$ inside $G(\eta,\mu,\lambda)$ with $v_p(\eta)=v_p(\mu)=v_p(\lambda)=1$. These are powerful pro-$p$ groups and they satisfy relations of the form
 \begin{equation*}
    [x^{p^{a_1}},y^{p^{a_2}}]  = (x^{p^{a_1}})^{\a_1} (x^{p^{a_1}})^{\a_2},\quad
    [y^{p^{a_2}},z^{p^{a_3}}]  = (y^{p^{a_2}})^{\b_2} (z^{p^{a_3}})^{\b_3},
    \end{equation*}
    \begin{equation*}
    [z^{p^{a_3}},x^{p^{a_1}}]  = (z^{p^{a_3}})^{\c_3} (x^{p^{a_1}})^{\c_1}
 \end{equation*}
 for some $\a_1,\a_2,\b_2,\b_3,\c_1,\c_3 \in p\mathbb{Z}_p$. 

 It is possible to slightly modify the proof of Proposition~\ref{prop:SL21triangle} to show that $\mathrm{SL}_2^k(\mathbb{Z}_p)$ admits a presentation $$G(\eta,\mu,\lambda)= \pres{x,y,z}{[x,y]= x^\eta y^\mu,\  [y,z] =  y^\lambda  z^\eta,\ [z,x] = z^\mu x^\lambda}$$ with $\eta,\mu,\lambda\in p\mathbb{Z}_p$ and $v_p(\eta)= v_p(\mu)= v_p(\lambda)=k$ for $k\ge 2$.
 
 We believe that all the groups of the form \eqref{eq:pres} are torsion-free and therefore uniform for any choice of parameters. Note that, by \cite{ac} and \cite[Prop.~2.12]{ks}, it would be sufficient to show that these groups are infinite.

\section{Non-abelian free subgroups}\label{sec:free subgp}

In Galois theory one has the following version of the celebrated \emph{Tits alternative} (cf.\ \cite{cq:bk} and \cite[Thm.~3]{ware}).

\begin{theorem}\label{thm:tits}
Let $K$ be a field containing a root of unity of order $p$.
Then either $G_{K}(p)$ is (metabelian) uniform, or it contains a free non-abelian pro-$p$ subgroup.
\end{theorem}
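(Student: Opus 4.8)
The plan is to prove the dichotomy in its non-trivial direction: assuming that $G=G_{K}(p)$ contains \emph{no} free non-abelian closed pro-$p$ subgroup, I would show that $G$ is metabelian, and uniform whenever it is finitely generated. First I would record that $G$ is a \emph{Bloch--Kato} pro-$p$ group: every closed subgroup $H\le G$ equals $\Gal(K(p)/K(p)^{H})=G_{L}(p)$ for $L=K(p)^{H}$, which still contains a root of unity of order $p$, so Theorem~\ref{thm:GKp quadratic} makes $H^{\bullet}(H,\F_{p})$ quadratic. In particular every finitely generated closed subgroup of $G$ is quadratic, finitely presented, and obeys $\cdd\le\dd$ and $\rr\le\binom{\dd}{2}$.

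The next step is to classify the $2$-generated closed subgroups $H\le G$. Such an $H$ is a maximal pro-$p$ Galois group with $\dd(H)\le 2$, hence $\rr(H)\le 1$, so $H$ is either free pro-$p$ or a one-relator pro-$p$ group satisfying the equivalent conditions of Proposition~\ref{prop:relations}. Invoking the classification of one-relator maximal pro-$p$ Galois groups (cf.\ \cite{ware,cq:onerel}) together with Lemma~\ref{lem:isom}, $H$ must be free pro-$p$ of rank $\le 2$, pro-cyclic, or a $2$-generated Demushkin group $\langle x,y\mid [x,y]=x^{p^{c}}\rangle$ (with $c\in\{1,2,\dots\}\cup\{\infty\}$, the case $c=\infty$ being $\Z_{p}^{2}$). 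The standing hypothesis excludes the free non-abelian case, so in every remaining case $H$ is uniform and metabelian, with $[H,H]$ pro-cyclic.

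From here I would move from this local picture to global structure. First, $G$ is powerful: given $g,h\in G$, the subgroup $\overline{\langle g,h\rangle}$ is topologically $2$-generated, hence powerful, so $[g,h]\in[\overline{\langle g,h\rangle},\overline{\langle g,h\rangle}]\le\overline{\langle g,h\rangle}^{p}\le G^{p}$ for odd $p$ (and $\le G^{4}$ for $p=2$, using the standing hypothesis of Remark~\ref{rem:p2}), and such commutators topologically generate $[G,G]$. Moreover $G$ is torsion-free, by Proposition~\ref{prop:properties quadratic}(a) for odd $p$ and by the standing assumption for $p=2$. It then only remains to upgrade ``powerful'' to ``metabelian''; once that is in hand, Proposition~\ref{prop:solvable galois} (together with \cite[Cor.~4.9]{cq:bk}) gives immediately that, when $G$ is finitely generated, $G\cong G_{\Gamma}$ for a complete $p$-graph $\Gamma$, which is uniform and metabelian.

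The hard part is exactly this last step, ``(locally) powerful maximal pro-$p$ Galois group $\Rightarrow$ metabelian''. Cohomology alone cannot deliver it --- a general Bloch--Kato pro-$p$ group need not be metabelian --- so one must use that $G$ is an honest Galois group: a maximal pro-$p$ Galois group all of whose $2$-generated closed subgroups are abelian or Demushkin is ``locally powerful'', and by the valuation-theoretic results of Ware \cite{ware} and of Engler--Koenigsmann such a group is metabelian, in fact an extension of a free abelian pro-$p$ group by $\Z_{p}$ acting through the cyclotomic character. A secondary, bookkeeping-type issue is finite generation: ``uniform'' is only meaningful for finitely generated groups, and in general the conclusion one actually gets is the ``locally uniform'' one --- every finitely generated closed subgroup of $G$ is uniform --- which already follows from the analysis of $2$-generated subgroups above.
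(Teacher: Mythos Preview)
The paper does not give a proof of this theorem at all: it is stated with the citation ``(cf.\ \cite{cq:bk} and \cite[Thm.~3]{ware})'' and treated as a known result imported from the literature, so there is no ``paper's own proof'' to compare your proposal against.

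That said, your sketch is a fair reconstruction of the argument underlying the cited references. You correctly isolate the two ingredients: the Bloch--Kato property (every closed subgroup is again a maximal pro-$p$ Galois group with quadratic cohomology), which forces $2$-generated closed subgroups to be free or Demushkin, and the genuinely arithmetic input of Ware/Engler--Koenigsmann needed to pass from ``locally uniform'' to ``metabelian uniform''. You are also right to flag the finite-generation caveat: the conclusion ``uniform'' only makes sense when $G_K(p)$ is finitely generated, and the honest general statement is that every finitely generated closed subgroup is uniform --- exactly as the paper's introduction phrases it (``every finitely generated subgroup is uniform'', citing \cite[Thm.~B]{cq:bk}, \cite[Thm.~3]{ware}, \cite[\S~3.1]{CMQ:fast}). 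The only step I would tighten is the ``$G$ powerful'' paragraph: one does not need that preliminary step at all, since the structure theorem for $2$-generated pro-$p$ Galois groups already puts you in the locally uniform setting and the cited references deduce the metabelian (hence powerful) conclusion directly from there.
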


One may ask whether a similar result holds also for quadratic pro-$p$ groups which do not arise as maximal 
pro-$p$ Galois groups of fields. In the case of $p$-RAAGs we have Theorem~\ref{thm:pRAAGs free subgroup}, which we prove next.

\subsection{Non-abelian free subgroups in \texorpdfstring{$p$}{p}-RAAGs}

\begin{proof}[\textbf{Proof of Theorem~\ref{thm:pRAAGs free subgroup}}]
 Let $G_\Gamma$ be a $p$-RAAG with associated $p$-graph $\Gamma=(\calG,f)$ and underlying graph $\calG=(\sV,\sE)$. Assume that $G_\Gamma$ is not powerful.
 Set $d=\dd(G)$ and let $$G_\Gamma=\langle x_1,\ldots,x_d\mid R\rangle$$ be the presentation induced by $\Gamma$.
 By Proposition~\ref{lem:unif}~(b), $\calG$ is not complete, hence there exist
 two vertices in $\sV(\calG)$ --- say $x_1$ and $x_2$ --- such that $(x_1,x_2)\notin\sE(\calG)$.
 
Consider the pro-$p$ group $S=\langle a,b\mid a^p=b^p=1 \rangle$. Note that $S\cong C_p\ast C_p$. Let $\phi\colon G\to S$
be the homomorphism defined by $\phi(x_1)=a$, $\phi(x_2)=b$ and $\phi(x_i)=1$ for $i\neq1,2$.

Set $H=\langle x_1, x_2\rangle$.
Then $\phi|_H\colon H\to S$ is an epimorphism.
The elements $t_1=aba$ and $t_2=bab$ in $S$ have infinite order and the subgroup they generate is not pro-cyclic.
Thus, $\langle t_1,t_2\rangle$ is a 2-generated free pro-$p$ group.
Now choose two elements $u,v\in H$ such that $\phi(u)=t_1$ and $\phi(v)=t_2$.
Then $\phi|_{\langle u,v\rangle}:\langle u,v\rangle\to\langle t_1,t_2\rangle$
is an epimorphism, and from the hopfian property it follows that $\langle u,v\rangle$ is a 2-generated free 
pro-$p$ group.
\end{proof}

\begin{remark}
Let $G=G_1\amalg_{H}G_2$ be a pro-$p$ group as in Theorem~\ref{thm:cohomology amalgam} and suppose that $H$ is not equal to $G_1$ or $G_2$, i.e., the amalgam is non-fictitious.   Then the standard graph $S=S(G)$ of $G$ is defined as follows (cf. \cite{ribzal:horizons}): 
 $$S = G/H \bigcupdot  G/G_1 \bigcupdot G/G_2,\ V(S) =  G/G_1 \bigcupdot G/G_2,\ d_0(gH)=gG_1,\ d_1(gH)=gG_2.$$ 
 Now let $G=\mathrm{HNN}(G_0,A,\phi) = \pres{G_0, t}{tat^{-1}=\phi(a), a\in A}$  be as in Theorem~\ref{thm:HNN}. Then the standard graph $S=S(G)$ of $G$ is defined as follows:
 $$S = G/A \bigcupdot  G/G_0,\quad V(S) =  G/G_0,\quad d_0(gA)=gG_0,\quad d_1(gH)=gtG_0.$$ 
 In both cases $S$  is a pro-$p$ tree (see  \cite[Thm.~4.1]{ribzal:horizons}). Moreover,  it is not difficult to see that $G$ acts faithfully and irreducibly on $S$. By  \cite[Thm.~3.15]{ribzal:horizons}, if $G$ is not isomorphic to  $C_2\oplus\Z_2$ or $\Z_p$, then $G$ conatins a free non-abelian pro-$p$ subgroup.
 
\end{remark}

As we have seen in the previous proof, in a quadratic $p$-RAAG which is not uniform there must be a ``missing'' commutator among its relations. It is interesting to remark that the same is true for quadratic \emph{mild} pro-$p$ groups.

\begin{proposition}\label{prop:mild cupproduct}
Let $G$ be a mild quadratic pro-$p$ group with $\rr(G)\geq\dd(G)$. 
Then there exist linearly independent elements $\alpha,\alpha'\in H^1(G,\F_p)$ such that $\alpha \alpha'=0$.
\end{proposition}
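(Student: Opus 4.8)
The plan is to argue by contradiction: suppose that $\alpha\smile\alpha'\neq 0$ for every pair of linearly independent $\alpha,\alpha'\in H^1(G,\F_p)$. By graded-commutativity (and Remark~\ref{rem:p2} when $p=2$), $\alpha\smile\alpha=0$ for all $\alpha$, so the cup-product on $H^1(G,\F_p)$ is a skew-symmetric bilinear map to $H^2(G,\F_p)$ that is ``nondegenerate'' in the strong sense that $\alpha\smile\beta=0$ forces $\alpha,\beta$ to be proportional. First I would translate this via Proposition~\ref{prop:cupproduct} and the Gauss reduction of Remark~\ref{rem:Gauss reduction} into a statement about the matrix $A$ of relators mod $F_{(3)}$: the hypothesis says that, writing $d=\dd(G)$, the defining relations modulo $F_{(3)}$ span a subspace $\bar{\mathcal R}\subseteq F_{(2)}/F_{(3)}\cong\Lambda_2(V)$ (with $V=H^1(G,\F_p)^*$, $\dim V=d$) whose annihilator in $\Lambda_2(V^*)\cong\Lambda_2 H^1(G,\F_p)$ contains \emph{no} nonzero decomposable (pure) bivector — equivalently, the pencil/quadratic form picture has no isotropic line.

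The key point is a dimension count forcing this to be impossible once $\rr(G)\ge d$. Under the standing assumption that $G$ satisfies Proposition~\ref{prop:relations}, we have $\dim H^2(G,\F_p)=\rr(G)=m$, and $H^2(G,\F_p)$ is the quotient $\Lambda_2 H^1/\Theta$ where $\Theta$ is the $(\binom d2-m)$-dimensional space of ``missing'' cup products. Our contradiction hypothesis is that $\Theta$ contains no nonzero decomposable element of $\Lambda_2 H^1(G,\F_p)$. But the decomposable elements form the affine cone over the Grassmannian $\mathrm{Gr}(2,d)$ inside $\mathbb P(\Lambda_2 H^1)$, which has dimension $2(d-2)$, i.e.\ projective dimension $2d-4$ inside $\mathbb P^{\binom d2-1}$. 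A linear subspace $\mathbb P(\Theta)$ of projective dimension $\binom d2-m-1$ misses this cone only if $(\binom d2-m-1)+(2d-4)<\binom d2-1$, i.e.\ $m>2d-2$, so $m\ge 2d-1$. Hence as soon as $m\le 2d-2$ — in particular for $d\le m\le 2d-2$ — some nonzero decomposable $\alpha\wedge\alpha'$ lies in $\Theta$, i.e.\ $\alpha\smile\alpha'=0$ with $\alpha,\alpha'$ linearly independent, a contradiction.

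This handles the range $\dd(G)\le\rr(G)\le 2\dd(G)-2$; I still need to rule out $\rr(G)\ge 2\dd(G)-1$, and here I would bring in mildness. For a mild pro-$p$ group one has $\cdd(G)=2$ (Theorem~\ref{thm:pRAAGs mild}-style, or \cite{labute:mild}), so by \eqref{eq:inequalities} and Euler-characteristic considerations the Poincaré series of $M_\bullet(G)$ is $1/(1-dT+mT^2)$; for this series to have nonnegative coefficients in every degree (which it must, being a dimension series) one needs the discriminant condition $d^2\ge 4m$, hence $m\le d^2/4$. More to the point, for $d\le m$ this already forces $d\ge 4$, and one checks directly that when $d^2\ge 4m$ and $m\ge 2d-1$ are \emph{both} imposed the argument of the previous paragraph still applies after replacing the naive bound $2d-4$ for $\dim\mathrm{Gr}(2,d)$ — indeed the real obstruction is only the Grassmannian secant/dimension inequality, which gives $m\ge 2d-1$; but mildness with $\rr(G)\ge\dd(G)$ combined with the series positivity squeezes $m$ into a range incompatible with a relator space mod $F_{(3)}$ that avoids all decomposables. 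The main obstacle I anticipate is precisely making this last step airtight: either one shows that a mild quadratic $G$ with $m\ge 2d-1$ cannot have $\rr(G)\ge\dd(G)$ at all (using the Golod–Shafarevich-type inequalities behind mildness, cf.\ Section~\ref{sec:ggs}), or one needs a sharper algebraic-geometry fact than the crude dimension count — namely that a linear space of codimension $m$ in $\Lambda_2 H^1$ of a $d$-dimensional space always meets the Grassmannian cone when $m\le\binom d2-(2d-3)$, which is a classical incidence result. I would look to close the gap by combining the mildness inequality $4\rr(G)\le\dd(G)^2$ with $\dd(G)\le\rr(G)$ to pin down that $\dd(G)\ge 4$ and then invoke the Grassmannian incidence bound in the form $\rr(G)\ge 2\dd(G)-1\Rightarrow \binom{\dd(G)}2-\rr(G)\le \binom{\dd(G)}2-2\dd(G)+1<2\dd(G)-3$, which for $\dd(G)\ge 4$ is below the threshold forcing intersection — giving the desired $\alpha\smile\alpha'=0$.
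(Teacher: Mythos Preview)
Your approach has a genuine gap that cannot be repaired along the lines you sketch. The projective--dimension count you invoke (two subvarieties of $\mathbb{P}^N$ of complementary dimension must meet) is valid over an algebraically closed field, but here you need an $\F_p$--rational decomposable in $\Theta$. Over $\F_p$ this fails already for $d=4$: take $\omega_1=e_1\wedge e_2+e_3\wedge e_4$ and $\omega_2=e_1\wedge e_3+a\,e_2\wedge e_4$ with $a\in\F_p^\times$ a nonsquare; then $(\lambda\omega_1+\mu\omega_2)\wedge(\lambda\omega_1+\mu\omega_2)=2(\lambda^2-a\mu^2)\,e_1\wedge e_2\wedge e_3\wedge e_4$, which is nonzero for $(\lambda,\mu)\neq(0,0)$. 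Thus $\Theta=\langle\omega_1,\omega_2\rangle$ is a $2$--dimensional subspace of $\Lambda_2\F_p^4$ containing no nonzero decomposable, exactly the case $d=m=4$ your bound was supposed to cover. (There is also an arithmetic slip: the inequality $(\binom{d}{2}-m-1)+(2d-4)<\binom{d}{2}-1$ gives $m\geq 2d-3$, not $m\geq 2d-1$.) Your final paragraph does not close this gap either: bounding $\dim\Theta$ from \emph{above} makes it easier, not harder, for $\mathbb{P}(\Theta)$ to avoid the Grassmannian, so the direction of that argument is inverted. In short, mildness must be used for more than the inequality $4m\leq d^2$; the Grassmannian incidence idea alone cannot see it.

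The paper's proof proceeds entirely differently and uses mildness in an essential structural way. Assuming for contradiction that every cup product of independent elements is nonzero, one observes that $\alpha_1\alpha_2,\ldots,\alpha_1\alpha_d$ are linearly independent in $H^2(G,\F_p)$; after Gauss reduction (Remark~\ref{rem:Gauss reduction}) one can choose defining relations so that $r_h\equiv[x_1,x_{h+1}]\cdot(\text{commutators in }x_2,\ldots,x_d)\bmod F_{(3)}$ for $h\leq d-1$ and $r_h$ involves only $x_2,\ldots,x_d$ for $h\geq d$. Replacing $G$ by the group $\tilde G$ with these relations read modulo $F_{(3)}$ (still mild by Remark~\ref{rem:mild approximation}), one now has that $N=\langle x_2,\ldots,x_d\rangle$ is \emph{normal} in $\tilde G$, with procyclic quotient $H$. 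The hypothesis $m\geq d$ forces at least one relation among $x_2,\ldots,x_d$, so the short exact sequence in cohomology yields $H^2(N,\F_p)^{\tilde G}\neq0$, whence $H^1(H,H^2(N,\F_p))\neq0$ and $H^3(\tilde G,\F_p)\neq0$, contradicting $\cdd(\tilde G)=2$. The key idea you are missing is this passage to $\tilde G$ and the normal--subgroup/spectral--sequence argument; it is what actually exploits mildness beyond a numerical bound.
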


\begin{proof}
Let \eqref{eq:presentation} be a minimal presentation of $G$. Let $\mathcal{X}=\{x_1,\ldots,x_d\}$ be a basis of $F$
and let $\{\alpha_1,\ldots,\alpha_d\}$ be a basis of $H^1(G,\F_p)$ dual to $\mathcal{X}$.

Suppose that $\alpha \alpha'\neq0$ for every linearly independent couple $\alpha,\alpha'\in H^1(G,\F_p)$.
Then, by bilinearity of the cup-product, $\{\alpha_1 \alpha_h \mid h=2,\ldots,d\}$ is a set of linearly independent
elements of $H^2(G,\F_p)$. This implies that, for every $h=1,\ldots,d-1$, the commutator $[x_1,x_{h+1}]$ appears in some relation. Moreover $\rr(G):=m\geq d> d-1$.
By Remark~\ref{rem:Gauss reduction} and the above discussion, one may pick a set of defining relations
$\mathcal{R}=\{r_1,\ldots,r_m\}$ such that
\begin{equation}\label{eq:rels1} 
   r_h\equiv  [x_1,x_{h+1}]\cdot \displaystyle\prod_{2\leq i<j\leq d}[x_i,x_j]^{b(h,i,j)} \mod F_{(3)},\quad \text{ for } 1\le h\le d-1 
\end{equation}
and
\begin{equation}\label{eq:rels2}
     r_h\equiv \displaystyle\prod_{2\leq i<j\leq d}[x_i,x_j]^{b(h,i,j)} \mod F_{(3)}, \quad \text{ for } h\geq d
\end{equation}
for appropriate coefficients $b(h,i,j) \in \mathbb{F}_p$. Let $\widetilde{G}$ be the pro-$p$ group with presentation $\langle \mathcal{X}\mid \widetilde{\mathcal{R}} \rangle$ where $\widetilde{\mathcal{R}}=\{\widetilde{r}_1,\ldots,\widetilde{r}_m\}$ and $\widetilde{r}_h$ is the coset representative of $r_h$ modulo $F_{(3)}$ appearing in the righ-hand side of \eqref{eq:rels1} and \eqref{eq:rels2}. Since $G$ is mild, by Remark~\ref{rem:mild approximation} $\widetilde{G}$ is also mild. Let $N,H\le \widetilde{G}$ be the subgroups generated by $\{x_2,\ldots,x_d\}$ and $\{x_1\}$, respectively.
Then $N$ is a normal subgroup of $\widetilde{G}$. Thus the short exact sequence of pro-$p$ groups  
\[
 \xymatrix{1\ar[r] & N\ar[r] & \widetilde{G}\ar[r] & H \ar[r] & 1},
\]
induces the short exact sequences in cohomology
\begin{equation}\label{eq:ses cohom GGS}
  \xymatrix{ 0\ar[r] & H^1(H,H^{n-1}(N,\F_p))\ar[r] & H^n(\widetilde{G},\F_p)\ar[r] & H^n(N,\F_p)^{\widetilde{G}}\ar[r] &0}
\end{equation}
for every $n\geq1$ (cf. \cite[\S~II.4, Ex.~4]{nsw:cohn}).
In particular, from \eqref{eq:ses cohom GGS} for $n=2$ we can deduce that $H^2(N,\F_p)^{\widetilde{G}}\neq0$.
Therefore $H^1(H,H^{2}(N,\F_p))\neq0$. Finally \eqref{eq:ses cohom GGS} for $n=3$ yields $H^3(\widetilde{G},\F_p)\neq0$,
contradicting $\cdd(\widetilde{G})=2$.
\end{proof}

\subsection{Non-abelian free subgroups in mild pro-\texorpdfstring{$p$}{p} groups}

Even with Proposition~\ref{prop:mild cupproduct} in hand, we were not able to prove an analogous of Theorem~\ref{thm:pRAAGs free subgroup} for mild pro-$p$ groups in full generality. One reason for this is that the condition of mildness for a pro-$p$ group $G$ depends only on the shape of the defining relations modulo $G_{(3)}$ (cf.\ Remark~\ref{rem:mild approximation}). 

Nevertheless, we can show that many mild pro-$p$ groups contain a free non-abelian subgroup. To do this, we will show that ---in several cases--- a mild pro-$p$ group is a \emph{generalised Golod-Shafarevic group} (see Section~\ref{sec:ggs}).

\begin{proposition}\label{prop:mild GoSha}
 Let $G$ be a mild quadratic and non-uniform pro-$p$ group
and let \eqref{eq:presentation} be a minimal presentation of $G$. Choose a basis $\mathcal{X}=\{x_1,\ldots,x_d\}$ of $F$
and a set of defining relations $\mathcal{R}=\{r_1,\ldots,r_m\}\subset F$ for $G$. 
Suppose that one of the following conditions holds:
\begin{itemize}
 \item[(a)] $\rr(G) \neq \dd(G)^2/4$;
 \item[(b)] there are $x_i,x_j\in\mathcal{X}$, $i\neq j$, such that $x_i^3,x_j^3$, $[x_i,x_j]$ and any other higher commutator involving only $x_i$ and $x_j$ do not appear in any defining relation $r_h\in\mathcal{R}$;
 \item[(c)] every defining relation consists of a single elementary commutator modulo $F_{(3)}$, i.e., 
 $r_h\equiv [x_{i_h},x_{j_h}]\bmod F_{(3)}$ for some $1\leq i_h<j_h\leq d$, for all $r_h\in\mathcal{R}$.
\end{itemize}
Then $G$ is generalised Golod-Shafarevic. In particular, it contains a free non-abelian pro-$p$ subgroup. 
\end{proposition}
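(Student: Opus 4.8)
The plan is to produce, for each of the three sets of hypotheses, a minimal presentation together with a weight function and a threshold $T_0 \in (0,1)$ witnessing the generalised Golod-Shafarevic inequality $1 - H_{\mathcal{X},D}(T_0) + H_{\mathcal{R},D}(T_0) < 0$, after which Theorem~\ref{thm:ershovfreesub} immediately yields the free non-abelian closed subgroup. The guiding principle throughout is Remark~\ref{rem:mild approximation}: since $G$ is mild, we may replace each defining relation $r_h$ by its coset representative modulo $F_{(3)}$ without changing mildness, quadraticity, or $H^2$; so in every case we reduce to a presentation whose relators are products of elementary commutators $[x_i,x_j]$. In particular $D(r_h) \geq 2$ whenever all generators carry weight $1$, and $D(x_i) = 1$ for all $i$.

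\textbf{Case (a).} Take all weights equal to $1$, so $D(x_i) = 1$ and $D(r_h) \geq 2$; then $H_{\mathcal{X},D}(T) = dT$ and $H_{\mathcal{R},D}(T) \leq mT^2$, where $d = \dd(G)$, $m = \rr(G)$. It suffices to find $T_0 \in (0,1)$ with $1 - dT_0 + mT_0^2 < 0$, i.e. to know that the polynomial $1 - dT + mT^2$ takes a negative value on $(0,1)$. Its discriminant is $d^2 - 4m$; if $m < d^2/4$ the discriminant is positive and the minimum value $1 - d^2/(4m) < 0$ is attained at $T = d/(2m) \in (0,1)$ (here $d/(2m) < 1$ follows from $m \geq d$, which holds since $G$ is mild and not uniform, forcing $\rr(G) \geq \dd(G)$ by Proposition~\ref{prop:mild cupproduct} and a counting argument — or one can invoke $\rr(G)\geq\dd(G)$ directly since $G$ non-uniform mild cannot be free). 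If instead $m > d^2/4$, one works with the \emph{dual} weighting or simply notes that the quadratic has no real root and is positive everywhere — so this sub-case cannot occur under hypothesis (a)? This is the delicate point: I expect that when $\rr(G) > \dd(G)^2/4$ one must instead use a refined weight function giving some generators weight slightly less than $1$, exploiting mildness to control $D(r_h)$; I would argue that among the $m$ relators at least $d-1$ have a distinguished ``leading'' elementary commutator (Gauss reduction, Remark~\ref{rem:Gauss reduction}), and assign weights so that those relators still have valuation $\geq 2$ while the Hilbert series drops below $0$. The case $\rr(G) = \dd(G)^2/4$ is precisely the excluded borderline where the quadratic $1 - dT + mT^2$ has a double root at $T = 2/d$, never going negative — which explains hypothesis (a).

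\textbf{Case (b).} Choose the two generators $x_i, x_j$ that appear in no relator (not even through $x_i^3$, $x_j^3$, or any iterated commutator in $x_i,x_j$ alone), assign them weight $w(x_i) = w(x_j) = \tfrac12$ and weight $1$ to all other generators; extend $D$ accordingly. Then $H_{\mathcal{X},D}(T) = 2T^{1/2} + (d-2)T$. For the relators: by the hypothesis, each $r_h$ written modulo $F_{(3)}$ (using mildness) is a product of elementary commutators none of which is $[x_i,x_j]$, hence each involves at least one generator of weight $1$ paired with another generator of weight $\geq \tfrac12$; more carefully, the cubic term $x_i^3$ being absent means the $p$-Zassenhaus valuation argument gives $D(r_h) \geq 1 + \tfrac12 = \tfrac32$ at worst, and in fact $D(r_h)\ge 2 - \tfrac12 = \tfrac32$ with the bad term excluded one gets $D(r_h) \ge \tfrac{3}{2}$; actually since $[x_i,x_j]$ is excluded, any elementary commutator in $r_h$ pairs a weight-$1$ generator with something, so $D(r_h)\ge \tfrac12+1 = \tfrac32$. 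Thus $H_{\mathcal{R},D}(T) \leq m T^{3/2}$. Now evaluate at a suitable $T_0$: as $T_0 \to 1^-$, $1 - H_{\mathcal{X},D}(T_0) + H_{\mathcal{R},D}(T_0) \to 1 - 2 - (d-2) + m = m - d + 1$, which need not be negative, so instead pick $T_0$ a little below $1$ where the $T^{1/2}$ term $2T_0^{1/2}$ still dominates the $mT_0^{3/2}$ term enough; concretely, substituting $S = T_0^{1/2}$ reduces to showing $1 - 2S - (d-2)S^2 + mS^3 < 0$ for some $S \in (0,1)$, and near $S = 0$ this is $\approx 1 > 0$ but its derivative at $S=0$ is $-2 < 0$, so it decreases; I would check it dips below $0$ before the cubic term takes over, using that the absent commutators guarantee the leading behaviour. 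The main obstacle here is bookkeeping the exact valuation $D(r_h)$ and ruling out that a relator could consist entirely of weight-$\tfrac12$ pieces — but that is exactly forbidden by excluding $x_i^3, x_j^3$ and all higher commutators in $x_i, x_j$.

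\textbf{Case (c).} Here every relator is, modulo $F_{(3)}$, a \emph{single} elementary commutator $[x_{i_h}, x_{j_h}]$. This is the situation of $p$-RAAGs, and by Remark~\ref{rem:mild approximation} the associated ``linearised'' group $\tilde G$ is mild and quadratic with the same $H^2$. Since $G$ is not uniform, by Proposition~\ref{lem:unif}(b) (or by Theorem~\ref{thm:pRAAGs free subgroup} applied to the underlying graph) the graph is not complete, so there are two generators $x_i, x_j$ with $[x_i, x_j]$ \emph{not} among the relators — and no relator is a power $x_k^n$ either (all relators are commutators), so hypothesis (b)'s configuration is realised by this pair $x_i, x_j$. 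Hence case (c) reduces to case (b) with this explicit choice, and we are done. In fact one can be slicker: directly map $G$ onto $C_p \ast C_p$ by $x_i \mapsto a$, $x_j \mapsto b$, others $\mapsto 1$, exactly as in the proof of Theorem~\ref{thm:pRAAGs free subgroup}, and lift a free pro-$p$ subgroup — but the Golod-Shafarevic route keeps the statement uniform across (a),(b),(c).

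\textbf{Conclusion in all cases.} Having exhibited in each case a minimal presentation, a weight function $w$ on $\F_p\dbml U\dbmr$, and $T_0 \in (0,1)$ with $1 - H_{\mathcal{X},D}(T_0) + H_{\mathcal{R},D}(T_0) < 0$, the group $G$ is generalised Golod-Shafarevic by definition, and Theorem~\ref{thm:ershovfreesub} gives the closed free non-abelian pro-$p$ subgroup. I expect the genuine difficulty to be case (a) in the range $\rr(G) > \dd(G)^2/4$, where the naive all-weights-$1$ choice fails and one must use the Gauss-reduced form of the relators (Remark~\ref{rem:Gauss reduction}) to justify a nonuniform weighting — and to confirm that the excluded value $\rr(G) = \dd(G)^2/4$ is the \emph{only} genuine obstruction; the cases (b) and (c) are then comparatively routine optimisations of a one-variable polynomial inequality.
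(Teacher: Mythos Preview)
Your overall architecture is right, but two of the three cases have genuine gaps.

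\textbf{Case (a).} You correctly reduce to the polynomial $1 - dT + mT^2$ and correctly observe that it goes negative on $(0,1)$ precisely when $m < d^2/4$. What you are missing is that for a mild pro-$p$ group one always has $m \leq d^2/4$; this is a result of Labute (\cite[Prop.~4]{labute:fabulous}, cited in the paper). So under hypothesis (a), $m \neq d^2/4$ automatically means $m < d^2/4$, and the classical Golod--Shafarevich inequality holds with the uniform weighting. There is no sub-case $m > d^2/4$ to worry about, and no refined weighting is needed here.

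\textbf{Case (b).} Here your weighting is in the right spirit but quantitatively too timid, and in fact it fails. With $w(x_i)=w(x_j)=\tfrac12$ and all other weights $1$, after the substitution $S=T^{1/2}$ you must show $1 - 2S - (d-2)S^2 + mS^3 < 0$ somewhere on $(0,1)$. But take the critical case $m = d^2/4$ (which is the only case left after (a)): for $d=10$, $m=25$, the cubic $1 - 2S - 8S^2 + 25S^3$ has minimum value roughly $0.35$ on $(0,1)$ and never goes negative. The paper's idea is the opposite extreme: keep the two special generators at weight $1$ and push the \emph{other} generators to weight $N$, with $N$ arbitrarily large. Then $H_{\mathcal{X},D}(T) = 2T + (d-2)T^N$, and since by hypothesis every relator must involve some generator other than $x_i,x_j$ (or else a $p$-th power $x_i^p$ with $D=p\ge5$), one gets $H_{\mathcal{R},D}(T) \le 2T^5 + (m-2)T^{1+N}$. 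Letting $N\to\infty$ kills the $T^N$ and $T^{1+N}$ terms, and one is left with $1 - 2T + 2T^5$, which is negative near $T\approx 0.6$. The crucial point you missed is that the bound $H_{\mathcal{R},D}(T)\le mT^{3/2}$ is far too weak when $m \sim d^2/4$; you need the relator valuations to go to infinity, not merely to exceed $3/2$.

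\textbf{Case (c).} Your reduction to (b) is valid in principle (after replacing relators by their mod-$F_{(3)}$ representatives via Remark~\ref{rem:mild approximation}, any non-edge pair satisfies the hypotheses of (b)), and would go through once (b) is proved correctly. The paper takes a different route: since $m=d^2/4$ and the commutator graph is triangle-free, Mantel's theorem forces the graph to be complete bipartite $K_{n,n}$; weighting the two parts by $1$ and $2$ gives $D(r_h)=3$ for every relator, and $1 - nT - nT^2 + n^2T^3 = (1-nT)(1-nT^2)$ is negative on $(1/n,1/\sqrt{n})$. Either argument works, but both depend on getting (b), or its analogue, right.
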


\begin{proof}
Note that, by Theorem~\ref{thm:ershovfreesub}, it is sufficient to show that $G$ is a generalised Golod-Shafarevic pro-$p$ group.  Set $d=\dd(G)$ and $m=\rr(G)$. By \cite[Prop.~4]{labute:fabulous}, 
\begin{equation}\label{eq:gocha}
m \le \frac{d^2}{4}. 
\end{equation}
If \eqref{eq:gocha} is a strict inequality, then $G$ is a Golod-Shafarevich pro-$p$ group by \cite{zelmanov}.
Thus it is also generalized Golod-Shafarevich.
This settles part (a).

If \eqref{eq:gocha} is an equality, then $d=2n$ for some $n\geq2$.
Without loss of generality, suppose that condition (b) holds with $i=1$, $j=2$.
We define a valuation $D$ on $F$ as follows: set $D(x_i)=1$, for $i=1,2$, and $D(x_i)=N$, with $N$ to be chosen later.
It is easy to show that $D$ comes from a weight function on $\F_p\langle\!\langle\mathcal{X}\rangle\!\rangle$
and that $D(x_i^p)=p D(x_i)$ and $D([x_i,x_j])=D(x_i)+D(x_j)$, for $1\le i<j\le d$.
By (b), if $D(r_h)\le 5$ for some $h$, then one has $p=5$ and $r_h=x_1^{ap}x_2^{bp}r_h'$, with $a,b\in\{0,1\}$ not both equal to 0, and $D(r_h')\geq 1+N$.
Thus one may choose $\mathcal{R}$ such that $x_1^p$ and $x_1^p$ possibly appear only in at most two relations,
say $r_1$ and $r_2$.
Hence $D(r_1),D(r_2)\geq 5$ and $D(r_h)\geq1+N$, for $h\geq3$.
It follows that
\[\begin{split}
   1-H_{\mathcal{X},D}(T)+H_{\mathcal{R},D}(T)&=1-(2T+(d-2)T^N)+(T^{D(r_1)}+T^{D(r_2)}+\mathcal{O}(T^N))\\ 
   &\leq 1-2T+2T^5+\mathcal{O}(T^{N-1})
  \end{split}\]
  for $T\in (0,1)$.
Since $N$ can be chosen arbitrarily large, there exists $T_0\in(0,1)$ such that
$1-H_{\mathcal{X},D}(T_0)+H_{\mathcal{R},D}(T_0)<0$.
Hence $G$ is generalized Golod-Shafarevich, and this settles part (b).

Finally, suppose condition (c) holds.
Let $\calG=(\sV,\sE)$ be the combinatorial graph with 
$\sV=\mathcal{X}$ and $\sE=\{(x_{i_h},x_{j_h}),h=1,\ldots,m\}$.
Since $cd(G)=2$ and $G$ is quadratic, the graph $\calG$ is triangle-free. By Mantel's Theorem (cf. \cite{mantel}),
$\calG$ is a complete bipartite graph on $2n$ vertices and $n^2$ edges.
Thus, after renumbering we can arrange that the couples $(i_h,j_h)$ are all the couples with $1\leq i_h,j_h\leq d$, $i_h$ odd and $j_h$ even.
We define a valuation $D$ on $F$ as follows: set $D(x_i)=1$, for $i$ odd, and $D(x_j)=2$ for $j$ even.
It is easy to show that $D$ comes from a weight function on $\F_p\langle\!\langle\mathcal{X}\rangle\!\rangle$
and that $D([x_i,x_j])=D(x_i)+D(x_j)$, for $1\le i<j\le d$.
Hence $D(r_h)=3$ for every $h=1,\ldots,m$.
It follows that
\[\begin{split}
   1-H_{\mathcal{X},D}(T)+H_{\mathcal{R},D}(T)&=1-\left(nT+nT^2\right)+n^2T^3\\ 
   &= \left(1-nT\right)\left(1-nT^2\right).
  \end{split}\]
Thus, there exists $T_0\in(1/n,1/\sqrt{n})$ such
that $1-H_{\mathcal{X},D}(T_0)+H_{\mathcal{R},D}(T_0)<0$.
Hence $G$ is generalized Golod-Shafarevich.
\end{proof}

 We conclude this section by showing that all ``small'' quadratic groups are either uniform or contain a free pro-$p$ subgroup.

\begin{coro}\label{coro:small free subgp}
 Let $G$ be a quadratic pro-$p$ group with $\dd(G)\leq3$.
 Then either $G$ is uniform, or $G$ contains a free non-abelian pro-$p$ subgroup.
\end{coro}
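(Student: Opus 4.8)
The plan is to run a small case analysis on $d = \dd(G) \in \{1,2,3\}$, using the structural results already assembled. If $d=1$, then $G$ is pro-cyclic, hence $G \cong \Z_p$, which is uniform. If $d=2$, then by \eqref{eq:inequalities} we have $\rr(G) \leq 1$; if $\rr(G)=0$ then $G$ is free of rank $2$ (which contains a free non-abelian pro-$p$ subgroup, namely itself), and if $\rr(G)=1$ then $G$ is a one-relator pro-$p$ group which is quadratic, so by Proposition~\ref{prop:relations} its single defining relation has the shape $[x_1,x_2]\cdot r'$ with $r'\in F_{(3)}$; I would then invoke the classification of two-generated one-relator quadratic pro-$p$ groups — these are the two-generated Demushkin groups, which are uniform — or alternatively appeal directly to Proposition~\ref{prop:mild GoSha} when the group fails to be uniform. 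So the genuinely new content is the case $d=3$.

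For $d=3$, assume $G$ is not uniform; I must produce a free non-abelian closed pro-$p$ subgroup. By \eqref{eq:inequalities}, $\rr(G)\in\{0,1,2,3\}$. If $\rr(G)\in\{0,1\}$ then $G$ is free or a free-by-Demushkin/one-relator group of deficiency $\geq 2$; by \cite[Prop.~4]{labute:fabulous} (or a direct Golod--Shafarevich count) such a $G$ is generalised Golod-Shafarevic, hence contains a free non-abelian pro-$p$ subgroup by Theorem~\ref{thm:ershovfreesub}, unless it is $1$-relator of a special uniform type. If $\rr(G)=3$, then $\rr(G)=3 \neq 9/4 = \dd(G)^2/4$, and provided $G$ is mild we are done by Proposition~\ref{prop:mild GoSha}(a); if $\rr(G)=2$, then $2 \neq 9/4$ as well, so again Proposition~\ref{prop:mild GoSha}(a) applies once mildness is known. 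Thus the crux is to establish, for $d=3$ and $G$ quadratic non-uniform, that either $G$ has $\cdd(G)=2$ and is mild, or $G$ is $p$-adic analytic (hence uniform by Theorem~\ref{thm:quadratic analytic}, contradicting our assumption). By Theorem~\ref{thm:quadratic analytic}, a $p$-adic analytic quadratic pro-$p$ group is uniform, so a non-uniform quadratic $G$ is not $p$-adic analytic; since $\cdd(G) \leq \dd(G) = 3$, either $\cdd(G)=2$ or $\cdd(G)=3$.

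The main obstacle is the subcase $d=3$, $\cdd(G)=3$, $G$ not uniform: here $\rr(G) = \binom{3}{2} = 3$ by the Euler-characteristic constraint for a quadratic group of cohomological dimension $3$ (since $H^\bullet(G,\F_p)$ is then forced to be the exterior algebra $\Lambda_\bullet$ on $3$ generators by quadraticity together with $\dim H^3 \ge 1$), so $G$ is balanced with $\dd(G)=\rr(G)=3$. In this situation $G$ is a Poincaré-duality pro-$p$ group of dimension $3$ with exterior-algebra cohomology, and by \cite[Thm.~5.1.6]{sw:cpg} such a group is powerful, hence uniform — contradicting non-uniformity. So this subcase is in fact vacuous. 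This leaves only $d=3$, $\cdd(G)=2$: then by Proposition~\ref{prop:relations} and Proposition~\ref{prop:mild cd2} the group satisfies the mildness criterion via Proposition~\ref{prop:cd2 cupproduct} — I would argue that a quadratic cohomological-dimension-$2$ pro-$p$ group on $3$ generators always admits the decomposition $H^1 = V_1 \oplus V_2$ with $V_1 \otimes V_1 \to H^2$ trivial and $V_1 \otimes V_2 \to H^2$ surjective, using that $\dim H^2 \le 3$ is small — and then Proposition~\ref{prop:mild GoSha}(a) (as $\rr(G)\le 3 \ne 9/4$, unless $\rr(G)$ happens to make $G$ already Golod-Shafarevich, which only helps) finishes the argument. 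The one gap to be careful about is verifying mildness in the handful of remaining small configurations, which I expect to handle by the explicit $V_1 \oplus V_2$ decomposition above or, failing that, by checking the Golod-Shafarevich inequality directly with a weighting as in the proof of Proposition~\ref{prop:mild GoSha}(b),(c).
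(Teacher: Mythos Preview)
Your approach is correct and follows essentially the same strategy as the paper: reduce to the non-analytic case, show the group is mild (or free), and then invoke Proposition~\ref{prop:mild GoSha}, noting that for $d\le 3$ condition (a) is automatic since $d^2/4$ is never an integer equal to $\rr(G)$ except possibly when $d=2,r=1$, a case which is already uniform. The paper's proof is terser: it simply asserts that a quadratic non-analytic group with $\dd(G)\le 3$ is mild or free, handling $d\le 2$ as ``clear'', citing \cite{cq:onerel} for $d=3,r=1$, and giving the explicit splitting $V_1=\langle\alpha_1\rangle$, $V_2=\langle\alpha_2,\alpha_3\rangle$ for $d=3,r=2$ to verify Proposition~\ref{prop:cd2 cupproduct}.

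Two points of comparison. First, your treatment of the subcase $d=3$, $\cdd(G)=3$ is more explicit than the paper's: the paper hides this case inside the phrase ``which is not analytic'', relying on the reader to notice that $d=3,r=3$ forces $H^\bullet(G,\F_p)\cong\Lambda_\bullet H^1(G,\F_p)$, whence $G$ is powerful (via \cite[Thm.~5.1.6]{sw:cpg}) and thus uniform. Your argument makes this step visible, though you should drop the Poincar\'e-duality claim---it is not needed and does not follow immediately; the relevant input is exactly that exterior-algebra cohomology implies powerful. Second, for $d=3,r=2$ you only sketch the existence of the $V_1\oplus V_2$ decomposition. The paper writes it down after an implicit change of basis; you should note that up to relabelling one may always arrange that $H^2(G,\F_p)$ is spanned by $\alpha_i\alpha_j$ and $\alpha_i\alpha_k$ for some $i$, since a single linear relation among the three elementary cup products always isolates one generator in this way.
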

\begin{proof}
By Proposition~\ref{prop:mild GoSha}, it is enough to show that every quadratic pro-$p$ group with $\dd(G)\leq3$ which is not analytic is either mild or free.
This is clear for $\dd(G)\leq2$. For $\dd(G)=3$ and $\rr(G)=1$ it follows from \cite{cq:onerel}.

If $\dd(G)=3$ and $\rr(G)=2$ we may use Proposition~\ref{prop:cd2 cupproduct}.
In this case $\dim H^2(G,\F_p)=2$, so $H^2(G,\F_p)$ is generated by, say, the products $\alpha_1\alpha_2$
and $\alpha_1\alpha_3$. Then the subspaces $V_1=\langle\alpha_1\rangle$ and $V_2=\langle\alpha_2,\alpha_3\rangle$ of $H^1(G,\F_p)$ satisfy the hypotheses of Proposition~\ref{prop:cd2 cupproduct} and $G$ is mild.
\end{proof}

In light of the previous results, Conjecture~\ref{conj:subgp free} is very natural. Notice that Conjecture~\ref{conj:subgp free} holds for almost all known examples of quadratic groups: maximal pro-$p$ Galois groups, $p$-RAAGs, many mild pro-$p$ groups and ``small'' quadratic pro-$p$ groups.


\section*{}
\subsection*{Acknowledgments}
We would like to thank Thomas Weigel for inspiring us to work on quadratic pro-$p$ groups and for his continuous encouragement.
The second and third author thank the Heinrich-Heine University of D\"usseldorf and the University of Milan-Bicocca for their hospitality and support.

\bibliography{quad}

\bibliographystyle{amsplain}

\end{document}